\numberwithin{equation}{section} \theoremstyle{plain}
\newcommand{\Complex}{\mathbb C}
\newcommand{\Real}{\mathbb R}
\newcommand{\ddbar}{\overline\partial}
\newcommand{\pr}{\partial}
\newcommand{\ol}{\overline}
\newcommand{\Td}{\widetilde}
\newcommand{\norm}[1]{\left\Vert#1\right\Vert}
\newcommand{\set}[1]{\left\{#1\right\}}
\newcommand{\To}{\rightarrow}
\DeclareMathOperator{\Ker}{Ker}
\newcommand{\cali}[1]{\mathscr{#1}}
\newcommand{\sumprime}{{\sum}^\prime}
\newtheorem{theorem}{Theorem}[section]
\newtheorem{lemma}[theorem]{Lemma}
\newtheorem{proposition}[theorem]{Proposition}
\newtheorem{corollary}[theorem]{Corollary}
\newtheorem{definition}[theorem]{Definition}
\newtheorem{ass}[theorem]{Assumption}
\newtheorem{remark}[theorem]{Remark}
\theoremstyle{definition}
\theoremstyle{remark}
\numberwithin{equation}{section}
\newcommand{\abs}[1]{\lvert#1\rvert}
\begin{document}
	
	\title[Heat kernel asymptotics and analytic torsion on CR manifolds]
	{Heat kernel asymptotics and analytic torsion on non-degenerate CR manifolds}

	\author[]{Chin-Yu Hsiao}
	\address{Department of Mathematics, National Taiwan University, Taipei, Taiwan}
	\email{chinyuhsiao@ntu.edu.tw or chinyu.hsiao@gmail.com}
	\thanks{Chin-Yu Hsiao was partially supported by National Science and Technology Council project 113-2115-M-002-011-MY3.}
	
	\author[]{Rung-Tzung Huang}
	\address{Department of Mathematics, National Central University, Zhongli District, Taoyuan City 320317, Taiwan}
	\email{rthuang@math.ncu.edu.tw}
	\thanks{Rung-Tzung Huang was supported by Taiwan Ministry of Science and Technology project 111-2115-M-008-003-MY2 and National Science and Technology Council project 113-2115-M-008-009-MY3.}

	\author[]{Guokuan Shao}
	\address{School of Mathematics (Zhuhai), Sun Yat-sen University, Zhuhai 519082, Guangdong, China}
	\email{shaogk@mail.sysu.edu.cn}
	\thanks{Guokuan Shao was supported by National Key R\& D Program of China No.2024YFA1015200, National Natural Science Foundation of China No. 12471082 and Guangdong Basic and Applied Basic Research Foundation No. 2024A1515011223.}
	
	\keywords{analytic torsion, heat kernel, Szeg\H{o} kernel, Fourier integral operator, non-degenerate CR manifold} 
	\subjclass[2020]{58J52, 32V20, 32A25}

\begin{abstract}
The existence of small-time asymptotics for the heat kernel of the Kohn Laplacian on a general CR manifold has remained an open problem. In this paper, we resolve the problem in the non-degenerate case. More precisely, let $X$ be a compact oriented CR manifold of dimension $2n+1$, $n \ge 1$, with a nondegenerate Levi form of constant signature $(n_-, n_+)$. Suppose that condition $Y(q)$ holds at each point of $X$, we establish the small-time asymptotics of the heat kernel of Kohn Laplacian. Suppose that condition $Y(q)$ fails, we establish the small-time asymptotics of the kernel of the difference of the heat operator and Szeg\H{o} projector. As an application we define the analytic torsion on compact oriented nondegenerate CR manifolds and study its dependence on changes of the metrics. Let $L^k$ be the $k$-th power of a CR complex line bundle $L$ over $X$. We establish the asymptotics, as $k \to \infty$, of the analytic torsion with values in $L^k$, under a variant of spectral gap condition. Furthermore, when $X$ admits a transversal CR $S^1$-action, we establish the small-time asymptotics of the $S^1$-equivariant heat kernel of the Kohn Laplacian with values in $L^k$. As an application we define the $S^1$-equivariant Quillen metric with values in $L^k$ and study its dependence on changes of the metrics. Finally, we establish the asymptotics, as $k \to \infty$, of the $S^1$-equivariant analytic torsion with values in $L^k$.		
\end{abstract}

	\maketitle
    \tableofcontents
	

	\section{Introduction}\label{s-gue190320}
	
	Let $(X, T^{1,0}X)$ be a compact oriented CR manifold of dimension $2n+1$, $n \ge 1$, and let $\ddbar_b: \Omega^{0,q}(X) \to \Omega^{0,q+1}(X)$ be the tangential Cauchy-Riemann operator. Let $\Box_b^{(q)} = \ddbar_b^{\ast}\ddbar_b + \ddbar_b \ddbar_b^{\ast} : \Omega^{0,q}(X) \to \Omega^{0,q}(X)$ be the Kohn Laplacian. The study of small $t$ behavior of the heat operator $e^{-t\Box^{(q)}_b}$ is a classical subject in several complex variables, harmonic and microlocal analysis. Suppose that the Levi form satisfies condition $Y(q)$ at each point of $X$, by developing a Heisenberg pseudodifferential calculus, Beals, Greiner and Stanton \cite[Theorem 7.30]{BGS84} obtained the asymptotic expansion of the heat kernel $e^{-t\Box_b^{(q)}}(x, x)$ as $t \to 0^+$. In particular, when $X$ is strongly pseudoconvex and the metric is a Levi metric, for $0 < q < n$, Beals, Greiner and Stanton showed that the coefficients are integrals of polynomials in the covariant derivatives of the curvature and torsion of the Webster-Stanton connection \cite{St92}, \cite{Web78}. In the case that $X$ is strongly pseudoconvex with a Levi metric, Stanton and Tartakoff \cite{ST85} obtained an exact formula for the kernel of $e^{-t\Box_b^{(q)}}$, $0< q < n$, using successive approximations to solve an integral equation and applied this to give a new proof of the asymptotic expansion in this case. M. Taylor \cite{Ta84}, using a different pseudodifferential calculus, obtained a somewhat less precise expansion in the case of a nondegenerate Levi form. 

Without any assumption on the Levi curvature, the Kohn Laplacian $\Box_{b}^{(q)}$ is, in general, not hypoelliptic, which makes the study of heat kernel asymptotics highly challenging. Consequently, the existence of the small-time asymptotics of $e^{-t\Box_{b}^{(q)}}$ in a general setting has remained an open problem for decades, which has important applications in CR local index problems \cite{CHT} and CR analytic torsion. In particular, for strongly pseudoconvex CR manifolds, this obstruction is manifest on functions $(q=0)$ or at top-degree $(q=n)$, where the condition $Y(q)$ completely fails and the operator $\Box_{b}^{(q)}$ loses its hypoellipticity. Because classical tools heavily depend on the hypoellipticity of the underlying operator, tracking the spectral invariants and heat kernel asymptotics on functions $(q=0)$ has long been recognized as a highly intricate problem in several complex variables and microlocal analysis. Even in highly symmetric settings such as sphere quotients, obtaining explicit spectral asymptotics on functions $(q=0)$ remains a delicate task that requires intricate combinatorial analysis, as recently highlighted by Fan et al. in their study of lens spaces \cite{FK}. In a general setting where such geometric symmetry is absent, the small-time asymptotic behavior of $e^{-t\Box_{b}^{(0)}}$ has remained largely out of reach. In this paper, we resolve the problem in the non-degenerate CR manifold case.  

A principal analytical tool for treating such problems is the celebrated framework developed by Melin and Sj\"ostrand~\cite{MS78} for Fourier integral operators with complex-valued phase functions. This framework, however, is traditionally adapted to hypoelliptic operators where the heat kernel exhibits smoothness for $t>0$. When the condition $Y(q)$ fails, the heat operator $e^{-t\Box_{b}^{(q)}}$ is no longer smoothing, and its analysis appears to go beyond the standard scope of the Melin-Sj\"ostrand category. To study the small-time behavior of the heat kernel under these non-hypoelliptic constraints, the approach we introduce in this paper attempts to extend the traditional boundaries of the theory. Our analysis centers on two main aspects: 

(1) Asymptotic Expansion in the Distribution Sense: We observe that despite the loss of hypoellipticity, a suitable formulation of the small-time asymptotics for the heat kernel can still be established rigorously when interpreted in the sense of distributions, rather than the classical pointwise smooth expansion. This allows us to describe $e^{-t\Box_{b}^{(q)}}$ as a distributional Fourier integral operator. 

(2) Smooth Asymptotic Expansion on the Orthogonal Complement of the Kernel: While the full heat kernel lacks smoothness, we show that upon projecting onto the orthogonal complement of the kernel of $\Box_{b}^{(q)}$-effectively removing the singularities tied to the Szeg\H{o} projection $\Pi^{(q)}$-the modified heat operator $(e^{-t\Box_{b}^{(q)}}(I-\Pi^{(q)}))(x,y)$ becomes smooth for $t>0$. For this modified operator, we prove the existence of a smooth small-time asymptotic expansion on the diagonal under a closed range assumption.

With our results on the small-time asymptotic expansion of the heat kernel $(e^{-t\Box_{b}^{(q)}}(I-\Pi^{(q)}))(x,x)$, for all degree $q$, we are able to define the analytic torsion on a compact oriented CR manifold with a nondegenerate Levi form and a CR vector bundle $E$ over $X$. We study the dependence of the analytic torsion on changes of the Hermitian metrics on $TX$ and $E$.

Let us illustrate our results more precisely. Assume that the Levi form is non-degenerate of constant signature $(n_-,n_+)$ on $X$. If $q \not\in \{ n_-, n_+ \}$, i.e. $Y(q)$ condition holds, then we obtain the asymptotic expansion of the heat kernel $e^{-t\Box_b^{(q)}}(x,y)$ as $t \to 0^+$ and recover the result of Beals, Greiner and Stanton \cite{BGS84}. Assume that $q \in \{ n_-, n_+ \}$, i.e. $Y(q)$ condition fails, then we show that $e^{-t\Box_b^{(q)}}$ is a Fourier integral operator and obtain the asymptotic expansion of the heat kernel of $e^{-t\Box_b^{(q)}}(x, y)$ as $t \to 0^+$ in the sense of distribution. Let $\Pi^{(q)} : L^2_{(0,q)}(X) \to \operatorname{Ker} \Box_b^{(q)}$ be the Szeg\H{o} projection. Assume that $q \in \{ n_-, n_+ \}$, i.e. $Y(q)$ condition fails and suppose that (1) $n_- = n_+$ or (2) $|n_- - n_+| > 1$ or (3) $|n_- - n_+| =1$ and $\Box_b^{(n_-)}$, $\Box_b^{(n_+)}$ have closed range, then we show that the heat kernel $\big( e^{-t\Box_b^{(q)}} ( I - \Pi^{(q)} ) \big)(x, y)$ is smooth and obtain the asymptotic expansion of the heat kernel $\big( e^{-t\Box_b^{(q)}} ( I - \Pi^{(q)} ) \big)(x, x)$ as $t \to 0^+$. The CR heat kernel in non-degenerate case is of importance as it can have potential applications in problems about heat kernel, quantization in contact geometry.
	
In \cite{RS71}, Ray and Singer introduced an analytic counterpart of the classical topological invariant called the Reidemeister torsion. They conjectured that the two invariants coincide for unitarily flat vector bundles. This conjecture was proved by Cheeger \cite{C79} and M\"uller \cite{M78}. Bismut and Zhang \cite{BZ92} extended it to arbitrary flat vector bundles via Witten deformation method. 
    In \cite{RS73}, Ray and Singer extended their work to the holomorphic setting.    
    The work of Ray and Singer has had a profound impact on many areas of mathematics and physics, making it difficult to provide a comprehensive survey of subsequent developments. We therefore refer the reader to recent excellent expositions \cite{BCG22, Lo25}.
    
    A version of analytic torsion on contact manifolds was proposed by Rumin and Seshadri \cite{RS12} by using certain fourth-order Laplacians. 
    It is natural to ask whether the Ray–Singer torsion can be extended to the CR setting; this question was raised to the first and second authors by Bismut during a coffee break at a conference in M\"{u}nster in 2024. With our results on the small-time asymptotic expansion of the heat kernel $\big( e^{-t\Box_b^{(q)}} ( I - \Pi^{(q)} ) \big)(x, x)$, for all degree $q$, we are able to define the analytic torsion on a compact oriented CR manifold with a nondegenerate Levi form and a CR vector bundle $E$ over $X$. We study also the dependence of the analytic torsion on changes of the Hermitian metrics on $TX$ and $E$.

	In \cite{BV89}, Bismut and Vasserot established the asymptotic formula of the holomorphic analytic torsion associated with $p$-th powers of a positive line bundle as $p \to \infty$, by using the heat kernel method of \cite{B} (see also \cite[Subsection 5.5]{MM}). The holomorphic analytic torsion and its asymptotic formula  have profound applications in the theory of determinant bundles of direct images and their Quillen metrics, and Arakelov geometry \cite{BGS1, BGS2, BGS3, S}. In \cite{F18}, Finski showed that the asymptotic expansion contains only the terms of the form $p^{n-1}\log p$ and $p^{n-i}$ for $i \in \mathbb{N}$. The first two leading terms were previously established by Bismut and Vasserot. Finski also established asymptotics for the Ray-Singer torsion on complex orbifolds. In \cite{P23}, Puchol extended the results of Bismut and Vasserot on the asymptotics of holomorphic torsion to the fibration case. More recently, Shen and Yu \cite{SY25} proved geometric Zabrodin-Wiegmann conjecture for an integer Quantum Hall state using asymptotics of analytic torsions \cite{BV89, F18}.
    
    In \cite{HH19}, the first and second authors introduced the Fourier components of the Ray-Singer analytic torsion on a compact connected strongly pseudoconvex CR manifold of dimension $2n+1, n \ge 1$, with a transversal CR $S^1$-action on $X$ with respect to the $S^1$-action and established an asymptotic formula for the Fourier components of the analytic torsion with respect to the $S^1$-action. This generalizes the asymptotic formula of Bismut and Vasserot on the holomorphic Ray-Singer torsion associated with high powers of a positive line bundle to strongly pseudoconvex CR manifolds with a transversal CR $S^1$-action. The results in~\cite{HH19} can be deduced from the results in~\cite{F18}. Inspired by \cite{HH19}, the second and third authors \cite{HS} studied asymptotics of the analytic torsion on CR covering manifolds with $S^1$-action. 
    
    Let $L^k$ be the $k$-th power of a CR complex line bundle $L$ over $X$. When condition $Y(q)$ holds at each point of $X$, in~\cite{HZ23}, the first author and W. Zhu established the asymptotics, as $k \to \infty$, of the heat kernel of the Kohn Laplacian with values in $L^k$. As an application, the first author and W. Zhu gave a heat kernel proof of Morse inequalities on compact CR manifolds. When condition $Y(q)$ fails, assume that the condition $n_-=n_+$ or $\abs{n_--n_+}>1$ hold, we establish the asymptotics, as $k \to \infty$, of the kernel of the difference of the heat operator of the Kohn Laplacian and Szeg\H{o} projector with values in $L^k$. By using the asymptotics of the heat kernel of the Kohn Laplacian of Hsiao-Zhu and the asymptotics of the difference of the heat kernel of the Kohn Laplacian and the Szeg\H{o} kernel of ours, we also establish Bismut-Vasserot type asymptotics (\cite{BV89}) of the analytic torsion with values in $L^k$ under the Assumption \ref{a-gue250426yyd1}. 

Quillen \cite{Q85} introduced the Quillen metric on the determinant line
of the cohomology of a family of Riemann surfaces, which is constructed using the Ray-Singer holomorphic
torsion of the fiber. Its key properties were developed in the works of Bismut-Gillet-Soul\'{e} \cite{BGS1,
BGS2, BGS3} and Bismut-Lebeau \cite{BL91}. Since then, the Quillen metric has found numerous applications in both mathematics and physics, including the quantum Hall effect; see, for example, \cite{SY25} and the references therein.

When $X$ admits a transversal CR $S^1$-action, we also establish the small time-asymptotics of the $S^1$-equivariant heat kernel of Kohn Laplacian with values in $L^k$. As an application, we define the $S^1$-equivariant analytic torsion and the $S^1$-equivariant Quillen metrics on compact CR manifolds with transversal CR $S^1$-action. We study also the dependence of the $S^1$-equivariant Quillen metrics on changes of the Hermitian metrics on $TX$ and $L^k$. By using the $\mathbb{R}$-equivariant heat kernel asymptotics of the Kohn Laplacian of Hsiao-Zhu \cite{HZ23}, we establish the Bismut-Vasserot type asymptotics \cite{BV89} of the $S^1$-equivariant analytic torsion with values in $L^k$.


We now formulate our main results. For the standard notations and our terminology used in Subsection~\ref{s-gue250921yyd} below, we refer the reader to Section~\ref{s:prelim}. 
	
	\subsection{Main results}\label{s-gue250921yyd}

	Let $(X, T^{1,0}X)$ be a compact connected orientable CR manifold of dimension $2n+1$, $n\geq1$, where $T^{1,0}X$ denotes the CR structure of $X$. 
	Fix a global non-vanishing real $1$-form $\omega_0\in\mathcal{C}^\infty(X,T^*X)$ such that $\langle\,\omega_0\,,\,u\,\rangle=0$, for every $u\in T^{1,0}X\oplus T^{0,1}X$. The Levi form of $X$ at $x\in X$ is the Hermitian quadratic form on $T^{1,0}_xX$ given by $\mathcal{L}_x(U,\ol V)=-\frac{1}{2i}\langle\,d\omega_0(x)\,,\,U\wedge\ol V\,\rangle$, $U, V\in T^{1,0}_xX$. In this work, we assume that 
	
	\begin{ass}\label{a-gue170123}
		The Levi form is non-degenerate of constant signature $(n_-,n_+)$ on $X$. That is, the Levi form has exactly $n_-$ negative and $n_+$ positive eigenvalues at each point of $X$, where $n_-+n_+=n$. 
	\end{ass}
    
    Let 
    \[\Box_b: {\rm Dom\,}\Box_b\subset L^2_{(0,\bullet)}(X)\To L^2_{(0,\bullet)}(X)\]
    be the Kohn Laplacian (See Subsection \ref{s-gue250331I} for the definition of the Kohn Laplacian $\Box_b$). Let 
    \[e^{-t\Box_b}: L^2_{(0,\bullet)}(X)\To{\rm Dom\,}\Box_b\]
    be the heat operator of $\Box_b$ 
    and let $e^{-t\Box_b}(x,y)\in\mathcal{D}'(\mathbb R_+\times X\times X,T^{*0,\bullet}X\boxtimes(T^{*0,\bullet}X)^*)$
    be the distribution kernel of $e^{-t\Box_b}$. Let $q\in\set{0,1,\ldots,n}$. Write $\Box^{(q)}_b:=\Box_b|_{{\rm Dom\,}\Box_b\cap L^2_{(0,q)}(X)}$,  $e^{-t\Box^{(q)}_b}:=e^{-t\Box_b}|_{L^2_{(0,q)}(X)}$ and let $e^{-t\Box^{(q)}_b}(x,y)\in\mathcal{D}'(\mathbb R_+\times X\times X,T^{*0,q}X\boxtimes(T^{*0,q}X)^*)$
    be the distribution kernel of $e^{-t\Box^{(q)}_b}$.
    Our first main theorem is the following 

        \begin{theorem}[cf. Theorem \ref{T:011120251117}]\label{T:011120251117-1}
		Suppose that Assumption~\ref{a-gue170123} holds. Let $q \in \{0, 1, \cdots, n\}$ ($q$ can be $n_-$ or $n_+$), we can find $a_j(x, y) \in\mathcal{C}^\infty(X \times X, T^{\ast0, q}X \boxtimes (T^{\ast 0 ,q}X)^\ast)$, $j=1, 2, \cdots$, such that for all $m ,N \in \mathbb{N}$, there is a constant $C_{m, N} > 0$ such that for all $t>0$, $t \ll 1$,
		\[
		\Big\| e^{-t\Box_b^{(q)}}(x, y) - \widehat{A}(t, x, y) - \sum_{j=1}^N a_j(x, y)t^j \Big\|_{\mathcal{C}^m(X \times X)} \le C_{m, N}t^{N+1}, 
		\]
		where $\widehat{A}(t, x, y) \in \mathcal{D}'(\mathbb{R}_+ \times X \times X, T^{\ast0, q}X \boxtimes (T^{\ast 0 ,q}X)^\ast)$ and for every local coordinate patch $D$ with local coordinate $x=(x_1, \cdots, x_{2n+1})$, we have
		\[
        \begin{split}
			& \widehat{A}(t, x, y) = \int_0^\infty e^{i \frac{1}{t}\Phi_-(x, y, s)}b_-(t, x, y, s) ds +  \int_0^\infty e^{i \frac{1}{t} \Phi_+(x, y, s)}b_+(t, x, y, s) ds,\\
            &b_{\pm}(t, x, y, s) \sim \sum_{j=0}^\infty t^{-n-1+j}b^j_{\pm}( x, y, s) \\
            &\qquad \text{in} \quad  S^{n,-n-1}_\varepsilon(\mathbb{R}_+ \times D \times D \times \overline{\mathbb{R}}_+, T^{\ast 0, q}X \boxtimes (T^{\ast 0, q}X)^\ast),\\
            &b^j_{\pm}(x, y, s)\in S^{n-j}_{1,0}(D \times D \times\mathbb R_+, T^{\ast 0, q}X \boxtimes (T^{\ast 0, q}X)^\ast),\ \ j=0,1,\ldots,\\
             &b^0_{\pm}(x,y,s)\neq0,\\
              &\mbox{$b_-(t,x,y,s)\in S^{n,-n-1}_{\varepsilon,{\rm cl\,}}(\mathbb R_+\times D\times D\times\mathbb R_+,T^{*0,q}X\boxtimes(T^{*0,q}X)^*)$ }\\
              &\qquad\mbox{with $\varepsilon>0$ if $q\neq n_-$, $\varepsilon=0$ if $q=n_-$},\\
              &\mbox{$b_+(t,x,y,s)\in S^{n,-n-1}_{\varepsilon,{\rm cl\,}}(\mathbb R_+\times D\times D\times\mathbb R_+,T^{*0,q}X\boxtimes(T^{*0,q}X)^*)$}\\
              &\qquad\mbox{ with $\varepsilon>0$ if $q\neq n_+$, $\varepsilon=0$ if $q=n_+$},
             \end{split}
             \]
             \[
             \begin{split}
             & \operatorname{Im}\Phi_\pm (x, y, s) \ge 0, \\
			& \Phi_\pm(x, x, s) = 0,\ \ \mbox{for all $x\in D$}, \\
            &(d_x\Phi_\pm)(x, x, s) = \pm \omega_0(x)s, \quad \text{for all $(x, s) \in D \times \mathbb{R}_+$},
            \end{split}
            \]
            and for every compact set $K \subset D$, there is a constant $C_K > 0$ such that for all $(x, y) \in K \times K$,
		\[
			\operatorname{Im} \Phi_\pm(x, y, s) \ge C_K \frac{s^2}{1+s} |x'-y'|^2, 
		\]
        $x'=(x_1,\ldots,x_{2n})$, $y'=(y_1,\ldots,y_{2n})$, $S^{n,-n-1}_\varepsilon$ and $S^{n,-n-1}_{\varepsilon,{\rm cl\,}}$ are defined in Definition~\ref{d-gue250403yyd} and $S^{n-j}_{1,0}$ is the H\"{o}rmander symbol space of order $n-j$ and of type $(1, 0)$.
	\end{theorem}

\begin{remark}
If $q \not\in \{n_-, n_+\}$, we deduce that $\widehat{A}(t, x, y)$ is smooth for $t > 0$ and admits a small-time asymptotic expansion as $t \to 0^+$. If $q \in \{n_-, n_+\}$, $\widehat{A}(t, x, y)$ is defined as an oscillatory integral and admits a small-time asymptotic expansion in the sense of distribution.
\end{remark}

From Theorem~\ref{T:011120251117-1}, we get 

\begin{corollary}[cf. Corollary~\ref{C:011120251206}]\label{C:011120251206-1}
		Assume that $q \not\in \{ n_-, n_+ \}$. We can find $a_j(x)\in\mathcal{C}^\infty(X,T^{*0,q}X\boxtimes(T^{*0,q}X)^*)$, $j=0,1,\ldots$, $a_0(x)\neq0$, such that
		\[
		\mbox{$e^{-t\Box_b^{(q)}}(x, x) \sim\sum^{+\infty}_{j=0} t^{-n-1+j}a_j(x)$ in $\tilde S^{-n-1}(\mathbb{R}_+ \times X, T^{\ast 0, q}X \boxtimes (T^{\ast 0, q}X)^\ast)$},
		\]
        where $\tilde S^{-n-1}$ is defined in Definition \ref{d-gue250404yydb}.
	\end{corollary}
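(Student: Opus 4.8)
The plan is to restrict Theorem~\ref{T:011120251117-1} to the diagonal $y=x$ and to analyse $\widehat A(t,x,y)$ there, using that the hypothesis $q\notin\{n_-,n_+\}$ forces both amplitudes $b_\pm$ in \eqref{e-gue250404yydaz} to lie in the classical symbol space $S^{n,-n-1}_{\varepsilon,{\rm cl\,}}$ with $\varepsilon>0$. Work in a local coordinate patch $D$. By \eqref{e-gue250921ycdu} we have $\Phi_\pm(x,x,s)=0$ for all $(x,s)\in D\times\mathbb R_+$, so on the diagonal the oscillatory factors in \eqref{e-gue250404yydaz} are identically $1$ and
\[
\widehat A(t,x,x)=\int_0^\infty b_-(t,x,x,s)\,ds+\int_0^\infty b_+(t,x,x,s)\,ds .
\]
Note that the lower bound \eqref{E:011020251802zz} on $\operatorname{Im}\Phi_\pm$ is vacuous on the diagonal and plays no role here, in contrast with the off-diagonal smoothness argument of Theorem~\ref{T:011120251117-1}.

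The decisive input is that, for $\varepsilon>0$, membership of $b_\pm$ in $S^{n,-n-1}_{\varepsilon,{\rm cl\,}}$ (Definition~\ref{d-gue250403yyd}) encodes decay of $b_\pm$ and of all its $t$- and $x$-derivatives as $s\to+\infty$, uniformly for $t$ in a neighbourhood of $0$. Hence each integral above converges absolutely, defines a smooth function of $(t,x)\in\mathbb R_+\times D$, and one may differentiate under the integral sign. Since $b_\pm$ is moreover classical of order $-n-1$ in $t$, with $b_\pm(t,x,x,s)\sim\sum_{j\ge0}t^{-(n+1)+j}b^j_\pm(x,x,s)$ and $b^j_\pm\in S^{n-j}_{1,0}$, integrating term by term in $s$ and estimating the remainder (writing $b_\pm=\sum_{j\le N}t^{-(n+1)+j}b^j_\pm+r_{\pm,N}$ with $r_{\pm,N}$ of order $N-n$ in $t$ and still with $s$-decay, then integrating and differentiating under the integral) yields
\[
\widehat A(t,x,x)\sim\sum_{j\ge0}t^{-(n+1)+j}\,\widehat a_j(x),\qquad \widehat a_j(x):=\int_0^\infty\big(b^j_-(x,x,s)+b^j_+(x,x,s)\big)\,ds ,
\]
in $\tilde S^{-n-1}(\mathbb{R}_+ \times X, T^{\ast 0, q}X \boxtimes (T^{\ast 0, q}X)^\ast)$; here one uses that $e^{-t\Box_b^{(q)}}(x,x)$ and the polynomial and remainder parts of Theorem~\ref{T:011120251117-1} are globally defined smooth sections, so that $\widehat A(t,x,x)$ is too and the $\widehat a_j$ patch to global sections. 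No $\log t$ terms arise, since integrating $s$-symbols against the decay afforded by $\varepsilon>0$ produces only pure powers of $t$.

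Now restrict the estimate of Theorem~\ref{T:011120251117-1} to $y=x$: for all $m,N$ there is $C_{m,N}>0$ with
\[
\Big\|e^{-t\Box_b^{(q)}}(x,x)-\widehat A(t,x,x)-\sum_{j=1}^N a_j(x,x)t^j\Big\|_{\mathcal C^m(X)}\le C_{m,N}t^{N+1},\qquad 0<t\ll1 .
\]
Feeding in the expansion of $\widehat A(t,x,x)$ from the previous step and relabelling — the term $a_j(x,x)t^j$, $j\ge1$, is absorbed into the coefficient of $t^{-(n+1)+(n+1+j)}$ — gives $e^{-t\Box_b^{(q)}}(x,x)\sim\sum_{j\ge0}t^{-(n+1)+j}a_j(x)$ in $\tilde S^{-n-1}$, with $a_j=\widehat a_j$ for $0\le j\le n+1$ and $a_j(x)=\widehat a_j(x)+a_{j-n-1}(x,x)$ for $j\ge n+2$ (the $\partial_t$-improvement required by Definition~\ref{d-gue250404yydb}, if part of that definition, being inherited from differentiating the convergent $s$-integral and from the corresponding property of the full heat-kernel expansion). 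Finally $a_0(x)\neq0$: by Theorem~\ref{T:011120251117-1} the leading symbols $b^0_\pm(x,y,s)$ do not vanish, and since $e^{-t\Box_b^{(q)}}$ is positive the endomorphism $\widehat a_0(x)=\int_0^\infty\big(b^0_-(x,x,s)+b^0_+(x,x,s)\big)\,ds$ is positive semidefinite; combined with the explicit (Mehler-type) description of $b^0_\pm$ on the diagonal, coming from the heat kernel of the associated Levi harmonic oscillator in degree $q$, this gives $a_0(x)\neq0$ for every $x$.

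The main obstacle is the second paragraph: one must genuinely exploit the structure of $S^{n,-n-1}_{\varepsilon,{\rm cl\,}}$ for $\varepsilon>0$ to see that $b_\pm$, together with its $t$- and $x$-derivatives, decays in $s$ fast enough and uniformly enough as $t\to0^+$ to justify simultaneously the absolute convergence of $\int_0^\infty b_\pm(t,x,x,s)\,ds$, the interchanges of differentiation and integration, and the passage from the classical $t$-expansion of $b_\pm$ to the $\tilde S^{-n-1}$-expansion of $\widehat A$ on the diagonal. Everything else is essentially bookkeeping: the reindexing between the two sources of terms, the passage from coordinate patches to global sections, and the non-cancellation giving $a_0\not\equiv0$.
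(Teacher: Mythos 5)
Your proposal is correct and follows essentially the same route as the paper: the corollary is obtained from Theorem~\ref{T:011120251117} by restricting to the diagonal, where $\Phi_\pm(x,x,s)=0$ kills the oscillatory factor, and using that $q\notin\{n_-,n_+\}$ puts both $b_\pm$ in $S^{n,-n-1}_{\varepsilon,{\rm cl\,}}$ with $\varepsilon>0$, so the $s$-integrals converge and the classical $t$-expansion of $b_\pm$ integrates term by term into the $\tilde S^{-n-1}$ expansion, after which the Taylor terms $a_j(x,x)t^j$ and the $O(t^{N+1})$ remainder are absorbed by reindexing. Your flagged technical point (exponential $s$-decay of the diagonal symbols $b^j_\pm(x,x,s)$, inherited from \eqref{e-gue250404yydu} with $a^j_\pm(x,y)=0$ since $q\neq n_\mp$) is exactly the justification implicit in the paper's one-line deduction.
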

	
	Note that, by developing a Heisenberg pseudodifferential calculus, Beals, Greiner and Stanton \cite[Theorem 7.30]{BGS84} obtained the same result of Corollary~\ref{C:011120251206-1}. 
    
For every $q=0,1,\ldots,n$,	let $\Pi^{(q)} : L^2_{(0,q)}(X) \to \operatorname{Ker} \Box_b^{(q)}$ be the orthogonal projection with respect to the $L^2$ inner product $(\,\cdot\,|\,\cdot\,)$ induced by the given Hermitian metric $\langle\,\cdot\,|\,\cdot\,\rangle$ on $\mathbb CTX$ (Szeg\H{o} projection). We have

	\begin{theorem}[cf. Theorem \ref{T:012820251552}]\label{T:0128202515521}
		Assume that $q \in \{ n_-, n_+ \}$. Suppose that $n_-=n_+$ or $\abs{n_--n_+}>1$. Then 
		\[
            \big( e^{-t\Box_b^{(q)}} ( I - \Pi^{(q)} ) \big)(x, y) \in \mathcal{C}^\infty(\mathbb{R}_+ \times X \times X, T^{\ast0, q}X \boxtimes (T^{\ast 0 ,q}X)^\ast)
        \]    
		and we can find $b_j(x)\in\mathcal{C}^\infty(X,T^{*0,q}X\boxtimes(T^{*0,q}X)^*)$, $j=0,1,\ldots$, $b_0(x)\neq0$, such that
        \[
        \begin{split}
			\big( e^{-t\Box_b^{(q)}} ( I - \Pi^{(q)} ) \big)(x, x)& \sim \sum_{j=0}^\infty t^{-n-1+j} b_j(x)) \\
            &\text{in}\quad \tilde S^{-n-1}(\mathbb{R}_+ \times X, T^{\ast 0, q}X \boxtimes (T^{\ast 0, q}X)^\ast).
            \end{split}
		\]
	\end{theorem}
    
\begin{remark}\label{r-gue2507181}
Note that the condition $n_-=n_+$ or $\abs{n_--n_+}>1$ implies that  $\Box_b^{(q)}$ has closed range. When $E$ is a CR vector bundle, which is locally CR trivializable, we establish a vector bundle version (cf. Theorem \ref{t-gue250410ycdbz}).
\end{remark}

 Let $E$ be a CR vector bundle over $X$. Suppose that $E$ is locally CR trivializable. Let
    \[\Box_{b,E}: {\rm Dom\,}\Box_{b,E}\subset L^2_{(0,\bullet)}(X,E)\To L^2_{(0,\bullet)}(X,E)\]
    be the Kohn Laplacian. Let 
    \[e^{-t\Box_{b,E}}: L^2_{(0,\bullet)}(X,E)\To{\rm Dom\,}\Box_{b,E}\]
    be the heat operator of $\Box_{b,E}$ 
    and let $e^{-t\Box_{b,E}}(x,y)\in\mathcal{D}'(\mathbb R_+\times X\times X,(E\otimes T^{*0,\bullet}X)\boxtimes(E\otimes T^{*0,\bullet}X)^*)$
    be the distribution kernel of $e^{-t\Box_{b,E}}$. Let $q\in\set{0,1,\ldots,n}$. We write 
    \begin{equation*}
    \Box^{(q)}_{b,E}:=\Box_{b,E}|_{{\rm Dom\,}\Box_{b,E}\cap L^2_{(0,q)}(X,E)},\quad  e^{-t\Box^{(q)}_{b,E}}:=e^{-t\Box_{b,E}}|_{L^2_{(0,q)}(X,E)} 
    \end{equation*}
    and let $e^{-t\Box^{(q)}_{b,E}}(x,y)\in\mathcal{D}'(\mathbb R_+\times X\times X,(E\otimes T^{*0,q}X)\boxtimes(E\otimes T^{*0,q}X)^*)$
    be the distribution kernel of $e^{-t\Box^{(q)}_{b,E}}$.

Let $N$ be the number operator on $T^{*0,\bullet}X$, i.e. $N$ acts on $T^{*0,q}X$ by multiplication by $q$. Denote by $\Pi^\perp_E : L^2(X, T^{*0,\bullet}X \otimes E) \to ({\rm Ker\,}\Box_{b,E})^\perp$ the orthogonal projection onto the complement of ${\rm Ker\,}\Box_{b,E}$. A key application of Theorem \ref{T:0128202515521} is that $\operatorname{STr}  \lbrack N e^{-t \Box_{b,E}} \Pi^\perp_E \rbrack$ admits small time asymptotic expansion when $\Box_{b,E}$ has $L^2$ closed range. Then we can define for $\operatorname{Re}(z)>n$,
\[
        \theta_{b,E}(z)  = - M \left\lbrack \operatorname{STr}  \lbrack N e^{-t \Box_{b,E}}\Pi^\perp_E  \rbrack \right\rbrack  =   - \operatorname{STr} \left\lbrack N ({\Box}_{b,E})^{-z}\Pi^\perp_E \right\rbrack,  
\]
where $M$ denotes the Mellin transform. Hence we can define analytic torsion on non-degenerate CR manifolds, which gives an affirmative answer to Bismut's question.

\begin{definition}[cf. Definition \ref{d-gue160502w}]\label{d-gue160502w1}
    Assume that $\Box_{b,E}$ has $L^2$ closed range. 
		We define $\exp ( -\frac{1}{2} \theta_{b,E}'(0) )$ to be the $\ddbar_{b}$-torsion for the CR vector bundle $E$ over the CR manifold $X$. 
	\end{definition}

Let $\langle \, \cdot \, |\, \cdot \, \rangle_{s}$ and $\langle \, \cdot \, |\, \cdot \, \rangle_{E, s}, s \in [0, 1]$ be smooth families of rigid Hermitian metrics on $TX$ and $E$, respectively. Let $(\, \cdot \, |\, \cdot \, )_{E,s}$ be the $L^2$ inner products on $\Omega^{p,q}(X,E)$ induced by $\langle \, \cdot \, | \, \cdot \, \rangle_s$ and $\langle \, \cdot \, | \, \cdot \, \rangle_{E,s}$. Denote by $\ast_{b,s}$ the tangential Hodge $\ast$-operators associated to the metrics $\langle \, \cdot \, |\, \cdot \, \rangle_{s}$ (cf. \eqref{e-gue1530a05092026} and \eqref{e-gue1530b05092026}) and $\mu_s$ the induced conjugate linear bundle isomorphisms of $E$ and $E^*$ associated to the metric $\langle \, \cdot \, |\, \cdot \, \rangle_{E, s}$. Let 
$$
\Box_{b,E,s}:= \overline{\partial}_{b}\overline{\partial}^{\ast}_{b,s} + \overline{\partial}^{\ast}_{b,s}\overline{\partial}_b:\Omega^{0,\bullet}(X,E) \to \Omega^{0,\bullet}(X,E),
$$ 
where $\overline{\partial}^{\ast}_{b,s}$ denotes the formal adjoint of $\overline{\partial}_b$ with respect to the $L^2$ scalar product $( \, \cdot \, | \, \cdot \, )_{E,s}$. Let $\theta_{b,E,s}(z)$ be the corresponding function as defined in \eqref{E:5.5.12} and let $\Pi_{E, s}$ be the orthogonal projection operator from $\Omega^{0,\bullet}(X,E)$ on $\operatorname{Ker}\left( D_{b,s}|_{\Omega^{0,\bullet}(X,E)} \right)$ for the Hermitian product $(\,\cdot\, | \,\cdot\,)_{E,s}$ on $\Omega^{0,\bullet}(X,E)$. Set 
\[
Q_{b, s} = - \left(\ast_{b,s} \otimes \mu_s \right)^{-1} \frac{\partial \left(\ast_{b, s} \otimes \mu_{k,s} \right)}{\partial s} = - \left(\ast^{-1}_{b, s} \frac{\partial \ast_{b, s}}{\partial s} + (\mu_{k,s})^{-1} \frac{\partial \mu_{k,s}}{\partial s} \right).
\]
We have  
\begin{theorem}[cf. Theorem~\ref{t303262026}]\label{tvari}
 As $t \to 0^+$, for any $\ell \in \mathbb{N}$, there is an asymptotic expansion
\[
\operatorname{STr} \left\lbrack Q_{b,s} \exp \left( -t\Box_{b,E, s} \right)(I-\Pi_{E,s}) \right\rbrack = \sum_{j=0}^{n+1+\ell} M_{j, s} t^{-n-1+j} +O(t^{\ell+1}), 
\]
where
\[
M_{n+1, s} = \frac{\partial}{\partial s}  \left( \frac{\partial}{\partial z}\theta_{b,E,s}  \right)(0).
\]
\end{theorem}
Note that the corresponding result for the Quillen metric on the determinant of the cohomology of a holomorphic Hermitian vector bundle over a complex manifold were obtained in \cite[Theorem 1.18]{BGS1}, see also \cite[Theorem 5.5.6]{MM}.

We now consider CR line bundle cases. We refer the reader to Section~\ref{s-gue250704} for the details. 
Let $L$ be a CR complex line bundle over $X$. The asymptotic behavior of $e^{-\frac{t}{k}\Box^{(q)}_{b,L^k}}$ is important with applications in Morse inequalities. The case when condition $Y(q)$ holds was obtained by Hsiao and Zhu \cite{HZ23}. The case when condition $Y(q)$ fails is more difficult since $e^{-\frac{t}{k}\Box^{(q)}_{b,L^k}}$ is not smoothing in general. We solve the problem
as shown in our second main theorem. 
We refer the readers to Section \ref{s-gue250411yydI} for more notations.

Let $(L, h^L)$ be a CR complex line bundle over $X$, where $h^L$ denotes the Hermitian metric of $L$. Let ${\rm Spec\,}\Box_{b,L^k}$ be the set of all spectrum of $\Box_{b,L^k}$. Since the Levi form is non-degenerate, it was shown in~\cite{HM17} that  ${\rm Spec\,}\Box_{b,L^k}\cap(0,+\infty)$ consists of point eigenvalues of finite multiplicity. 

For $\lambda\in{\rm Spec\,}\Box_{b,L^k}$, let 
\[E_\lambda(X,L^k):=\set{u\in{\rm Dom\,}\Box_{b,L^k};\, \Box_{b,L^k}u=\lambda u}.\]
For $\mu\geq0$, put 
\[
\begin{split}
&E_{0<\lambda\leq\mu}(X,L^k):=\oplus_{\lambda\in{\rm Spec\,}\Box_{b,L^k},0<\lambda\leq\mu}E_\lambda(X,L^k),\\
&E_{\leq\mu}(X,L^k):=\oplus_{\lambda\in{\rm Spec\,}\Box_{b,L^k},0\leq\lambda\leq\mu}E_\lambda(X,L^k).
\end{split}
\]
For $q\in\set{0,1,\ldots,n}$, put 
\[
\begin{split}
&E^{(q)}_{0<\lambda\leq\mu}(X,L^k):=E_{0<\lambda\leq\mu}(X,L^k)\cap L^2_{(0,q)}(X,L^k),\\
&E^{(q)}_{\leq\mu}(X,L^k):=E_{\leq\mu}(X,L^k)\cap L^2_{(0,q)}(X,L^k). 
\end{split}
\]

Let 
\[
\Pi_{L^k,\mu}: L^2(X,L^k)\To E_{\leq\mu}(X,L^k)
\]
be the orthogonal projection and let $\Pi_{L^k,\mu}(x,y)$ be the distribution kernel of $\Pi_{L^k,\mu}$. When $\mu=0$, $\Pi_{L^k,\mu}=\Pi_{L^k}$, $\Pi_{L^k,\mu}(x,y)=\Pi_{L^k}(x,y)$. For every $q=0,1,\ldots$, put $\Pi^{(q)}_{L^k,\mu}:=\Pi_{L^k,\mu}|_{L^2_{(0,q)}(X,L^k)}$, $\Pi^{(q)}_{L^k}:=\Pi_{L^k}|_{L^2_{(0,q)}(X,L^k)}$. 

Let $\mathcal{R}^L$ be the CR curvature induced by $h^L$ (see Definition~\ref{d-gue250921ycdt}). Let \[
		\begin{split}
			\dot{\mathcal{R}}^\phi_x : T^{1,0}_xX \to T^{1,0}_xX, \\
			\dot{\mathcal{L}}_x : T^{1,0}_xX \to T^{1,0}_xX,
		\end{split}\]
be the linear transformations induced by the given Hermitian metric $\langle\,\cdot\,|\,\cdot\,\rangle$ with respect to $\mathcal{R}^L$ and the Levi form $\mathcal{L}_x$ respectively (see \eqref{e-gue250921ycdq}). For every $\eta \in \mathbb{R}$, let
	\[
	\det (\dot{\mathcal{R}}^\phi_x -2\eta \dot{\mathcal{L}}_x) = \mu_1(x) \cdots \mu_n(x),
	\]
	where $\mu_j(x)$, $j = 1, \cdots, n$, are the eigenvalues of $\dot{\mathcal{R}}^\phi_x -2\eta \dot{\mathcal{L}}_x$ with respect to $\langle \cdot \mid \cdot \rangle$. For $x\in X$, let $\set{U_{1}(x),\ldots,U_{n}(x)}$ be an orthonormal frame of $T^{1,0}_xX$ and let $\{\omega^j \}^n_{j=1} \subset T^{\ast1,0}X$ is the dual frame of $\set{U_{1},\ldots,U_{n}}$. For every $\eta\in\mathbb R$, put 
    \[
		\omega^\eta_x = \sum_{j, l =1}^n (\dot{\mathcal{R}}^\phi_x -2\eta \dot{\mathcal{L}}_x)(U_{l}, \overline{U}_{j}) \overline{\omega}^j \wedge (\overline{\omega}^l \wedge)^\star : T^{\ast0,q}_x X \to T^{\ast0,q}_xX.
	\]

    For every $\eta\in\mathbb R$, we set 
	\[
    \begin{split}
	\mathbb R_x(q):=\{\eta&\in\mathbb R;\, \dot{\mathcal R}^\phi_x-2\eta\dot{\mathcal L}_x\,\text{has exactly $q$ negative eigenvalues} 
	\\&\text{and $n-q$ positive eigenvalues}\},
	\end{split}
	\]
    and let $1_{\mathbb R_x(q)}(\eta)=1$ if $\eta\in\mathbb R_x(q)$, $1_{\mathbb R_x(q)}(\eta)=0$ if $\eta\notin\mathbb R_x(q)$.

Our second main theorem is the following
\begin{theorem}[cf. Theorem \ref{t-gue250529yyd}]\label{t-gue250529yyd1}
We assume that $n_-=n_+$ or $\abs{n_--n_+}>1$. With the notations used above, let $I\subset\mathbb R_+$ be a bounded open interval. Let $q\in\set{0,1,\ldots,n}$. Then, 
\[
\begin{split}
&\lim_{k\To+\infty}k^{-(n+1)}\operatorname{Tr}^{(q)}(e^{-\frac{t}{k}\Box^{(q)}_{b,L^k}}(I-\Pi^{(q)}_{L^k,\frac{k}{\log k}}))(x,x))\\
&=\frac{1}{(2\pi)^{n+1}}\int_{\mathbb R}\Bigr(\dfrac{\det(\dot{\mathcal R}^\phi_x-2\eta\dot{\mathcal L}_x)}{\det\big(1-e^{-t(\dot{\mathcal R}^\phi_x-2\eta\dot{\mathcal L}_x)}\big)}{\rm Tr\,}^{(q)}e^{-t\omega_x^\eta}-\abs{\det(\dot{\mathcal{R}}_x^\phi - 2 \eta \dot{\mathcal{L}}_x)}1_{\mathbb R_x(q)}(\eta)\Bigr)d\eta
\end{split}
\]
in $\mathcal{C}^0(I\times X)$ topology.
\end{theorem} 

We also establish Bismut-Vasserot type asymptotics of the analytic torsion when $E=L^k$ as $k\To+\infty$, where $L$ is a CR line bundle. The spectral gap condition is essential in Bismut and Vasserot's work \cite{BV89}, see also \cite{HH19}. But it is difficult to find examples that satisfy the spectral gap condition (${\rm Spec\,}\Box_{b,L^k}\subset\set{0}\cup[Ck,+\infty)$) in the non-degenerate case. We now take $\langle\,\cdot\,|\,\cdot\,\rangle$ to be Levi metric (see \eqref{e-gue250623yyd}). 
We need to relax the condition as follows.

\begin{ass}[cf. Assumption \ref{a-gue250426yyd}]\label{a-gue250426yyd1}
${\rm dim\,}E_{0<\lambda\leq\frac{k}{\log k}}(X,L^k)=o(k^{n+1})$, for $k\gg1$ and 
\[{\rm Spec\,}\Box_{b,L^k}\subset\set{0}\cup[C,+\infty),\]
for all $k\gg1$, where $C>0$ is a constant independent of $k$. 
\end{ass} 

We introduce some notations. 
For $x \in X$, $t>0$ and $\eta\in\mathbb R$, we set
	\[
		(\dot{\mathcal{R}}^\phi_x-2\eta \dot{\mathcal{L}_x})_t =\frac{1}{2\pi}\det \left( \frac{ \dot{\mathcal{R}}_x^\phi - 2 \eta \dot{\mathcal{L}}_x  }{2\pi} \right) \operatorname{Tr}[(\operatorname{Id} - \exp(t (\dot{\mathcal{R}}_x^\phi - 2 \eta \dot{\mathcal{L}}_x ) ))^{-1}].
	\]

    Fix $C\gg1$ so that if $\eta>C$, $\mathcal{R}_x^\phi - 2 \eta \dot{\mathcal{L}}_x$ is non-degenerate of constant signature $(n_+,n_-)$ and if $\eta<-C$, then $\mathcal{R}_x^\phi - 2 \eta \dot{\mathcal{L}}_x$ is non-degenerate of constant signature $(n_-,n_+)$. Fro ${\rm Re\,}z>1$, let 
\[
\begin{split}
H_x(z):=&\frac{1}{\Gamma(z)}\int^{+\infty}_0\int_{\abs{\eta}\geq C}\Bigr((\dot{\mathcal{R}}^\phi_x-2\eta \dot{\mathcal{L}}_x)_t\\
&\quad-(2\pi)^{-n-1}\sum^n_{q=0}q(-1)^q\abs{\det(\dot{\mathcal{R}}_x^\phi - 2 \eta \dot{\mathcal{L}}_x)}1_{\mathbb R_x(q)}(\eta)\Bigr)t^{z-1}d\eta dt,
\end{split}
\]
where $\Gamma(z)$ is the standard Gamma function. We will show in Lemma~\ref{l-gue250623yyd} below that $H_x(z)$ extends to a meromorphic function on $\mathbb{C}$ with poles contained in $\mathbb Z$
and $H_x(z)$ is holomorphic at $0$. 


Our third main theorem is the following 

	\begin{theorem}[cf. Theorem \ref{T:5.5.8}]\label{T:5.5.81}

    We assume that Assumption~\ref{a-gue250426yyd1} holds and $n_-=n_+$ or $\abs{n_--n_+}>1$. Fix $C \gg 1$. As $k \to+\infty$, we have
\[
        \begin{split}
			&\theta_{b,L^k}'(0)\\   
          &=(\log k)k^{n+1}\Bigr(\int_X\int_{\abs{\eta}\leq C}\frac{1}{(2\pi)^{n+1}}\bigr(\frac{n}{2}{\rm det\,}(\dot{\mathcal{R}}^\phi_x - 2\eta \dot{\mathcal{L}_x})\\
          &\quad-\sum^n_{q=0}(-1)^qq\abs{{\rm det\,}(\dot{\mathcal{R}}^\phi_x - 2\eta \dot{\mathcal{L}_x})}1_{\mathbb R_x(q)}(\eta)\bigr)d\eta dv_X(x)+\int_X H_x(0)dv_X(x)\Bigr)\\
&+k^{n+1}\Bigr(-\int_X H'_x(0)dv_X(x)\\
&+\frac{1}{2} \log (2\pi)(2\pi)^{-n-1} \int_X\int_{\abs{\eta}\leq C} \det \left(\dot{\mathcal{R}}^\phi_x - 2 \eta \dot{\mathcal{L}}_x \right)(2q-n)1_{\mathbb R_x(q)}(\eta)d\eta dv_X(x)\\
                &+\frac{1}{2}(2\pi)^{-n-1} \int_X\int_{\abs{\eta}\leq C} \det \left(\dot{\mathcal{R}}^\phi_x - 2 \eta \dot{\mathcal{L}}_x \right)\bigr(-\log(\abs{{\rm det\,}(\dot{\mathcal{R}}^\phi_x - 2 \eta \dot{\mathcal{L}}_x)_-})\\
                &\quad+\log(\abs{{\rm det\,}(\dot{\mathcal{R}}^\phi_x - 2 \eta \dot{\mathcal{L}}_x)_+})\bigr)1_{\mathbb R_x(q)}(\eta)d\eta dv_X(x)\Bigr)+o(k^{n+1}),
          \end{split}
	\]
		where $H_x(z)$ is given by \eqref{e-gue250603yyd}, Lemma~\ref{l-gue250623yyd} and $H_x(0)$ and $H'_x(0)$ are computed in Lemma \ref{l-gue250623yyd} and Lemma \ref{l-gue250623yydI} respectively. 
        \end{theorem}
\begin{remark}
Note that in Subsection \ref{s-gue250714} we provide a large class of examples, i.e. when $X$ is an irregular Sasakian manifold, one can find a CR line bundle $(L,h^L)$ over $X$ such that all the assumptions of Theorem \ref{T:5.5.81} are satisfied.
\end{remark}

Assume that $(X, T^{1,0}X)$ is a CR manifold of dimension $2n+1, n \ge 1$, with a locally free transversal CR $S^1$-action $e^{i\theta}, \theta \in \mathbb{R}, e^{i\theta} : X \to X, x \mapsto e^{i\theta} \circ x$ (see Definition~\ref{d-215605072026} for the meaning of CR $S^1$-action). We let $T$ be the infinitesimal generator of the $S^1$-action (see \eqref{e-gue220005072026}). 
	Suppose that $X$ admits an $S^1$-invariant complete Hermitian metric $\langle \cdot \, | \, \cdot \rangle$ on $\mathbb{C}TX$ so that we have the orthogonal decomposition
	\[
	\mathbb{C}TX = T^{1, 0}X \oplus T^{0, 1}X \oplus \left\{ \lambda T \mid \lambda \in \mathbb{C} \right\}
	\] 
	and $|T|^2 = \langle\, T \, | \, T \,\rangle =1$.
	Let $(L, h^L)$ be a rigid CR line bundle over $X$ with an $S^1$-invariant Hermitian metric $h^L$ on $L$ (see Definition \ref{D:114511262024} for the meaning of rigid CR line bundles). We will use the same notations as before. 
    Consider the operator
	\[
	-iT : \Omega_c^{0, q}(X, L^k) \to \Omega_c^{0, q}(X, L^k)
	\]
	and we extend $-iT$ to $L^2_{(0,q)}(X, L^k)$ in the standard way (see \eqref{e-gue220805072026}).
	Fix $\delta > 0$, let
	\[
		\begin{split}
			& L^2_{(0, q), \le k\delta}(X, L^k) : = E_{-iT} \left(\left[-k\delta, k\delta \right] \right),  \\
			& \Omega^{0, q}_{\le k \delta} := \Omega^{0, q}(X, L^k) \cap L^2_{(0, q), \le k\delta}(X, L^k),
		\end{split}
	\]
	where $E_{-iT}$ denotes the spectral measure of $-iT$. Let
	\[
	Q_{X, \le k\delta}: L^2_{(0, q)}(X, L^k) \to L^2_{(0, q), \le k \delta}(X, L^k)
	\]
	be the orthogonal projection with respect to $(\,\cdot \, | \,\cdot \,)_{h^{L^k}}$. We have the following partial $\ddbar_{b, k}$-complex:
	\[
	\cdots \longrightarrow \Omega^{0, q-1}_{\le k \delta}(X, L^k) \stackrel{\ddbar_{b, k}}{\longrightarrow}\Omega^{0, q}_{\le k \delta}(X, L^k) \stackrel{\ddbar_{b, k}}{\longrightarrow} \Omega^{0, q+1}_{\le k \delta}(X, L^k) \longrightarrow \cdots
	\]
	and put
	\[
	H^q_{b, \le k\delta}(X, L^k) := \frac{\Ker \ddbar_{b,k}:  \Omega^{0, q}_{\le k \delta}(X, L^k)  \to \Omega^{0, q+1}_{\le k \delta}(X, L^k) }{\mbox{Im} \ddbar_{b, k}: \Omega^{0, q-1}_{\le k \delta}(X, L^k)  \to \Omega^{0, q}_{\le k \delta}(X, L^k) }, \quad 0 \le q \le n.
	\]
	Let
	\[
		\begin{split}
			& \Box^{(q)}_{b, k, \le k\delta} : \mbox{Dom} \, \Box^{(q)}_{b, k, \le k\delta} \subset L^2_{(0, q), \le k\delta}(X, L^k) \to  L^2_{(0, q), \le k\delta}(X, L^k),    \\
			&  \mbox{Dom} \, \Box^{(q)}_{b, k, \le k\delta} =  \mbox{Dom} \, \Box^{(q)}_{b, k}   \cap  L^2_{(0, q), \le k\delta}(X, L^k),  \\
			& \Box^{(q)}_{b, k, \le k\delta} = \ddbar^*_{b, k}\ddbar_{b, k} + \ddbar_{b, k}\ddbar^*_{b, k} \quad \text{on}\ \mbox{Dom} \, \Box^{(q)}_{b, k, \le k\delta}. \ 
		\end{split}
	\] 
	It is known that
	\[
	H^q_{b, \le k\delta}(X, L^k) \cong \Ker \Box^{(q)}_{b, k, \le k\delta},     \quad \dim H^q_{b, \le k\delta}(X, L^k)  < +\infty.
	\]
    Let 
	\[
	e^{-t\Box^{(q)}_{b, k, \le k\delta} } := e^{-t\Box^{(q)}_{b, k} } \circ Q_{X, \le k\delta} : L^2_{(0, q)}(X, L^k) \to  \mbox{Dom} \, \Box^{(q)}_{b, k, \le k\delta}, \quad t>0,
	\]
	and let
	\[
	 e^{-t\Box^{(q)}_{b, k, \le k\delta}}(x, y) \in C^\infty(\mathbb{R}_+ \times X \times X, (T^{*0,q}X \times L^k) \boxtimes (T^{*0,q}X \otimes L^k)^*)
	\]
	be the distribution kernel of $ e^{-t\Box^{(q)}_{b, k, \le k\delta}}$ with respect to $dv_X(x)$.
	As before, we use the canonical identification $\mbox{End}(L^k) = \mathbb{C}$.
    We have the following
\begin{theorem}[cf. Theorem~\ref{t05042026}]\label{t-gue222505072026}
		With the notations and assumptions used before, we can find 
        $a_{j,k} \in \mathbb{R}$, $j=0,1,\ldots$, $a_{0,k}\neq0$, such that
		\[
		\int_X e^{-t\Box^{(q)}_{b, k, \le k\delta} }(x, x)dv_X(x) \sim\sum^{+\infty}_{j=0} a_{j,k} t^{-n-1+j} \quad \text{as $t \to 0^+$}.
		\]
	\end{theorem}
    Let 
	$\Pi_{L^k, \le k \delta}^\perp : L^2(X, T^{*0,\bullet}X \otimes L^k) \to (\Ker\Box_{b, k, \le k\delta})^\perp$, 
	be the orthogonal projection onto the complement of $\Ker\Box_{b, k, \le k\delta}$.
	A key application of Theorem~\ref{t-gue222505072026} is that $
		\operatorname{STr}  \lbrack N e^{-t \Box_{b, k, \le k\delta}} \Pi_{L^k, \le k \delta}^\perp \rbrack$
	admits small-time asymptotics. 
	Then we can define for $\operatorname{Re}(z)>n$, 
	\[
		\theta_{b, L^k, \le k \delta}(z)  = - M \left\lbrack \operatorname{STr}  \lbrack N e^{-t \Box_{b, k, \le k\delta}} (\Pi_{L^k, \le k \delta})^\perp  \rbrack \right\rbrack  =   - \operatorname{STr} \left\lbrack N ({\Box}_{b, k, \le k \delta})^{-z} (\Pi_{L^k, \le k \delta})^\perp \right\rbrack.  
	\]
	Hence we can define the $S^1$-equivariant analytic torsion on non-degenrate CR manifolds.
	\begin{definition}[cf. Definition \ref{d-gue160502wr6}]\label{d-gue223205072026}
		We define $\exp ( -\frac{1}{2} \theta_{b, L^k, \le k \delta}'(0) )$ to be the $\ddbar_{b}$-torsion for the CR line bundle $L^k$ over the CR manifold $X$ with $S^1$-action. 
	\end{definition}	
	Denote by 
$$
H^\bullet_{b, \le k \delta}(X, L^k) = \oplus_{q=0}^n H^q_{b,\le k \delta}(X,L^k).
$$ 
For a finite dimensional vector space $V$, we set 
$$
\det V := \wedge^{\text{max}}V.
$$
We then denote by 
$$
(\det V)^{-1}:= (\det V)^*,
$$ the dual line of $\det V$.
Then
\[
\det H^\bullet_{b,\le k \delta}(X,L^k) = \otimes_{q=0}^n \left( \det H^q_{b,\le k \delta}(X,L^k)  \right)^{(-1)^q}
\]
is the determinant line of the cohomology $H^\bullet_{b,\le k \delta}(X,L^k)$. We define 
\begin{equation}\label{E:5.5.14}
\lambda_{b,\le k \delta}(L^k) = \left( \det H^\bullet_{b,\le k \delta}(X,L^k)  \right)^{-1}. \nonumber
\end{equation}
The rigid Hermitian metrics $\langle \, \cdot \, |\, \cdot \, \rangle$ and $\langle \, \cdot \, |\, \cdot \, \rangle_{L^k}$ on $\mathbb{C}TX$ and $L^k$, respectively, induce a canonical $L^2$-metric $h^{H^\bullet_{b,\le k \delta}(X,L^k)}$ on $H^\bullet_{b,\le k \delta}(X,L^k)$. Let $|\cdot|_{\lambda_{b,\le k \delta}(L^k)}$ be the $L^2$-metric on $\lambda_{b,\le k \delta}(L^k)$ induced by $h^{H^\bullet_{b,\le k \delta}(X,L^k)}$. 
Now we can define the Quillen metric on $\det H^\bullet_{b,\le k \delta}(X,L^k)$.
\begin{definition}[cf. Definition \ref{D:5.5.5}]\label{d-gue223205072026}
Fix $k \in \mathbb{N}$. The Quillen metric $\| \cdot  \|_{\lambda_{b, \le k \delta}(L^k)}$ on $\det H^\bullet_{b,\le k \delta}(X,L^k)$ is defined as
\[
\| \cdot  \|_{\lambda_{b, \le k \delta}(L^k)} \, := \,  |\cdot|_{\lambda_{b,\le k \delta}(L^k)} \cdot \exp ( -\frac{1}{2} \theta_{b,L^k,\le k \delta}'(0)  ).
\]
\end{definition}
Denote by $\mu_k: L^k \to (L^*)^k$ the induced conjugate linear bundle isomorphism from the vector bundle $L$ to its dual vector bundle $(L^*)^k$. We denote by 
$$
\Box_{b,k,\le k\delta, s}:=\Box_{b, s}|_{\Omega^{0,\bullet}_{\le k \delta}(X,L^k)}.
$$ 
Let $\| \cdot  \|_{\lambda_{b, \le k \delta}(L^k), s}$ be the corresponding Quillen metrics  on $\det H^\bullet_{b,\le k \delta}(X,L^k)$. Recall that $Q_{b,s}$ is defined in \eqref{E:qbs}. 

We have (see also Theorem~\ref{tvari})
\begin{theorem}[cf. Thoerem~\ref{t205302042026}]\label{t581}
 As $t \to 0^+$, for any $\ell \in \mathbb{N}$, there is an asymptotic expansion
\[
\operatorname{STr} \left\lbrack Q_{b,s} \exp \left( -t\Box_{b,k,\le k\delta, s} \right) \right\rbrack = \sum_{j=0}^{\ell+n} M_{j,\le k \delta, s} t^{-n-1+j} +O(t^{\ell+1}), 
\]
where
\[
M_{n+1,\le k\delta, s} = \frac{\partial}{\partial s} \log \left(  \| \cdot  \|^{2}_{\lambda_{b, \le k \delta}(L^k), s} \right).
\]
\end{theorem}

Finally, we establish the Bismut-Vasserot type asymptotics (\cite{BV89}) of the $S^1$-equivariant analytic torsion with values in $L^k$.
\begin{theorem}[cf. Theorem~\ref{t-que042520262248}]\label{t582}
		As $k \to+\infty$, we have
		\[
			  \begin{split}
			&\theta_{b,L^k, \le k \delta}'(0)\\   
          &=(\log k)k^{n+1}\Bigr(\int_X\int_{-\delta}^\delta \frac{1}{(2\pi)^{n+1}}\bigr(\frac{n}{2}{\rm det\,}(\dot{\mathcal{R}}^\phi_x - 2\eta \dot{\mathcal{L}_x})\\
          &\quad-\sum^n_{q=0}(-1)^qq\abs{{\rm det\,}(\dot{\mathcal{R}}^\phi_x - 2\eta \dot{\mathcal{L}_x})}1_{\mathbb R_x(q)}(\eta)\bigr)d\eta dv_X(x)\Bigr)\\
&+k^{n+1}\Bigr(\frac{1}{2} \log (2\pi)(2\pi)^{-n-1} \int_X\int_{-\delta}^\delta \det \left(\dot{\mathcal{R}}^\phi_x - 2 \eta \dot{\mathcal{L}}_x \right)(2q-n)1_{\mathbb R_x(q)}(\eta)d\eta dv_X(x)\\
                &+\frac{1}{2}(2\pi)^{-n-1} \int_X\int_{-\delta}^\delta \det \left(\dot{\mathcal{R}}^\phi_x - 2 \eta \dot{\mathcal{L}}_x \right)\bigr(-\log(\abs{{\rm det\,}(\dot{\mathcal{R}}^\phi_x - 2 \eta \dot{\mathcal{L}}_x)_-})\\
                &\quad+\log(\abs{{\rm det\,}(\dot{\mathcal{R}}^\phi_x - 2 \eta \dot{\mathcal{L}}_x)_+})\bigr)1_{\mathbb R_x(q)}(\eta)d\eta dv_X(x)\Bigr)+o(k^{n+1}),
          \end{split}
		\]
	\end{theorem}

It is interesting to consider CR analytic torsion in the degenerate case (e.g. Levi flat case), equivariant case with group action and CR orbifold case which can be related to CR singular points \cite{FH18, HY16, HY17}. Our paper can be regarded as a starting point for such further topics.

The paper is organized as follows. In Section \ref{s:prelim}, we list some terminology and definitions we use throughout this paper. In Section \ref{s-gue250401yyd}, we introduce some new symbol classes and study the asymptotic behavior of $e^{-t\Box_b^{(q)}}$ as $t\to 0+$. In Section \ref{s-gue250704}, we are devoted to exploring the asymptotics of $e^{-\frac{t}{k}\Box^{(q)}_{b,L^k}}$ as $k\to\infty$. In Section \ref{s-gue250411}, we define the CR analytic torsion and study its dependence on changes of metrics. We establish the Bismut-Vasserot type asymptotics of the analytic torsion $\theta_{b,L^k}'(0)$ is established. In Subsection \ref{s-gue250714}, we show non-trivial examples satisfying conditions in Theorem \ref{T:5.5.81}. In Section \ref{s23}, we obtain the small time asymptotics of the $S^1$-equivariant CR heat kernel. In Section \ref{s-220404272026}, we define the $S^1$-equivariant CR analytic torsion and Quillen metric. We study the dependence of the $S^1$-equivariant Quillen metric on changes of metrics. Finally we give the corresponding asymptotic formula of the $S^1$-equivariant CR analytic torsion with values in $L^k$ in Section \ref{sfinal}.


	
	\section{Preliminaries}\label{s:prelim}

    \subsection{Standard notations} \label{s-ssna}

We use the following notations: $\mathbb N=\{1,2,\ldots\}$ is the set of natural numbers excluding $0$ and $\mathbb N_0=\mathbb N\cup\{0\}$, $\mathbb R$ is the set of real numbers, 
$\dot{\mathbb R}=\{x\in\mathbb R;\, x\neq0\}$,
${\mathbb R}_+=\{x\in\mathbb R;\, x>0\}$, $\overline{\mathbb R}_+=\{x\in\mathbb R;\, x\geq0\}$. Furthermore we adopt the standard multi-index notation: we write $\alpha=(\alpha_1,\ldots,\alpha_n)\in\mathbb N^n_0$ 
if $\alpha_j\in\mathbb N_0$, $j=1,\ldots,n$. 

Let $M$ be a smooth paracompact manifold. We let $TM$ and $T^*M$ denote respectively the tangent bundle of $M$ and the cotangent bundle of $M$. The complexified tangent bundle $TM \otimes \mathbb{C}$ of $M$ will be denoted by $\Complex TM$, similarly we write $\Complex T^*M$ for the complexified cotangent bundle of $M$. Consider $\langle\,\cdot\,,\cdot\,\rangle$ to denote the pointwise
duality between $TM$ and $T^*M$; we extend $\langle\,\cdot\,,\cdot\,\rangle$ bi-linearly to $\Complex TM\times\Complex T^*M$. Let $B$ be a smooth vector bundle over $M$. The fiber of $B$ at $x\in M$ will be denoted by $B_x$. Let $E$ be a vector bundle over a smooth paracompact manifold $N$. We write
$B\boxtimes E^*$ to denote the vector bundle over $M\times N$ with fiber over $(x, y)\in M\times N$ consisting of the linear maps from $E_y$ to $B_x$.  

Let $Y\subset M$ be an open set. From now on, the spaces of distribution sections of $B$ over $Y$ and smooth sections of $B$ over $Y$ will be denoted by $\mathcal D'(Y, B)$ and $\mathcal{C}^\infty(Y, B)$, respectively.
Let $\mathcal E'(Y, B)$ be the subspace of $\mathcal D'(Y, B)$ whose elements have compact support in $Y$. Let $\mathcal{C}^\infty_c(Y,B):=\mathcal{C}^\infty(Y,B)\cap\mathcal{E}'(Y,B)$. 
For $m\in\Real$, let $H^m(Y, B)$ denote the Sobolev space
of order $m$ of sections of $B$ over $Y$. Let us denote
\begin{align*}
H^m_{\rm loc\,}(Y, B)=\big\{u\in\mathcal{D}'(Y, B);\, \varphi u\in H^m(Y, B),
    \, \forall\varphi\in \mathcal{C}^\infty_c(Y)\big\}\,,
\end{align*}
and
\begin{align*}
       H^m_{\rm comp\,}(Y, B)=H^m_{\rm loc}(Y, B)\cap\mathcal{E}'(Y, B)\,.
\end{align*}

Let $B$ and $E$ be smooth vector
bundles over paracompact orientable manifolds $M$ and $N$, respectively, equipped with smooth densities of integration. If
$A: \mathcal{C}^\infty_c(N,E)\To\mathcal D'(M,B)$
is continuous, we write $A(x, y)$ to denote the distribution kernel of $A$.
The following two statements are equivalent
\begin{enumerate}
	\item $A$ is continuous: $\mathcal E'(N,E)\To\mathcal{C}^\infty(M,B)$,
	\item $A(x,y)\in\mathcal{C}^\infty(M\times N,B\boxtimes E^*)$.
\end{enumerate}
If $A$ satisfies (1) or (2), we say that $A$ is smoothing on $M \times N$. 
We say that $A$ is properly supported if the restrictions of the two projections 
$(x,y)\mapsto x$, $(x,y)\mapsto y$ to ${\rm supp\,}(A(x,y))$
are proper.

Let $H(x,y)\in\mathcal D'(M\times N,B\boxtimes E^*)$. We write $H$ to denote the unique continuous operator $\mathcal C^\infty_c(N,E)\To\mathcal D'(M,B)$ with distribution kernel $H(x,y)$. In this work, we identify $H$ with $H(x,y)$. 

Let
$B: \mathcal{C}^\infty_c(N,E)\To\mathcal D'(M,B)$
be a continuous operator. We write $A(x, y)\equiv B(x,y)$ or $A\equiv B$ if $A-B$ is a smoothing operator.  
	
	\subsection{CR manifolds} \label{s-gue250331}
	
	Let $(X, T^{1,0}X)$ be a compact, and orientable CR manifold of dimension $2n+1$, $n\geq 1$, where $T^{1,0}X$ is a CR structure of $X$, that is, $T^{1,0}X$ is a subbundle of rank $n$ of the complexified tangent bundle $\mathbb{C}TX$, satisfying $T^{1,0}X\cap T^{0,1}X=\{0\}$, where $T^{0,1}X=\overline{T^{1,0}X}$, and $[\mathcal V,\mathcal V]\subset\mathcal V$, where $\mathcal V=\mathcal C^\infty(X, T^{1,0}X)$. There is a unique subbundle $HX$ of $TX$ such that $\mathbb{C}HX=T^{1,0}X \oplus T^{0,1}X$, i.e. $HX$ is the real part of $T^{1,0}X \oplus T^{0,1}X$. Let $J:HX\To HX$ be the complex structure map given by $J(u+\ol u)=iu-i\ol u$, for every $u\in T^{1,0}X$. 
	By complex linear extension of $J$ to $\mathbb{C}TX$, the $i$-eigenspace of $J$ is $T^{1,0}X \, = \, \left\{ V \in \mathbb{C}HX;\, JV \, =  \,  iV  \right\}.$ We shall also write $(X, HX, J)$ to denote a compact CR manifold $X$.
	
	We fix a real $1$ form $\omega_0\in\mathcal{C}^{\infty}(X,T^*X)$ so that $\omega_0(x)\neq0$, for every $x\in X$,  $\langle\,\omega_0(x)\,,\,u\,\rangle=0$, for every $u\in H_xX$, for every $x\in X$. 
	For each $x \in X$, we define a quadratic form on $HX$ by
	\begin{equation}\label{E:levi}
		\mathcal{L}_x(U,V) =\frac{1}{2}d\omega_0(JU, V), \forall \ U, V \in H_xX.
	\end{equation}
	We extend $\mathcal{L}$ to $\mathbb{C}HX$ by complex linear extension. Then for $U, V \in T^{1,0}_xX$,
	\begin{equation}
		\mathcal{L}_x(U,\overline{V}) = \frac{1}{2}d\omega_0(JU, \overline{V}) = -\frac{1}{2i}d\omega_0(U,\overline{V}).
	\end{equation}
	The Hermitian quadratic form $\mathcal{L}_x$ on $T^{1,0}_xX$ is called Levi form at $x$. We recall that in this paper, we always assume that the Levi form $\mathcal{L}$ on $T^{1,0}X$ is non-degenerate of constant signature $(n_-,n_+)$ on $X$, where $n_-$ denotes the number of negative eigenvalues of the Levi form and $n_+$ denotes the number of positive eigenvalues of the Levi form. Let $T\in\mathcal{C}^\infty(X,TX)$ be the non-vanishing vector field determined by 
	\begin{equation}\label{e-gue170111ry}\begin{split}
			&\omega_0(T)=-1,\\
			&d\omega_0(T,\cdot)\equiv0\ \ \mbox{on $TX$}.
	\end{split}\end{equation}
	Note that $X$ is a contact manifold with contact form $\omega_0$, contact plane $HX$ and $T$ is the Reeb vector field.
	
	Fix a smooth Hermitian metric $\langle \cdot \mid \cdot \rangle$ on $\mathbb{C}TX$ so that $T^{1,0}X$ is orthogonal to $T^{0,1}X$, $\langle u \mid v \rangle$ is real if $u, v$ are real tangent vectors, $\langle\, T\, | \, T \, \rangle=1$ and $T$ is orthogonal to $T^{1,0}X\oplus T^{0,1}X$. For $u \in \mathbb{C}TX$, we write $|u|^2 := \langle u | u \rangle$. Denote by $T^{*1,0}X$ and $T^{*0,1}X$ the dual bundles of $T^{1,0}X$ and $T^{0,1}X$, respectively. They can be identified with subbundles of the complexified cotangent bundle $\mathbb{C}T^*X$. Define the vector bundle of $(0,q)$-forms by $T^{*0,q}X := \wedge^qT^{*0,1}X$. Let
    $\Lambda^\bullet(\mathbb CT^*X):=\oplus^{2n+1}_{j=0}\Lambda^j(\mathbb CT^*X)$.
    The Hermitian metric $\langle\,\cdot\,| \cdot\,\rangle$ on $\mathbb{C}TX$ induces, by duality, a Hermitian metric on $\mathbb{C}T^*X$ and also on $\Lambda^\bullet(\mathbb CT^*X)$. We shall also denote all these induced metrics by $\langle \cdot \mid \cdot \rangle$. For $u\in\Lambda^*(\mathbb CT^*X)$, let $\abs{u}^2:=\langle\,u\,|\,u\,\rangle$. Let $T^{*0,\bullet}X:=\oplus^n_{q=0}T^{*0,q}X$. Note that we have the pointwise orthogonal decompositions:
	\begin{equation}
		\begin{array}{c}
			\mathbb{C}T^*X = T^{*1,0}X \oplus T^{*0,1}X \oplus \left\{ \lambda \omega_0;\, \lambda \in \mathbb{C} \right\}, \\
			\mathbb{C}TX = T^{1,0}X \oplus T^{0,1}X \oplus \left\{ \lambda T;\, \lambda \in \mathbb{C} \right\}.
		\end{array}
	\end{equation}
	
	For $x, y\in X$, let $d(x,y)$ denote the distance between $x$ and $y$ induced by the Hermitian metric $\langle \cdot \mid \cdot \rangle$. Let $A$ be a subset of $X$. For every $x\in X$, let $d(x,A):=\inf\set{d(x,y);\, y\in A}$. 

    \subsection{Kohn Laplacian}\label{s-gue250331I} 
	Let $D\subset X$ be an open subset. Let $\Omega^{0,q}(D)$ denote the space of smooth sections 
	of $T^{*0,q}X$ over $D$ and let $\Omega^{0,q}_c(D)$ be the subspace of
	$\Omega^{0,q}(D)$ whose elements have compact support in $D$. Put $\Omega^{0,\bullet}(D):=\oplus^n_{q=0}\Omega^{0,q}(D)$ and $\Omega^{0,\bullet}_c(D):=\oplus^n_{q=0}\Omega^{0,q}_c(D)$.
	
	Let 
	\begin{equation} \label{e-suIV}
		\ddbar_b:\Omega^{0,\bullet}(X)\To\Omega^{0,\bullet}(X)
	\end{equation}
	be the tangential Cauchy-Riemann operator. Let $dv_X(x)$ be the volume form on $X$ induced by the Hermitian metric $\langle\,\cdot\,|\,\cdot\,\rangle$.
	The natural global $L^2$ inner product $(\,\cdot\,|\,\cdot\,)$ on $\Omega^{0,\bullet}(X)$ 
	induced by $dv_X(x)$ and $\langle\,\cdot\,|\,\cdot\,\rangle$ is given by
	\begin{equation}\label{e:l2}
		(\,u\,|\,v\,):=\int_X\langle\,u(x)\,|\,v(x)\,\rangle\, dv_X(x)\,,\quad u,v\in\Omega^{0,\bullet}(X)\,.
	\end{equation}

    For every $q=0,1,\ldots,n$, we denote by $L^2_{(0,q)}(X)$ 
	the completion of $\Omega^{0,q}(X)$ with respect to $(\,\cdot\,|\,\cdot\,)$. 
	Write $L^2(X):=L^2_{(0,0)}(X)$. We extend $(\,\cdot\,|\,\cdot\,)$ to $L^2_{(0,q)}(X)$ 
	in the standard way. Put $L^2_{(0,\bullet)}(X):=\oplus^n_{q=0}L^2_{(0,q)}(X)$. For $f\in L^2_{(0,\bullet)}(X)$, denote by $\norm{f}^2:=(\,f\,|\,f\,)$.
	We extend
	$\ddbar_{b}$ to $L^2_{(0,\bullet)}(X)$, by
	\begin{equation}\label{e-suVII}
		\ddbar_{b}:{\rm Dom\,}\ddbar_{b}\subset L^2_{(0,\bullet)}(X)\To L^2_{(0,\bullet)}(X)\,,
	\end{equation}
	where ${\rm Dom\,}\ddbar_{b}:=\{u\in L^2_{(0,\bullet)}(X);\, \ddbar_{b}u\in L^2_{(0,\bullet)}(X)\}$, 
	and for any $u\in L^2_{(0,\bullet)}(X)$, $\ddbar_{b} u$ is defined in the sense of distributions.
	We also write
	\begin{equation}\label{e-suVIII}
		\ol{\pr}^{*}_{b}:{\rm Dom\,}\ol{\pr}^{*}_{b}\subset L^2_{(0,\bullet)}(X)\To L^2_{(0,\bullet)}(X)
	\end{equation}
	to denote the Hilbert space adjoint of $\ddbar_{b}$ in the $L^2$ space with respect to $(\,\cdot\,|\,\cdot\, )$.
	Let $\Box_{b}$ denote the (Gaffney extension) of the Kohn Laplacian given by
	\begin{equation}\label{e-suIX}
		\begin{split}
			{\rm Dom\,}\Box_{b}=\Big\{s\in L^2_{(0,\bullet)}(X);&\, 
			s\in{\rm Dom\,}\ddbar_{b}\cap{\rm Dom\,}\ol{\pr}^{*}_{b},\,
			\ddbar_{b}s\in{\rm Dom\,}\ol{\pr}^{*}_{b},\\
			&\quad\ol{\pr}^{*}_{b}s\in{\rm Dom\,}\ddbar_{b}\Big\}\,,\\
			\Box_{b}s&=\ddbar_{b}\ol{\pr}^{*}_{b}s+\ol{\pr}^{*}_{b}\ddbar_{b}s 
			\:\:\text{for  $s\in {\rm Dom\,}\Box_{b}$}\,.
		\end{split}
	\end{equation}
	By a result of Gaffney, $\Box_{b}$ is a non-negative self-adjoint operator 
	(see \cite[Proposition\,3.1.2]{MM}). That is, $\Box_{b}$ is self-adjoint and 
	the spectrum of $\Box_{b}$ is contained in $\ol\Real_+$. Let 
    \[e^{-t\Box_b}: L^2_{(0,\bullet)}(X)\To{\rm Dom\,}\Box_b\]
    be the heat operator of $\Box_b$ 
    and let $e^{-t\Box_b}(x,y)\in\mathcal{D}'(\mathbb R_+\times X\times X,T^{*0,\bullet}X\boxtimes(T^{*0,\bullet}X)^*)$
    be the distribution kernel of $e^{-t\Box_b}$. Let $q\in\set{0,1,\ldots,n}$. Write $\Box^{(q)}_b:=\Box_b|_{{\rm Dom\,}\Box_b\cap L^2_{(0,q)}(X)}$,  $e^{-t\Box^{(q)}_b}:=e^{-t\Box_b}|_{L^2_{(0,q)}(X)}$ and let $e^{-t\Box^{(q)}_b}(x,y)\in\mathcal{D}'(\mathbb R_+\times X\times X,T^{*0,q}X\boxtimes(T^{*0,q}X)^*)$
    be the distribution kernel of $e^{-t\Box^{(q)}_b}$.

    
    The operator $\Box^{(q)}_b$ is not elliptic, for all $q=0,1,\ldots,n$. 
	The characteristic manifold of $\Box^{(q)}_b$ is given by $\Sigma=\Sigma^+\cup\Sigma^-$, where 
	\begin{equation}\label{e-gue250401yyd}\begin{split}
		&\Sigma^+=\set{(x,\lambda\omega_0(x))\in T^*X;\, \lambda>0},\\
		&\Sigma^-=\set{(x,\lambda\omega_0(x))\in T^*X;\, \lambda<0}.\end{split}\end{equation}
	Then $\Sigma$ is a symplectic submanifold in $T^*X$ if and only if the Levi from is non-degenerate (see~\cite[Lemma 2.4]{Hsiao08}). 
	
	\subsection{CR vector bundles}\label{s-gue250404yyds}

We first introduce CR manifolds of high codimension. 

\begin{definition}\label{d-gue250920yyd}
Let $E$ be a smooth manifold of dimension $2n+d$, $n\geq1$, $d\geq1$. Let $T^{1,0}E$ be a subbundle of $\mathbb CTE$. We say that $T^{1,0}E$ is a CR structure of $E$ if 
\begin{itemize}
    \item ${\rm dim\,}T^{1,0}_xE=n$, $\forall x\in E$,
    \item $T^{1,0}E\cap T^{0,1}E=\set{0}$, $T^{0,1}E:=\ol{T^{1,0}E}$, 
    \item $[\mathcal{C}^\infty(E,T^{1,0}E),\mathcal{C}^\infty(E,T^{1,0}E)]\subset\mathcal{C}^\infty(E,T^{1,0}E)$.
\end{itemize}
We call the pair $(E,T^{1,0}E)$ CR manifold of codimension $d$. 
\end{definition}

Let $(E, T^{1,0}E)$ and $(F, T^{1,0}F)$ be two arbitrary CR manifolds. A smooth map $f:E \to F$ is called a CR map if $(d_xf)(T_x^{1,0}E) \subseteq T_{f(x)}^{1,0}F$ for any $x \in E$, where $d_xf$ is the ($\mathbb{C}$-linear extension to $\mathbb{C}TE$ of the) differential of $f$ at $x$. 

   We now introduce the definition of CR vector bundles.
    
\begin{definition}\label{Def:CRVB}
	A complex vector bundle $\pi\,:\,E\rightarrow X$ of rank $d$ is called a CR vector bundle if 
	\begin{itemize}
		\item [(i)] \(E\) is a CR manifold of codimension \(1+d\), 
		\item [(ii)] \(\pi\colon E\to X\) is a CR submersion,
		\item [(iii)] \(E\oplus E\ni(\xi_1,\xi_2)\to \xi_1+\xi_2\in E\) and \(\mathbb C\times E\ni(\lambda,\xi)\to \lambda \xi\in E\) are CR maps.
	\end{itemize}
	A smooth section \(s\in\mathcal{C}^\infty(U,E)\) defined on an open set \(U\subset E\) is called a CR section if the map \(s\colon U\to E\) is CR.
\end{definition}

Note that it is easy to see that $E \oplus E$ and $\mathbb{C} \times E$ are CR manifolds.

Let $E$ be a CR vector bundle over $X$. The tangential Cauchy-Riemann operator can be defined on sections of $E$ in the standard way: 
\[\ddbar_{b}: \Omega^{0,\bullet}(X,E)\To\Omega^{0,\bullet}(X,E),\]
where $\Omega^{0,\bullet}(X,E):=\oplus^n_{q=0}\Omega^{0,q}(X,E)$, $\Omega^{0,q}(X,E):=\mathcal{C}^\infty(X,E\otimes T^{*0,q}X)$, $q=0,1,\ldots,n$. 

\begin{definition}\label{Def:LocTriv}
 A CR vector bundle $E\To X$ is called locally CR trivializable if for any point \(p\in X\)  there exists an open neighborhood \(U\subset X\) such that \(E|_U\) is CR vector bundle isomorphic to the trivial CR vector bundle \(U\times\mathbb C^d\), $d\in\mathbb N$, $d={\rm rank\,}E$.
\end{definition}

Let $E$ be a CR vector bundle over $X$. In this work, we will always assume that $E$ is locally CR trivializable. Let $D\subset X$ be an open subset. Let $\Omega^{0,q}(D,E)$ denote the space of smooth sections 
	of $T^{*0,q}X\otimes E$ over $D$ and let $\Omega^{0,q}_c(D,E)$ be the subspace of
	$\Omega^{0,q}(D,E)$ whose elements have compact support in $D$. Put $\Omega^{0,\bullet}(D,E):=\oplus^n_{q=0}\Omega^{0,q}(D,E)$ and $\Omega^{0,\bullet}_c(D,E):=\oplus^n_{q=0}\Omega^{0,q}_c(D,E)$.

Fix a Hermtian metric $h^E=\langle\,\cdot\,|\,\cdot\,\rangle_E$ on $E$. Let $(\,\cdot\,|\,\cdot\,)_E$ be the natural global $L^2$ inner product on $\mathcal{C}^\infty(X,E\otimes\Lambda^\bullet(\mathbb CT^*X))$ induced by $dv_X(x)$, $\langle\,\cdot\,|\,\cdot\,\rangle$ and $h^E$. 
    For every $q=0,1,\ldots,n$, we denote by $L^2_{(0,q)}(X,E)$ 
	the completion of $\Omega^{0,q}(X,E)$ with respect to $(\,\cdot\,|\,\cdot\,)_E$. 
	Write $L^2(X,E):=L^2_{(0,0)}(X,E)$. We extend $(\,\cdot\,|\,\cdot\,)_E$ to $L^2_{(0,q)}(X,E)$ 
	in the standard way. Put $L^2_{(0,\bullet)}(X,E):=\oplus^n_{q=0}L^2_{(0,q)}(X,E)$. Similarly,  we denote by $L^2(X,E\otimes\Lambda^\bullet(\mathbb CT^*X))$ 
	the completion of $\mathcal{C}^\infty(X,E\otimes\Lambda^\bullet(\mathbb CT^*X))$ with respect to $(\,\cdot\,|\,\cdot\,)_E$. For $f\in L^2(X,E\otimes\Lambda^\bullet(\mathbb CT^*X))$, denote by $\norm{f}^2_E:=(\,f\,|\,f\,)_E$.
    
Let $\Box_{b,E}: {\rm Dom\,}\Box_{b,E}\subset L^2_{(0,\bullet)}(X,E)\To L^2_{(0,\bullet)}(X,E)$ be the Gaffney extension of Kohn Laplacian with values in $E$ as \eqref{e-suIX} and let $e^{-t\Box_{b,E}}$ be the heat operator of $\Box_{b,E}$ 
    and let $e^{-t\Box_{b,E}}(x,y)\in\mathcal{D}'(\mathbb R_+\times X\times X,(E\otimes T^{*0,\bullet}X)\boxtimes(E\otimes T^{*0,\bullet}X)^*)$
    be the distribution kernel of $e^{-t\Box_{b,E}}$. Let $q\in\set{0,1,\ldots,n}$. We write $\Box^{(q)}_{b,E}:=\Box_{b,E}|_{{\rm Dom\,}\Box_{b,E}\cap L^2_{(0,q)}(X,E)}$,  $e^{-t\Box^{(q)}_{b,E}}:=e^{-t\Box_{b,E}}|_{L^2_{(0,q)}(X,E)}$ and let $e^{-t\Box^{(q)}_{b,E}}(x,y)\in\mathcal{D}'(\mathbb R_+\times X\times X,(E\otimes T^{*0,q}X)\boxtimes(E\otimes T^{*0,q}X)^*)$
    be the distribution kernel of $e^{-t\Box^{(q)}_{b,E}}$. We will also consider $e^{-t\Box_{b,E}}(x,y)$, $e^{-t\Box^{(q)}_{b,E}}(x,y)$, $q=0,1,\ldots,n$, to be elements in $\mathcal{D}'(\mathbb R_+\times X\times X,(E\otimes\Lambda^\bullet(\mathbb CT^*X))\boxtimes(E\otimes (\Lambda^\bullet(\mathbb CT^*X))^*)$ and $e^{-t\Box_{b,E}}$, $e^{-t\Box^{(q)}_{b,E}}$, $t>0$, $q=0,1,\ldots,n$, to be continuous operators: 
\[e^{-t\Box_{b,E}}, e^{-t\Box^{(q)}_{b,E}}: L^2(X,E\otimes\Lambda^\bullet(\mathbb CT^*X))\To  L^2(X,E\otimes\Lambda^\bullet(\mathbb CT^*X)),\]
$q=0,1,\ldots,n$.

Let $\Pi_E: L^2_{(0,\bullet)}(X,E)\To{\rm Ker\,}\Box_{b,E}$ be the orthogonal projection with respect to $(\,\cdot\,|\,\cdot\,)_E$ and let $\Pi_E(x,y)\in\mathcal{D}'(X\times X,(E\otimes T^{*0,\bullet}X)\boxtimes(E\otimes T^{*0,\bullet}X)^*)$
    be the distribution kernel of $\Pi_E$.  Let $q\in\set{0,1,\ldots,n}$. We write  $\Pi^{(q)}_E:=\Pi_E|_{L^2_{(0,q)}(X,E)}$ and let $\Pi^{(q)}_E(x,y)\in\mathcal{D}'(X\times X,(E\otimes T^{*0,q}X)\boxtimes(E\otimes T^{*0,q}X)^*)$
    be the distribution kernel of $\Pi^{(q)}_E$. We will also consider $\Pi_E(x,y)$, $\Pi^{(q)}_E(x,y)$, $q=0,1,\ldots,n$, to be elements in $\mathcal{D}'(X\times X,(E\otimes\Lambda^\bullet(\mathbb CT^*X))\boxtimes(E\otimes (\Lambda^\bullet(\mathbb CT^*X))^*)$ and $\Pi_E$, $\Pi^{(q)}_E$, $q=0,1,\ldots,n$, to be continuous operators: 
\[\Pi_E, \Pi^{(q)}_E: L^2(X,E\otimes\Lambda^\bullet(\mathbb CT^*X))\To  L^2(X,E\otimes\Lambda^\bullet(\mathbb CT^*X)),\]
$q=0,1,\ldots,n$.

\subsection{CR manifolds with $S^1$-action}
Assume that $X$ admits an $S^1$-action.
\begin{definition}\label{d-215605072026}
		We say that the $S^1$-action $e^{i\theta}$ is CR if
		\[
		[T, C^\infty(X, T^{1,0}X)] \subset C^\infty(X, T^{1,0}X)
		\]
		and the $S^1$-action is transversal if for each $x \in X$,
		\[
		\mathbb{C}T_xX = \mathbb{C}T(x) \oplus T_x^{1, 0}X \oplus T_x^{0,1}X. 
		\]
	\end{definition}
	
	Assume that $(X, T^{1,0}X)$ is a CR manifold of dimension $2n+1, n \ge 1$, with a transversal CR $S^1$-action $e^{i\theta}$ and we let $T$ be the infinitesimal generator of the $S^1$-action. 
    
	Suppose that $X$ admits an $S^1$-invariant complete Hermitian metric $\langle \cdot \, | \, \cdot \rangle$ on $\mathbb{C}TX$ so that we have the orthogonal decomposition
	\[
	\mathbb{C}TX = T^{1, 0}X \oplus T^{0, 1}X \oplus \left\{ \lambda T \mid \lambda \in \mathbb{C} \right\}
	\] 
	and $|T|^2 = \langle\, T \, | \, T \,\rangle =1$.
	
	For $u \in \Omega^{0, q}(X)$, we define
	\begin{equation}\label{e-gue220005072026}
	Tu := \frac{\partial}{\partial \theta} ((e^{i\theta})^*u) \mid_{\theta = 0} \in \Omega^{0, q}(X), 
	\end{equation}
	where $(e^{i\theta})^* : T^{*0,q}_{e^{i\theta} \circ x}X \to T^{*0,q}_x X$ is the pull-back map of $e^{i\theta}$. Since the $S^1$-action is CR, as in the $S^1$-action case (see Section 2.4 in \cite{HLM21}), we have
	\[
	T\ddbar_b = \ddbar_b T \quad \text{on} \ \Omega^{0, q}(X).
	\]
	
	Let $(L, h^L)$ be a rigid CR line bundle over $X$ with an $S^1$-invariant Hermitian metric $h^L$ on $L$. We refer the reader to Section 2.3 in \cite{HHL22} for the definitions and terminology about rigid CR vector bundles. We recall the following (see Definition 2.9 in \cite{HHL22})
	
	\begin{definition}\label{D:114511262024}
		A CR vector bundle $E$ of rank $r$ over $X$ with a CR vector bundle lift $T^E$ of $T$ is called rigid CR (with respect to $T^E$) if for every point $p \in X$ there exists an open neighborhood $U$ around $p$ and a CR frame $\{f_1, \cdots, f_r \}$ of $E|_U$ with $T^E(f_j) = 0$ for $1 \le j \le r$.
	\end{definition} 
	
	A section $s \in C^\infty(X, E)$ is called a rigid CR section if $T^E s =0$ and $\ddbar_b s = 0$. The frame $\{f_j \}_{j=1}^r$ in Definition~\eqref{D:114511262024} is called a rigid CR frame of $E|_U$. Note that any rigid CR vector bundle is locally CR trivializable.
	
	For every $k \in \mathbb{N}$, we use $T$ to denote the CR lifting $T^{L^k}$. For $u \in \Omega^{0, q}(X, L^k)$, we can define $Tu$ in the standard way (see Section 2.3 in \cite{HHL22}) and we have $Tu \in \Omega^{0, q}(X, L^k)$.
	
    Consider the operator
	\[
	-iT : \Omega^{0, q}(X, L^k) \to \Omega^{0, q}(X, L^k)
	\]
	and we extend $-iT$ to $L^2_{(0,q)}(X, L^k)$ by
	\begin{equation}\label{e-gue220805072026}
		\begin{split}
			& -iT : \mbox{Dom} \, (-iT) \subset   L^2_{(0,q)}(X, L^k) \to L^2_{(0,q)}(X, L^k),  \\
			& \mbox{Dom} \, (-iT) = \left\{ u \in L^2_{(0,q)}(X, L^k) ; -iTu \in L^2_{(0,q)}(X, L^k)  \right\}.
		\end{split}
	\end{equation}
	It is known that \cite[Corollary 4.3]{HMW22} $-iT$ is self-adjoint. 
	
	Fix $\delta > 0$, let
	\begin{equation}
		\begin{split}
			& L^2_{(0, q), \le k\delta}(X, L^k) : = E_{-iT} \left(\left[-k\delta, k\delta \right] \right),  \\
			& \Omega^{0, q}_{\le k \delta} := \Omega^{0, q}(X, L^k) \cap L^2_{(0, q), \le k\delta}(X, L^k),
		\end{split}
	\end{equation}
	where $E_{-iT}$ denotes the spectral measure of $-iT$. Let
	\[
	Q_{X, \le k\delta}: L^2_{(0, q)}(X, L^k) \to L^2_{(0, q), \le k \delta}(X, L^k)
	\]
	be the orthogonal projection with respect to $(\,\cdot \, | \,\cdot \,)_{h^{L^k}}$. We have the following partial $\ddbar_{b, k}$-complex:
	\[
	\cdots \longrightarrow \Omega^{0, q-1}_{\le k \delta}(X, L^k) \stackrel{\ddbar_{b, k}}{\longrightarrow}\Omega^{0, q}_{\le k \delta}(X, L^k) \stackrel{\ddbar_{b, k}}{\longrightarrow} \Omega^{0, q+1}_{\le k \delta}(X, L^k) \longrightarrow \cdots
	\]
	and put
	\[
	H^q_{b, \le k\delta}(X, L^k) := \frac{\Ker \ddbar_{b,k}:  \Omega^{0, q}_{\le k \delta}(X, L^k)  \to \Omega^{0, q+1}_{\le k \delta}(X, L^k) }{\mbox{Im} \ddbar_{b, k}: \Omega^{0, q-1}_{\le k \delta}(X, L^k)  \to \Omega^{0, q}_{\le k \delta}(X, L^k) }, \quad 0 \le q \le n.
	\]
	
	Let
	\begin{equation}
		\begin{split}
			& \Box^{(q)}_{b, k, \le k\delta} : \mbox{Dom} \, \Box^{(q)}_{b, k, \le k\delta} \subset L^2_{(0, q), \le k\delta}(X, L^k) \to  L^2_{(0, q), \le k\delta}(X, L^k),    \\
			&  \mbox{Dom} \, \Box^{(q)}_{b, k, \le k\delta} =  \mbox{Dom} \, \Box^{(q)}_{b, k}   \cap  L^2_{(0, q), \le k\delta}(X, L^k),  \\
			& \Box^{(q)}_{b, k, \le k\delta} = \ddbar^*_{b, k}\ddbar_{b, k} + \ddbar_{b, k}\ddbar^*_{b, k} \quad \text{on}\ \mbox{Dom} \, \Box^{(q)}_{b, k, \le k\delta}. \ 
		\end{split}
	\end{equation} 
	It is known that
	\[
	H^q_{b, \le k\delta}(X, L^k) \cong \Ker \Box^{(q)}_{b, k, \le k\delta},     \quad \dim H^q_{b, \le k\delta}(X, L^k)  < +\infty.
	\]

\section{The small time asymptotics of CR heat kernel}\label{s-gue250401yyd}
	
	In this section, we will establish asymptotic expansions  of $e^{-t\Box^{(q)}_b}(x,y)$ as $t\To0^+$. We first introduce some symbol spaces.  
	
\subsection{Some symbol classes}\label{s-gue250401yydI}	

Let $D$ be a local coordinate patch of $X$ with local coordinates $x=(x_1,\ldots,x_{2n+1})$. 

	\begin{definition}\label{d-gue250401yyd}
		Let $m \in \mathbb{R}$. Let $\widehat{S}^m(\overline{\mathbb{R}}_+ \times D \times \dot{\mathbb{R}}^{2n+1}, T^{\ast 0, q}X \boxtimes (T^{\ast 0, q}X)^\ast)$ be the space of all $a(t, x, \eta) \in\mathcal{C}^\infty(\overline{\mathbb{R}}_+ \times D \times \dot{\mathbb{R}}^{2n+1}, T^{\ast 0, q}X \boxtimes (T^{\ast 0, q}X)^\ast)$ such that for all $\alpha, \beta \in \mathbb{N}_0^{2n+1}$, $\gamma \in \mathbb{N}_0$ and every compact set $K \subset D$, there exists a constant $C_{\alpha, \beta, \gamma, K}>0$ such that
		\[
		|\partial_t^\gamma \partial_x^\alpha \partial_\eta^\beta a(t, x, \eta)| \le C_{\alpha, \beta, \gamma, K}(1+|\eta|)^{m-|\beta|+\gamma}
		\]
		for all $x \in K$, all $|\eta| \ge 1$, and all $t > 0$. 
		
		We say that $a(t, x, \eta) \in \mathcal{C}^\infty(\overline{\mathbb{R}}_+ \times D \times \dot{\mathbb{R}}^{2n+1}, T^{\ast 0, q}X \boxtimes (T^{\ast 0, q}X)^\ast)$ is quasi-homogeneous of degree $j$ if 
		\[
		a(t, x, \lambda \eta) = \lambda^j a(\lambda t, x, \eta), \quad
        \mbox{for all $\lambda > 0$}.
		\]
		Let $a(t, x, \eta) \in \widehat{S}^m(\overline{\mathbb{R}}_+ \times D \times \dot{\mathbb{R}}^{2n+1}, T^{\ast 0, q}X \boxtimes (T^{\ast 0, q}X)^\ast)$. We write 
		\[
		a(t, x, \eta) \sim \sum_{j=0}^\infty a_j(t, x, \eta) \quad \text{in} \quad  \widehat{S}^m(\overline{\mathbb{R}}_+ \times D \times \dot{\mathbb{R}}^{2n+1}, T^{\ast 0, q}X \boxtimes (T^{\ast 0, q}X)^\ast),
		\]
		where $a_j(t, x, \eta) \in \widehat{S}^{m-j}(\overline{\mathbb{R}}_+ \times D \times \dot{\mathbb{R}}^{2n+1}, T^{\ast 0, q}X \boxtimes (T^{\ast 0, q}X)^\ast)$, $j=0, 1, \cdots,$ if for every $N \in \mathbb{N}$,
		\[
		a - \sum_{j=0}^N a_j \in \widehat{S}^{m-N-1}(\overline{\mathbb{R}}_+ \times D \times \dot{\mathbb{R}}^{2n+1}, T^{\ast 0, q}X \boxtimes (T^{\ast 0, q}X)^\ast).
		\]
		
		Let $a_j \in \widehat{S}^{m-j}(\overline{\mathbb{R}}_+ \times D \times \dot{\mathbb{R}}^{2n+1}, T^{\ast 0, q}X \boxtimes (T^{\ast 0, q}X)^\ast)$, $j=0,1,2, \cdots$. By Borel construction, we can find 
		\[
		a \in \widehat{S}^m(\overline{\mathbb{R}}_+ \times D \times \dot{\mathbb{R}}^{2n+1}, T^{\ast 0, q}X \boxtimes (T^{\ast 0, q}X)^\ast)
		\]
		unique modulo 
		\begin{equation*}
        \begin{split}
		\widehat{S}^{-\infty}&(\overline{\mathbb{R}}_+ \times D \times \dot{\mathbb{R}}^{2n+1}, T^{\ast 0, q}X \boxtimes (T^{\ast 0, q}X)^\ast)\\
       & := \cap_{m \in \mathbb{R}} \widehat{S}^m(\overline{\mathbb{R}}_+ \times D \times \dot{\mathbb{R}}^{2n+1}, T^{\ast 0, q}X \boxtimes (T^{\ast 0, q}X)^\ast)
        \end{split}
        \end{equation*}
		 such that
		\[
		a - \sum_{j=0}^N a_j \in \widehat{S}^{m-N-1}(\overline{\mathbb{R}}_+ \times D \times \dot{\mathbb{R}}^{2n+1}, T^{\ast 0, q}X \boxtimes (T^{\ast 0, q}X)^\ast), \quad \text{for all $N \in \mathbb{N}$.}
		\]

        Let $B$ be a vector bundle over $\overline{\mathbb{R}}_+ \times D \times \dot{\mathbb{R}}^{2n+1}$. For $m\in\mathbb R$, we can define 
        $\widehat{S}^m(\overline{\mathbb{R}}_+ \times D \times \dot{\mathbb{R}}^{2n+1}, B)$ and asymptotic sum in $\widehat{S}^m(\overline{\mathbb{R}}_+ \times D \times \dot{\mathbb{R}}^{2n+1}, B)$ in the same ways. 
	\end{definition}
	
	Similarly, we can define
	
	\begin{definition}\label{d-gue250404yyd}
		Let $m \in \mathbb{R}$ and let $\varepsilon \ge 0$. Let $\widehat{S}^m_\varepsilon(\overline{\mathbb{R}}_+ \times D \times D \times\mathbb{R}_+, T^{\ast 0, q}X \boxtimes (T^{\ast 0, q}X)^\ast)$ be the space of all $a(t, x, y, s) \in \mathcal{C}^\infty(\overline{\mathbb{R}}_+ \times D \times D\times\mathbb{R}_+, T^{\ast 0, q}X \boxtimes (T^{\ast 0, q}X)^\ast)$ such that for all $\alpha, \beta \in \mathbb{N}_0^{2n+1}$, $\gamma_1, \gamma_2 \in \mathbb{N}_0$ and every compact set $K \subset D$, there exists a constant $C_{\alpha, \beta, \gamma_1, \gamma_2, K}>0$ such that
		\[
		|\partial_t^{\gamma_1} \partial_s^{\gamma_2} \partial_x^\alpha \partial_y^\beta a(t, x, y, s)| \le C_{\alpha, \beta, \gamma_1, \gamma_2, K}(1+|s|)^{m+\gamma_1-\gamma_2} e^{-\varepsilon t s}
		\]
		for all $(x, y) \in K \times K$ and all $s \ge 1$, and all $t \in \mathbb{R}_+$.

		Let $a(t, x, y, s) \in \widehat{S}^m_\varepsilon(\overline{\mathbb{R}}_+ \times D \times D \times\mathbb{R}_+, T^{\ast 0, q}X \boxtimes (T^{\ast 0, q}X)^\ast)$. We write 
		\[
		a(t, x, y, s) \sim \sum_{j=0}^\infty a_j(t, x, y, s) \quad \text{in} \quad  \widehat{S}^m_\varepsilon(\overline{\mathbb{R}}_+ \times D \times D \times \overline{\mathbb{R}}_+, T^{\ast 0, q}X \boxtimes (T^{\ast 0, q}X)^\ast),
		\]
		where $a_j(t, x, y, s) \in \widehat{S}^{m-j}_\varepsilon(\overline{\mathbb{R}}_+ \times D \times D \times \mathbb{R}_+, T^{\ast 0, q}X \boxtimes (T^{\ast 0, q}X)^\ast)$, $j=0, 1, \cdots,$ if for every $N \in \mathbb{N}$,
		\[
		a - \sum_{j=0}^N a_j \in \widehat{S}^{m-N-1}_\varepsilon(\overline{\mathbb{R}}_+ \times D \times D\times\mathbb{R}_+, T^{\ast 0, q}X \boxtimes (T^{\ast 0, q}X)^\ast).
		\]
		
		Let $a_j \in \widehat{S}^{m-j}_\varepsilon(\overline{\mathbb{R}}_+ \times D \times D \times\mathbb{R}_+, T^{\ast 0, q}X \boxtimes (T^{\ast 0, q}X)^\ast)$, $j=0, 1, 2, \cdots$. By Borel construction, we can find 
		\[
		a \in \widehat{S}^m_\varepsilon(\overline{\mathbb{R}}_+ \times D \times D \times\mathbb{R}_+, T^{\ast 0, q}X \boxtimes (T^{\ast 0, q}X)^\ast)
		\] 
		unique modulo 
		\begin{equation*}
        \begin{split}
		\widehat{S}^{-\infty}_\varepsilon&(\overline{\mathbb{R}}_+ \times D \times D \times \mathbb{R}_+, T^{\ast 0, q}X \boxtimes (T^{\ast 0, q}X)^\ast)\\
        &:= \cap_{m \in \mathbb{R}} \widehat{S}^m_\varepsilon(\overline{\mathbb{R}}_+ \times D \times D \times\mathbb{R}_+, T^{\ast 0, q}X \boxtimes (T^{\ast 0, q}X)^\ast)
        \end{split}
		\end{equation*}
		such that
		\[
		a - \sum_{j=0}^N a_j \in \widehat{S}^{m-N-1}_\varepsilon(\overline{\mathbb{R}}_+ \times D \times D \times\mathbb{R}_+, T^{\ast 0, q}X \boxtimes (T^{\ast 0, q}X)^\ast), \quad \text{for all $N \in \mathbb{N}$.}
		\]

        Let $B$ be a vector bundle over $\overline{\mathbb{R}}_+ \times D\times D \times\mathbb R_+$. For $m\in\mathbb R$, $\varepsilon\geq0$, we can define 
        $\widehat{S}^m_\varepsilon(\overline{\mathbb{R}}_+ \times D \times D\times\mathbb R_+, B)$ and asymptotic sum in $\widehat{S}^m_\varepsilon(\overline{\mathbb{R}}_+ \times D\times D \times\mathbb R_+, B)$ in the same ways. 
	\end{definition} 

    \begin{definition}\label{d-gue250404yydI}
		Let $m \in \mathbb{R}$ and let $\varepsilon \ge 0$. Let $\widehat{S}^m_{\varepsilon,{\rm cl\,}}(\overline{\mathbb{R}}_+ \times D \times D \times\mathbb{R}_+, T^{\ast 0, q}X \boxtimes (T^{\ast 0, q}X)^\ast)$ be the space of all $a(t, x, y, s) \in \widehat{S}^m_\varepsilon(\overline{\mathbb{R}}_+ \times D \times D \times\mathbb{R}_+, T^{\ast 0, q}X \boxtimes (T^{\ast 0, q}X)^\ast)$ such that 
        \[
		a(t, x, y, s) \sim \sum_{j=0}^\infty a_j(t, x, y, s) \quad \text{in} \quad  \widehat{S}^m_\varepsilon(\overline{\mathbb{R}}_+ \times D \times D \times\mathbb{R}_+, T^{\ast 0, q}X \boxtimes (T^{\ast 0, q}X)^\ast),
		\]
		where $a_j(t, x, y, s) \in \widehat{S}^{m-j}_\varepsilon(\overline{\mathbb{R}}_+ \times D \times D \times \mathbb{R}_+, T^{\ast 0, q}X \boxtimes (T^{\ast 0, q}X)^\ast)$, 
        \[\mbox{$a_j(t,x,y,\lambda s)=\lambda^{m-j}a_j(\lambda t,x,y,s)$, \quad for every $s>0$, $\lambda\geq1$, $t\in\mathbb R_+$},\]
        $j=0, 1, \cdots$. 

 Let $B$ be a vector bundle over $\overline{\mathbb{R}}_+ \times D\times D \times\mathbb R_+$. Let $m\in\mathbb R$ and let $\varepsilon\geq0$. We can define 
        $\widehat{S}^m_{\varepsilon,{\rm cl\,}}(\overline{\mathbb{R}}_+ \times D \times D \times\mathbb{R}_+, B)$ in the same way. 
	\end{definition}

Let $\Omega\subset\mathbb R^N$ be an open set of $\mathbb R^N$. Let $\ell\in\mathbb N$. Let $E$ be a smooth vector bundle over $\Omega\times\dot{\mathbb R}^\ell$. For $m\in\mathbb R$, let $S^m_{1,0}(\Omega\times\dot{\mathbb{R}}^\ell,E)$ be the H\"ormander symbol space with values in $E$ of order $m$ and of type $(1,0)$. 
	
\subsection{Small-time asymptotics}\label{s-gue250401yyda}

Recall that we work with the assumption that the Levi form is non-degenerate of constant signature $(n_-, n_+)$, $n_-+n_+=n$. Fix $q \in \{0, 1, \cdots, n\}$. Fix $p \in X$ and let $x = (x_1, x_2, \cdots, x_n)$ be local coordinates of $X$ defined on an open set $D$ of $X$, $p \in D$, $x(p) = 0 \in \mathbb{R}^{2n+1}$, $T = -\frac{\partial}{\partial x_{2n+1}}$ on $D$, $\omega_0(0)=dx_{2n+1}$. 
	
    It was shown in \cite{Hsiao08} that on $D\times D$, we can find 
	\begin{equation}\label{e-gue250402yydI}
	A(t, x, y)=\frac{1}{(2\pi)^{2n+1}}\int e^{i(\Psi(t,x,\eta)-<y,\eta>)}a(t,x,\eta)d\eta,
	\end{equation}
	where there is a constant $C>1$ such that 
    \[\frac{1}{C}d((x,\frac{\eta}{\abs{\eta}}),\Sigma)^2\frac{t\abs{\eta}^2}{1+t\abs{\eta}}\leq{\rm Im\,}\Psi(t,x,\eta)\leq Cd((x,\frac{\eta}{\abs{\eta}}),\Sigma)^2\frac{t\abs{\eta}^2}{1+t\abs{\eta}},\]
    for all $t\in\mathbb R_+$, $\abs{\eta}\geq1$, 
	\[
	\begin{split}
		& \Psi(0, x, \eta) = \langle x, \eta \rangle, \\
        & \Psi(t, x, \eta) = \langle x, \eta \rangle\ \ \mbox{on $\Sigma$},\\
		& \Psi(t, x, \lambda \eta) = \lambda \Psi(\lambda t, x ,\eta), \\
		& d_{x, \eta}(\Psi(t, x, \eta) - \langle x, \eta \rangle) = 0 \quad \text{on} \quad \Sigma, \\
		& a(t,x,\eta)\sim\sum^\infty_{j=0}a_j(t,x,\eta) \quad \text{in} \quad \widehat{S}^0(\overline{\mathbb{R}}_+ \times D \times \dot{\mathbb{R}}^{2n+1}, T^{\ast 0,q}X\boxtimes (T^{\ast 0,q}X)^\ast), \\
		& a_j(t, x, \lambda \eta) = \lambda^{-j}a_j(\lambda t, x, \eta), \quad \mbox{if $|\eta| \ge 1$, $t>0$, $\lambda \ge 1$, $j=0,1,\cdots$},
	\end{split}
	\]
	and for every $\alpha, \beta \in \mathbb{N}^{2n+1}_0$, every $j=0, 1, \cdots$, and every compact set $K \subset D$, there are constants $C_{K,\alpha,\beta}>0$, $C_{K, \alpha, \beta, j}>0$, $j=0,1,\ldots$, such that
	\[\begin{split}
&| \partial_x^\alpha \partial_\eta^\beta (\Psi(t, x, \eta) - \Psi(\infty, x, \eta)) | \le C_{K,\alpha,\beta} e^{-\varepsilon t |\eta|}(1+|\eta|)^{-|\beta|},\\
 &| \partial_x^\alpha \partial_\eta^\beta (a(t, x, \eta) - a(\infty, x, \eta)) | \le C_{K,\alpha,\beta} e^{-\varepsilon t |\eta|}(1+|\eta|)^{-|\beta|},\\
	&| \partial_x^\alpha \partial_\eta^\beta (a_j(t, x, \eta) - a_j(\infty, x, \eta)) | \le C_{K, \alpha,\beta,j} e^{-\varepsilon t |\eta|}(1+|\eta|)^{-j-|\beta|},
	\end{split}\]
	where $\varepsilon >0$ is a constant independent of $j, \alpha, \beta$ and $\Psi(\infty,x,\eta)\in\mathcal{C}^\infty(T^*D)$, $\Psi(\infty,x,\lambda\eta)=\lambda\Psi(\infty,x,\eta)$, for all $\abs{\eta}\geq1$, $\lambda\geq1$, 
 $d_{x, \eta}(\Psi(\infty, x, \eta) - \langle x, \eta \rangle) = 0$ on $\Sigma$, $\Psi(\infty, x, \eta)=\langle\,x\,,\,\eta\,\rangle$ on $\Sigma$, there is a constant $C>1$ such that 
 \[\frac{1}{C}\abs{\eta}d((x,\frac{\eta}{\abs{\eta}}),\Sigma)^2\leq{\rm Im\,}\Psi(\infty,x,\eta)\leq C\abs{\eta}d((x,\frac{\eta}{\abs{\eta}}),\Sigma)^2,\]
    for all $\abs{\eta} \ge 1$, 
\[
	\begin{split}
    &a(\infty,x,\eta)\in S^0_{1,0}(D\times\dot{\mathbb R}^{2n+1},T^{\ast 0,q} X \boxtimes (T^{\ast 0,q}X)^\ast),\\
&\mbox{$a(\infty,x,\eta)\sim\sum^{+\infty}_{j=0}a_j(\infty,x,\eta)$ \quad in \quad $S^0_{1,0}(D\times\dot{\mathbb R}^{2n+1},T^{\ast 0,q}X\boxtimes(T^{\ast 0,q}X)^\ast)$},\\
    & a_j(\infty, x,\eta) \in \mathcal{C}^\infty(T^\ast D, T^{\ast 0,q} X \boxtimes (T^{\ast 0,q}X)^\ast), \\
		& a_j(\infty, x, \lambda \eta) = \lambda^{-j}a_j(\infty, x, \eta), \quad  \text{if $|\eta| \ge 1$,\ $\lambda \ge 1$, \ $j=0, 1, \cdots$}.
	\end{split}
	\]
	When $q \not= n_-$, 
	\[
	a(\infty,x,\eta)=0,\ \  a_j(\infty, x, \eta) = 0 \quad \text{if $\eta_{2n+1} < 0$, \ $j=0,1, \cdots$},
	\]
	and when $q \not= n_+$, 
	\[
	a(\infty,x,\eta)=0,\ \ a_j(\infty, x, \eta) = 0 \quad \text{if $\eta_{2n+1} > 0$, \ $j=0,1, \cdots$},
	\]
	such that
	\begin{equation}\label{E:010820252208}
		\left\{
		\begin{split} &  A'(t) + \Box_b^{(q)}A(t) = R(t) \quad \text{on} \ D, \\
			& A(0) = I,  
		\end{split}
		\right.
	\end{equation}
	where $A(t) : \Omega_c^{0,q}(D) \to \Omega^{0, q}(D)$ is the continuous operator with distribution kernel $A(t, x, y)$, and $R(t) : \Omega^{0,q}_c(D) \to \Omega^{0, q}(D)$ is a continuous operator with distribution kernel $R(t, x, y) \in\mathcal{C}^\infty (\overline{\mathbb{R}}_+ \times D \times D, T^{\ast 0, q}X \boxtimes (T^{\ast 0, q}X)^\ast)$. 
	
	Let $\eta' = (\eta_1, \cdots, \eta_{2n})$, $x' = (x_1, \cdots, x_{2n})$. Then, by \cite[Proposition 3.2]{Hsiao08} and repeat the proof of~\cite[Lemma 7.14]{Hsiao08}, we have
    
	\begin{lemma}\label{L:010820252224}
		For fixed $t > 0$ and $\eta_{2n+1} \not= 0$, 
		\[
		d_{\eta'}(\Psi(t, x, \eta) - \langle y, \eta \rangle)\big|_{\eta'=0, x= y = 0} = 0,
		\]
		and
		\[
		\left( \frac{\partial^2}{\partial \eta_j \partial \eta_l} \left( \Psi(t, x, \eta) - \langle y, \eta \rangle \right)\big|_{\eta'=0, x = y = 0} \right)_{j, l =1}^{2n}
		\]
		is invertible.
	\end{lemma}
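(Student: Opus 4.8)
The plan is to deduce both assertions from the geometry of the characteristic manifold $\Sigma$ together with the structural properties of the phase $\Psi$ recorded above, without appealing to the eikonal equation directly. In the chosen coordinates $\omega_0(0)=dx_{2n+1}$, so for every $\eta_{2n+1}\neq0$ the covector $(0,\dots,0,\eta_{2n+1})=\eta_{2n+1}\,dx_{2n+1}$ equals $\eta_{2n+1}\omega_0(0)$; hence $\rho_0:=\bigl(0,(0,\dots,0,\eta_{2n+1})\bigr)\in\Sigma$. Since $\Psi(t,x,\eta)=\langle x,\eta\rangle$ on $\Sigma$ and $d_{x,\eta}\bigl(\Psi(t,x,\eta)-\langle x,\eta\rangle\bigr)=0$ on $\Sigma$, we get at $\rho_0$ that $\Psi(t,0,(0,\dots,0,\eta_{2n+1}))=0$ and $\partial_{\eta_j}\Psi(t,0,(0,\dots,0,\eta_{2n+1}))=\partial_{\eta_j}\langle x,\eta\rangle|_{x=0}=0$ for $j=1,\dots,2n$. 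As $\langle y,\eta\rangle$ is linear in $\eta$ and $x=y=0$, the latter identities are exactly $d_{\eta'}\bigl(\Psi(t,x,\eta)-\langle y,\eta\rangle\bigr)\big|_{\eta'=0,\,x=y=0}=0$, which is the first assertion.

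For the nondegeneracy, fix $t>0$ and $\eta_{2n+1}\neq0$. Using the homogeneity $\Psi(t,x,\lambda\eta)=\lambda\Psi(\lambda t,x,\eta)$ I would reduce to $\eta_{2n+1}=\pm1$ (at the cost of replacing $t$ by $|\eta_{2n+1}|t$), which moreover forces $|\eta|\geq1$ for $\eta'$ small, so the bounds on $\operatorname{Im}\Psi$ apply. Taylor expanding $\eta'\mapsto\Psi(t,0,(\eta',\pm1))$ about $\eta'=0$ and using the two facts just established — the constant and first-order terms vanish, and $\langle y,\eta\rangle$ contributes nothing to the quadratic part — we obtain $\Psi(t,0,(\eta',\pm1))=\tfrac12\sum_{j,l=1}^{2n}H_{jl}\eta_j\eta_l+O(|\eta'|^3)$, where $H=(H_{jl})$ is precisely the complex symmetric matrix in the statement.

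Next I would estimate the distance to $\Sigma$: writing $\omega_0(x)=(\alpha(x),1+\beta(x))$ with $\alpha(0)=0$ and $\beta(0)=0$, and minimizing $|x|^2+|\eta'-\lambda\alpha(x)|^2+|\pm1-\lambda(1+\beta(x))|^2$ over $(x,\lambda)$ near $(0,\pm1)$, a short elementary argument gives $d\bigl((0,(\eta',\pm1)),\Sigma\bigr)\asymp|\eta'|$ for $|\eta'|$ small. Inserting this into the lower bound $\operatorname{Im}\Psi(t,x,\eta)\geq C^{-1}d((x,\eta),\Sigma)^2\,\tfrac{t|\eta|^2}{1+t|\eta|}$ and noting that the factor $\tfrac{t|\eta|^2}{1+t|\eta|}$ stays bounded below by a positive constant near $\rho_0$, we get $\operatorname{Im}\Psi(t,0,(\eta',\pm1))\geq c'|\eta'|^2$ for some $c'>0$ and all small $\eta'$. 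Comparing with the quadratic Taylor term forces $\operatorname{Im}H$ to be positive definite, and a complex symmetric matrix with positive definite imaginary part is invertible: if $Hv=0$ for $v\in\Complex^{2n}$, then $\overline{v}^{T}Hv=0$, whose imaginary part is $\overline{v}^{T}(\operatorname{Im}H)v$, which forces $v=0$. This proves the second assertion.

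The only mildly technical ingredient is the distance estimate $d\bigl((0,(\eta',\pm1)),\Sigma\bigr)\asymp|\eta'|$; everything else is a formal consequence of the properties of $\Psi$ borrowed from \cite{Hsiao08}. As an alternative, and closer in spirit to the proof of \cite[Lemma 7.14]{Hsiao08}, one can differentiate the complex eikonal equation satisfied by $\Psi$ twice in $\eta'$ along the (complex) bicharacteristic through $\rho_0$: this produces a matrix Riccati equation whose solution vanishes at $t=0$ and whose $t$-derivative there is, up to the factor $i$, the $\xi'$-Hessian at $\rho_0$ of the principal symbol of $\Box_b^{(q)}$ — a positive definite quadratic form, since this symbol vanishes to exactly second order transversally to $\Sigma$ — from which invertibility for all $t>0$ again follows.
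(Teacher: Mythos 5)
Your proof is correct, and it is essentially the argument the paper relies on: the paper establishes this lemma only by invoking \cite[Proposition 3.2]{Hsiao08} and repeating the proof of \cite[Lemma 7.14]{Hsiao08}, whose mechanism is exactly your chain — the point $(0,\eta_{2n+1}dx_{2n+1})$ lies on $\Sigma$, so the listed properties of $\Psi$ kill the constant and first-order terms, and the bound $\operatorname{Im}\Psi\geq C^{-1}d((x,\eta),\Sigma)^2\,t|\eta|^2/(1+t|\eta|)$ combined with $d\bigl((0,(\eta',\pm1)),\Sigma\bigr)\asymp|\eta'|$ forces the $\eta'$-Hessian to have positive definite imaginary part, hence to be invertible. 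Your quasi-homogeneity reduction to $\eta_{2n+1}=\pm1$ and the linear-algebra step for complex symmetric matrices with positive definite imaginary part are both sound, so nothing needs to be added.
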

	
	From Lemma~\ref{L:010820252224}, we can apply complex stationary phase formula of Melin-Sj\"{o}strand to carry out the integration of $\eta'$ in \eqref{e-gue250402yydI} and get
	
	\begin{theorem}\label{T:0111202511001}
		With the notations used above, on $D\times D$, we have
		\begin{equation}\label{e-gue250402ycdb}
        \begin{split}
		A(t, x, y) =& \int_0^\infty e^{i \varphi_-(t, x, y, s)}a_-(t, x, y, s) ds\\
        &+  \int_0^\infty e^{i \varphi_+(t, x, y, s)}a_+(t, x, y, s) ds + \mathcal{E}(t, x, y),
        \end{split}
		\end{equation}
		where 
		\begin{equation}\label{E:011020251803}
		\begin{split}
                & \mathcal{E}(t, x, y) \in \mathcal{C}^\infty(\overline{\mathbb{R}}_+ \times D \times D, T^{\ast 0,q}X\boxtimes (T^{\ast 0,q}X)^\ast), \\
			& \varphi_\pm(t, x, y, s) \in\mathcal{C}^\infty(\overline{\mathbb{R}}_+ \times D \times D \times\mathbb{R}_+), \\
			& \operatorname{Im} \varphi_\pm (t, x, y, s) \ge 0, \\
			& \varphi_\pm(t, x, x, s) = 0,\ \ \mbox{for all $x\in D$}, \\
            &(d_x\varphi_\pm)(t, x, x, s) = \pm \omega_0(x)s, \quad \text{for all $(t, x, s) \in \mathbb{R}_+ \times D \times \mathbb{R}_+$},\\
			& \varphi_\pm (t, x, y, \lambda s) = \lambda \varphi_\pm(\lambda t, x, y, s), \ \ \mbox{for all $s>0$, $\lambda \ge 1$}, \\
			& a_\pm(t, x, y, s) \in\widehat S^n_{0,{\rm cl\,}}( \overline{\mathbb{R}}_+ \times D \times D \times\mathbb{R}_+, T^{\ast 0, q}X \boxtimes (T^{\ast 0, q}X)^\ast), \\
			& a_\pm(t,x, y, s)\sim\sum^\infty_{j=0}a^j_\pm(t,x, y, s)  \ \text{in}\ \widehat{S}^n_0(\overline{\mathbb{R}}_+ \times D \times D \times \mathbb{R}_+, T^{\ast 0,q}X\boxtimes (T^{\ast 0,q}X)^\ast), \\
			& a^j_\pm(t, x, y, s) \in C^\infty(\overline{\mathbb{R}}_+ \times D \times D \times \mathbb{R}_+, T^{\ast 0,q}X\boxtimes (T^{\ast 0,q}X)^\ast), j=0,1, \cdots, \\
			& a^j_\pm (t, x, y, \lambda s) = \lambda^{n-j}a^j_\pm(\lambda t, x, y, s),\quad t, s \not= 0,\ \lambda \ge 1, \ j=0,1,\cdots.
		\end{split}
		\end{equation}
		Moreover, we can find $a_{\pm}(x,y,s)\in S^n_{1,0}(D\times D\times\mathbb R_+,T^{*0,q}X\boxtimes(T^{*0,q}X)^*)$, 
        \[\mbox{$a_{\pm}(x,y,s)\sim\sum^{+\infty}_{j=0}a^j_{\pm}(x,y)s^{n-j}$ in $S^n_{1,0}(D\times D\times\mathbb R_+,T^{*0,q}X\boxtimes(T^{*0,q}X)^*)$},\] 
        $a^j_{\pm}(x,y)\in\mathcal{C}^\infty(D\times D,T^{*0,q}X\boxtimes(T^{*0,q}X)^*)$, $j=0,1,\ldots$, with  $a_-(x,y,s)=0$, $a^j_-(x,y)=0$, $j=0,1,\ldots$, if $q\neq n_-$, $a_+(x,y,s)=0$, $a^j_+(x,y)=0$, $j=0,1,\ldots$, if $q\neq n_+$, $a^0_-(x,y)\neq0$ if $q=n_-$, $a^0_+(x,y)\neq0$ if $q = n_+$, such that 
        \begin{equation}\label{e-gue250404yydu}
		\begin{split}
		& a_\pm(t, x, y, s) - a_\pm(x, y, s) \in \widehat{S}^n_\varepsilon( \overline{\mathbb{R}}_+ \times D \times D \times \overline{\mathbb{R}}_+, T^{\ast 0,q}X\boxtimes (T^{\ast 0,q}X)^\ast ), \\
		& a^j_\pm(t, x, y, s) - a^j_\pm(x, y)s^{n-j} \in \widehat{S}^{n-j}_\varepsilon( \overline{\mathbb{R}}_+ \times D \times D \times \overline{\mathbb{R}}_+, T^{\ast 0,q}X\boxtimes (T^{\ast 0,q}X)^\ast ), 
		\end{split}
		\end{equation}
		where $j = 0, 1, 2, \cdots$,$\varepsilon > 0$. Furthermore, for every $\alpha, \beta \in \mathbb{N}_0^{2n+1}$ and for every compact set $K \subset D$, there is a constant $C_{\alpha, \beta, K}>0$ such that 
		\begin{equation}\label{e-gue250404yydv}
		|\partial_x^\alpha \partial_y^\beta (\varphi_\pm(t, x, y, s) - s \varphi_\pm(x, y))| \le C_{\alpha, \beta, K}e^{-ts}(1+|s|),
		\end{equation}
		for every $(t, x, y, s) \in \overline{\mathbb{R}}_+ \times K \times K \times \overline{\mathbb{R}}_+$ 
		where $\varphi_\pm(x, y) \in C^\infty(D \times D)$ is the phase appearing in the description of the Szeg\H{o} projection (see~\cite[Theorem 1.2]{Hsiao08},~\cite[Theorem 4.1]{HM17}).
	\end{theorem}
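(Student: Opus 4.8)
The plan is to integrate out, in the oscillatory integral \eqref{e-gue250402yydI} representing $A(t,x,y)$, the $2n$ non-characteristic frequency variables $\eta'=(\eta_1,\dots,\eta_{2n})$ by the complex stationary phase method of Melin--Sj\"ostrand, so that the remaining frequency $\eta_{2n+1}$ becomes, after rescaling, the parameter $s$. First I would split $\int d\eta$ into the pieces $\{\eta_{2n+1}>0\}$, $\{\eta_{2n+1}<0\}$ and $\{|\eta_{2n+1}|\le 1\}$, the last one contributing a term in $\mathcal{C}^\infty(\overline{\mathbb R}_+\times D\times D)$ that is absorbed into $\mathcal{E}$. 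On $\{\eta_{2n+1}>0\}$ I would substitute $\eta=(s\xi',s)$ with $s>0$, so that $d\eta=s^{2n}\,ds\,d\xi'$ and, by the homogeneity $\Psi(t,x,\lambda\eta)=\lambda\Psi(\lambda t,x,\eta)$, the full phase becomes $s\,\psi_+(st,x,y,\xi')$ with $\psi_+(\tau,x,y,\xi'):=\Psi(\tau,x,(\xi',1))-\langle y',\xi'\rangle-y_{2n+1}$. By Lemma~\ref{L:010820252224}, which supplies a nondegenerate critical point of $\xi'\mapsto\psi_+$ at $\xi'=0$, $x=y=0$, and the implicit function theorem (for almost analytic functions), $\xi'\mapsto\psi_+(\tau,x,y,\xi')$ has, for $(x,y)$ near the base point and all $\tau\in\overline{\mathbb R}_+$, a unique nondegenerate critical point $\xi'_c=\xi'_c(\tau,x,y)$, smooth in all variables and complex-valued off the diagonal, with $\operatorname{Im}\psi_+(\tau,x,y,\xi'_c)\ge0$ because $\operatorname{Im}\Psi\ge0$; uniformity up to $\tau=\infty$ comes from the exponential convergence of $\Psi(\tau,\cdot)$ and all its derivatives to $\Psi(\infty,\cdot)$ recorded in Subsection~\ref{s-gue250401yyda}.

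Next I would apply the Melin--Sj\"ostrand complex stationary phase formula to the $\xi'$-integral with large parameter $s$. Modulo a remainder smooth in $(t,x,y)$, absorbed into $\mathcal{E}$, this produces $\int_0^\infty e^{i\varphi_+(t,x,y,s)}a_+(t,x,y,s)\,ds$ with $\varphi_+(t,x,y,s):=s\,\psi_+(st,x,y,\xi'_c(st,x,y))$, and $a_+$ equal to $s^{2n}$ (the Jacobian) times $s^{-n}$ (the gain of the $2n$-dimensional stationary phase) times the Hessian factor times the Taylor data of $a$ at $\xi'_c$, hence $a_+\in\widehat S^n$. The properties in \eqref{E:011020251803} are then read off the construction: homogeneity of $\varphi_+$ in $s$ is immediate from its definition; $\varphi_+(t,x,x,s)=0$ because $d_{x,\eta}(\Psi-\langle x,\eta\rangle)=0$ on $\Sigma$ forces $\xi'_c(st,x,x)$ to be the covector of the point of $\Sigma^+$ over $x$, where $\Psi=\langle x,\eta\rangle$ and hence $\psi_+=0$; $(d_x\varphi_+)(t,x,x,s)=s\,\omega_0(x)$ because $d_x\Psi=\eta$ on $\Sigma$ with $\eta=s\,\omega_0(x)$ there, the $d_{\xi'}$-contribution dropping by the critical point equation; $\operatorname{Im}\varphi_+\ge0$ by the analogous fact for $\Psi$; and the classical expansion $a_+\sim\sum_j a_+^j$ together with the homogeneity $a_+^j(t,x,y,\lambda s)=\lambda^{n-j}a_+^j(\lambda t,x,y,s)$ descends from the symbol expansion and homogeneity of $a$. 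Running the same argument on $\{\eta_{2n+1}<0\}$ gives $\varphi_-,a_-$ attached to $\Sigma^-$, so that $(d_x\varphi_-)(t,x,x,s)=-s\,\omega_0(x)$.

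For the finer structure in \eqref{e-gue250404yydu} and \eqref{e-gue250404yydv} I would split $\Psi(\tau,\cdot)=\Psi(\infty,\cdot)+\bigl(\Psi(\tau,\cdot)-\Psi(\infty,\cdot)\bigr)$ with the correction and all its derivatives $O(e^{-\varepsilon\tau})$, and likewise for $a$, and propagate the splitting through $\xi'_c$ and the stationary phase expansion. This yields $|\partial_x^\alpha\partial_y^\beta(\varphi_\pm(t,x,y,s)-s\,\varphi_\pm(x,y))|\le Ce^{-ts}(1+|s|)$, $a_\pm(t,x,y,s)-a_\pm(x,y,s)\in\widehat S^n_\varepsilon$ and the analogue for the $a_\pm^j$, with $\varphi_\pm(x,y)$ and $a_\pm(x,y,s)$ the phase and symbol of the Szeg\H o projection of \cite{Hsiao08,HM17} obtained as the $\tau=\infty$ limit. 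Finally, since $a(\infty,x,\eta)$ and each $a_j(\infty,x,\eta)$ vanish for $\eta_{2n+1}<0$ when $q\ne n_-$ and for $\eta_{2n+1}>0$ when $q\ne n_+$, one gets $a_-(x,y,s)=0$ and $a_-^j(x,y)=0$ for $q\ne n_-$, and $a_+(x,y,s)=0$ and $a_+^j(x,y)=0$ for $q\ne n_+$; nonvanishing of the leading Szeg\H o symbol on $\Sigma^-$, resp. $\Sigma^+$, gives $a_-^0(x,y)\ne0$ when $q=n_-$, resp. $a_+^0(x,y)\ne0$ when $q=n_+$.

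The main obstacle is making this stationary phase reduction uniform down to $t=0$: at $t=0$ the phase $\psi_\pm(0,x,y,\xi')$ is affine in $\xi'$ and the amplitude $a(0,x,\eta)$ reduces to the identity, which does not decay in $\xi'$, so the Melin--Sj\"ostrand formula degenerates and the $\delta(x-y)$ singularity of $A(0)=I$ has instead to be recovered from the oscillatory $s$-integral itself --- that is, from the oscillatory-integral representation of $\delta(x-y)$ built into the homogeneity of $\varphi_\pm$, the identity $d_x\varphi_\pm(t,x,x,s)=\pm s\,\omega_0(x)$ and the positivity $\operatorname{Im}\varphi_\pm>0$ off the diagonal. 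I would deal with this by running the Menikoff--Sj\"ostrand heat operator construction \cite{MS78} directly in the rescaled variables $(\xi',s)$, with $\tau=st$ as auxiliary parameter, exploiting that the obstruction is confined to the bounded range $\tau\lesssim 1$ where the distinction between $\widehat S^0_\varepsilon$ and $\widehat S^0$ is invisible; a secondary, purely technical layer is the bookkeeping of the cutoffs that localize $\xi'$ near its interior critical point and the verification that every discarded contribution is smooth on $\overline{\mathbb R}_+\times D\times D$.
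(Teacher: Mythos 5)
Your proposal follows essentially the same route as the paper: the paper obtains Theorem~\ref{T:0111202511001} precisely by invoking Lemma~\ref{L:010820252224} (nondegeneracy of the $\eta'$-Hessian of the phase) and then applying the Melin--Sj\"ostrand complex stationary phase formula to carry out the $\eta'$-integration in \eqref{e-gue250402yydI}, with the listed properties of $\varphi_\pm$, $a_\pm$ and the comparison with the Szeg\H{o} phase and symbol read off from the quasi-homogeneity and the exponential-in-$t|\eta|$ estimates for $\Psi-\Psi(\infty)$ and $a-a(\infty)$ recorded in Subsection~\ref{s-gue250401yyda}, exactly as you describe. Your extra discussion of uniformity down to $t=0$ is a sensible elaboration of the same argument rather than a different method.
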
 

    The following is known~(see~\cite[Page 147]{MS74}) 

\begin{lemma} \label{l-gue250402yyd}
In some open neighborhood $\Omega$ of $p$ in $D$, for all $t\in\mathbb R_+$, we have
\begin{equation} \label{e:0709212032} 
\begin{split}
{\rm Im\,}&\varphi_{+}(t,x,y,1)  \\
&\geq c\frac{t}{1+t}\inf_{w\in W}\Big({\rm Im\,}\Psi(t, x, (w, 1))
+\abs{d_w(\Psi(t, x, (w, 1))-\langle\,y, (w, 1)\,\rangle)}^2\Big), \\ 
{\rm Im\,}&\varphi_{-}(t,x,y,1) \\
&\geq c\frac{t}{1+t}\inf_{w\in W}\Big({\rm Im\,}\Psi(t, x, (w, -1))
+\abs{d_w(\Psi(t, x, (w, -1))-\langle\,y, (w, -1)\,\rangle)}^2\Big), \\ 
&(x, y)\in \Omega\times \Omega,
\end{split}
\end{equation}
where $c$ is a positive constant and $W$ is some open set of the origin in $\Real^{2n}$.
\end{lemma}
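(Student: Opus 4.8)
The plan is to read off \eqref{e:0709212032} from the complex stationary phase construction that produces the phases $\varphi_\pm$ in Theorem~\ref{T:0111202511001}, combined with the standard Melin--Sj\"ostrand estimate for the imaginary part of a critical value. Recall that $\varphi_\pm(t,x,y,s)$ is obtained by carrying out the $\eta'$-integration in \eqref{e-gue250402yydI}: writing $\eta=(\eta',\eta_{2n+1})$ and setting $\eta_{2n+1}=\pm s$, Lemma~\ref{L:010820252224} shows that at $\eta'=0$, $x=y=0$ the function $w\mapsto \Psi(t,x,(w,\pm s))-\langle y,(w,\pm s)\rangle$ has vanishing $w$-differential and invertible $w$-Hessian, so for $(x,y)$ near $p$ the critical equation $d_w\bigl(\Psi(t,x,(w,\pm s))-\langle y,(w,\pm s)\rangle\bigr)=0$ has, for the almost-analytic extension, a unique solution $w_c=w_c(t,x,y,s)$, and $\varphi_\pm(t,x,y,s)$ is the corresponding critical value. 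By the quasi-homogeneity $\varphi_\pm(t,x,y,\lambda s)=\lambda\varphi_\pm(\lambda t,x,y,s)$ it suffices to treat $s=1$. First I would fix $s=1$, put
\[
f_{t,x,y}(w):=\Psi(t,x,(w,1))-\langle y,(w,1)\rangle
\]
(and its analogue with $(w,-1)$ for $\varphi_-$), and record that $\operatorname{Im} f_{t,x,y}\ge 0$ by \cite{Hsiao08}, and that on a fixed small ball $W\ni 0$ in $\mathbb R^{2n}$ (on which $|(w,1)|$ is bounded above and below) the two-sided bound $\tfrac1C d((x,\eta),\Sigma)^2\tfrac{t|\eta|^2}{1+t|\eta|}\le \operatorname{Im}\Psi(t,x,\eta)\le C d((x,\eta),\Sigma)^2\tfrac{t|\eta|^2}{1+t|\eta|}$ recorded after \eqref{e-gue250402yydI}, together with the derivative estimates on $\Psi(t,x,\eta)-\Psi(\infty,x,\eta)$ stated before Theorem~\ref{T:0111202511001}, forces the $w$-Hessian of $f_{t,x,y}$ at $w_c$ to be invertible with inverse of operator norm $O((1+t)/t)$, uniformly for $t\in\mathbb R_+$ and $(x,y)$ near $p$.

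Next I would invoke the general Melin--Sj\"ostrand lemma (cf. \cite[p.~147]{MS74}; see also \cite{MS78}): for a phase $f$ with $\operatorname{Im} f\ge 0$ and a non-degenerate critical point, the critical value $\widetilde f$ of its almost-analytic extension satisfies
\[
\operatorname{Im}\widetilde f\ \ge\ c_f\,\inf_{w\in W}\bigl(\operatorname{Im} f(w)+|d_w f(w)|^2\bigr),
\]
where $c_f>0$ depends only on a lower bound for the Hessian of $f$ at its critical point and on finitely many $\mathcal{C}^\infty$-seminorms of $f$ over $W$. Applying this to $f=f_{t,x,y}$ and inserting the uniform bound $O((1+t)/t)$ on the inverse Hessian yields $c_{f_{t,x,y}}\gtrsim t/(1+t)$, uniformly in $t$ and $(x,y)$ near $p$, hence
\[
\operatorname{Im}\varphi_\pm(t,x,y,1)\ \ge\ c\,\frac{t}{1+t}\,\inf_{w\in W}\Bigl(\operatorname{Im}\Psi(t,x,(w,\pm1))+\bigl|d_w\bigl(\Psi(t,x,(w,\pm1))-\langle y,(w,\pm1)\rangle\bigr)\bigr|^2\Bigr),
\]
which is \eqref{e:0709212032}. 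To conclude I would shrink $D$ to a neighborhood $\Omega$ of $p$ and shrink $W$, if necessary, so that $w_c(t,x,y,1)$ remains in the domain of the almost-analytic extension for all $t\in\mathbb R_+$ and all $(x,y)\in\Omega\times\Omega$, and so that the almost-analytic error terms, which vanish to infinite order on the real locus, are absorbed into the right-hand side.

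The step I expect to be the main obstacle is the \emph{uniformity in $t$} as $t\to 0^+$ and $t\to +\infty$: the Melin--Sj\"ostrand estimate cannot be applied as a black box since its constant $c_f$ may degenerate, and one must use the explicit quasi-homogeneity of $\Psi$ and its structure from \cite{Hsiao08} to verify that the reduced $w$-phase stays non-degenerate with inverse Hessian of size $O((1+t)/t)$ and that the relevant seminorms are controlled by suitable powers of $(1+t)/t$. Once this quantitative control is in place, the remaining bookkeeping --- the $\eta'$-reduction, the choice of $\Omega$ and $W$, and the reduction from general $s$ to $s=1$ via homogeneity --- is routine.
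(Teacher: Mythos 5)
Your proposal follows essentially the same route as the paper, which in fact offers no argument at all beyond the citation to Melin--Sj\"ostrand \cite[p.~147]{MS74}: the lemma is exactly the known critical-value estimate for the reduced $w$-phase produced by the $\eta'$-stationary phase reduction of Lemma~\ref{L:010820252224}, which is precisely what you invoke. Your extra bookkeeping on the uniformity in $t$ (identifying the $\frac{t}{1+t}$ factor with the degeneration of the $w$-Hessian as $t\to0^+$, via the quasi-homogeneity of $\Psi$) supplies more detail than the paper does and is consistent with how this estimate is established and used in \cite{MS78} and \cite{Hsiao08}.
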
 

We can now prove the following

\begin{theorem}\label{T:0111202511002}
		With the notations and assumptions used above, for every compact set $K \subset D$, there is a constant $C_K > 0$ such that 
		\begin{equation}\label{E:011020251802}
			\operatorname{Im} \varphi_\pm(t, x, y, s) \ge C_K \frac{ts^2}{1+ts} |x'-y'|^2, \quad 
            \text{for all $(x, y) \in K \times K$},
		\end{equation} 
        where $x'=(x_1,\ldots,x_{2n})$, $y'=(y_1,\ldots,y_{2n})$.
	\end{theorem}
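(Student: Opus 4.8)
The plan is to reduce the lower bound \eqref{E:011020251802} to the estimate in Lemma~\ref{l-gue250402yyd} by exploiting the quasi-homogeneity of $\varphi_\pm$ in the variable $s$. Since $\varphi_\pm(t,x,y,\lambda s)=\lambda\varphi_\pm(\lambda t,x,y,s)$ for $\lambda\geq 1$, writing $\lambda=s$ (for $s\geq 1$) gives $\operatorname{Im}\varphi_\pm(t,x,y,s)=s\operatorname{Im}\varphi_\pm(st,x,y,1)$. Thus it suffices to control $\operatorname{Im}\varphi_\pm(\tau,x,y,1)$ from below by a multiple of $\frac{\tau}{1+\tau}|x'-y'|^2$ uniformly on compact sets, and then substitute $\tau=st$ to obtain $s\cdot\frac{st}{1+st}|x'-y'|^2=\frac{ts^2}{1+ts}|x'-y'|^2$. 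The range $0<s\leq 1$ must be handled separately, but there one can again use the scaling (now with $\lambda=1/s\geq1$ applied in the other direction, i.e.\ $\operatorname{Im}\varphi_\pm(t,x,y,1)=s\operatorname{Im}\varphi_\pm(t/s,x,y,s)$) or simply note that for bounded $s$ the desired bound follows from the $s=1$ case together with smoothness and the vanishing $\varphi_\pm(t,x,x,s)=0$, $(d_x\varphi_\pm)(t,x,x,s)=\pm\omega_0(x)s$; I would first reduce everything to proving
\begin{equation*}
\operatorname{Im}\varphi_\pm(\tau,x,y,1)\geq c_K\,\frac{\tau}{1+\tau}\,|x'-y'|^2,\qquad (x,y)\in K\times K,\ \tau\in\mathbb{R}_+.
\end{equation*}

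To prove this reduced inequality I would invoke Lemma~\ref{l-gue250402yyd}, which bounds $\operatorname{Im}\varphi_\pm(\tau,x,y,1)$ below by $c\frac{\tau}{1+\tau}$ times the infimum over $w\in W$ of $\operatorname{Im}\Psi(\tau,x,(w,\pm1))+|d_w(\Psi(\tau,x,(w,\pm1))-\langle y,(w,\pm1)\rangle)|^2$. So the task becomes: show that this infimum is itself bounded below by $C|x'-y'|^2$, uniformly for $(x,y)$ in a compact set and $\tau\in\mathbb{R}_+$. For the second term in the infimum, I would Taylor-expand $d_w(\Psi(\tau,x,(w,\pm1))-\langle y,(w,\pm1)\rangle)$ in the $w'$-variables around $w=0$; by Lemma~\ref{L:010820252224}, at $x=y=0$, $w=0$ the value is $0$ and the Hessian matrix $(\partial^2_{\eta_j\eta_l}(\Psi-\langle y,\eta\rangle))_{j,l=1}^{2n}$ is invertible, and the first-order terms in $(x-y)$ are controlled by this invertible matrix, so a quantitative implicit-function-type argument gives $|d_w(\cdots)|^2\gtrsim |x'-y'|^2$ when $w$ is such that the correction terms cannot cancel the main linear term. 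The point is that the critical point $w=w_c(\tau,x,y)$ of $w\mapsto\Psi(\tau,x,(w,\pm1))-\langle y,(w,\pm1)\rangle$, when it lies in $W$, satisfies $|w_c|\gtrsim|x'-y'|$ (again from Lemma~\ref{L:010820252224} and the vanishing of the first derivative on $\Sigma$); when $w_c\notin W$ the derivative term in the infimum is bounded below directly.

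The uniformity in $\tau\in\mathbb{R}_+$ is the key structural point and the main obstacle, because $\Psi(\tau,x,\eta)$ degenerates as $\tau\to\infty$: $\operatorname{Im}\Psi$ only controls $d((x,\eta),\Sigma)^2\frac{\tau|\eta|^2}{1+\tau|\eta|}$, which for large $\tau$ saturates at $d((x,\eta),\Sigma)^2|\eta|$ rather than growing. To get around this I would use the decomposition $\Psi(\tau,x,\eta)=\Psi(\infty,x,\eta)+(\Psi(\tau,x,\eta)-\Psi(\infty,x,\eta))$ together with the exponential-decay estimate $|\partial_x^\alpha\partial_\eta^\beta(\Psi(\tau,x,\eta)-\Psi(\infty,x,\eta))|\leq C_{K,\alpha,\beta}e^{-\varepsilon\tau|\eta|}(1+|\eta|)^{-|\beta|}$ from the display preceding Lemma~\ref{L:010820252224}: for $\tau$ bounded the estimate follows from compactness plus Lemma~\ref{L:010820252224} applied to the smooth family, while for $\tau$ large the error term is negligible and one works with $\Psi(\infty,x,\eta)$, for which $\operatorname{Im}\Psi(\infty,x,(w,\pm1))\gtrsim d((x,(w,\pm1)/|(w,\pm1)|),\Sigma)^2$ and $d_w(\Psi(\infty,x,(w,\pm1))-\langle y,(w,\pm1)\rangle)$ has the same nondegenerate structure (the invertibility of the $w'$-Hessian persists at $\infty$). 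The factor $\frac{\tau}{1+\tau}$ in Lemma~\ref{l-gue250402yyd} exactly absorbs the small-$\tau$ behavior of $\operatorname{Im}\Psi$, so the two regimes patch together. Finally I would assemble the two regimes ($s\leq 1$ and $s\geq 1$) via the quasi-homogeneity identity and shrink the coordinate patch to the neighborhood $\Omega$ from Lemma~\ref{l-gue250402yyd} if necessary, noting the estimate is local and $K$ can be covered by finitely many such neighborhoods.
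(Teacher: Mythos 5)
Your overall route is the paper's: reduce to $s=1$ via the quasi-homogeneity identity $\varphi_\pm(t,x,y,s)=s\,\varphi_\pm(st,x,y,1)$, invoke Lemma~\ref{l-gue250402yyd}, and then bound the infimum over $w$ from below by a constant times $|x'-y'|^2$ using the structure of $\Psi$ near $\Sigma$. (Incidentally, the separate treatment of $0<s\le 1$ is unnecessary: applying quasi-homogeneity with $\lambda=1/s\ge 1$ shows the same identity holds for all $s>0$, as you yourself observe.)

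The soft spot is in how you execute the key step, the lower bound on $\inf_{w\in W}\bigl(\operatorname{Im}\Psi+|d_w(\Psi-\langle y,\eta\rangle)|^2\bigr)$. You organize the argument around "the critical point $w_c(\tau,x,y)$", but $\Psi$ is complex-valued with $\operatorname{Im}\Psi\ge 0$, so the equation $d_w(\Psi(\tau,x,(w,\pm1))-\langle y,(w,\pm1)\rangle)=0$ need not have a real solution $w\in W$ at all (critical points here only exist after almost-analytic extension), and the claim $|w_c|\gtrsim|x'-y'|$ measures distance from the origin, which is the wrong base point away from $x=p$: the quantity that controls $\operatorname{Im}\Psi$ is $|w-\sigma(x)|$, where $(x,(\sigma(x),1))\in\Sigma^+$, since $\operatorname{Im}\Psi$ vanishes precisely over $\sigma(x)$ and one has $\operatorname{Im}\Psi(t,x,(w,1))\gtrsim|w-\sigma(x)|^2$ there. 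The paper avoids both issues with a pointwise dichotomy that never mentions critical points: expanding around $w=\sigma(x)$ one gets $d_w(\Psi-\langle y,\eta\rangle)=\langle x'-y',dw\rangle+O(|w-\sigma(x)|)$, hence $|d_w(\cdots)|^2\ge c_1|x'-y'|^2-c_2|w-\sigma(x)|^2$; then for each real $w$ either $c_2|w-\sigma(x)|^2\le\frac{c_1}{2}|x'-y'|^2$ and the derivative term dominates, or $|w-\sigma(x)|^2\gtrsim|x'-y'|^2$ and the $\operatorname{Im}\Psi$ term does. This also delivers the uniformity in $\tau$ directly (the expansions around $\sigma(x)$ hold with constants uniform in $t$ by the listed estimates on $\Psi$), so the large-$\tau$ splitting through $\Psi(\infty,x,\eta)$ you propose, while a legitimate alternative, is not needed. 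Your expansion-based reasoning can be repaired into exactly this argument by replacing $|w_c|$-statements with the pointwise comparison of $|w-\sigma(x)|$ against $|x'-y'|$.
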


    \begin{proof}
    Let $\Omega$ be as in Lemma~\ref{l-gue250402yyd}.
For $x\in\Omega$, let $\sigma(x)\in\mathbb R^{2n}$ so that $(x,(\sigma(x),1))\in\Sigma^+$.
From 
$\Psi(t, x, (w, 1))-\langle\,y,\,(w, 1)\,\rangle=\langle\,x-y\,,\, (w, 1)\,\rangle+O(\abs{w-\sigma(x)}^2)$
we can check that
\[d_w\big(\Psi(t, x, (w, 1))-\langle\,y\,,\,(w, 1)\,\rangle\big)=\langle\,x'-y'\,,\,dw\,\rangle+O(\abs{w-\sigma(x)}),\]
where $x'=(x_1,\ldots,x_{2n})$, $y'=(y_1,\ldots,y_{2n})$. Thus, there are
constants $c_1$, $c_2>0$ such that
\[\abs{d_w\big(\Psi(t, x, (w, 1))-\langle\,y\,,\,(w, 1)\,\rangle\big)}^2\geq c_1\abs{x'-y'}^2-c_2\abs{w-\sigma(x)}^2\]
for $(x, w)$ in some compact set of $\Omega\times\dot{\mathbb{R}}^{2n}$. If $\frac{c_1}{2}\abs{(x'-y')}^2\geq c_2\abs{w-\sigma(x)}^2$, then
\begin{equation} \label{e:0709231244}
\abs{d_\omega\big(\Psi(t, x, (\omega,1 ))-\langle\,y\,,(\omega, 1)\,\rangle\big)}^2\geq\frac{c_1}{2}\abs{(x'-y')}^2.
\end{equation}
Now, we assume that $\abs{(x'-y')}^2\leq\frac{2c_2}{c_1}\abs{w-\sigma(x)}^2$. We have
\begin{equation} \label{e:0709231245}
{\rm Im\,}\Psi(t, x, (w, 1))\geq c_3\abs{w-\sigma(x)}^2\geq\frac{c_1c_3}{2c_2}\abs{(x'-y')}^2,
\end{equation}
for $(x, w)$ in some compact set of $\Omega\times\dot{\mathbb{R}}^{2n}$, where $c_3$ is a positive constant.
From Lemma~\ref{l-gue250402yyd}, \eqref{e:0709231244} and \eqref{e:0709231245}, we have
${\rm Im\,}\varphi_+(t,x, y,1)\geq c\frac{t}{1+t}\abs{(x'-y')}^2$
for $x$, $y$ in some neighborhood of $p$, where $c$ is a positive constant. By using quasi-homogeneity of $\varphi_+(t,x,y,s)$, we get \eqref{E:011020251802} for $\varphi_+$. The proof of $\varphi_-$ is similar. 
        \end{proof}
	
	
	\begin{theorem}\label{T:0111202511003}
		With the notations and assumptions used above, let $\chi, \tau \in \mathcal{C}_c^\infty(D)$, $\operatorname{supp} \chi \cap \operatorname{supp} \tau = \emptyset$, then 
		\[
		\chi(x) A(t, x, y)\tau(y) \in \mathcal{C}^\infty ( \overline{\mathbb{R}}_+ \times D \times D, T^{\ast0, q}X \boxtimes (T^{\ast 0 ,q}X)^\ast ).
		\]
	\end{theorem}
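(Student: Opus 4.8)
The plan is to exploit the explicit oscillatory integral representation \eqref{e-gue250402ycdb} for $A(t,x,y)$ together with the lower bound \eqref{E:011020251802} on the imaginary parts of the phase functions $\varphi_\pm$. Since $\chi$ and $\tau$ have disjoint supports, there is a constant $\delta>0$ such that $|x'-y'|\ge\delta$ whenever $x\in\operatorname{supp}\chi$ and $y\in\operatorname{supp}\tau$; here the primed variables suffice because the assumption $\operatorname{supp}\chi\cap\operatorname{supp}\tau=\emptyset$ is read in the coordinate patch $D$, and if necessary one shrinks $D$ so that the $x_{2n+1}$-direction is controlled, or argues directly from $\varphi_\pm(t,x,x,s)=0$ and the structure of the phase near the diagonal. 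The error term $\mathcal E(t,x,y)$ in \eqref{e-gue250402ycdb} is already smooth on $\overline{\mathbb R}_+\times D\times D$, so it contributes nothing to the obstruction, and we only need to handle the two oscillatory integrals $\int_0^\infty e^{i\varphi_\pm(t,x,y,s)}a_\pm(t,x,y,s)\,ds$.

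First I would fix the cutoffs and write $I_\pm(t,x,y)=\chi(x)\big(\int_0^\infty e^{i\varphi_\pm(t,x,y,s)}a_\pm(t,x,y,s)\,ds\big)\tau(y)$. On $\operatorname{supp}\chi\times\operatorname{supp}\tau$ we have, by \eqref{E:011020251802}, $\operatorname{Im}\varphi_\pm(t,x,y,s)\ge C_K\delta^2\,\frac{ts^2}{1+ts}$. The key point is that for fixed $t>0$ this grows linearly in $s$ as $s\to\infty$ (like $C_K\delta^2 s$ up to the factor $t/(1+ts)$), so $e^{i\varphi_\pm}$ decays exponentially in $s$; combined with the symbol estimate $a_\pm\in\widehat S^n_{0,{\rm cl}}$ — which grows only polynomially in $s$ of order $n$, with each $\partial_t$ costing one extra power of $s$ — the integrand and all its $(t,x,y)$-derivatives are dominated by $s^{N}e^{-cs}$ for suitable $c=c(t)>0$ on any set where $t$ is bounded below. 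Hence the integral converges absolutely, can be differentiated under the integral sign arbitrarily many times in $x,y$ and $t$, and defines a $\mathcal C^\infty$ function on $\{t>0\}\times D\times D$ away from the diagonal.

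The remaining issue is smoothness up to $t=0$, i.e. on $\overline{\mathbb R}_+$ rather than $\mathbb R_+$; this is where the factor $ts^2/(1+ts)$ rather than $s$ must be handled carefully, since as $t\to0^+$ the exponential gain degenerates. Here I would use the sharper behaviour of the phase encoded in \eqref{e-gue250404yydv}: $\varphi_\pm(t,x,y,s)-s\varphi_\pm(x,y)$ is bounded, together with all $x,y$-derivatives, by $C e^{-ts}(1+s)$, while $\varphi_\pm(x,y)$ is the Szeg\H{o}-type phase whose imaginary part satisfies $\operatorname{Im}\varphi_\pm(x,y)\ge c|x'-y'|^2\ge c\delta^2>0$ on $\operatorname{supp}\chi\times\operatorname{supp}\tau$. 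Thus, after changing variables $s\mapsto s$ (keeping it), the dominant factor $e^{is\varphi_\pm(x,y)}$ already gives exponential decay $e^{-c\delta^2 s}$ uniformly for all $t\in\overline{\mathbb R}_+$, the correction $e^{i(\varphi_\pm(t,x,y,s)-s\varphi_\pm(x,y))}$ is $\mathcal C^\infty$ up to $t=0$ with uniformly controlled derivatives (using \eqref{e-gue250404yydv} and the fact that $e^{-ts}(1+s)$ is bounded), and the $t$-derivatives of $a_\pm$ and of the phase correction each produce at most polynomially many extra powers of $s$, all absorbed by the uniform exponential decay. Differentiation under the integral sign then yields $I_\pm\in\mathcal C^\infty(\overline{\mathbb R}_+\times D\times D, T^{*0,q}X\boxtimes(T^{*0,q}X)^*)$, and adding the smooth term $\mathcal E$ completes the proof. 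The main obstacle is precisely this uniform control as $t\to0^+$: naively one only has $\operatorname{Im}\varphi_\pm\gtrsim ts^2/(1+ts)$, which is useless near $t=0$, so the argument genuinely requires replacing $\varphi_\pm(t,x,y,s)$ by its $t$-independent leading part $s\varphi_\pm(x,y)$ via \eqref{e-gue250404yydv} and using positivity of $\operatorname{Im}\varphi_\pm(x,y)$ off the diagonal — a point one must not skip.
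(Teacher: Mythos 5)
There are two genuine gaps, and both occur precisely at the points where the paper's proof uses devices your proposal replaces or omits. First, disjointness of $\operatorname{supp}\chi$ and $\operatorname{supp}\tau$ in $D\subset\mathbb R^{2n+1}$ only forces $|x-y|\ge\delta$, not $|x'-y'|\ge\delta$: the two supports can be separated purely in the $x_{2n+1}$-direction (take $\chi$ supported near $(0,\dots,0,0)$ and $\tau$ near $(0,\dots,0,c)$), and such configurations exist in every coordinate patch, so shrinking $D$ cannot remove them and the given $\chi,\tau$ are in any case fixed. On that set the estimate \eqref{E:011020251802} gives no decay whatsoever, so your whole mechanism is empty there. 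The paper treats this case by a separate argument: since $\varphi_\pm(t,x,x,s)=0$ and $(d_x\varphi_\pm)(t,x,x,s)=\pm\omega_0(x)s$ with $\omega_0$ close to $dx_{2n+1}$ on $D$, for $x'$ near $y'$ but $x_{2n+1}\neq y_{2n+1}$ the phase satisfies $\partial_s\varphi_\pm\approx\pm(x_{2n+1}-y_{2n+1})\neq0$, and repeated integration by parts in $s$ (lowering the symbol order of $a_\pm$ each time, uniformly in $t\in\overline{\mathbb R}_+$) yields smoothness near such $(x,y)$; your parenthetical ``argue directly from the structure of the phase near the diagonal'' gestures at this but neither states nor performs it, so an entire case of the theorem is unproved.

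Second, your argument for smoothness up to $t=0$ in the region $|x'-y'|\ge\delta$ rests on a misreading of \eqref{e-gue250404yydv}. That estimate bounds $\varphi_\pm(t,x,y,s)-s\varphi_\pm(x,y)$ by $Ce^{-ts}(1+s)$, which is small only when $ts$ is large (it is the $ts\to\infty$ comparison with the Szeg\H{o} phase, used for Theorem~\ref{T:012820251552}); at $t=0$ it is merely $C(1+s)$, so your ``correction'' factor $e^{i(\varphi_\pm(t,x,y,s)-s\varphi_\pm(x,y))}$ can have modulus as large as $e^{C(1+s)}$ and destroys the claimed uniform decay $e^{-c\delta^2 s}$ unless $c\delta^2>C$, which you cannot arrange. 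In fact no absolute-decay argument can be uniform down to $t=0$: by quasi-homogeneity $\varphi_\pm(0,x,y,s)=s\varphi_\pm(0,x,y,1)$ and $A(0)=I$, so at $t=0$ the phase is essentially real and the off-diagonal smoothness comes from oscillation, not from decay of the integrand. The paper's route is different: substitute $s\mapsto s/t$ in \eqref{e-gue250402ycdb}, use $\varphi_\pm(t,x,y,s/t)=\tfrac1t\varphi_\pm(1,x,y,s)$, and then apply \eqref{E:011020251802} to $\varphi_\pm(1,x,y,s)$, so that the factor $\exp\bigl(-C_K\delta^2 s^2/(t(1+s))\bigr)$ dominates the powers $t^{-1}(1+s/t)^n$ produced by the rescaled amplitude and by $t$- and $(x,y)$-derivatives, giving $\mathcal C^\infty$ regularity up to $t=0$. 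Your estimate is fine for $t$ in compact subsets of $(0,\infty)$, but both the degenerate direction $x'=y'$ and the boundary $t=0$ require the paper's two separate arguments, which your proposal does not supply.
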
 

    \begin{proof}
Let $\chi, \tau \in C_c^\infty(D)$, $\operatorname{supp} \chi \cap \operatorname{supp} \tau = \emptyset$. Let $x=(x_1,\ldots,x_{2n+1})=(x',x_{2n+1})\in{\rm Supp\,}\chi$, $y=(y_1,\ldots,y_{2n+1})=(y',y_{2n+1})\in{\rm Supp\,}\tau$. If $x_{2n+1}\neq y_{2n+1}$. Form \eqref{E:011020251803}, we have
 \[(d_x\varphi_\pm)(t, x, x, s) = \pm \omega_0(x)s, \quad \text{for all $(t, x, s) \in \mathbb{R}_+ \times D \times \mathbb{R}_+$}.\]
 From this observation, 
we can integrate by parts in $s$ and get $A(t, x, y)\in\mathcal{C}^\infty ( \overline{\mathbb{R}}_+ \times W_1 \times W_2, T^{\ast0, q}X \boxtimes (T^{\ast 0 ,q}X)^\ast )$, where $W_1$ is an open set of $x$ and $W_2$ is an open set of $y$. 

If $x'\neq y'$. By changing $s\To\frac{s}{t}$ in \eqref{e-gue250402ycdb} in Theorem \ref{T:0111202511001}, we get 
\begin{equation}\label{e-gue250402ycdc}
\begin{split}
		A(t, x, y) =& \int_0^\infty e^{i\frac{1}{t}\varphi_-(1, x, y, s)}t^{-1}a_-(t, x, y, \frac{s}{t}) ds\\
        &+  \int_0^\infty e^{i\frac{1}{t}\varphi_+(1, x, y, s)}t^{-1}a_+(t, x, y, \frac{s}{t}) ds + \mathcal{E}(t, x, y).
        \end{split}
		\end{equation}
    From \eqref{E:011020251802} and \eqref{e-gue250402ycdc}, we see that $A(t, x, y)\in\mathcal{C}^\infty ( \overline{\mathbb{R}}_+ \times\hat W_1 \times\hat W_2, T^{\ast0, q}X \boxtimes (T^{\ast 0 ,q}X)^\ast )$, where $\hat W_1$ is an open set of $x$ and $\hat W_2$ is an open set of $y$. The theorem follows. 
\end{proof}
	
	Assume that $X = \cup_{j=1}^N D_j, N \in \mathbb{N}$, where $D_j$ is an open local coordinate patch as above, $j = 1, 2, \cdots, N$. Let $\chi_j \in C^\infty_c (D_j)$, $j=1, 2, \cdots, N$, with $\sum_{j=1}^N \chi_j \equiv 1$ on $X$. Let $A_j(t) : \Omega^{0,q}_c(D_j) \to \Omega^{0,q}(D_j)$ be the continuous operator as in \eqref{E:010820252208} and let $A_j(t, x, y)$ be the distribution kernel of $A_j(t)$, $j=1, 2, \cdots, N$. Let 
	\[
	\widehat{A}(t) := \sum_{j=1}^N \widetilde{\chi}_j A_j(t) \chi_j : \Omega^{0,q}(X) \to \Omega^{0,q}(X),
	\]
	where $\widetilde{\chi}_j \in C^\infty_c(D_j)$, $\widetilde{\chi}_j \equiv 1$ on $\operatorname{supp} \chi_j$, $j=1,2, \cdots, N$. From Theorem~\ref{T:0111202511003}, we get
	\begin{equation}\label{E:011120251102}
		\left\{
		\begin{split} &  \widehat{A}'(t) + \Box_b^{(q)}\widehat{A}(t) = \delta (t), \\
			& \lim_{t \to 0}\widehat{A}(t) = I,  
		\end{split}
		\right.
	\end{equation}
	where $\widehat{A}(t) : \Omega^{0,q}(X) \to \Omega^{0, q}(X)$, the distribution kernel of $\widehat{A}(t)$ is $\widehat{A}(t, x, y)$, and $\delta(t) : \Omega^{0,q}(X) \to \Omega^{0, q}(X)$ is a continuous operator with distribution kernel $\delta(t, x, y) \in \mathcal{C}^\infty (\overline{\mathbb{R}}_+ \times X \times X, T^{\ast 0, q}X \boxtimes (T^{\ast 0, q}X)^\ast)$.

	\begin{theorem}\label{T:011120251108}
		We can find $a_j(x, y) \in\mathcal{C}^\infty(X \times X, T^{\ast0, q}X \boxtimes (T^{\ast 0 ,q}X)^\ast)$, $j=1, 2, \cdots$, such that for all $m, N \in \mathbb{N}$, there is a constant $C_{m, N}>0$ such that for all $t>0$, $t \ll 1$,
		\[
		\Big\| e^{-t\Box_b^{(q)}}(x, y) - \widehat{A}(t, x, y) - \sum_{j=1}^N a_j(x, y)t^j \Big\|_{\mathcal{C}^m(X \times X)} \le C_{m, N}t^{N+1}.
		\]
	\end{theorem}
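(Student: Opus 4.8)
The plan is to use the parametrix $\widehat{A}(t)$ constructed above to compare with the genuine heat operator $e^{-t\Box_b^{(q)}}$ via Duhamel's principle. From \eqref{E:011120251102} we know that $\widehat{A}(t)$ solves the heat equation up to a smoothing error: $\widehat{A}'(t)+\Box_b^{(q)}\widehat{A}(t)=\delta(t)$ with $\delta(t,x,y)\in\mathcal{C}^\infty(\overline{\mathbb{R}}_+\times X\times X,T^{\ast0,q}X\boxtimes(T^{\ast0,q}X)^\ast)$ and $\lim_{t\to0}\widehat A(t)=I$. On the other hand $e^{-t\Box_b^{(q)}}$ solves $\frac{d}{dt}e^{-t\Box_b^{(q)}}+\Box_b^{(q)}e^{-t\Box_b^{(q)}}=0$ with $e^{-0\cdot\Box_b^{(q)}}=I$. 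First I would apply Duhamel's formula: writing $F(t):=e^{-t\Box_b^{(q)}}-\widehat{A}(t)$, one has $F'(t)+\Box_b^{(q)}F(t)=-\delta(t)$ and $\lim_{t\to0}F(t)=0$, hence
\[
F(t)=-\int_0^t e^{-(t-s)\Box_b^{(q)}}\delta(s)\,ds,
\]
so that
\[
e^{-t\Box_b^{(q)}}(x,y)=\widehat{A}(t,x,y)-\int_0^t\big(e^{-(t-s)\Box_b^{(q)}}\delta(s)\big)(x,y)\,ds.
\]
Since $e^{-(t-s)\Box_b^{(q)}}$ is a contraction on $L^2_{(0,q)}(X)$ and $\delta(s)$ is smoothing with all derivatives bounded uniformly for $s$ in a bounded interval, the integral term is a priori only in some finite Sobolev space; the first technical step is therefore to upgrade this to $\mathcal{C}^\infty$ regularity and to get the desired asymptotic expansion.

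The key to the regularity is the self-improving nature of Duhamel's iteration. I would set $G(t):=-\int_0^t e^{-(t-s)\Box_b^{(q)}}\delta(s)\,ds$ and iterate: substituting $e^{-(t-s)\Box_b^{(q)}}=\widehat{A}(t-s)+G(t-s)$-type relations, or more directly, observing that $G$ itself satisfies a Volterra-type integral equation with smoothing kernel $\delta$, one sees that $G(t,x,y)$ is obtained by convolving $\delta$ with itself finitely many times plus a remainder of arbitrarily high Sobolev regularity coming from the $L^2$-bounded semigroup acting on an $n$-th convolution power of $\delta$. Because $\delta$ is smooth in $(t,x,y)$ up to $t=0$, each convolution in $s$ only improves regularity and produces at least one extra power of $t$: the $k$-fold time-convolution of $\delta$ with itself is $O(t^{k-1})$ near $t=0$ in every $\mathcal{C}^m$ norm. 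Hence $G(t,x,y)$ has a genuine Taylor-type expansion $\sum_{j\ge1}a_j(x,y)t^j$ in $\mathcal{C}^m(X\times X)$ for every $m$, with remainder controlled by $C_{m,N}t^{N+1}$; the coefficients $a_j(x,y)$ are built from $\delta$ and its $t$-derivatives at $t=0$ via explicit convolution integrals, and in particular $a_j\in\mathcal{C}^\infty(X\times X,T^{\ast0,q}X\boxtimes(T^{\ast0,q}X)^\ast)$.

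The main obstacle will be justifying the smoothing and the quantitative $\mathcal{C}^m$ estimates in the Volterra iteration: one must show that although $e^{-(t-s)\Box_b^{(q)}}$ is far from smoothing (it is merely $L^2$-bounded, and indeed $\Box_b^{(q)}$ is not elliptic), the composition with the smoothing operator $\delta(s)$ and the finitely many self-convolutions suffice to absorb any finite loss of derivatives, uniformly down to $t=0$. This requires a careful bookkeeping of Sobolev norms: each application of $\delta(s)$ gains infinitely many derivatives with norm bounded uniformly in $s\in[0,t_0]$, so after one iteration $G(t)$ maps $L^2$ to $H^m$ for every $m$, and a second iteration plus differentiation in $t$ (using $\partial_t e^{-(t-s)\Box_b^{(q)}}=-\Box_b^{(q)}e^{-(t-s)\Box_b^{(q)}}$ together with $\partial_s\delta$ integrated by parts, so no unbounded operator is actually applied) controls $\partial_t^\ell G$ in $H^m$. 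The asymptotic expansion then follows by Taylor-expanding $\delta(s)$ in $s$ around $0$ inside the iterated convolutions and collecting powers of $t$; the remainder estimate $\|\cdot\|_{\mathcal{C}^m(X\times X)}\le C_{m,N}t^{N+1}$ is obtained by bounding the tail of this Taylor expansion, using the Sobolev embedding $H^{m+s_0}(X\times X)\hookrightarrow\mathcal{C}^m(X\times X)$ to convert the Sobolev bounds into $\mathcal{C}^m$ bounds. Combining $e^{-t\Box_b^{(q)}}(x,y)=\widehat A(t,x,y)+G(t,x,y)$ with the expansion of $G$ yields the claimed statement.
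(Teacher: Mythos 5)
Your reduction via Duhamel is the right starting point, and your idea of extracting the coefficients $a_j$ from Taylor data of the error term at $t=0$ is morally the same as the paper's inductive correction of the parametrix (the paper sets $a_1=-\delta(0,x,y)$, $a_2=-\tfrac12\delta_1(0,x,y)$, etc., so that $\widehat A_N(t)=\widehat A(t)+\sum_{j=1}^N t^ja_j$ has error $t^N\delta_N(t)$). But there is a genuine gap in your regularity argument. In your Volterra iteration the smoothing factor $\delta(s)$ always sits on the \emph{input} side: every term of the iteration, and in particular the remainder, has the form (semigroup or $G$)$\circ$(smoothing). Such a composition has a kernel that is smooth in $y$ but, in $x$, only as regular as the output of $e^{-(t-s)\Box_b^{(q)}}$, i.e.\ $L^2$. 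Your claim that ``after one iteration $G(t)$ maps $L^2$ to $H^m$ for every $m$'' is false: $G(t)u=-\int_0^t e^{-(t-s)\Box_b^{(q)}}\delta(s)u\,ds$ lands only in $L^2$ (or in domains of powers of $\Box_b^{(q)}$, which give no Sobolev gain since $\Box_b^{(q)}$ is not elliptic, and is not even hypoelliptic when $q\in\{n_-,n_+\}$ -- precisely the cases the theorem must cover). No number of self-convolutions in time fixes this, because the non-smoothing semigroup factor always remains on the left; the iteration improves the power of $t$ but never the $x$-regularity, so you cannot reach the stated $\mathcal{C}^m(X\times X)$ bound on the kernel.

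The missing idea is the paper's adjoint trick. After building the corrected parametrix $\widehat A_N$ with error $t^N\delta_N(t)$, one writes Duhamel for the \emph{adjoint} error, $\mathcal{E}_N^\ast(t)=-\int_0^t s^N\delta_N^\ast(s)\,e^{-(t-s)\Box_b^{(q)}}\,ds$ (cf.\ \eqref{E:011120251249}--\eqref{E:011120251300}), so that the smoothing factor $\delta_N^\ast(s)$ now acts on the \emph{output} side. This immediately gives $\mathcal{E}_N^\ast(t)=O(t^{N+1}):L^2_{(0,q)}(X)\To\Omega^{0,q}(X)$; by duality $\mathcal{E}_N(t)=O(t^{N+1}):\mathcal{D}'(X,T^{\ast0,q}X)\To L^2_{(0,q)}(X)$, which in turn lets one extend $e^{-t\Box_b^{(q)}}$ continuously to distributions, and then the same formula yields $\mathcal{E}_N^\ast(t)=O(t^{N+1}):\mathcal{D}'(X,T^{\ast0,q}X)\To\Omega^{0,q}(X)$, i.e.\ joint smoothness of the error kernel in $(x,y)$ together with the $\mathcal{C}^m$, $O(t^{N+1})$ estimates. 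A secondary issue with your coefficient extraction: Taylor-expanding $\delta(s)$ inside the Duhamel integral leaves semigroup factors $e^{-(t-s)\Box_b^{(q)}}$ in each term, so it does not directly produce $t$-independent smooth coefficients $a_j(x,y)$; the inductive modification of the parametrix (absorbing $-\delta_j(0,x,y)$ at each step) is what makes the expansion clean. Without the adjoint/duality step your argument cannot be completed as written.
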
 
	\begin{proof}
		Let 
		$
		a_1(x, y) = - \delta(0, x, y) \in \mathcal{C}^\infty(X \times X, T^{\ast0, q}X \boxtimes (T^{\ast 0 ,q}X)^\ast),
		$ 
		where $\delta(t, x, y)$ is as in \eqref{E:011120251102}. Let 
		\[
		\widehat{A}_1(t, x, y) := \widehat{A}(t, x ,y) + ta_1(x, y)
		\] 
		and let $\widehat{A}_1(t) : \Omega^{0, q}(X) \to \Omega^{0, q}(X)$ be the continuous operator with distribution kernel $\widehat{A}_1(t, x, y)$. It follows from \eqref{E:011120251102} that
		\begin{equation}\label{E:011120251231}
			\widehat{A}'_1(t) + \Box_b^{(q)} \widehat{A}_1(t) = t\delta_1(t),
		\end{equation}
		where $\delta_1(t, x, y) \in \mathcal{C}^\infty(\ol{\mathbb{R}}_+ \times X \times X, T^{\ast0, q}X \boxtimes (T^{\ast 0 ,q}X)^\ast)$, $\delta_1(t, x, y)$ is the distribution kernel of $\delta_1(t)$. 
		
		Let $a_2(x, y) := - \frac{1}{2} \delta_1(0, x, y)$ and let 
		\[
		\begin{array}{lcl}
			\widehat{A}_2(t, x, y) & := & \widehat{A}_1(t, x, y) + t^2a_2(x, y) \\
			& = & \widehat{A}(t, x, y) + ta_1(x, y) + t^2a_2(x, y).
		\end{array}
		\]
		Let $\widehat{A}_2(t) : \Omega^{0,q}(X) \to \Omega^{0, q}(X)$ be the continuous operator with distribution kernel $\widehat{A}_2(t, x, y)$. From \eqref{E:011120251231}, we have
		\[
		\widehat{A}'_2(t) + \Box_b^{(q)} \widehat{A}_2(t) = t^2\delta_2(t),
		\]
		where $\delta_2(t, x, y) \in \mathcal{C}^\infty(\ol{\mathbb{R}}_+ \times X \times X, T^{\ast0, q}X \boxtimes (T^{\ast 0 ,q}X)^\ast)$, $\delta_2(t, x, y)$ is the distribution kernel of $\delta_2(t)$. Continuing in this way, we get $a_j(x, y) \in\mathcal{C}^\infty(X \times X, T^{\ast0, q}X \boxtimes (T^{\ast 0 ,q}X)^\ast)$, $j=1, 2, \dots$, such that, for every $N \in \mathbb{N}$, let
		\begin{equation}\label{e-gue250529ycdw}
		\widehat{A}_N(t, x, y) := \widehat{A}(t, x ,y) + \sum_{j=1}^N t^ja_j(x, y)
		\end{equation}
		and let $\widehat{A}_N(t) : \Omega^{0, q}(X) \to \Omega^{0, q}(X)$ be the continuous operator with distribution kernel $\widehat{A}_N(t, x, y)$, we have
		\begin{equation}\label{E:011120251243}
			\widehat{A}'_N(t) + \Box_b^{(q)} \widehat{A}_N(t) = t^N\delta_N(t),
		\end{equation}
		where $\delta_N(t, x, y) \in \mathcal{C}^\infty(\ol{\mathbb{R}}_+\times X \times X, T^{\ast0, q}X \boxtimes (T^{\ast 0 ,q}X)^\ast)$, $\delta_N(t, x, y)$ is the distribution kernel of $\delta_N(t)$. Let 
		\begin{equation}\label{E:011120251245}
			\mathcal{E}_N(t):= e^{-t\Box_b^{(q)}} - \widehat{A}_N(t).
		\end{equation}
		We deduce that
		\[
		\mathcal{E}'_N(t) + \Box_b^{(q)} \mathcal{E}_N(t) = -t^N\delta_N(t)
		\]
		and $\mathcal{E}_N(0) = 0$, hence
		\begin{equation}\label{E:011120251249}
			\frac{\partial}{\partial t} \mathcal{E}_N^\ast(t) + \mathcal{E}_N^\ast(t) \Box_b^{(q)} = -t^N\delta_N^\ast(t),
		\end{equation}
		where $\mathcal{E}_N^\ast(t)$ and $\delta_N^\ast(t)$ are adjoints of $\mathcal{E}_N(t)$ and $\delta_N(t)$ with respect to $(\,\cdot\,|\,\cdot\,)$ respectively. From \eqref{E:011120251249}, we have
		\begin{equation}\label{E:011120251254}
			\frac{\partial}{\partial s} \left(\mathcal{E}_N^\ast(s) e^{-(t-s)\Box_b^{(q)}} \right) = -s^N\delta_N^\ast(s) e^{-(t-s)\Box_b^{(q)}}, \quad \text{for all $0 \le s \le t$}.
		\end{equation}
		From \eqref{E:011120251254} and $\mathcal{E}_N^\ast(0) = 0$, we get
		\begin{equation}\label{E:011120251300}
			\mathcal{E}_N^\ast(t) = \int_0^t-s^N
            \delta_N^\ast(s) e^{-(t-s)\Box_b^{(q)}}ds.
		\end{equation}
		It implies from \eqref{E:011120251300} that
		\[
		\mathcal{E}_N^\ast(t) := O(t^{N+1}) : L^2_{(0, q)}(X) \to \Omega^{0, q}(X)
		\]
		is continuous and hence
		\begin{equation}\label{E:011120251304}
			\mathcal{E}_N(t):=O(t^{N+1}) : \mathcal{D}'(X, T^{\ast 0, q}X) \to L^2_{(0, q)}(X) \ \text{is continuous}.
		\end{equation}
		From \eqref{E:011120251304}, we deduce that $e^{-t\Box_b^{(q)}}$ can be continuously extended to 
		\begin{equation}\label{E:011120251330}
			e^{-t\Box_b^{(q)}}: \mathcal{D}'(X, T^{\ast 0, q}X) \to \mathcal{D}'(X, T^{\ast 0, q}X).
		\end{equation}
		From \eqref{E:011120251300} and \eqref{E:011120251330}, we finally get
		\[
		\mathcal{E}_N^\ast(t) = O(t^{N+1}) :\mathcal{D}'(X, T^{\ast 0, q}X) \to \Omega^{0,q}(X)
		\]
		is continuous. The theorem follows.
	\end{proof}

\begin{definition}\label{d-gue250403yyd}
Let $m, \ell\in \mathbb{R}$. Let $\varepsilon \ge 0$. Let $S^{m,\ell}_\varepsilon(\mathbb{R}_+ \times D \times D \times\mathbb{R}_+, T^{\ast 0, q}X \boxtimes (T^{\ast 0, q}X)^\ast)$ be the space of all $a(t, x, y, s) \in \mathcal{C}^\infty(\mathbb{R}_+ \times D \times D\times\mathbb{R}_+, T^{\ast 0, q}X \boxtimes (T^{\ast 0, q}X)^\ast)$ such that for all $\alpha, \beta \in \mathbb{N}_0^{2n+1}$, $\gamma_1, \gamma_2 \in \mathbb{N}_0$ and every compact set $K \subset D$, there exists a constant $C_{\alpha, \beta, \gamma_1, \gamma_2, K}>0$ such that
		\[
		|\partial_t^{\gamma_1} \partial_s^{\gamma_2} \partial_x^\alpha \partial_y^\beta a(t, x, y, s)| \le C_{\alpha, \beta, \gamma_1, \gamma_2, K}t^{\ell-\gamma_1}(1+|s|)^{m-\gamma_2} e^{-\varepsilon s}
		\]
		for all $(x, y) \in K \times K$ and all $s \ge 1$, and all $t \in \mathbb{R}_+$, $t\ll1$.   

Let $a(t, x, y, s) \in S^{m,\ell}_\varepsilon(\mathbb{R}_+ \times D \times D \times\mathbb{R}_+, T^{\ast 0, q}X \boxtimes (T^{\ast 0, q}X)^\ast)$. We write 
		\[
		a(t, x, y, s) \sim \sum_{j=0}^\infty a_j(t, x, y, s) \ \text{in} \  S^{m,\ell}_\varepsilon(\mathbb{R}_+ \times D \times D \times\mathbb R_+, T^{\ast 0, q}X \boxtimes (T^{\ast 0, q}X)^\ast),
		\]
		where $a_j(t, x, y, s) \in S^{m-j,\ell+j}_\varepsilon(\mathbb{R}_+ \times D \times D \times \mathbb{R}_+, T^{\ast 0, q}X \boxtimes (T^{\ast 0, q}X)^\ast)$, $j=0, 1, \cdots,$ if for every $N \in \mathbb{N}$,
		\[
		a - \sum_{j=0}^N a_j \in S^{m-N-1,\ell+N+1}_\varepsilon(\mathbb{R}_+ \times D \times D\times\mathbb{R}_+, T^{\ast 0, q}X \boxtimes (T^{\ast 0, q}X)^\ast).
		\]

        Let $S^{m,\ell}_{\varepsilon,{\rm cl\,}}(\mathbb{R}_+ \times D \times D \times\mathbb{R}_+, T^{\ast 0, q}X \boxtimes (T^{\ast 0, q}X)^\ast)$ be the space of all $a(t, x, y, s) \in 
S^{m,\ell}_\varepsilon(\mathbb{R}_+ \times D \times D \times\mathbb{R}_+, T^{\ast 0, q}X \boxtimes (T^{\ast 0, q}X)^\ast)$  such that 
\[
		a(t, x, y, s) \sim \sum_{j=0}^\infty t^{\ell+j}a_j( x, y, s) \ \text{in} \  S^{m,\ell}_\varepsilon(\mathbb{R}_+ \times D \times D \times\mathbb{R}_+, T^{\ast 0, q}X \boxtimes (T^{\ast 0, q}X)^\ast),
		\]
		where $a_j(x, y, s) \in S^{m-j}_{1,0}(D \times D \times \mathbb{R}_+, T^{\ast 0, q}X \boxtimes (T^{\ast 0, q}X)^\ast)$, $j=0, 1, \cdots$.  

       Let $B$ be a vector bundle over $\mathbb R_+\times D\times D\times\mathbb R_+$. Let $m, \ell\in \mathbb{R}$. Let $\varepsilon \ge 0$. We can define $S^{m,\ell}_{\varepsilon}(\mathbb{R}_+ \times D \times D \times\mathbb{R}_+, B)$, $S^{m,\ell}_{\varepsilon,{\rm cl\,}}(\mathbb{R}_+ \times D \times D \times\mathbb{R}_+, B)$, asymptotic sum in $S^{m,\ell}_{\varepsilon}(\mathbb{R}_+ \times D \times D \times\mathbb{R}_+, B)$ in the same ways. 
		\end{definition}

        Note that for $a(t,x,y,s)\in\hat S^m_{\varepsilon,{\rm cl\,}}(\ol{\mathbb R}_+\times D\times D\times\mathbb R_+,T^{*0,q}X\boxtimes(T^{*0,q}X)^*)$, where $m\in\mathbb R$, $\varepsilon\geq0$, we have 
        \begin{equation}\label{e-gue250404yyd}
t^{\ell}a(t,x,y,\frac{s}{t})\in S^{m,-m+\ell}_{\varepsilon,{\rm cl\,}}(\mathbb R_+\times D\times D\times\mathbb R_+,T^{*0,q}X\boxtimes(T^{*0,q}X)^*),
        \end{equation}
for all $\ell\in\mathbb R$. Changing $s$ by $\frac{s}{t}$ in the integral \eqref{e-gue250402ycdb} and from \eqref{e-gue250404yyd}, we get the following

	\begin{theorem}\label{T:011120251117}
		With the notations used above, we can find $a_j(x, y) \in\mathcal{C}^\infty(X \times X, T^{\ast0, q}X \boxtimes (T^{\ast 0 ,q}X)^\ast)$, $j=1, 2, \cdots$, such that for all $m ,N \in \mathbb{N}$, there is a constant $C_{m, N} > 0$ such that for all $t>0$, $t \ll 1$,
		\[
		\Big\| e^{-t\Box_b^{(q)}}(x, y) - \widehat{A}(t, x, y) - \sum_{j=1}^N a_j(x, y)t^j \Big\|_{\mathcal{C}^m(X \times X)} \le C_{m, N}t^{N+1}, 
		\]
		where $\widehat{A}(t, x, y) \in \mathcal{D}'(\mathbb{R}_+ \times X \times X, T^{\ast0, q}X \boxtimes (T^{\ast 0 ,q}X)^\ast)$ and for every local coordinate patch $D$ with local coordinate $x=(x_1, \cdots, x_{2n+1})$, we have
		\begin{equation}\label{e-gue250404yyda}
		\begin{split}
			& \widehat{A}(t, x, y) = \int_0^\infty e^{i \frac{1}{t}\Phi_-(x, y, s)}b_-(t, x, y, s) ds +  \int_0^\infty e^{i \frac{1}{t} \Phi_+(x, y, s)}b_+(t, x, y, s) ds, \\
			& \Phi_\pm(x, y, s) = \varphi_\pm(1, x, y, s), \quad \text{$\varphi_\pm(t, x, y, s)$ is as in Theorem~\ref{T:0111202511001}}, \\
            &b_{\pm}(t, x, y, s) \sim \sum_{j=0}^\infty t^{-n-1+j}b^j_{\pm}( x, y, s) \\
            & \qquad \text{in} \quad S^{n,-n-1}_\varepsilon(\mathbb{R}_+ \times D \times D \times \overline{\mathbb{R}}_+, T^{\ast 0, q}X \boxtimes (T^{\ast 0, q}X)^\ast),\\
            &b^j_{\pm}(x, y, s)\in S^{n-j}_{1,0}(D \times D \times\mathbb R_+, T^{\ast 0, q}X \boxtimes (T^{\ast 0, q}X)^\ast),\ \ j=0,1,\ldots,\\
             &b^0_{\pm}(x,y,s)\neq0,\\
              &\mbox{$b_-(t,x,y,s)\in S^{n,-n-1}_{\varepsilon,{\rm cl\,}}(\mathbb R_+\times D\times D\times\mathbb R_+,T^{*0,q}X\boxtimes(T^{*0,q}X)^*)$}\\
              &\qquad\mbox{ with $\varepsilon>0$ if $q\neq n_-$, $\varepsilon=0$ if $q=n_-$},\\
              &\mbox{$b_+(t,x,y,s)\in S^{n,-n-1}_{\varepsilon,{\rm cl\,}}(\mathbb R_+\times D\times D\times\mathbb R_+,T^{*0,q}X\boxtimes(T^{*0,q}X)^*)$}\\
              &\qquad\mbox{ with $\varepsilon>0$ if $q\neq n_+$, $\varepsilon=0$ if $q=n_+$}.
             \end{split}
             \end{equation}
	\end{theorem}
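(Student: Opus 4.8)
The plan is to obtain Theorem~\ref{T:011120251117} by combining the approximation estimate already proved in Theorem~\ref{T:011120251108} with an explicit oscillatory-integral description of $\widehat A(t,x,y)$ on each coordinate patch, the latter being produced simply by rescaling $s\mapsto s/t$ in the local parametrix representation of Theorem~\ref{T:0111202511001}. Indeed, the estimate
\[
\Big\| e^{-t\Box_b^{(q)}}(x, y) - \widehat{A}(t, x, y) - \sum_{j=1}^N a_j(x, y)t^j \Big\|_{\mathcal{C}^m(X \times X)} \le C_{m, N}t^{N+1}
\]
is exactly the content of Theorem~\ref{T:011120251108}, so the task reduces to showing that the kernel $\widehat A(t,x,y)=\sum_k\widetilde\chi_k(x)A_k(t,x,y)\chi_k(y)$ has, in any local chart, the form \eqref{e-gue250404yyda}, after possibly modifying the smooth coefficients $a_j$.

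First I would recall from Theorem~\ref{T:0111202511001} that on $D\times D$ each $A_k(t,x,y)$ equals the two oscillatory integrals in \eqref{e-gue250402ycdb} plus a term $\mathcal E(t,x,y)$ smooth up to $t=0$, that the phases obey the quasi-homogeneity $\varphi_\pm(t,x,y,\lambda s)=\lambda\varphi_\pm(\lambda t,x,y,s)$, and that $a_\pm(t,x,y,s)\in\widehat S^{\,n}_{0,\mathrm{cl}}$. Setting $\Phi_\pm(x,y,s):=\varphi_\pm(1,x,y,s)$ and substituting $s\mapsto s/t$ (with $\lambda=1/t\ge 1$, i.e. $0<t\le1$, the only regime of interest), the quasi-homogeneity turns each phase into $\tfrac1t\Phi_\pm(x,y,s)$ and produces the amplitude $b_\pm(t,x,y,s):=t^{-1}a_\pm(t,x,y,s/t)$; this is precisely \eqref{e-gue250402ycdc}. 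Patching the local pieces $\widetilde\chi_k A_k(t)\chi_k$ together — their off-diagonal parts being smooth up to $t=0$ by Theorem~\ref{T:0111202511003}, and the phases from overlapping charts being equivalent since all of them agree, modulo an $O(e^{-ts})$ error, with the canonical Szeg\H{o} phase by \eqref{e-gue250404yydv} — leaves $\widehat A(t,x,y)$ in the form \eqref{e-gue250404yyda} together with finitely many contributions that are smooth up to $t=0$ (the various $\mathcal E_k$ and the off-diagonal cut-off terms); each such contribution has a genuine Taylor expansion in $t$, and absorbing its coefficients into the $a_j(x,y)$ preserves the estimate of Theorem~\ref{T:011120251108}.

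The remaining work is to translate the symbol information. By the observation recorded in \eqref{e-gue250404yyd}, applied with $m=n$ and $\ell=-1$, from $a_\pm\in\widehat S^{\,n}_{0,\mathrm{cl}}$ one gets $b_\pm=t^{-1}a_\pm(t,x,y,s/t)\in S^{\,n,-n-1}_{0,\mathrm{cl}}(\mathbb R_+\times D\times D\times\mathbb R_+,T^{*0,q}X\boxtimes(T^{*0,q}X)^*)$ in the sense of Definition~\ref{d-gue250403yyd}; the same computation applied to the asymptotic sum $a_\pm\sim\sum_j a_\pm^j$, together with $a_\pm^j(t,x,y,\lambda s)=\lambda^{n-j}a_\pm^j(\lambda t,x,y,s)$, yields $b_\pm\sim\sum_j t^{-(n+1)+j}b_\pm^j(x,y,s)$ with $b_\pm^j(x,y,s):=a_\pm^j(1,x,y,s)$, which lies in $S^{\,n-j}_{1,0}(D\times D\times\mathbb R_+)$ by the description of the $a_\pm^j$ in Theorem~\ref{T:0111202511001}. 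For the $\varepsilon$-bookkeeping I would invoke \eqref{e-gue250404yydu}: when $q\ne n_-$ one has $a_-(x,y,s)\equiv0$, hence $a_-(t,x,y,s)=a_-(t,x,y,s)-a_-(x,y,s)\in\widehat S^{\,n}_{\varepsilon,\mathrm{cl}}$ with $\varepsilon>0$ and so $b_-\in S^{\,n,-n-1}_{\varepsilon,\mathrm{cl}}$ with $\varepsilon>0$; when $q=n_-$ the term $a_-(x,y,s)$ (leading part $a_-^0(x,y)s^n$, with $a_-^0(x,y)\ne0$) is nonzero and carries no decay in $ts$, so only $\varepsilon=0$ is available; the identical argument with $n_+$ in place of $n_-$ settles $b_+$. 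Finally $b_\pm^0\not\equiv0$, since $a_\pm^0$ descends, via the stationary-phase reduction of Theorem~\ref{T:0111202511001}, from the parametrix amplitude $a(t,x,\eta)$ of \eqref{e-gue250402yydI}, which is the identity at $t=0$; equivalently, $b_\pm^0$ carries the heat-kernel leading behaviour and cannot vanish identically.

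I expect the genuinely delicate point to be the globalization step — verifying that the local phases $\varphi_\pm$ from different charts glue, through their equivalence with the Szeg\H{o} phase, so that \eqref{e-gue250404yyda} determines a well-defined distribution $\widehat A(t,x,y)$ on $\mathbb R_+\times X\times X$, and that every term discarded in passing from $\sum_k\widetilde\chi_k A_k(t)\chi_k$ to the two oscillatory integrals is smooth up to $t=0$, hence Taylor-expandable with an $O(t^{N+1})$ remainder in every $\mathcal C^m$-norm; the symbol-class bookkeeping of the previous paragraph, by contrast, is routine once Definition~\ref{d-gue250403yyd}, the identity \eqref{e-gue250404yyd} and the properties listed in Theorem~\ref{T:0111202511001} are in hand.
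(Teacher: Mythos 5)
Your proposal is correct and follows essentially the same route as the paper: the estimate is exactly Theorem~\ref{T:011120251108}, and the local form \eqref{e-gue250404yyda} is obtained, as in the text preceding the theorem, by the substitution $s\mapsto s/t$ in \eqref{e-gue250402ycdb}, the quasi-homogeneity of $\varphi_\pm$, and the symbol-class observation \eqref{e-gue250404yyd} (with $m=n$, $\ell=-1$), the $\varepsilon$-bookkeeping coming from \eqref{e-gue250404yydu}. Your extra remarks on gluing the local pieces and absorbing the terms smooth up to $t=0$ into the $a_j$ are consistent with (and slightly more explicit than) what the paper leaves implicit.
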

	
We also need 

\begin{definition}\label{d-gue250404yydb}
Let $m\in \mathbb{R}$. Let $\tilde{S}^{m}(\mathbb{R}_+ \times D, T^{\ast 0, q}X \boxtimes (T^{\ast 0, q}X)^\ast)$ be the space of all $a(t, x) \in \mathcal{C}^\infty(\mathbb R_+ \times D,T^{\ast 0, q}X \boxtimes (T^{\ast 0, q}X)^\ast)$ such that for all $\alpha\in \mathbb{N}_0^{2n+1}$, $\gamma \in \mathbb{N}_0$ and every compact set $K \subset D$, there exists a constant $C_{\alpha, \gamma, K}>0$ such that
		\[
		|\partial_t^{\gamma}\partial_x^\alpha a(t, x)| \le C_{\alpha, \gamma,K}t^{m-\gamma_1},
		\]
		for all $x \in K $ and all $t \in \mathbb{R}_+$, $t\ll1$.   

Let $a(t, x) \in\tilde S^{m}(\mathbb{R}_+ \times D, T^{\ast 0, q}X \boxtimes (T^{\ast 0, q}X)^\ast)$. We write 
		\begin{equation}\label{e-gue250404yyde}
		a(t, x) \sim \sum_{j=0}^\infty a_j(t, x) \quad \text{in} \quad\tilde{S}^m(\mathbb{R}_+ \times D, T^{\ast 0, q}X \boxtimes (T^{\ast 0, q}X)^\ast),
		\end{equation}
		where $a_j(t, x) \in \tilde S^{m+j}(\mathbb{R}_+ \times D, T^{\ast 0, q}X \boxtimes (T^{\ast 0, q}X)^\ast)$, $j=0, 1, \cdots,$ if for every $N \in \mathbb{N}$,
		\begin{equation}\label{e-gue250920ycd}
		a - \sum_{j=0}^N a_j \in\tilde S^{m+N+1}(\mathbb{R}_+ \times D, T^{\ast 0, q}X \boxtimes (T^{\ast 0, q}X)^\ast).
		\end{equation}

Let $B$ be a vector bundle over $\mathbb R_+\times X$. 
        We can similar define $\tilde S^{m}(\mathbb{R}_+ \times D,B)$, $\tilde S^{m}(\mathbb{R}_+ \times X,B$ and asymptotic sum as \eqref{e-gue250920ycd}.
		\end{definition}

We deduce from Theorem~\ref{T:011120251117} the following

\begin{corollary}\label{C:011120251206}
		Assume that $q \not\in \{ n_-, n_+ \}$. We can find $a_j(x)\in\mathcal{C}^\infty(X,T^{*0,q}X\boxtimes(T^{*0,q}X)^*)$, $j=0,1,\ldots$, $a_0(x)\neq0$, such that
		\[
		\mbox{$e^{-t\Box_b^{(q)}}(x, x) \sim\sum^{+\infty}_{j=0} t^{-n-1+j}a_j(x)$ in $\tilde S^{-n-1}(\mathbb{R}_+ \times X, T^{\ast 0, q}X \boxtimes (T^{\ast 0, q}X)^\ast)$}. 
		\]
	\end{corollary}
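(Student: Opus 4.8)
The plan is to read off the corollary from Theorem~\ref{T:011120251117} by restricting everything to the diagonal $x=y$. The decisive observation is that when $q\notin\{n_-,n_+\}$ condition $Y(q)$ holds, and then \emph{both} amplitudes $b_-$ and $b_+$ in \eqref{e-gue250404yyda} lie in $S^{n,-n-1}_{\varepsilon,{\rm cl\,}}$ with one and the same $\varepsilon>0$; this exponential factor $e^{-\varepsilon s}$ is exactly what is needed for the $s$-integration on the diagonal to converge. (For $q=n_\pm$ the matching amplitude only has $\varepsilon=0$, so $\int_0^\infty b_\pm(t,x,x,s)\,ds$ diverges -- that divergent piece is the Szeg\H{o}-kernel contribution, dealt with separately in Theorem~\ref{T:0128202515521}.)

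First I would work in a local coordinate patch $D$. By \eqref{E:011020251803} the phases vanish on the diagonal, $\Phi_\pm(x,x,s)=\varphi_\pm(1,x,x,s)=0$, so the oscillatory factors trivialize there and $\widehat A(t,x,x)=\int_0^\infty b_-(t,x,x,s)\,ds+\int_0^\infty b_+(t,x,x,s)\,ds$. The defining estimates of $S^{n,-n-1}_\varepsilon$ (Definition~\ref{d-gue250403yyd}) give, on every compact $K\subset D$ and for $t\ll1$, a bound of the form $|\partial_t^\gamma\partial_x^\alpha b_\pm(t,x,x,s)|\le C\,t^{-n-1-\gamma}(1+s)^n e^{-\varepsilon s}$; since $\int_0^\infty(1+s)^n e^{-\varepsilon s}\,ds<\infty$, both integrals converge absolutely, are smooth in $(t,x)$, and (after gluing over a finite cover of $X$ by such patches) $x\mapsto\widehat A(t,x,x)$ defines an element of $\tilde S^{-n-1}(\mathbb R_+\times X,T^{*0,q}X\boxtimes(T^{*0,q}X)^*)$. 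Next I would run the asymptotic expansion: writing $b_\pm\sim\sum_{j\ge0}t^{-(n+1)+j}b^j_\pm(x,y,s)$ in $S^{n,-n-1}_\varepsilon$, a subtraction argument shows that each $b^j_\pm(x,x,s)$ -- being a difference of symbols all living in spaces carrying the factor $e^{-\varepsilon s}$ -- also decays like $e^{-\varepsilon s}$ together with its $x$- and $s$-derivatives, so $\int_0^\infty b^j_\pm(x,x,s)\,ds$ converges to a smooth section, and integrating the $N$-th remainder estimate in $s$ yields $\int_0^\infty b_\pm(t,x,x,s)\,ds\sim\sum_{j\ge0}t^{-(n+1)+j}\int_0^\infty b^j_\pm(x,x,s)\,ds$ in $\tilde S^{-n-1}$.

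Combining this with Theorem~\ref{T:011120251117} and adding the polynomial correction $\sum_{j\ge1}t^j a_j(x,x)$ occurring there (its diagonal restriction only modifies the coefficients of $t^{-(n+1)+k}$ with $k\ge n+2$), I would obtain $e^{-t\Box_b^{(q)}}(x,x)\sim\sum_{k\ge0}t^{-(n+1)+k}a_k(x)$ with $a_k(x)=\int_0^\infty\big(b^k_-(x,x,s)+b^k_+(x,x,s)\big)\,ds$ for $0\le k\le n+1$ and with a further polynomial term added for $k\ge n+2$. One point to tidy up is that the $\mathcal C^m(X\times X)$ remainder estimate of Theorem~\ref{T:011120251117} must first be upgraded to control $t$-derivatives before the conclusion can be phrased in the $\tilde S^{-n-1}$-topology; this is routine from the variation-of-constants identity \eqref{E:011120251300} after one integration by parts in $s$, each $\partial_t$ costing at most one power of $t$, which matches the scaling built into $\tilde S$. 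Finally, for $a_0(x)\neq0$: since $e^{-t\Box_b^{(q)}}$ is a positive self-adjoint operator, $e^{-t\Box_b^{(q)}}(x,x)$ is a positive semidefinite endomorphism of $T^{*0,q}_xX$ for every $t>0$, hence $a_0(x)=\lim_{t\to0^+}t^{n+1}e^{-t\Box_b^{(q)}}(x,x)$ is positive semidefinite, and its non-vanishing follows from the explicit form of the leading amplitude $b^0_\pm$ in the parametrix of Theorem~\ref{T:0111202511001} together with the non-degeneracy of the Levi form (cf. \cite{Hsiao08}). The only genuinely nontrivial step -- hence the main obstacle -- is justifying the interchange of the $s$-integration with the asymptotic expansion, and that rests entirely on the uniform exponential gain $\varepsilon>0$ which the hypothesis $q\notin\{n_-,n_+\}$ secures.
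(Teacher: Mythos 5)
Your proposal is correct and follows essentially the same route as the paper: the corollary is read off from Theorem~\ref{T:011120251117} by restricting to the diagonal, where $\Phi_\pm(x,x,s)=0$ and the hypothesis $q\notin\{n_-,n_+\}$ guarantees that both amplitudes $b_\pm$ carry the exponential gain $e^{-\varepsilon s}$ with $\varepsilon>0$, so the $s$-integrals of the classical expansion converge term by term and yield the expansion in $\tilde S^{-n-1}$, with the polynomial terms $a_j(x,y)t^j$ only affecting coefficients of order $\ge n+2$. Your remarks on propagating the $e^{-\varepsilon s}$ decay to the expansion coefficients and on upgrading the remainder to control $t$-derivatives are exactly the (routine) points implicit in the paper's deduction.
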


	Let $\Pi^{(q)} : L^2_{(0,q)}(X) \to \operatorname{Ker} \Box_b^{(q)}$ be the orthogonal projection with respect to $(\,\cdot\,|\,\cdot\,)$ (Szeg\H{o} projection).  We have the following result about small-time asymptotics of CR heat kernel, which plays a vital role throughout the paper.
	
	\begin{theorem}\label{T:012820251552}
		Assume that $q \in \{ n_-, n_+ \}$. Suppose that  $\Box_b^{(q)}$ has closed range.  Then 
		\begin{equation}\label{E:0111202512141}
			\big( e^{-t\Box_b^{(q)}} ( I - \Pi^{(q)} ) \big)(x, y) \in \mathcal{C}^\infty(\mathbb{R}_+ \times X \times X, T^{\ast0, q}X \boxtimes (T^{\ast 0 ,q}X)^\ast)
		\end{equation}
		and we can find $b_j(x)\in\mathcal{C}^\infty(X,T^{*0,q}X\boxtimes(T^{*0,q}X)^*)$, $j=0,1,\ldots$, $b_0(x)\neq0$, such that
		\begin{equation}\label{E:0111202512142}
        \begin{split}
			\big( e^{-t\Box_b^{(q)}} ( I - \Pi^{(q)} ) \big)(x, x) &\sim \sum_{j=0}^\infty t^{-n-1+j} b_j(x)\\
            & \text{in} \ \tilde S^{-n-1}(\mathbb{R}_+ \times X, T^{\ast 0, q}X \boxtimes (T^{\ast 0, q}X)^\ast). 
            \end{split}
		\end{equation}
	\end{theorem}
    
	\begin{proof}
    Let $x=(x_1,\ldots,x_{2n+1})$ be local coordinates of $X$ defined on an open set $D$ of $X$ with $T=-\frac{\pr}{\pr x_{2n+1}}$ on $D$. 
Since $\Box_b^{(q)}$ has closed range, it was shown in \cite{Hsiao08} that on $D\times D$, 
		\[
		\Pi^{(q)}(x, y)\equiv\int_0^\infty e^{is\varphi_-(x, y)}a_-(x, y, s) ds +  \int_0^\infty e^{is \varphi_+(x, y)}a_+(x, y, s) ds, \\
		\]
		where $\varphi_\pm(x, y)$, $a_{\pm}(x,y,s)$  are as in \eqref{e-gue250404yydv} and \eqref{e-gue250404yydu} respectively. 
Let $A(t,x,y)$ be as in \eqref{e-gue250402ycdb}. We have
		\begin{equation}\label{e-gue250404yydp}
        \begin{split}
			&A(t, x, y) - \Pi^{(q)}(x, y)\\
            &= \int_0^\infty e^{i \varphi_+(t, x, y, s)}(a_+(t, x, y, s) - a_+(x, y, s)) ds\\
			&   + \int_0^\infty (e^{i\varphi_+(t, x, y, s)} - e^{is\varphi_+(x, y)} ) a_+(x, y, s) ds \\ 
			&   + \int_0^\infty e^{i \varphi_-(t, x, y, s)}(a_-(t, x, y, s) - a_-(x, y, s)) ds \\
			&    + \int_0^\infty (e^{i\varphi_-(t, x, y, s)} - e^{is\varphi_-(x, y)} ) a_-(x, y, s) ds+E(t,x,y),
            \end{split}
		\end{equation}
        where $E(t,x,y)\in\mathcal{C}^\infty(\ol{\mathbb R}_+\times D\times D,T^{*0,q}X\boxtimes(T^{*0,q}X)^*)$.
		From \eqref{e-gue250404yydu} and \eqref{e-gue250404yydv}, we see that the integrals in \eqref{e-gue250404yydp} converge and hence 
        \begin{equation}\label{e-gue250404yydm}
            A(t, x, y) - \Pi^{(q)}(x, y)\in\mathcal{C}^\infty(\mathbb R_+\times D\times D,T^{*0,q}X\boxtimes(T^{*0,q}X)^*).\end{equation}
            From Theorem~\ref{T:011120251108}, we see that 
             \begin{equation}\label{e-gue250404yydn}
             \begin{split}
               A(t, x, y) - \Pi^{(q)}(x, y)&-(e^{-t\Box^{(q)}_b}(I-\Pi^{(q)}))(x,y)\\
               &\in\mathcal{C}^\infty(\ol{\mathbb R}_+\times D\times D,T^{*0,q}X\boxtimes(T^{*0,q}X)^*).
               \end{split}
             \end{equation}
             From \eqref{e-gue250404yydm} and \eqref{e-gue250404yydn}, we get \eqref{E:0111202512141}. 

From \eqref{e-gue250404yydp}, we have 
             \begin{equation}\label{e-gue250404yydq}
        \begin{split}
			&A(t, x, x) - \Pi^{(q)}(x, x)\\
            &= \int_0^\infty (a_+(t, x, x, s) - a_+(x, x, s)) ds\\
			&   + \int_0^\infty (a_-(t, x, x, s) - a_-(x, x, s)) ds+E(t,x,x)\\
            &=\int_0^\infty (a_+(t, x, x, \frac{s}{t}) - a_+(x, x, \frac{s}{t}))t^{-1}ds\\
			&   + \int_0^\infty (a_-(t, x, x,\frac{s}{t}) - a_-(x, x, \frac{s}{t}))t^{-1} ds+E(t,x,x).
            \end{split}
		\end{equation}
        From \eqref{e-gue250404yydq} and notice that 
        \begin{equation}\label{e-gue250404yydr}
        A(t, x, x) - \Pi^{(q)}(x, x)=(e^{-t\Box^{(q)}_b}(I-\Pi^{(q)}))+\hat E(t,x,y),\end{equation}
        where $\hat E(t,x,y)\in\mathcal{C}^\infty(\ol{\mathbb R}_+\times D\times D,T^{*0,q}X\boxtimes(T^{*0,q}X)^*)$. From \eqref{e-gue250404yydq} and \eqref{e-gue250404yydr}, we can repeat the proof of Theorem~\ref{T:011120251108} and get \eqref{E:0111202512142}. 
            \end{proof}

Let 
\[\Pi: L^2_{(0,\bullet)}(X)\To{\rm Ker\,}\Box_b\]
be the orthogonal projection. 
            From Corollary~\ref{C:011120251206} and Theorem~\ref{T:012820251552}, we get 

\begin{theorem}\label{t-gue250410ycdb}
Suppose that  $\Box_b$ has closed range. Then 
		\begin{equation}\label{E:0111202512141y}
			\big( e^{-t\Box_b} ( I - \Pi ) \big)(x, y) \in \mathcal{C}^\infty(\mathbb{R}_+ \times X \times X, T^{\ast0, \bullet}X \boxtimes (T^{\ast 0 ,\bullet}X)^\ast)
		\end{equation}
		and we can find $a_j(x)\in\mathcal{C}^\infty(X,T^{*0,\bullet}X\boxtimes(T^{*0,\bullet}X)^*)$, $j=0,1,\ldots$, $a_0(x)\neq0$, such that
		\begin{equation}\label{E:0111202512142y}
        \begin{split}
			\big( e^{-t\Box_b}( I - \Pi ) \big)(x, x) &\sim \sum_{j=0}^\infty t^{-n-1+j} a_j(x)\\
            &\text{in} \ \tilde S^{-n-1}(\mathbb{R}_+ \times X, T^{\ast 0, \bullet}X \boxtimes (T^{\ast 0, \bullet}X)^\ast). 
            \end{split}
		\end{equation}
\end{theorem}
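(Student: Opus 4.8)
The plan is to reduce the statement to the degreewise results already established and then reassemble over the form degree. First I would record that the Gaffney Laplacian respects the bidegree splitting $L^2_{(0,\bullet)}(X)=\bigoplus_{q=0}^{n}L^2_{(0,q)}(X)$, so that $\Box_b=\bigoplus_q\Box_b^{(q)}$, $e^{-t\Box_b}=\bigoplus_q e^{-t\Box_b^{(q)}}$ and $\Pi=\bigoplus_q\Pi^{(q)}$, and that ${\rm Range\,}\Box_b=\bigoplus_q{\rm Range\,}\Box_b^{(q)}$ is closed if and only if each ${\rm Range\,}\Box_b^{(q)}$ is closed; in particular the hypothesis that $\Box_b$ has closed range passes to every $\Box_b^{(q)}$. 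Hence it suffices to prove, for each fixed $q$, that $\big(e^{-t\Box_b^{(q)}}(I-\Pi^{(q)})\big)(x,y)$ is smooth on $\mathbb{R}_+\times X\times X$ and that its restriction to the diagonal has the claimed expansion in $\tilde S^{-n-1}(\mathbb{R}_+\times X,T^{*0,q}X\boxtimes(T^{*0,q}X)^*)$ with non-vanishing leading coefficient.

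For $q\in\{n_-,n_+\}$ this is exactly Theorem~\ref{T:012820251552}, whose hypothesis (closed range of $\Box_b^{(q)}$) has just been verified. For $q\notin\{n_-,n_+\}$ the condition $Y(q)$ holds, and then the structure of $\widehat A(t,x,y)$ in Theorem~\ref{T:011120251117} is the key input: in this range of degrees both amplitudes $b_\pm$ lie in $S^{n,-n-1}_{\varepsilon,{\rm cl\,}}$ with $\varepsilon>0$, so they carry a genuine exponential factor $e^{-\varepsilon s}$ which makes the $s$-integrals and all of their $(t,x,y)$-derivatives absolutely convergent for every $t>0$; together with Theorem~\ref{T:011120251108} this shows $e^{-t\Box_b^{(q)}}(x,y)$ is smooth on $\mathbb{R}_+\times X\times X$. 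Moreover $Y(q)$ forces $\Ker\Box_b^{(q)}$ to be finite-dimensional on the compact manifold $X$, so $\Pi^{(q)}$ is a finite-rank smoothing operator with smooth kernel, and since $e^{-t\Box_b^{(q)}}$ is the identity on $\Ker\Box_b^{(q)}$ we have $e^{-t\Box_b^{(q)}}(I-\Pi^{(q)})=e^{-t\Box_b^{(q)}}-\Pi^{(q)}$, which is therefore smooth, giving \eqref{E:0111202512141y} in this degree. On the diagonal, Corollary~\ref{C:011120251206} supplies $e^{-t\Box_b^{(q)}}(x,x)\sim\sum_{j\ge0}t^{-(n+1)+j}a_j(x)$ with $a_0(x)\neq0$; subtracting the $t$-independent smooth density $\Pi^{(q)}(x,x)$, which lies in $\tilde S^0\subset\tilde S^{-n-1}$, only modifies the coefficient of $t^0$ (equivalently, replaces $a_{n+1}$ by $a_{n+1}-\Pi^{(q)}(x,x)$), leaving the leading term untouched; this is a routine re-indexing of the asymptotic sum.

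Finally I would assemble the pieces: define $a_j(x)$ to be the direct sum over $q=0,\dots,n$ of the coefficient $a_j^{(q)}(x)$ produced in degree $q$, so $a_j(x)\in\mathcal{C}^\infty(X,T^{*0,\bullet}X\boxtimes(T^{*0,\bullet}X)^*)$. Smoothness of $\big(e^{-t\Box_b}(I-\Pi)\big)(x,y)$ is immediate from the degreewise smoothness, and \eqref{E:0111202512142y} follows since an asymptotic expansion of a finite direct sum is the direct sum of the expansions. The leading coefficient $a_0(x)$ is non-vanishing because each graded component $a_0^{(q)}(x)$ is non-vanishing (by Corollary~\ref{C:011120251206}, respectively Theorem~\ref{T:012820251552}) and distinct $q$ occupy different summands, so no cancellation can occur.

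The one point that requires a little care — and which I would isolate if the referee asked — is the assertion that, when $q\notin\{n_-,n_+\}$, the kernel $e^{-t\Box_b^{(q)}}(x,y)$ (equivalently $A(t,x,y)$ on a coordinate patch, and hence $e^{-t\Box_b^{(q)}}(I-\Pi^{(q)})(x,y)$) is smooth on \emph{all} of $\mathbb{R}_+\times X\times X$, not merely near the diagonal or for small $t$: this rests on reading off from Theorem~\ref{T:011120251117} that in this degree range the amplitudes of the oscillatory integrals defining $\widehat A(t,x,y)$ have true exponential decay in the $s$-variable, so differentiation under the integral sign is legitimate for each $t>0$, and on the spectral-gap bound for $e^{-t\Box_b^{(q)}}(I-\Pi^{(q)})$ coming from the closed-range hypothesis to cover the range $t\ge 1$. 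Everything else is bookkeeping over $q$ and re-indexing of asymptotic series.
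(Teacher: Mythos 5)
Your proposal is correct and follows essentially the same route as the paper, which deduces the theorem by summing the degreewise results: Theorem \ref{T:012820251552} for $q\in\{n_-,n_+\}$ and Corollary \ref{C:011120251206} (with the smooth, finite-rank kernel $\Pi^{(q)}$ subtracted, which only shifts the $t^0$ coefficient) for $q\notin\{n_-,n_+\}$. Your explicit treatment of the degree decomposition, the passage of the closed-range hypothesis to each $\Box_b^{(q)}$, and the smoothness for all $t>0$ is just the bookkeeping the paper leaves implicit.
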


        Let $E$ be a CR vector bundle over $X$ (see Definition~\ref{Def:CRVB}). We recall that we always assume that $E$ is locally CR trivializable.  It should be mentioned that Theorem~\ref{T:011120251108}, Theorem~\ref{T:011120251117} and Theorem~\ref{T:012820251552} hold for $e^{-t\Box^{(q)}_{b,E}}$ with the same proofs. In particular, we have the following results
        
        \begin{corollary}\label{c-gue250410yyd}
		Assume that $q \not\in \{ n_-, n_+ \}$. We can find $a_j(x)\in\mathcal{C}^\infty(X,(E\times T^{*0,q}X)\boxtimes(E\otimes T^{*0,q}X)^*)$, $j=0,1,\ldots$, $a_0(x)\neq0$, such that
		\begin{equation*}
        \begin{split}
		e^{-t\Box_{b,E}^{(q)}}(x, x) &\sim\sum^{+\infty}_{j=0} t^{-n-1+j}a_j(x) \\
        &\text{in}\ \tilde S^{-n-1}(\mathbb{R}_+ \times X, (E\otimes T^{\ast 0, q}X) \boxtimes(E\otimes T^{\ast 0, q}X)^\ast). 
        \end{split}
		\end{equation*}
	\end{corollary}
   
Let $\Pi^{(q)}_E : L^2_{(0,q)}(X,E) \to \operatorname{Ker} \Box_{b,E}^{(q)}$ be the orthogonal projection with respect to $(\,\cdot\,|\,\cdot\,)_E$. 

        \begin{theorem}\label{t-gue250410yydI}
		Assume that $q \in \{ n_-, n_+ \}$. Suppose that  $\Box_{b,E}^{(q)}$ has closed range.  Then 
		\begin{equation}\label{E:0111202512141z}
			\big( e^{-t\Box_{b,E}^{(q)}} ( I - \Pi^{(q)}_E ) \big)(x, y) \in \mathcal{C}^\infty(\mathbb{R}_+ \times X \times X, (E\otimes T^{\ast0, q}X)\boxtimes(E\otimes T^{\ast 0 ,q}X)^\ast)
		\end{equation}
		and we can find $b_j(x)\in\mathcal{C}^\infty(X,(E\otimes T^{*0,q}X)\boxtimes(E\times T^{*0,q}X)^*)$, $j=0,1,\ldots$, $b_0(x)\neq0$, such that
		\begin{equation}\label{E:0111202512142z}
        \begin{split}
			&\mbox{$\big( e^{-t\Box_{b,E}^{(q)}} ( I - \Pi^{(q)}_E ) \big)(x, x) \sim \sum_{j=0}^\infty t^{-n-1+j} b_j(x)$}\\ &\mbox{in $\tilde S^{-n-1}(\mathbb{R}_+ \times X, (E\otimes T^{\ast 0, q}X)\boxtimes(E\otimes T^{\ast 0, q}X)^\ast)$}. 
            \end{split}
		\end{equation}
	\end{theorem}

    Let $\Pi_E : L^2_{(0,\bullet)}(X,E) \to \operatorname{Ker} \Box_{b,E}$ be the orthogonal projection with respect to $(\,\cdot\,|\,\cdot\,)_E$. 

    \begin{theorem}\label{t-gue250410ycdbz}
Suppose that  $\Box_{b,E}$ has closed range.  Then 
		\begin{equation}\label{E:0111202512141yq}
			\big( e^{-t\Box_{b,E}} ( I - \Pi_E) \big)(x, y) \in \mathcal{C}^\infty(\mathbb{R}_+ \times X \times X, (E\otimes T^{\ast0, \bullet}X)\boxtimes (E\otimes T^{\ast 0 ,\bullet}X)^\ast)
		\end{equation}
		and we can find $g_j(x)\in\mathcal{C}^\infty(X,(E\otimes T^{*0,\bullet}X)\boxtimes(E\otimes T^{*0,\bullet}X)^*)$, $j=0,1,\ldots$, $g_0(x)\neq0$, such that
		\begin{equation}\label{E:0111202512142q}
        \begin{split}
			\big( e^{-t\Box_b}( I - \Pi_E) \big)(x, x) &\sim \sum_{j=0}^\infty t^{-n-1+j} g_j(x) \\
            &\text{in}\ \tilde S^{-n-1}(\mathbb{R}_+ \times X, (E\otimes T^{\ast 0, \bullet}X)\boxtimes (E\otimes T^{\ast 0, \bullet}X)^\ast). 
            \end{split}
		\end{equation}
\end{theorem}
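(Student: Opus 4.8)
The plan is to reduce both assertions to the degree-by-degree statements already obtained, exploiting that $\Box_{b,E}$ is block diagonal with respect to the grading by form degree. First I would write $L^2_{(0,\bullet)}(X,E)=\bigoplus_{q=0}^nL^2_{(0,q)}(X,E)$; since $\Box_{b,E}$ preserves this splitting, $e^{-t\Box_{b,E}}=\bigoplus_{q=0}^ne^{-t\Box^{(q)}_{b,E}}$ and $\Pi_E=\bigoplus_{q=0}^n\Pi^{(q)}_E$, hence
\[
\big(e^{-t\Box_{b,E}}(I-\Pi_E)\big)(x,y)=\bigoplus_{q=0}^n\big(e^{-t\Box^{(q)}_{b,E}}(I-\Pi^{(q)}_E)\big)(x,y).
\]
Moreover $\Box_{b,E}$ has $L^2$ closed range if and only if every $\Box^{(q)}_{b,E}$, $q=0,\ldots,n$, has $L^2$ closed range. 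It therefore suffices to prove, for each $q$, that $\big(e^{-t\Box^{(q)}_{b,E}}(I-\Pi^{(q)}_E)\big)(x,y)$ is smooth on $\mathbb{R}_+\times X\times X$ and that its restriction to the diagonal has an asymptotic expansion in $\tilde S^{-n-1}$ of the form $\sum_{j\ge0}t^{-(n+1)+j}(\,\cdot\,)$ with nonzero leading coefficient, and then to take the direct sum over $q$.

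For the two critical degrees $q\in\set{n_-,n_+}$ condition $Y(q)$ fails, and since $\Box^{(q)}_{b,E}$ has closed range by hypothesis, Theorem~\ref{t-gue250410yydI} gives precisely what is needed: the smoothness \eqref{E:0111202512141z} and the diagonal expansion \eqref{E:0111202512142z} with $b_0(x)\neq0$.

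For the remaining degrees $q\notin\set{n_-,n_+}$ condition $Y(q)$ holds, so $\Box^{(q)}_{b,E}$ is hypoelliptic (working in local CR trivializations of $E$, this is the classical subelliptic estimate of Kohn), and the parabolic operator $\pr_t+\Box^{(q)}_{b,E}$ is hypoelliptic as well, so $e^{-t\Box^{(q)}_{b,E}}(x,y)$ is smooth on $\mathbb{R}_+\times X\times X$. Hypoellipticity also forces $\operatorname{Ker}\Box^{(q)}_{b,E}$ to be a finite-dimensional subspace of $\Omega^{0,q}(X,E)$, so $\Pi^{(q)}_E$ is smoothing; and since $e^{-t\Box^{(q)}_{b,E}}$ acts as the identity on $\operatorname{Ker}\Box^{(q)}_{b,E}$ we have $e^{-t\Box^{(q)}_{b,E}}\Pi^{(q)}_E=\Pi^{(q)}_E$, whence
\[
\big(e^{-t\Box^{(q)}_{b,E}}(I-\Pi^{(q)}_E)\big)(x,y)=e^{-t\Box^{(q)}_{b,E}}(x,y)-\Pi^{(q)}_E(x,y)
\]
is smooth on $\mathbb{R}_+\times X\times X$. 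On the diagonal, Corollary~\ref{c-gue250410yyd} gives $e^{-t\Box^{(q)}_{b,E}}(x,x)\sim\sum_{j\ge0}t^{-(n+1)+j}a_j(x)$ in $\tilde S^{-n-1}$ with $a_0(x)\neq0$, while $\Pi^{(q)}_E(x,x)$ is a $t$-independent smooth section, hence lies in $\tilde S^0\subset\tilde S^{-n-1}$ and only modifies the $j=n+1$ term of the expansion; this yields the expansion in this degree, still with nonzero leading coefficient.

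Finally I would assemble the pieces: summing the smoothness statements over $q$ gives \eqref{E:0111202512141yq}, and setting $g_j(x):=\bigoplus_{q=0}^n(\text{the degree-}q\text{ coefficient of }t^{-(n+1)+j})$ gives \eqref{E:0111202512142q}; since each block of $g_0(x)$ is nonzero, $g_0(x)\neq0$. The only point that requires more than bookkeeping is, for $q\notin\set{n_-,n_+}$, the assertions that $\Pi^{(q)}_E$ is smoothing and that $e^{-t\Box^{(q)}_{b,E}}(x,y)$ is smooth for $t>0$; both are classical consequences of the hypoellipticity of $\Box^{(q)}_{b,E}$ (and of its heat operator) under condition $Y(q)$, and can alternatively be read off from the oscillatory-integral description of $e^{-t\Box^{(q)}_{b,E}}$ in Theorem~\ref{T:011120251117} together with the semigroup property, so no genuinely new estimate is needed.
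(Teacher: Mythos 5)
Your proposal is correct and follows essentially the same route as the paper: the paper also obtains this theorem by decomposing in form degree and combining Corollary~\ref{c-gue250410yyd} (degrees where $Y(q)$ holds) with Theorem~\ref{t-gue250410yydI} (degrees $q\in\set{n_-,n_+}$, using the closed range hypothesis). Your explicit remark that for $q\notin\set{n_-,n_+}$ the projection $\Pi^{(q)}_E$ is smoothing and only shifts the $j=n+1$ coefficient is exactly the bookkeeping the paper leaves implicit, so no new argument is needed.
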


    \section{The asymptotics of CR heat kernel for CR line bundles $L^k$}\label{s-gue250704}

Let $L^k$ be the $k$-th tensor power of a CR complex line bundle $L$ over $X$. In this section, we will establish asymptotics of the heat kernel of Kohn Laplacian with values in $L^k$ as $k \to +\infty$. 

\subsection{CR line bundle}\label{s-gue250411yyd}

Let $(L, h^L)$ be a CR complex line bundle over $X$, where $h^L$ is the Hermitian fiber metric on $L$ and its local weight is $\phi$. Namely, let $s$ be a local CR trivializing section of $L$ over $D$, then locally
	\[
	| s(x) |^2_{h^L} = e^{-\phi(x)}, \quad x \in D.
	\]
	
	For $k > 0$, let $L^k$ be the $k$-th tensor power of the line bundle $L$ over $X$, then $h^L$ induces a Hermitian fiber metric $h^{L^k}$ on $L^k$ and $s^k$ is a local CR trivializing section of $L^k$. 
As before, let 
	\[
	\ddbar_{b, k}=\ddbar_b : \Omega^{0, q}(X, L^k) \to  \Omega^{0, q+1}(X, L^k)
	\]
	be the tangential Cauchy-Riemann operator acting on forms with values in $L^k$. Then the $q$-th $\ddbar_{b, k}$-cohomology is given by
	\[
	H^q_b(X, L^k) :=\frac{{\rm Ker\,}(\ddbar_b: \Omega^{0,q}(X,L^k)\To\Omega^{0,q+1}(X,L^k))}{{\rm Im\,}(\ddbar_b: \Omega^{0,q-1}(X,L^k)\To\Omega^{0,q}(X,L^k))},\ \ q=0,1,\ldots,n.
	\]
We recall that we work with the assumption that the Levi form is non-degenerate of constant signature $(n_-,n_+)$. Then ${\rm dim\,}H^q(M,L^k)<+\infty$ if $q\notin\set{n_-,n_+}$. 

\subsection{The asymptotics of $e^{-\frac{t}{k}\Box_{b,L^k}}$ as $k\To+\infty$} \label{s-gue250411yydI} 

In this subsection, we are going to study asymptotic behavior of $e^{-\frac{t}{k}\Box_{b,L^k}}$ as $k\To+\infty$. 
We also write $A_k(t)$ to denote $e^{-t\Box_{b,L^k}}$ and write $A_k(t,x,y)\in\mathcal{D}'(\mathbb R_+\times X\times X,(T^{*0,\bullet}X\otimes L^k)\boxtimes(T^{*0,\bullet}X\otimes L^k)^*)$ to denote the distribution kernel of $A_k(t)$. 
For every $q\in\set{0,1,\ldots,n}$, write $A^{(q)}_k(t)$ to denote $e^{-t\Box^{(q)}_{b,L^k}}$ 
and write $A^{(q)}_k(t,x,y)$ to denote $e^{-t\Box^{(q)}_{b,L^k}}(x,y)$.
If condition $Y(q)$ holds, then 
 \[A^{(q)}_k(t,x,y)\in\mathcal{C}^\infty(\mathbb R_+\times X\times X, (T^{*0,q}X\otimes L^k)\boxtimes(T^{*0,q}X\otimes L^k)^*).\]
 When condition $Y(q)$ holds, the asymptotic behavior of 
$A^{(q)}_k(\frac{t}{k},x,y)$ as $k\To+\infty$ was established in~\cite{HZ23}.

One of the main goal of this subsection is to study the asymptotic behavior of 
 $A^{(q)}_k(\frac{t}{k},x,y)$ as $k\To+\infty$ in the sense of distribution when condition $Y(q)$ fails. 

Let $s$ be a local CR trivializing section of $L$ defined on an open set $D$ of $X$, $\abs{s}^2_{h^L}=e^{-\phi}$. We have the unitary identification: 
\[\begin{split}
U: L^2_{(0,\bullet)}(D,L^k)&\To L^2_{(0,\bullet)}(D),\\
s^k\otimes \hat u&\To e^{-\frac{k\phi}{2}} \hat u.
\end{split}\]
There exists $A_{k,s}(t,x,y)\in\mathcal{D}'(\mathbb R_+\times D\times D, T^{*0,\bullet}X\boxtimes(T^{*0,\bullet}X)^*)$ such that 
\begin{equation}\label{e-gue210303yydI}
(e^{-t\Box_{b,L^k}}u)(x)=s^k(x)\otimes e^{\frac{k\phi(x)}{2}}\int_DA_{k,s}(t,x,y)e^{-\frac{k\phi(y)}{2}}\hat u(y)dv_X(y)\ \ \mbox{on $D$}, 
\end{equation}
for every $u=s^k\otimes\hat u\in\Omega^{0,\bullet}_c(D,L^k)$, $\hat u\in\Omega^{0,\bullet}_c(D)$. Note that 
\begin{equation}\label{e-gue210303yydII}
A_k(t,x,y)=s^k(x)\otimes A_{k,s}(t,x,y)e^{\frac{k\phi(x)-k\phi(y)}{2}}\otimes (s^k(y))^*\ \ \mbox{on $D$}. 
\end{equation}

We first set up the local coordinates. Fix $p\in X$. Let $U_1,\cdots,U_n$ be a smooth orthonormal frame for $T^{1,0}X$ in a neighborhood $D$ of $p$,
 for which $\mathcal L_p(U_j,\ol U_l)=\delta_{j,l}\lambda_j(p)$, $1\le j,l\le n$. Let $\omega^1,\cdots,\omega^n$ be an orthonormal basis for $(1,0)$ forms which is dual to the basis $U_1,\cdots,U_n$. Let $s$ be a local CR trivializing section of $L$ on $D$  with local weight $\phi$, i.e. $|s(x)|^2_{h^L}=e^{-\phi(x)}$ on $D$.
 
We take local coordinates $x=(x_1,\ldots,x_{2n+1})=(z,\theta)=(z_1,\cdots,z_n,\theta)$ on an open set $D$ of $p$, such that 
\begin{equation}\label{local1}
 (z(p),\theta(p))=0,\ \omega_0(p)=d\theta,\ z_j=x_{2j-1}+ix_{2j},\quad \frac{\pr}{\pr z_j}=\frac{1}{2}\left(\frac{\pr}{\pr x_{2j-1}}-i\frac{\pr}{\pr x_{2j}}\right),
\end{equation}
\begin{equation}\label{local2}
\begin{split}
&\langle\,\frac{\partial}{\partial x_j}\,\mid\,\frac{\partial}{\partial x_l}\,\rangle=2\delta_{j,l}+O(|x|), \quad
\langle\,\frac{\partial}{\partial x_j}\,\mid\,\frac{\partial}{\partial\theta}\,\rangle=O(|x|),\\
 &\left|\frac{\partial}{\partial \theta}\right|^2=1+O(|x|),\quad  \left|\frac{\pr}{\pr z_j}\right|^2=1+O(|x|),
 \end{split}
\end{equation}
for $j,l=1,\cdots,2n$.
 Moreover,
\begin{equation}\label{U}
   U_j=\frac{\pr}{\pr z_j}-i\lambda_j\ol z_j\frac{\pr}{\pr \theta}-c_j\theta\frac{\pr}{\pr\theta}+\sum^{2n+1}_{s=1}r_{j,s}(z,\theta)\frac{\partial}{\partial x_s},
\end{equation}
where $r_{j,s}(z,\theta)\in\cali{C}^\infty(D)$, $r_{j,s}(z,\theta)=O(|(z,\theta)|^2)$, $j=1,\ldots,n$, $s=1,\ldots,2n+1$, 
and
\begin{equation}\label{phi}
\begin{split}
\phi(z,\theta)=&\beta\theta+\sum_{j=1}^n(a_jz_j+\ol a_j\ol z_j)+\sum_{j,l=1}^{n}(a_{j,l}z_jz_l+\ol a_{j,l}\ol z_j\ol z_l)+\sum_{j,l=1}^{n}\mu_{j,l}z_j\ol z_l\\
&+O(|\theta|^2+|z||\theta|+|(z,\theta)|^3)
\end{split}
\end{equation}
where $\beta\in\mathbb R$, $c_j$, $a_j$, $a_{j,l}$, $\mu_{j,l}\in\mathbb C$, and $\lambda_j$ are the eigenvalues of $\mathcal L_p$, $j, l=1,\ldots,n$. We then work with this local coordinates and we identify $D$ with some open set in $\mathbb R^{2n+1}$. 

For $u,v\in \Omega_c^{0,\bullet}(D)$, let
$$
(u\mid v)_{k\phi,D}=(u\mid v)_{k\phi}:=\int_{D}\langle\,u\mid v\,\rangle e^{-k\phi}dv_X(x).
$$
$(\,\cdot\mid\cdot\,)_{k\phi}$ is called the inner product on $\Omega_c^{0,\bullet}(D)$ with weight $k\phi$. Denote by $L_{(0,\bullet)}^2(D,k\phi)$ the completion of $\Omega_c^{0,\bullet}(D)$ with respect to $(\,\cdot\mid\cdot\,)_{k\phi}$.
Let $\ddbar_b^{*,k\phi}$ be the formal adjoint of $\ddbar_b$ with respect to $(\,\cdot\mid\cdot\,)_{k\phi}$. From $\ddbar_b=\sum_{j=1}^{n}\bigr(\ol\omega^j\wedge \ol U_j+(\ddbar_b \ol\omega^j)\wedge (\ol\omega^j\wedge)^*\bigr)$, we can check that
\begin{equation}
\ddbar^{*,k\phi}_{b}=\sum_{j=1}^{n}\bigr((\ol\omega^j\wedge)^*(- U_j+kU_j\phi+\beta_j(z,\theta))+\ol\omega^j\wedge (\ddbar_b \ol\omega^j\wedge)^*\bigr),
\end{equation}
where $\beta_j(z,\theta)$ is a smooth function on $D$, independent of $k$, for every $j=1,\ldots,n$, and for $e\in T^{*,r}_xX$, $(e\wedge)^*: T^{*0,\bullet}_xX\To T^{*0,\bullet}_xX$ denotes the adjoint of $e\wedge$ with respect to $\langle\,\cdot\,|\,\cdot\,\rangle$, that is, 
$\langle\,e\wedge u\,|\,v\,\rangle=\langle\,u\,|\,(e\wedge)^*v\,\rangle$, for all $u, v\in T^{*0,\bullet}_xX$. 

Set
\begin{equation}\label{e-gue210531yydu}
\Box_{b,k\phi}=\ddbar^{*,k\phi}_{b}\ddbar_{b}+\ddbar_{b}\ddbar^{*,k\phi}_{b}: \Omega^{0,\bullet}(D)\To\Omega^{0,\bullet}(D).
\end{equation}
For $u\in\Omega^{0,\bullet}(D,L^k)$, there exists a $\check u\in \Omega^{0,\bullet}(D)$ such that 
$u=s^k\check u$ on $D$. Recall that $\ddbar_{b}(s^k\otimes \check u)=s^k\otimes\ddbar_b \check u$ and we have
\begin{equation}\label{e-gue210303yyda}
\Box_{b,L^k}u=s^k\otimes \Box_{b,k\phi}\check u.
\end{equation}
From now on, on $D$, we identify $u$ with $\check u$ and $\Box_{b,L^k}$ with $\Box_{b,k\phi}$. 

Set
\begin{equation}\label{rho}
\rho(z)=\sum_{j=1}^na_jz_j+\sum_{j,l=1}^{n}a_{j,l}z_jz_l
\end{equation}
and 
\begin{equation}\label{phi0}
\begin{aligned}
\phi_0(z,\theta)&=\phi(z,\theta)-\rho(z)-\ol{\rho(z)}\\
&=\beta\theta+\sum_{j,l=1}^{n}\mu_{j,l}z_j\ol z_l+O(|\theta|^2+|z||\theta|+|(z,\theta)|^3).
\end{aligned}
\end{equation}
We consider the following unitary identification:
$$
\begin{cases}
L^2_{(0,\bullet)}(D,k\phi)&\leftrightarrow L^2_{(0,\bullet)}(D,k\phi_0)\\
u&\to\tilde{u}=e^{-k\rho}u\\
u=e^{k\rho}\tilde{u}&\leftarrow\tilde{u}\\
\end{cases}
$$

For $u\in\Omega^{0,\bullet}(D)\cap L^2_{(0,\bullet)}(D,k\phi_0)$,
let $\ddbar_{\rho}=\ddbar_b+k(\ddbar_b\rho)\wedge$, then we have 
\begin{equation}\label{dbarrho}
\ddbar_\rho\tilde u=e^{-k\rho}\ddbar_{b}u=\widetilde{\ddbar_bu}.
\end{equation}
Let $\ddbar_\rho^{*,k\phi_0}$ be the formal adjoint of $\ddbar_{\rho}$ with respect to $(\,\cdot\mid\cdot\,)_{k\phi_0}$. We can check that 
\begin{equation}
\begin{aligned}
\ddbar_\rho=\sum_{j=1}^{n}\bigr(\ol\omega^j\wedge (\ol U_j+k\ol U_j\rho)+(\ddbar_b \ol\omega^j)\wedge (\ol\omega^j\wedge)^*\bigr)
\end{aligned}
\end{equation}
and
\begin{equation}\label{dbarstarrho}
\begin{aligned}
\ddbar_\rho^{*,k\phi_0}=\sum_{j=1}^{n}\bigr((\ol\omega^j\wedge)^*\left(-U_j+kU_j\phi_0+kU_j\ol\rho+\alpha_j(z,\theta)\right)+\ol\omega^j\wedge (\ddbar_b \ol\omega^j\wedge)^*\bigr),
\end{aligned}
\end{equation}\label{dbarstarkphi0}
where $\alpha_j(z,\theta)$ is a smooth function on $D$, independent of $k$, for every $j=1,\ldots,n$. For abbreviation, we write $\ol U_{\rho,j}=\ol U_j+k\ol U_j\rho$ and $\ol U_{\rho,j}^{*,k\phi_0}=-U_j+kU_j\phi_0+kU_j\ol\rho+\alpha_j(z,\theta)$, $j=1,\ldots,n$. 

Put
$$
\Box_{\rho,k\phi_0}=\ddbar^{*,k\phi_0}_{\rho}\ddbar_{\rho}+\ddbar_{\rho}\ddbar^{*,k\phi_0}_{\rho}: \Omega^{0,q}(D)\To\Omega^{0,q}(D).
$$ 
Thorough a straightforward computation, we have
\begin{equation}
\begin{aligned}
\Box_{\rho, k\phi_0}=&\sum_{j=1}^{n}\ol U_{\rho,j}^{*,k\phi_0}\ol U_{\rho,j}
+\sum_{j,l=1}^{n}\ol\omega^j\wedge(\ol\omega^l\wedge)^*\big[\ol U_{\rho,j}\,,\,\ol U_{\rho,l}^{*,k\phi_0}\big] \\
&+\sum_{j=1}^n\Big(\epsilon_j(z,\theta)\ol U_{\rho,j}+s_j(z,\theta)\ol U_{\rho,j}^{*,k\phi_0}\Big)+\gamma(z,\theta),
\end{aligned}
\end{equation}
where $\epsilon_j$, $s_j$ and $\gamma$ are smooth functions on $D$ and independent of $k$, $j=1,\ldots,n$. We can check that
\begin{equation}\label{e-gue210516yyd}
\Box_{b,k\phi}u=e^{k\rho}\Box_{\rho, k\phi_0}(e^{-k\rho}u),\ \ u\in\Omega^{0,\bullet}(D).
\end{equation} 

Let 
\begin{equation}\label{e-gue210305yyd}
\begin{split}
A_{k\phi}(\frac{t}{k},x,y):=&e^{\frac{k\phi(x)}{2}}A_{k,s}(\frac{t}{k},x,y)e^{-\frac{k\phi(y)}{2}},\\
A_{k\phi_0}(\frac{t}{k},x,y):=&e^{-k\rho(x)}A_{k\phi}(\frac{t}{k},x,y)e^{k\rho(y)}\\
=&e^{-k\rho(x)+\frac{k\phi(x)}{2}}A_{k,s}(\frac{t}{k},x,y)e^{k\rho(y)-\frac{k\phi(y)}{2}},
\end{split}
\end{equation}
where $A_{k,s}(\frac{t}{k},x,y)$ is as in \eqref{e-gue210303yydII}. For $t>0$, let 
\[A_{k\phi_0}(t): \Omega^{0,\bullet}_c(D)\To \mathcal{D}'(D,T^{*0,\bullet}X)\]
be the continuous operator with distribution kernel $A_{k\phi_0}(t,x,y)$ with respect to $dv_X$. Note that 
\begin{equation}\label{e-gue210305yydI}
(A_{k\phi_0}(t)u)(x)=\int A_{k\phi_0}(t,x,y)u(y)dv_X(y),\ \ u\in\Omega^{0,\bullet}_c(D).
\end{equation}
It is easy to check that 
\begin{equation}\label{e-gue210305yydII}
\begin{cases}
&A'_{k\phi_0}(t)+\Box_{\rho,k\phi_0}A_{k\phi_0}(t)=0,\\
&\lim_{t\To0^+}A_{k\phi_0}(t)=I.
\end{cases}
\end{equation}

Notice that the reason we take the term $\rho(z)$ out of $\phi(z,\theta)$ is that  the term $kU_j\ol\rho$ in it will approach to zero as $k\To+\infty$ after $\Box_{\rho,k\phi_0}^q$ is scaled. 

We introduce the scaling technique. Let $B_r:=\{(z,\theta)=(x_1,\cdots x_{2n},\theta)\in\mathbb R^{2n+1};\, |x_j|<r, |\theta|<r, j=1,\cdots 2n\}$. Let 
\[\begin{split}
F_k: \mathbb R^{2n+1}&\To\mathbb R^{2n+1},\\
(z,\theta)&\To(\frac{z}{\sqrt{k}},\frac{\theta}{k}),
\end{split}\]
 be the scaling map, then we can choose large $k$ such that $F_k(B_{\log k})\subset D$. For $(z,\theta)\in B_{\log k}$, let 
\begin{equation}\label{e-gue250423ycd}
F_k^*T^{*0,q}_{(z,\theta)}X:=\left\{\sumprime_{|J|=q}a_J\ol\omega^J\left(\frac{z}{\sqrt k},\frac{\theta}{k}\right);\,a_J\in\mathbb C\right\},
\end{equation}
where the sum takes over all $q$-tuple of integers between $1$ and $n$, the prime means that we sum over only strictly increasing multiindices, 
$\ol\omega^J=\ol\omega^{j_1}\wedge\cdots\wedge\ol\omega^{j_q}$, $1\leq j_1<\cdots<j_q\leq n$. Let $F^*_kT^{*0,q}X$ be the vector bundle over $B_{\log k}$ with fiber $F^*_kT^{*0,q}_{x}X$, $x=(z,\theta)\in B_{\log k}$. Put
\[F^*_kT^{*0,\bullet}X:=\oplus^n_{q=0}F^*_kT^{*0,q}X.\]
Define by $F_k^*\Omega^{0,q}(B_r)$ the space of smooth sections of $F_k^*T^{*0,q}X$ over $B_r$. Let 
\[F^*_k\Omega^{0,q}_c(B_r)=\set{u\in F^*_k\Omega^{0,q}(B_r);\, {\rm supp\,}u\Subset B_r}.\] 
Put
\[\begin{split}
&F_k^*\Omega^{0,\bullet}(B_r):=\oplus^n_{q=0}F_k^*\Omega^{0,q}(B_r),\\
&F_k^*\Omega^{0,\bullet}_c(B_r):=\oplus^n_{q=0}F_k^*\Omega^{0,q}_c(B_r).
\end{split}\]

For $u=\sumprime_{|J|=q}u_J\ol\omega^J\in \Omega^{0,q}(F_k(B_{\log k}))$, we define the scaled form $F_k^*u$ by
\begin{equation}\label{e-gue250423yyd}
F_k^*u:=\sumprime_{|J|=q}u_J\left(\frac{z}{\sqrt k},\frac{\theta}{k}\right)\ol\omega^J\left(\frac{z}{\sqrt k},\frac{\theta}{k}\right)\in F_k^*\Omega^{0,q}(B_{\log k}).
\end{equation}
For $u\in \Omega^{0,\bullet}(F_k(B_{\log k}))$, we define 
$F_k^*u\in F_k^*\Omega^{0,\bullet}(B_{\log k})$ as \eqref{e-gue250423yyd} in the same way.

Fix $q\in\set{0,1,\ldots,n}$. With the notations used in the discussions after \eqref{e-gue210303yydII} and \eqref{e-gue250423ycd}, 
\[\set{\overline\omega^J(x);\, J=(j_1,\ldots,j_q), 1\leq j_1<\cdots<j_q\leq n}\] 
is an orthonormal frame for $T^{*0,q}_xX$, for every $x\in D$. Let $\langle\,\cdot\mid\cdot\,\rangle_{F^*_k}$ be the Hermitian metric of $F^*_kT^{*0,q}X$ on $B_{\log k}$ such that 
\[\set{\overline\omega^J(F_kx);\, J=(j_1,\ldots,j_q), 1\leq j_1<\cdots<j_q\leq n}\] is an orthonormal frame at every $x\in B_{\log k}$. On $D$, write 
$dv_X(x)=m(x)dx_1\cdots dx_{2n+1}$, $m(x)\in\mathcal{C}^\infty(D)$. Let $(\,\cdot\mid\cdot\,)_{kF^*_k\phi_0}$ be the $L^2$ inner product on $F^*_k\Omega^{0,\bullet}_c(B_{\log k})$ given by 
\[(\,u\mid v\,)_{kF^*_k\phi_0}=\int\langle\,u\mid v\,\rangle_{F^*_k}e^{-kF^*_k\phi_0}m(F_kx)dx_1\cdots dx_{2n+1},\]
where $u, v\in F^*_k\Omega^{0,\bullet}_c(B_{\log k})$ and let $\norm{\cdot}_{kF^*_k\phi_0}$ be the corresponding norm. 

Let $\ddbar_{\rho,(k)}^{*,kF_k^*\phi_0}$ be the formal adjoint of $\ddbar_{\rho,(k)}$ with respect to $(\,\cdot\mid\cdot\,)_{kF_k^*\phi_0}$. 
Now we define the scaled Kohn-Laplacian:
\begin{equation}\label{e-gue210614yydI}
\Box_{\rho,(k)}=\ddbar_{\rho,(k)}^{*,kF_k^*\phi_0}\ddbar_{\rho,(k)}+\ddbar_{\rho,(k)}\ddbar_{\rho,(k)}^{*,kF_k^*\phi_0}
\end{equation}
on $F_k^*\Omega^{0,\bullet}(B_{\log k})$. Let 
\begin{equation}\label{e-gue210325yyd}
\begin{split}
A_{(k)}(t,x,y)&:=k^{-(n+1)}A_{k\phi_0}(\frac{t}{k},F_kx,F_ky)\\
&\in\mathcal{D}'(\mathbb R_+\times B_{\log k}\times B_{\log k}, F^*_kT^{*0,\bullet}X\boxtimes(F^*_kT^{*0,\bullet}X)^*),
\end{split}
\end{equation}
where $F_kx=(\frac{z}{\sqrt{k}},\frac{\theta}{k})$, $F_ky=(\frac{w}{\sqrt{k}},\frac{\xi}{k})$, $y=(w,\xi)\in\mathbb C^n\times\mathbb R$, $A_{k\phi_0}(t,x,y)$ is as in
\eqref{e-gue210305yyd}. Let 
\[A_{(k)}(t): F^*_k\Omega^{0,\bullet}_c(B_{\log k})\To\mathcal{D}'(B_{\log k}, F^*_kT^{*0,\bullet}X)\] 
be the continuous operator given by 
\begin{equation}\label{e-gue210325yydI}
(A_{(k)}(t)u)(x)=\int A_{(k)}(t,x,y)u(y)m(F_ky)dy,\ \ u\in F^*_k\Omega^{0,q}_c(B_{\log k}).
\end{equation}
It was shown in~\cite{HZ23} that 
\begin{equation}\label{k(k)}
\Box_{\rho,(k)}(F_k^*u)=\frac{1}{k}F_k^*(\Box_{\rho,k\phi_0}u),\quad u\in\Omega^{0,\bullet}(F_k(B_{\log k})). 
\end{equation} 

For $u, v\in F^*_k\Omega^{0,\bullet}_c(B_{\log k})$ and every $\ell\in\mathbb N$, we have 
\begin{equation}\label{e-gue250424yyd}
(\,\frac{\pr^\ell}{\pr t^\ell}A_{(k)}(t)u\,|\,v\,)_{kF^*_k\phi_0}
=(\,A_{(k)}(t)(-1)^\ell(\Box_{\rho,(k)})^\ell u\,|\,v\,)_{kF^*_k\phi_0}
\end{equation}
and
\begin{equation}\label{e-gue250425yyd}
\abs{(\,\frac{\pr^\ell}{\pr t^\ell}A_{(k)}(t)u\,|\,v\,)_{kF^*_k\phi_0}}\leq\norm{(\Box_{\rho,(k)})^\ell u}_{kF^*_k\phi_0}\norm{v}_{kF^*_k\phi_0},
\end{equation}
for all $t>0$. 

We pause and introduce some notations about Heisenberg group. We identify $\mathbb R^{2n+1}$
with the Heisenberg group $H_n:=\mathbb C^{n}\times\mathbb R$. Using the same setting as in \eqref{local1} and \eqref{local2} to denote by $x=(z,\theta)$ the coordinates of $H_n$, $z=(z_1,\cdots,z_n)\in \mathbb C^n$, $\theta\in \mathbb R$, $x=(x_1,\ldots,x_{2n+1})$, $z_j=x_{2j-1}+ix_{2j}$, $j=1,\ldots,n$, $\theta=x_{2n+1}$. Let 
\[T^{1,0}H_n={\rm span\,}\set{\frac{\partial}{\partial z_j}-i\lambda_j\overline z_j\frac{\partial}{\partial\theta};\, j=1,\ldots,n}.\]
Then $(H_n,T^{1,0}H_n)$ is a CR manifold of dimension $2n+1$. Let $\langle\,\cdot\mid\cdot\,\rangle_{H_n}$ be the Hermitian metric on 
$\mathbb CTH_n$ so that 
\begin{equation}
\begin{aligned}
\left\{U_{j,H_n}, \ol U_{j,H_n}, T=-\frac{\pr}{\pr \theta};\, j=1,\cdots n\right\}
\end{aligned}
\end{equation}
is an orthonormal basis for $\mathbb CTH_n$, where
\begin{equation}
\begin{aligned}
U_{j,H_n}=\frac{\pr}{\pr z_j}-i\lambda_j\ol z_j\frac{\pr}{\pr\theta},\ \ j=1,\ldots,n.
\end{aligned}
\end{equation}
Then 
\begin{equation}
\begin{aligned}
\left\{dz_j, d\ol z_j, \omega_0=d\theta+\sum_{j=1}^{n}\left(i\lambda_j\ol z_jdz_j-i\lambda_jz_jd\ol z_j\right), j=1,\cdots, n\right\}
\end{aligned}
\end{equation}
is an orthonormal basis of the complexified cotangent bundle $\mathbb CT^*H_n$. The Levi form $\mathcal{L}_p$ of $H_n$ at $p\in H_n$ is given by $\mathcal L_p=\sum_{j=1}^{n}\lambda_jdz_j\wedge d\ol z_j$. The tangential Cauchy-Riemann operator $\ddbar_{b,H_n}$ on $H_n$ is given by
\begin{equation}\label{dbarbh}
\begin{aligned}
\ddbar_{b,H_n}=\sum_{j=1}^{n}d\ol z_j\wedge\ol U_{j,H_n}:\Omega^{0,q}(H_n)\To\Omega^{0,q+1}(H_n).
\end{aligned}
\end{equation}
Let 
\begin{equation}\label{e-gue250429yydI}
\begin{split}
&\Phi(x)=\beta\theta+\sum_{j,l=1}^{n}\mu_{j,l}z_j\ol z_l,\\
&\Phi_0(z)=\sum_{j,l=1}^{n}\mu_{j,l}z_j\ol z_l,
\end{split}\end{equation}
where $\beta$ and $\mu_{j,l}$ are the same as in \eqref{phi0}. It is easy to check that
\begin{equation}
\begin{aligned}
\sup_{(z,\theta)\in B_{\log k}}\left|kF_k^*\phi_0-\Phi\right|\to 0,\,\, \,\text{as}\,\, k\to\infty.
\end{aligned}
\end{equation}  
The Hermitian metric on $\mathbb CTH_n$ induces a Hermitian metric $\langle\,\cdot\mid\cdot\,\rangle_{H_n}$ on \[\Lambda^\bullet(\mathbb CT^*H_n):=\oplus^{2n+1}_{j=0}\Lambda^j(\mathbb CT^*H_n).\] 
Let $(\,\cdot\mid\cdot\,)_{\Phi}$ be the  inner product on $\mathcal{C}^\infty_c(H_n,\oplus^{2n+1}_{j=0}\Lambda^j(\mathbb CT^*H_n))$ with weight $\Phi$. Namely,
\begin{equation}\label{e-gue210506yyd}
\begin{aligned}
(\,u\mid v\,)_{\Phi}=\int_{H_n}\langle\,u\mid v\,\rangle_{H_n}e^{-\Phi}dv_{H_n},\quad u,v\in\mathcal{C}^\infty_c(H_n,\oplus^{2n+1}_{j=0}\Lambda^j(\mathbb CT^*H_n)),
\end{aligned}
\end{equation}
where $dv_{H_n}(x)=dv(z)d\theta$, $dv(z)=2^n dx_1\cdots dx_{2n}$, $z_j=x_{2j-1}+ix_{2j}$, $j=1,\cdots,n$. Similarly, let $(\,\cdot\mid\cdot\,)_{H_n}$ be the  inner product on $\mathcal{C}^\infty_c(H_n,\Lambda^\bullet(\mathbb CT^*H_n))$ given by
\begin{equation}\label{e-gue210506yydz}
\begin{aligned}
(\,u\mid v\,)_{H_n}=\int_{H_n}\langle\,u\mid v\,\rangle_{H_n}dv_{H_n},\quad u,v\in\mathcal{C}^\infty_c(H_n, \Lambda^*(\mathbb CT^*H_n)).
\end{aligned}
\end{equation}

Let $L^2_{(0,\bullet)}(H_n,\Phi)$ be the completion of $\Omega_c^{0,\bullet}(H_n)$ with respect to $(\,\cdot\mid\cdot\,)_{\Phi}$. Let $\|\cdot\|_{\Phi}$ be the corresponding norm. 

We extend $\ddbar_{b,H_n}$ to $L^2_{(0,\bullet)}(H_n,\Phi)$: 
\[\ddbar_{b,H_n}: {\rm Dom\,}\ddbar_{b,H_n}\subset L^2_{(0,\bullet)}(H_n,\Phi)\To L^2_{(0,\bullet)}(H_n,\Phi),\]
where ${\rm Dom\,}\ddbar_{b,H_n}=\set{u\in L^2_{(0,\bullet)}(H_n,\Phi);\, \ddbar_{b,H_n}u\in L^2_{(0,\bullet)}(H_n,\Phi)}$. 
Let 
$$\ddbar^*_{b,H_n}:{\rm Dom\,}\ddbar^*_{b,H_n}\subset L^2_{(0,\bullet)}(H_n,\Phi)\to L^2_{(0,\bullet)}(H_n,\Phi)$$
be the Hilbert space adjoint of $\ddbar_{b,H_n}$ with respect to $(\,\cdot\mid\cdot\,)_{\Phi}$. The (Gaffney extension) of Kohn Laplacian $\Box_{H_n,\Phi}$ is given by
\[\begin{split}
&\Box_{H_n,\Phi}: {\rm Dom\,}\Box_{H_n,\Phi}\subset L^2_{(0,\bullet)}(H_n,\Phi)\To L^2_{(0,\bullet)}(H_n,\Phi),\\
& {\rm Dom\,}\Box_{H_n,\Phi}=\{u\in L^2_{(0,\bullet)}(H_n,\Phi);\, u\in{\rm Dom\,}\ddbar_{b,H_n}\bigcap{\rm Dom\,}\ddbar^*_{b,H_n},\\
&\quad\ddbar_{b,H_n}u\in{\rm Dom\,}\ddbar^*_{b,H_n}, \ddbar^*_{b,H_n} u\in{\rm Dom\,}\ddbar_{b,H_n}\},\\
&\Box_{H_n,\Phi}=\ddbar^*_{b,H_n}\ddbar_{b,H_n}+\ddbar_{b,H_n}\ddbar^*_{b,H_n}\ \ \mbox{on ${\rm Dom\,}\Box_{H_n,\Phi}$}. 
\end{split}\] 
Let $e^{-t\Box_{H_n,\Phi}}$, $t>0$, be the heat operator for $\Box_{H_n,\Phi}$.

Let 
\[e^{-t\Box_{H_n,\Phi}}(x,y)\in\mathcal{D}'(\mathbb R_+\times H_n\times H_n,T^{*0,\bullet}H_n\boxtimes(T^{*0,\bullet}H_n)^*)\]
be the distribution kernel of $e^{-t\Box_{H_n,\Phi}}$ with respect to $(\,\cdot\mid\cdot\,)_{H_n}$, that is,
\[(e^{-t\Box_{H_n,\Phi}}u)(x)=\int e^{-t\Box_{H_n,\Phi}}(x,y)u(y)dv_{H_n}(y),\ \ u\in\Omega^{0,\bullet}_c(H_n).\]
We write $\Box^{(q)}_{H_n,\Phi}:=\Box_{H_n,\Phi}|_{{\rm Dom\,}\Box_{H_n,\Phi}\cap L^2_{(0,q)}(H_n)}$,  $e^{-t\Box^{(q)}_{H_n,\Phi}}:=e^{-t\Box_{H_n,\Phi}}|_{L^2_{(0,q)}(H_n)}$ and let $e^{-t\Box^{(q)}_{H_n,\Phi}}(x,y)\in\mathcal{D}'(\mathbb R_+\times H_n\times H_n,T^{*0,q}H_n\boxtimes(T^{*0,q}H_n)^*)$
    be the distribution kernel of $e^{-t\Box^{(q)}_{H_n,\Phi}}$ with respect to $(\,\cdot\,|\,\cdot\,)_{H_n}$. 

    Let 
    \[\Pi_{H_n,\Phi}: L^2_{(0,\bullet)}(H_n)\To{\rm Ker\,}\Box_{H_n,\Phi}\]
    be the orthogonal projection with respect to $(\,\cdot\,|\,\cdot\,)_\Phi$ and let $\Pi_{H_n,\Phi}(x,y)\in\mathcal{D}'(H_n\times H_n,T^{*0,\bullet}H_n\boxtimes(T^{*0,\bullet}H_n)^*)$ be the distribution kernel of $\Pi_{H_n,\Phi}$ with respect to $(\,\cdot\,|\,\cdot\,)_{H_n}$, that is, \[(\Pi_{H_n,\Phi}u)(x)=\int\Pi_{H_n,\Phi}(x,y)u(y)dv_{H_n}(y),\ \ u\in\Omega^{0,\bullet}_c(H_n).\]
Let $q\in\set{0,1,\ldots,n}$. Write $\Pi^{(q)}_{H_n,\Phi}:=\Pi_{H_n,\Phi}|_{L^2_{(0,q)}(H_n)}$ and let $\Pi^{(q)}_{H_n,\Phi}(x,y)\in\mathcal{D}'(H_n\times H_n,T^{*0,q}H_n\boxtimes(T^{*0,q}H_n)^*)$ be the distribution kernel of $\Pi^{(q)}_{H_n,\Phi}$.

    For any bounded open set $W\subset H_n$, we now identify $e^{-t\Box_{H_n,\Phi}}(x,y)$ and $A_{(k)}(t,x,y)$ as elements in 
$\mathcal{D}'(\mathbb R_+\times W\times W, \Lambda^*(\mathbb CT^*H_n)\boxtimes(\Lambda^*(\mathbb CT^*H_n))^*)$.

From \eqref{e-gue250424yyd}, \eqref{e-gue250425yyd} and notice that $\mathcal{C}^\infty_c(H_n,\Lambda^\bullet(\mathbb CT^*H_n))$ is separable, we can repeat the proof of~\cite[Theorem 3.9]{HZ23} with minor changes and deduce  

\begin{theorem}\label{t-gue250425yyd}
Let $p\in X$, let $x=(x_1,\ldots,x_{2n+1})$ be local coordinates of $X$ defined on an open set $D$ of $p$ with $x(p)=0$ and let $s$ be a local CR trivializing section of $L$ on $D$ such that 
\eqref{local1}, \eqref{local2}, \eqref{U} and \eqref{phi} hold. With the notations used above, 
let $I\subset\mathbb R_+$ be a bounded interval. 
Let $r>0$. We have 
\[\lim_{k\To+\infty}A_{(k)}(t,x,y)=e^{-t\Box_{H_n,\Phi}}(x,y)\]
in $\mathcal{D}'(I\times B_r\times B_r,\Lambda^*(\mathbb CT^*H_n)\boxtimes(\Lambda^*(\mathbb CT^*H_n))^*)$ topology, where $A_{(k)}(t,x,y)$ is as in \eqref{e-gue210325yyd}.
\end{theorem}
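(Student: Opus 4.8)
The plan is to adapt the localization-and-rescaling argument of \cite[Theorem 3.9]{HZ23}: I would use the functional-calculus estimates \eqref{e-gue250424yyd}--\eqref{e-gue250425yyd} to extract from $A_{(k)}(t,x,y)$ a subsequence converging in $\mathcal{D}'$, and then identify the limit with $e^{-t\Box_{H_n,\Phi}}(x,y)$ via uniqueness for the heat equation attached to the model operator $\Box_{H_n,\Phi}$. The only genuinely new feature relative to the $Y(q)$-holds case of \cite{HZ23} is that $\Box_{H_n,\Phi}$ may now have infinite-dimensional kernel, so that $e^{-t\Box_{H_n,\Phi}}$ is no longer smoothing; but hypoellipticity of the model Laplacian is never used, so this causes no trouble. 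Concretely, first I would record the convergence of the rescaled operators. Writing $\Box_{\rho,(k)}$ out from \eqref{e-gue210614yydI} and \eqref{k(k)} in terms of the rescaled vector fields $\tfrac{1}{\sqrt k}F_k^*U_j$, the rescaled weight $kF_k^*\phi_0$ and the rescaled density $m(F_k\,\cdot\,)$, and invoking \eqref{local1}, \eqref{local2}, \eqref{U}, \eqref{phi0} together with $\sup_{B_{\log k}}\abs{kF_k^*\phi_0-\Phi}\to0$, one sees that on every fixed ball $B_r$ the coefficients of $\Box_{\rho,(k)}$ converge in $\mathcal{C}^\infty(B_r)$ to those of $\Box_{H_n,\Phi}$; in particular the term $kU_j\ol\rho$ scales away in the limit, which is precisely why $\rho$ was split off in \eqref{rho}--\eqref{phi0}. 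It follows that for every fixed $u\in\mathcal{C}^\infty_c(H_n,\Lambda^\bullet(\mathbb CT^*H_n))$ and every $\ell\in\mathbb N_0$, once $k$ is large enough that $\operatorname{supp}u\Subset B_{\log k}$, one has $(\Box_{\rho,(k)})^\ell u\to(\Box_{H_n,\Phi})^\ell u$ in $\mathcal{C}^\infty_c(H_n,\Lambda^\bullet(\mathbb CT^*H_n))$ and, since also $m(F_k\,\cdot\,)\to m(0)$ uniformly on compacts, $\norm{(\Box_{\rho,(k)})^\ell u}_{kF_k^*\phi_0}\to\norm{(\Box_{H_n,\Phi})^\ell u}_\Phi$.

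Next I would extract a limit. Fix a bounded open set $W\subset H_n$ and a bounded interval $I\subset\mathbb R_+$, and take $k\gg1$ with $W\Subset B_{\log k}$. For $u,v\in\mathcal{C}^\infty_c(W,\Lambda^\bullet(\mathbb CT^*H_n))$, \eqref{e-gue250424yyd} and \eqref{e-gue250425yyd} give
\[
\Bigl|\tfrac{\pr^\ell}{\pr t^\ell}\bigl(A_{(k)}(t)u\mid v\bigr)_{kF_k^*\phi_0}\Bigr|
\le\norm{(\Box_{\rho,(k)})^\ell u}_{kF_k^*\phi_0}\,\norm{v}_{kF_k^*\phi_0}
\qquad(\ell\in\mathbb N_0,\ t>0),
\]
so by the previous paragraph the functions $t\mapsto\bigl(A_{(k)}(t)u\mid v\bigr)_{kF_k^*\phi_0}$ are bounded on $I$ with all $t$-derivatives bounded, uniformly in $k\gg1$. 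Since $\mathcal{C}^\infty_c(W,\Lambda^\bullet(\mathbb CT^*H_n))$ is separable, a diagonal argument together with the Arzel\`a--Ascoli theorem yields a subsequence $k_j\to\infty$ along which $\bigl(A_{(k_j)}(t)u\mid v\bigr)_{k_jF_{k_j}^*\phi_0}$ converges in $\mathcal{C}^\infty(I)$ for all $u,v$ in a countable dense set, hence — using again $kF_k^*\phi_0\to\Phi$ and $m(F_k\,\cdot\,)\to m(0)$ — for all $u,v\in\mathcal{C}^\infty_c(W,\Lambda^\bullet(\mathbb CT^*H_n))$. Writing the limit as $(B(t)u\mid v)_\Phi$, and noting that $\ell=0$ gives $\norm{B(t)}\le1$ on $L^2_{(0,\bullet)}(H_n,\Phi)$ while the symmetry $A_{(k)}(t,x,y)=(A_{(k)}(t,y,x))^*$ of the rescaled heat kernels passes to $B(t)$, I get $A_{(k_j)}(t,x,y)\to B(t,x,y)$ in $\mathcal{D}'(I\times W\times W,\Lambda^\bullet(\mathbb CT^*H_n)\boxtimes(\Lambda^\bullet(\mathbb CT^*H_n))^*)$, because the tensor products $\psi(t)\,v(x)\otimes\ol{u}(y)$ span a dense subspace of the test space.

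Finally I would identify $B$. From \eqref{e-gue210305yydII}, \eqref{k(k)} and \eqref{e-gue210325yyd} one has $\pr_tA_{(k)}(t)+\Box_{\rho,(k)}A_{(k)}(t)=0$ on $B_{\log k}$, with $\lim_{t\to0^+}A_{(k)}(t)=I$ (the prefactor $k^{-(n+1)}$ in \eqref{e-gue210325yyd} exactly offsets the Jacobian of $F_k$, so the initial $\delta$ is preserved under rescaling). Passing to the limit along $k_j$, using the operator convergence above, I obtain for $u,v\in\mathcal{C}^\infty_c(W,\Lambda^\bullet(\mathbb CT^*H_n))$ the relations $\tfrac{\pr^\ell}{\pr t^\ell}(B(t)u\mid v)_\Phi=(-1)^\ell(B(t)(\Box_{H_n,\Phi})^\ell u\mid v)_\Phi$ and $\lim_{t\to0^+}(B(t)u\mid v)_\Phi=(u\mid v)_\Phi$; since $W$ and $I$ were arbitrary and $B(t)$ is a self-adjoint contraction, $t\mapsto B(t)u$ is then a bounded solution of the Cauchy problem $\pr_tw=-\Box_{H_n,\Phi}w$, $w(0^+)=u$, for the nonnegative self-adjoint operator $\Box_{H_n,\Phi}$, so $B(t)u=e^{-t\Box_{H_n,\Phi}}u$ by uniqueness and therefore $B(t,x,y)=e^{-t\Box_{H_n,\Phi}}(x,y)$. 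As the limit is independent of the subsequence, the whole family $A_{(k)}(t,x,y)$ converges, which is Theorem~\ref{t-gue250425yyd}. I expect the main obstacle to be exactly this last step: making rigorous the passage to the limit in the initial condition and the deduction that $B(t)u\in{\rm Dom\,}\Box_{H_n,\Phi}$ for $t>0$, all while the operators $\Box_{\rho,(k)}$ are defined only on the exhausting balls $B_{\log k}$ rather than on $H_n$. This is precisely the point handled in \cite[Theorem 3.9]{HZ23}, and only routine modifications are needed here, since that proof never used hypoellipticity of the model Kohn Laplacian.
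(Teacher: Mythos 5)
Your proposal is correct and follows essentially the same route as the paper: the authors simply invoke the estimates \eqref{e-gue250424yyd}--\eqref{e-gue250425yyd} together with separability of $\mathcal{C}^\infty_c(H_n,\Lambda^\bullet(\mathbb CT^*H_n))$ and repeat the proof of \cite[Theorem 3.9]{HZ23} with minor changes, which is exactly the localization-rescaling, diagonal-extraction, and heat-equation-uniqueness argument you reconstruct. Your observation that hypoellipticity of the model operator is never used, so the failure of condition $Y(q)$ causes no difficulty, is precisely the "minor change" the paper has in mind.
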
 

We now introduce the explicit formula for $e^{-t\Box_{H_n,\Phi}}(x,y)$ which was obtained in~\cite[Theorem 4.3]{HZ23}. We first introduce some notations. Consider $\mathbb C^n$. Let $\langle\,\cdot\mid\cdot\,\rangle_{\mathbb C^n}$ be the Hermitian inner product on $T^{*0,q}\mathbb C^n$ so that 
\[\set{d\ol z^J;\, J=(j_1,\ldots,j_q), 1\leq j_1<\cdots<j_q\le n}\]
is an orthonormal basis for $T^{*0,q}\mathbb C^n$, where $T^{*0,q}\mathbb C^n$ denotes the bundle of $(0,q)$ forms of $\mathbb C^n$. 
Let $\Omega^{0,q}(\mathbb C^n)$ denote the space of smooth $(0,q)$ forms of $\mathbb C^n$ and put $\Omega^{0,q}_c(\mathbb C^n):=\Omega^{0,q}(\mathbb C^n)\bigcap\mathcal{E}'(\mathbb C^n,T^{*0,q}\mathbb C^n)$. Let $(\,\cdot\mid\cdot\,)_{\mathbb C^n}$ be the $L^2$ inner product on $\Omega^{0,q}_c(\mathbb C^n)$ 
induced by $\langle\,\cdot\mid\cdot\,\rangle_{\mathbb C^n}$ and let $L^2_{(0,q)}(\mathbb C^n)$ be the completion of $\Omega^{0,q}_c(\mathbb C^n)$ with respect to 
$(\,\cdot\mid\cdot\,)_{\mathbb C^n}$. 
For $\eta\in\mathbb R$, put
\begin{equation}\label{Phieta}
\Phi_{\eta}=-2\sum_{j=1}^{n}\eta\lambda_j|z_j|^2+\sum_{j,l=1}^{n}\mu_{j,l}\ol z_jz_l,
\end{equation}
where $\lambda_j$ and $\mu_{j,l}$, $j, l=1,\ldots,n$, are as in \eqref{U} and \eqref{phi0}. In particular, $\Phi_0=\sum^n_{j,l=1}\mu_{j,l}\ol z_jz_l$. 
Let
$$
\ddbar_{\eta}=\sum_{j=1}^nd\ol z_j\wedge\left(\frac{\pr}{\pr\ol z_j}+\frac{1}{2}\frac{\pr\Phi_\eta}{\pr\ol z_j}\right): \Omega^{0,q}(\mathbb C^n)\To\Omega^{0,q}(\mathbb C^n).
$$
Let 
\[\ddbar^*_\eta:  \Omega^{0,q+1}(\mathbb C^n)\To\Omega^{0,q}(\mathbb C^n)\]
be the formal adjoint of $\ddbar_\eta$ with respect to $(\,\cdot\mid\cdot\,)_{\mathbb C^n}$. It is easy to check that 
$$
\ddbar^*_{\eta}=\sum_{j=1}^n(d\ol z_j\wedge)^*\left(-\frac{\pr}{\pr z_j}+\frac{1}{2}\frac{\pr\Phi_\eta}{\pr z_j}\right),
$$
where $(d\ol z_j\wedge)^*$ is the adjoint of $d\ol z_j\wedge$ with respect to $\langle\,\cdot\mid\cdot\,\rangle_{\mathbb C^n}$, $j=1,\ldots,n$. Let 
\[\Box_\eta:=\ddbar_{\eta}\ddbar^*_\eta+\ddbar^*_\eta\ddbar_\eta: {\rm Dom\,}\Box_\eta\subset L^2_{(0,q)}(\mathbb C^n)\To L^2_{(0,q)}(\mathbb C^n),\]
where ${\rm Dom\,}\Box_\eta=\set{u\in L^2_{(0,q)}(\mathbb C^n);\,\Box_\eta u\in L^2_{(0,q)}(\mathbb C^n)}$. It is not difficult to see that $\Box_\eta$ is a non-negative self-adjoint operator. Let $e^{-t\Box_\eta}$ be the heat operator of $\Box_\eta$ and let $e^{-t\Box_\eta}(x,y)\in\mathcal{C}^\infty(\mathbb R_+\times\mathbb C^n\times\mathbb C^n, T^{*0,q}\mathbb C^n\boxtimes(T^{*0,q}\mathbb C^n)^*)$ be the distribution kernel of $e^{-t\Box_\eta}$. Let $\dot{R}^\eta:T^{1,0}\mathbb C^n\To T^{1,0}\mathbb C^n$ be the linear map defined by
\begin{equation}\label{Reta}
\langle\,\dot{R}^\eta U\mid\ol V\,\rangle_{\mathbb C^n}=\pr\ddbar\Phi_{\eta}(U,\ol V),\quad U,V\in T^{1,0}\mathbb C^n
\end{equation}
and let
\begin{equation}\label{omega}
\omega^\eta_{\mathbb C^n}:=\sum_{j,l=1}^n\frac{\pr^2\Phi_{\eta}}{\pr\ol z_j\pr z_l}d\ol z_j\wedge (d\ol z_l\wedge)^*,
\end{equation}
where $\Phi_\eta$ is as in \eqref{Phieta}.
The following is known (see~\cite[Appendix E.2]{MM},~\cite[(4.8)]{HZ23}):

\begin{equation}\label{e-gue210521yyd}
\begin{aligned}
e^{-t\Box_\eta}(x,y)=&\frac{1}{(2\pi)^n}\dfrac{\det\dot R^\eta}{\det\big(1-e^{-t \dot R^\eta}\big)}
\exp\bigg\{-t\omega^\eta_{\mathbb C^n}-\frac{1}{2}\Big\langle\,\frac{\dot R^\eta/2}{\tanh(t\dot R^\eta/2)}x\mid x\,\Big\rangle_{\mathbb C^n}\\
&-\frac{1}{2}\Big\langle\,\frac{\dot R^\eta/2}{\tanh(t\dot R^\eta/2)}y\mid y\,\Big\rangle_{\mathbb C^n}
+\Big\langle\,\frac{\dot R^\eta/2}{\sinh(t\dot R^\eta/2)}e^{t\dot R^\eta/2}x\mid y\,\Big\rangle_{\mathbb C^n}\bigg\}.
\end{aligned}
\end{equation}

The following was obtained by~\cite[Theorem 4.3]{HZ23}

\begin{theorem}\label{t-gue210521yyda}
With the notations used above, we have 
\begin{equation}\label{e-gue210521yydb}
\begin{split}
&e^{-t\Box_{H_n,\Phi}}(x,y)\\
&=\frac{1}{2\pi}\int e^{i<x_{2n+1}-y_{2n+1},\eta>+\frac{\beta}{2}\bigr((x_{2n+1}-y_{2n+1})+i\lambda(-\abs{z}^2+\abs{w}^2)\bigr)+\frac{-\Phi_0(w)+\Phi_0(z)}{2}}\\
&\quad\quad\times e^{-t\Box_\eta}(z,w)d\eta\in\mathcal{D}'(\mathbb R_+\times H_n\times H_n,\Lambda^\bullet(\mathbb CT^*H_n)\boxtimes(\Lambda^\bullet(\mathbb CT^*H_n))^*),
\end{split}
\end{equation}
where $z=(x_1,\ldots,x_{2n})$, $w=(y_1,\ldots,y_{2n})$, $e^{-t\Box_\eta}(z,w)$ is given by \eqref{e-gue210521yyd},
	where $\dot R^\eta$ and $\omega^\eta_{\mathbb C^n}$ are as in \eqref{Reta} and \eqref{omega} respectively, and $\lambda(-\abs{z}^2+\abs{w}^2):=\sum^n_{j=1}\lambda_j(-\abs{z_j}^2+\abs{w_j}^2)$.
\end{theorem}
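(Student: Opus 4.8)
The plan is to diagonalize $\Box_{H_n,\Phi}$ along the centre of the Heisenberg group and thereby reduce it to a direct integral over $\eta\in\mathbb R$ of the model operators $\Box_\eta$ on $\mathbb C^n$, whose heat kernels are already given explicitly by \eqref{e-gue210521yyd}. The reduction runs through three successive unitary equivalences. First, multiplication by $e^{-\Phi/2}$ is a unitary $U_1\colon L^2_{(0,\bullet)}(H_n,\Phi)\to L^2_{(0,\bullet)}(H_n)$ onto the unweighted space, under which $\ddbar_{b,H_n}$ becomes $\widetilde{\ddbar}_b:=\ddbar_{b,H_n}+\frac12(\ddbar_{b,H_n}\Phi)\wedge=\sum_j d\overline z_j\wedge\bigl(\overline U_{j,H_n}+\frac12\overline U_{j,H_n}\Phi\bigr)$. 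The key point is that, because $\Phi=\beta\theta+\Phi_0(z)$ with $\beta$ constant and $\Phi_0$ independent of $\theta$, the coefficients of $\overline U_{j,H_n}$ and the scalars $\overline U_{j,H_n}\Phi=i\beta\lambda_j z_j+\partial_{\overline z_j}\Phi_0$ are all $\theta$-independent, so $\widetilde{\ddbar}_b$, its formal adjoint, and the conjugated Kohn Laplacian $\widetilde{\Box}:=U_1\Box_{H_n,\Phi}U_1^{-1}$ all commute with $\partial/\partial\theta$.

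Second, I would apply the partial Fourier transform $U_2$ in the central variable $\theta$: this is a unitary identification of $L^2_{(0,\bullet)}(H_n)$ with $\int^{\oplus}_{\mathbb R}L^2_{(0,\bullet)}(\mathbb C^n)\,d\eta$ turning $\partial/\partial\theta$ into multiplication by $i\eta$, so that $\overline U_{j,H_n}\mapsto\partial_{\overline z_j}-\eta\lambda_j z_j=\partial_{\overline z_j}+\frac12\partial_{\overline z_j}(-2\eta\lambda_j|z_j|^2)$ on the $\eta$-fibre, and $\widetilde{\Box}$ becomes $\int^{\oplus}_{\mathbb R}\Box'_\eta\,d\eta$. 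Comparing with the definition of $\ddbar_\eta$ in the text, $\Box'_\eta$ differs from $\Box_\eta$ only in that its $\ddbar$-operator is $\ddbar_\eta$ plus a zeroth-order term consisting of wedging with a $(0,1)$-form whose coefficients are linear in $z$ (coming from the $i\beta\lambda_j z_j$ in $\overline U_{j,H_n}\Phi$ together with the difference between $\partial_{\overline z_j}\Phi_0$ and $\partial_{\overline z_j}$ of the $(1,1)$-part of $\Phi_\eta$). Since that perturbing $(0,1)$-form has purely imaginary coefficients, it equals $\ddbar g_\eta$ for a quadratic function $g_\eta(z,\overline z)$ with $\overline{g_\eta}=-g_\eta$; hence multiplication by $e^{g_\eta}$ is a unitary $U_3$ on each fibre with $U_3\Box'_\eta U_3^{-1}=\Box_\eta$. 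With $U:=U_3U_2U_1$ we get $U\Box_{H_n,\Phi}U^{-1}=\int^{\oplus}_{\mathbb R}\Box_\eta\,d\eta$, hence $e^{-t\Box_{H_n,\Phi}}=U^{-1}\bigl(\int^{\oplus}_{\mathbb R}e^{-t\Box_\eta}\,d\eta\bigr)U$. Reading off the Schwartz kernel (all kernels with respect to the unweighted measures $dv_{H_n}$, $dv(z)$), undoing $U_3$ contributes the factor $e^{g_\eta(w)-g_\eta(z)}$, undoing $U_2$ contributes $\frac{1}{2\pi}e^{i\langle x_{2n+1}-y_{2n+1},\eta\rangle}$, and undoing $U_1$ contributes $e^{(\Phi(x)-\Phi(y))/2}$; inserting the Mehler formula \eqref{e-gue210521yyd} for $e^{-t\Box_\eta}(z,w)$ and combining the exponentials gives \eqref{e-gue210521yydb}, with the $\frac{\beta}{2}i\lambda(-|z|^2+|w|^2)$ summand there being the $\beta$-dependent part of $g_\eta(w)-g_\eta(z)$.

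The difficulties lie not in this formal chain but in two analytic points. The first is to justify the direct-integral decomposition at the level of unbounded self-adjoint operators: one must check that the Gaffney extension $\Box_{H_n,\Phi}$ really decomposes as $U^{-1}\bigl(\int^{\oplus}\Box_\eta\,d\eta\bigr)U$ with matching domains, which uses the $\theta$-translation invariance of $\widetilde{\ddbar}_b$ and $\widetilde{\ddbar}_b^{\,*}$ established above together with the essential self-adjointness of each $\Box_\eta$ on $\Omega^{0,\bullet}_c(\mathbb C^n)$. The second, and the real obstacle, is convergence: the $\eta$-integral in \eqref{e-gue210521yydb} is not absolutely convergent. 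Under Assumption~\ref{a-gue170123} the eigenvalues of $\dot R^\eta$ go to $\pm\infty$ with a fixed sign pattern as $\eta\to\pm\infty$, and on at least one of the two half-lines the prefactor $\frac{1}{(2\pi)^n}\det\dot R^\eta/\det(1-e^{-t\dot R^\eta})$ together with $e^{-t\omega^\eta_{\mathbb C^n}}$ grows only polynomially in $\eta$. Hence \eqref{e-gue210521yydb} must be read as an oscillatory integral in $\eta$ — the oscillation $e^{i\langle x_{2n+1}-y_{2n+1},\eta\rangle}$ giving convergence after repeated integration by parts in $\eta$ — so that the identity holds in $\mathcal D'(\mathbb R_+\times H_n\times H_n,\Lambda^\bullet(\mathbb CT^*H_n)\boxtimes(\Lambda^\bullet(\mathbb CT^*H_n))^{*})$; to make this precise one needs symbol-type bounds on $e^{-t\Box_\eta}(z,w)$ and its $z,w,t$-derivatives that are uniform in $\eta$ outside a fixed neighbourhood of the finitely many $\eta$ where $\dot R^\eta$ degenerates, plus an elementary argument near those exceptional values. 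An alternative that avoids the direct-integral machinery is to take \eqref{e-gue210521yydb} as an ansatz and verify directly, using that \eqref{e-gue210521yyd} solves the heat equation for $\Box_\eta$, that it satisfies $(\partial_t+\Box_{H_n,\Phi})e^{-t\Box_{H_n,\Phi}}(x,y)=0$ and $e^{-t\Box_{H_n,\Phi}}(x,y)\to\delta(x-y)\,\mathrm{Id}$ in $\mathcal D'$ as $t\to0^+$; this reduces the proof to differentiation under the $\eta$-integral, where the same uniform estimates are needed.
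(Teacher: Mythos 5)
Your proposal is correct in outline and the exponential factors you track through the three conjugations (removing the weight $e^{-\Phi}$, partial Fourier transform in $x_{2n+1}$, and the imaginary gauge $e^{g_\eta}$ with $g_\eta=\tfrac{i\beta}{2}\sum_j\lambda_j|z_j|^2$) reproduce \eqref{e-gue210521yydb} exactly, including the distributional (oscillatory-integral) reading of the $\eta$-integral, which is indeed necessary since the integrand only grows polynomially on one half-line. Note that the paper does not prove this statement itself but quotes it from \cite[Theorem 4.3]{HZ23}, and the argument there is essentially the reduction you describe — conjugation of the weight, Fourier decomposition in the central variable, and the Mehler formula \eqref{e-gue210521yyd} for the model operators $\Box_\eta$ — so your route coincides with the source's, with the two analytic points you flag (fibrewise decomposition of the Gaffney extension and uniform-in-$\eta$ estimates) being exactly the technical content to be supplied.
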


We pause and introduce some notations. 

\begin{definition}\label{d-gue250921ycdt}
		Let $L$ be a CR line bundle over $X$ and $h^L$ be the Hermitian fiber metric on $L$ with local weight $\phi$. The curvature of $(L, h^L)$ at $x \in D$ with respect to $\phi$ is the Hermitian quadratic form $\mathcal{R}_x^\phi$ on $T^{1,0}_xX$ defined by
		\[
		\mathcal{R}^\phi_x(U, \overline{V}) = \frac{1}{2} \langle  d(\overline{\partial}_b \phi - \partial_b \phi)(x), U \wedge \overline{V}\rangle, \quad U, V \in T^{1,0}_xX,
		\]
		where $d$ is the usual exterior derivative.
	\end{definition}
	
	Let 
	\begin{equation}\label{e-gue250921ycdq}
		\begin{split}
			\dot{\mathcal{R}}^\phi_x : T^{1,0}_xX \to T^{1,0}_xX, \\
			\dot{\mathcal{L}}_x : T^{1,0}_xX \to T^{1,0}_xX,
		\end{split}
	\end{equation}
	be the linear maps given by $\langle \dot{\mathcal{R}}^\phi_xU \mid V \rangle = \mathcal{R}^\phi_x(U, \overline{V})$, $\langle \dot{\mathcal{L}}_x U \mid V \rangle = \mathcal{L}_x(U, \overline{V})$, for all $U, V \in T^{1,0}_xX$. For every $\eta \in \mathbb{R}$, let
	\[
	\det (\dot{\mathcal{R}}^\phi_x -2\eta \dot{\mathcal{L}}_x) = \mu_1(x) \cdots \mu_n(x),
	\]
	where $\mu_j(x)$, $j = 1, \cdots, n$, are the eigenvalues of $\dot{\mathcal{R}}^\phi_x -2\eta \dot{\mathcal{L}}_x$ with respect to $\langle \cdot \mid \cdot \rangle$. Let $\set{U_{1}(x),\ldots,U_{n}(x)}$ be an orthonormal frame of $T^{1,0}_xX$ and let
$\{\omega^j \}^n_{j=1} \subset T^{\ast1,0}X$ be the dual frame of $\set{U_{1},\ldots,U_{n}}$.
  For every $\eta\in\mathbb R$, Put 
	\begin{equation}\label{e-gue250920ycda}
		\omega^\eta_x = \sum_{j, l =1}^n (\dot{\mathcal{R}}^\phi_x -2\eta \dot{\mathcal{L}}_x)(U_l, \overline{U}_j) \overline{\omega}^j \wedge (\overline{\omega}^l \wedge)^\star : T^{\ast0,q}_x X \to T^{\ast0,q}_xX,
	\end{equation}
	where $\{\omega^j \}^n_{j=1} \subset T^{\ast1,0}X$ is the dual frame of $\set{U_{1,\eta},\ldots,U_{n,\eta}}$.
    
Let ${\rm Spec\,}\Box_{b,L^k}$ be the set of all spectrum of $\Box_{b,L^k}$. For $\lambda\in{\rm Spec\,}\Box_{b,L^k}$, let 
\[E_\lambda(X,L^k):=\set{u\in{\rm Dom\,}\Box_{b,L^k};\, \Box_{b,L^k}u=\lambda u}.\]
For $\mu\geq0$, put 
\begin{equation}\label{e-gue250709yyd}
\begin{split}
&E_{0<\lambda\leq\mu}(X,L^k):=\oplus_{\lambda\in{\rm Spec\,}\Box_{b,L^k},0<\lambda\leq\mu}E_\lambda(X,L^k),\\
&E_{\leq\mu}(X,L^k):=\oplus_{\lambda\in{\rm Spec\,}\Box_{b,L^k},0\leq\lambda\leq\mu}E_\lambda(X,L^k).
\end{split}
\end{equation}
For $q\in\set{0,1,\ldots,n}$, put 
\begin{equation}\label{e-gue250709yydz}
\begin{split}
&E^{(q)}_{0<\lambda\leq\mu}(X,L^k):=E_{0<\lambda\leq\mu}(X,L^k)\cap L^2_{(0,q)}(X,L^k),\\
&E^{(q)}_{\leq\mu}(X,L^k):=E_{\leq\mu}(X,L^k)\cap L^2_{(0,q)}(X,L^k). 
\end{split}
\end{equation}

Let 
\begin{equation}\label{e-gue250709yydI}
\Pi_{L^k,\mu}: L^2(X,L^k)\To E_{\leq\mu}(X,L^k)
\end{equation}be the orthogonal projection and let $\Pi_{L^k,\mu}(x,y)$ be the distribution kernel of $\Pi_{L^k,\mu}$. When $\mu=0$, $\Pi_{L^k,\mu}=\Pi_{L^k}$, $\Pi_{L^k,\mu}(x,y)=\Pi_{L^k}(x,y)$. 

We now come back to our situation. 
Let $p\in X$, let $x=(x_1,\ldots,x_{2n+1})$ be local coordinates of $X$ defined on an open set $D$ of $p$ with $x(p)=0$ and let $s$ be a local CR trivializing section of $L$ on $D$ such that 
\eqref{local1}, \eqref{local2}, \eqref{U} and \eqref{phi} hold. We will use the same notations as before. 
There exists $\Pi_{L^k, \frac{k}{\log k},s}(t,x,y)\in\mathcal{D}'(D\times D, T^{*0,\bullet}X\boxtimes(T^{*0,\bullet}X)^*)$ such that 
\begin{equation}\label{e-gue210303yydIz}
(\Pi_{L^k,\frac{k}{\log k}}u)(x)=s^k(x)\otimes e^{\frac{k\phi(x)}{2}}\int_D\Pi_{L^k,\frac{k}{\log k},s}(x,y)e^{-\frac{k\phi(y)}{2}}\hat u(y)dv_X(y)\ \ \mbox{on $D$}, 
\end{equation}
for every $u=s^k\otimes\hat u\in\Omega^{0,\bullet}_c(D,L^k)$, $\hat u\in\Omega^{0,\bullet}_c(D)$. Note that 
\begin{equation}\label{e-gue210303yydIIz}
\Pi_{L^k,\frac{k}{\log k}}(x,y)=s^k(x)\otimes\Pi_{L^k,\frac{k}{\log k},s}(t,x,y)e^{\frac{k\phi(x)-k\phi(y)}{2}}\otimes (s^k(y))^*\ \ \mbox{on $D$}. 
\end{equation}
Let 
\begin{equation}\label{e-gue210305yydz}
\begin{split}
&\Pi_{k\phi}(x,y):=e^{\frac{k\phi(x)}{2}}\Pi_{L^k,\frac{k}{\log k}s}(x,y)e^{-\frac{k\phi(y)}{2}},\\
&\Pi_{k\phi_0}(x,y):=e^{-k\rho(x)}\Pi_{k\phi}(x,y)e^{k\rho(y)}=e^{-k\rho(x)+\frac{k\phi(x)}{2}}\Pi_{L^k,\frac{k}{\log k},s}(x,y)e^{k\rho(y)-\frac{k\phi(y)}{2}},
\end{split}
\end{equation}
where $\rho$ is as in \eqref{rho} and $\phi_0$ is as in \eqref{phi0}. Let 
\[\Pi_{k\phi_0}: \Omega^{0,\bullet}_c(D)\To \mathcal{D}'(D,T^{*0,\bullet}X)\]
be the continuous operator with distribution kernel $\Pi_{k\phi_0}(x,y)$ with respect to $dv_X$. Note that 
\begin{equation}\label{e-gue210305yydIz}
(\Pi_{k\phi_0}u)(x)=\int \Pi_{k\phi_0}(x,y)u(y)dv_X(y),\ \ u\in\Omega^{0,\bullet}_c(D).
\end{equation}

Let 
\begin{equation}\label{e-gue250523yyd}
\Pi_{(k)}(x,y):=k^{-n-1}\Pi_{k\phi_0}(F_kx,F_ky)\in\mathcal{D}'(B_{\log k}\times B_{\log k},F^*_kT^{*0,\bullet}X\boxtimes(F^*_kT^{*0,\bullet}X)^*),
\end{equation}
where $F_kx=(\frac{z}{\sqrt{k}},\frac{\theta}{k})$. Let 
\[\Pi_{(k)}: F^*_k\Omega^{0,\bullet}_c(B_{\log k})\To\mathcal{D}'(B_{\log k}, F^*_kT^{*0,\bullet}X)\] 
be the continuous operator given by 
\begin{equation}\label{e-gue210325yydIz}
(\Pi_{(k)}u)(x)=\int\Pi_{(k)}(x,y)u(y)m(F_ky)dy,\ \ u\in F^*_k\Omega^{0,\bullet}_c(B_{\log k}).
\end{equation}

    For any bounded open set $W\subset H_n$, we now identify $\Pi_{H_n,\Phi}(x,y)$ and $\Pi_{(k)}(x,y)$ as elements in 
$\mathcal{D}'(W\times W, \Lambda^*(\mathbb CT^*H_n)\boxtimes(\Lambda^*(\mathbb CT^*H_n))^*)$. We can repeat the proof of~\cite[Theorem 3.9]{HZ23} with minor changes and deduce that there is a sequence $k_1<k_2<\cdots$, such that for any $r>0$, we have 
\begin{equation}\label{e-gue250524ycd}\lim_{k_j\To+\infty}\Pi_{(k_j)}(x,y)=B(x,y)\end{equation}
in $\mathcal{D}'(B_r\times B_r,\Lambda^*(\mathbb CT^*H_n)\boxtimes(\Lambda^*(\mathbb CT^*H_n))^*)$ topology, where $B(x,y)\in\mathcal{D}'(H_n\times H_n,T^{*0,\bullet}H_n\boxtimes(T^{*0,\bullet}H_n)^*)$ and 
\begin{equation}\label{e-gue250426ycd}
\begin{split}
&(\,Bu\,|\,v\,)_{\Phi}\leq\norm{u}_{\Phi}\norm{v}_{\Phi},\ \ \mbox{for all $u, v\in\Omega^{0,\bullet}_c(H_n)$},\\
&\Box_{H_n,\Phi}B=0\ \ \mbox{on $\mathcal{D}'(H_n,T^{*0,\bullet}H_n)$},
\end{split}
\end{equation}
where $B: \Omega^{0,\bullet}(H_n)\To\mathcal{D}'(H_n,T^{*0,\bullet}H_n)$ is the continuous operator with distribution kernel $B(x,y)$ with respect to $(\,\cdot\,|\,\cdot\,)_{H_n}$. 

We are going to prove that $B=\Pi_{H_n,\Phi}$. 
Recall that
\begin{equation}\label{e-gue250429yyd}
\Phi_\eta(z)=\Phi_0(z)-2\eta\sum^n_{j=1}\lambda_j(z)\abs{z_j}^2,
\end{equation}
where $\eta\in\mathbb R$, $\Phi_0(z)$ and $\lambda_j$, $j=1,\ldots,n$, are as in \eqref{U} and \eqref{e-gue250429yydI}. 
For every $q\in\set{0,1,\ldots,n}$, let
\begin{equation}\label{e-gue250429yyda}
\begin{split}
&I_q:=\{\eta\in\mathbb R;\, 
\mbox{$\left(\frac{\pr^2\Phi_\eta}{\pr z_j\pr\ol z_\ell}\right)^n_{j,\ell=1}$ has exactly $q$ negative}\\
&\quad\mbox{and $n-q$ positive eigenvalues}\}.
\end{split}
\end{equation}
We now fix $q\in\set{0,1,\ldots,n}$. For $\eta\in I_q$, write 
\begin{equation}\label{e-gue250429ycd}
\Phi_\eta(z)=\sum^n_{j=1}\mu_j(\eta)\abs{z_j(\eta)}^2,
\end{equation}
where $\mu_j(\eta)\in\mathbb R$, $\mu_j(\eta)<0$, $j=1,\ldots,q$, $\mu_j(\eta)>0$, $j=q+1,\ldots,n$
, $z_j(\eta)=\sum^n_{\ell=1}a_{j,\ell}(\eta)z_\ell$, $j=1,\ldots,n$, $a_{j,\ell}(\eta)\in\mathbb C$, $j, \ell=1,\ldots,n$, and the matrix $A_\eta:=\left(a_{j,\ell}(\eta)\right)^n_{j,\ell=1}$ satisfies $A^*_\eta A_\eta=A_\eta A^*_\eta=I$, $A^*_\eta=(\ol{A_\eta})^t$, $(\ol{A_\eta})^t$ 
is the transpose of $\ol{A_\eta}$, $\ol{A_\eta}$ is the conjugate of $A_\eta$.
Let 
\begin{equation}
\begin{split}
\Psi_\eta(z,w):=&i\sum^n_{j=1}\abs{\mu_j(\eta)}\abs{z_j(\eta)-w_j(\eta)}^2\\
&+i\sum^n_{j=1}\mu_j(\eta)(\ol{z_j(\eta)}w_j(\eta)-z_j(\eta)\ol {w_j(\eta)})\in\mathcal{C}^\infty(\mathbb C^n\times\mathbb C^n).
\end{split}
\end{equation}
For $x, y\in H_n$,
Let \begin{equation}\label{e-gue250523ycd}
\begin{split}
\tau^{(q)}_\eta(x,y):=&(d\ol{z_1(\eta)}\wedge\cdots\wedge d\ol{z_q(\eta)})(x)\otimes (d\ol{z_1(\eta)}\wedge\cdots\wedge d\ol{z_q(\eta)})^*(y): \\
\Lambda^\bullet_y(\mathbb CT^*H_n)&\To \Lambda^\bullet_x(\mathbb CT^*H_n),\\
u&\mapsto (d\ol{z_1(\eta)}\wedge\cdots\wedge d\ol{z_q(\eta)})(x)\langle\,u\,|\,(d\ol{z_1(\eta)}\wedge\cdots\wedge d\ol{z_q(\eta)})(y)\,\rangle_{H_n}
\end{split}
\end{equation}
 be the linear transform. Put
\begin{equation}\label{e-gue250429ycdII}
\tau^{(q)}_\eta: \Lambda^\bullet(\mathbb CT^*H_n)\To\set{cd\ol{z_1(\eta)}\wedge\cdots\wedge d\ol{z_q(\eta)};\, c\in\mathbb C}
\end{equation}
be the orthogonal projection with respect to $\langle\,\cdot\,|\,\cdot\,\rangle_{H_n}$. Note that the distribution kernel of $\tau^{(q)}_\eta$ is $\tau^{(q)}_\eta(x,y)$.
We can repeat the proof of~\cite[Theorems 1.1, 1.3]{HHL18} and deduce the following 
explicit formula for $\Pi^{(q)}_{H_n,\Phi}$.  

\begin{theorem}\label{t-gue250429yyd}
Let $q\in\set{0,1,\ldots,n}$. With the notations used above, we have 
\begin{equation}\label{e-gue250429ycdIII}
\begin{split}
&\Pi^{(q)}_{H_n,\Phi}(x,y)\\
&=\int_{\eta\in I_q}e^{\frac{1}{2}(\Phi_\eta(z)-\Phi_\eta(w))+i\frac{1}{2}\Psi_\eta(z,w)+(i\eta+\frac{\beta}{2})(x_{2n+1}-y_{2n+1})}\\
&\times e^{-(\eta-\frac{i}{2}\beta)\sum^n_{j=1}\lambda_j(\abs{w_j}^2-\abs{z_j}^2)}(2\pi)^{-n-1}\abs{\det\left(\frac{\pr^2\Phi_\eta}{\pr z_j\pr\ol z_\ell}\right)^n_{j,\ell=1}}\tau^{(q)}_\eta(x,y)d\eta\\
&\in\mathcal{D}'(H_n\times H_n,\Lambda^\bullet(\mathbb CT^*H_n)\boxtimes\Lambda^\bullet(\mathbb CT^*H_n)^*), 
\end{split}
\end{equation}
where $I_q$ and $\tau^{(q)}_\eta(x,y)$ are as in \eqref{e-gue250429yyda} and \eqref{e-gue250523ycd} respectively. 

Moreover, we have 
\begin{equation}\label{e-gue250524yyde}
\begin{split}
&\Pi_{H_n,\Phi}(x,y)\\
&=\sum^n_{q=0}\int_{\eta\in I_q}e^{\frac{1}{2}(\Phi_\eta(z)-\Phi_\eta(w))+i\frac{1}{2}\Psi_\eta(z,w)+(i\eta+\frac{\beta}{2})(x_{2n+1}-y_{2n+1})}\\
&\times e^{-(\eta-\frac{i}{2}\beta)\sum^n_{j=1}\lambda_j(\abs{w_j}^2-\abs{z_j}^2)}(2\pi)^{-n-1}\abs{\det\left(\frac{\pr^2\Phi_\eta}{\pr z_j\pr\ol z_\ell}\right)^n_{j,\ell=1}}\tau^{(q)}_\eta(x,y)d\eta\\
&\in\mathcal{D}'(H_n\times H_n,\Lambda^\bullet(\mathbb CT^*H_n)\boxtimes\Lambda^\bullet(\mathbb CT^*H_n)^*).
\end{split}
\end{equation}
\end{theorem}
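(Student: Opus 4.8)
The plan is to compute $\Pi^{(q)}_{H_n,\Phi}$ directly, following the proof of Theorems~1.1 and~1.3 in~\cite{HHL18}. The operator $\Box_{H_n,\Phi}$ is translation invariant in the central variable $x_{2n+1}$ up to the factor $e^{-\beta x_{2n+1}}$ hidden in $e^{-\Phi}$, so I would first conjugate $\Box_{H_n,\Phi}$ by $e^{-\beta x_{2n+1}/2}$ to absorb that factor, then apply the partial Fourier transform $\mathcal{F}=\mathcal{F}_{x_{2n+1}\to\eta}$. Since $\ol U_{j,H_n}=\frac{\pr}{\pr\ol z_j}+i\lambda_j z_j\frac{\pr}{\pr\theta}$ and $\frac{\pr}{\pr\theta}\mapsto i\eta$ under $\mathcal{F}$, and distributing the weight $e^{-\Phi_0}$ symmetrically as $e^{\pm\Phi_0/2}$, the conjugated and transformed Kohn Laplacian becomes the direct integral $\int^{\oplus}_{\mathbb R}\Box_\eta\,d\eta$, where $\Box_\eta$ is exactly the model operator on $L^2_{(0,\bullet)}(\mathbb C^n,\Phi_\eta)$ introduced above, with $\Phi_\eta=\Phi_0-2\eta\sum_j\lambda_j|z_j|^2$ as in~\eqref{e-gue250429yyd}. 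Consequently $\Pi^{(q)}_{H_n,\Phi}=\mathcal{F}^{-1}\,\big(\int^{\oplus}_{\mathbb R}\Pi^{(q)}_\eta\,d\eta\big)\,\mathcal{F}$, where $\Pi^{(q)}_\eta$ is the orthogonal projection of $L^2_{(0,q)}(\mathbb C^n,\Phi_\eta)$ onto $\Ker\Box^{(q)}_\eta$.

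Next I would identify $\Ker\Box^{(q)}_\eta$ and its reproducing kernel on each fiber. A $(0,q)$-form lies in $\Ker\Box^{(q)}_\eta$ iff it is annihilated by both $\ddbar_\eta$ and $\ddbar^*_\eta$; diagonalizing the Hermitian matrix $\big(\pr^2\Phi_\eta/\pr z_j\pr\ol z_\ell\big)$ in the unitary coordinates $z_j(\eta)$ of~\eqref{e-gue250429ycd}, one checks in the usual way (exactly as for the Bargmann/Fock model, cf.~\cite[Appendix~E]{MM}) that $\Ker\Box^{(q)}_\eta\neq\{0\}$ precisely when this matrix has exactly $q$ negative eigenvalues, i.e. $\eta\in I_q$, and that in this case $\Ker\Box^{(q)}_\eta$ consists of the forms $f(z)\,d\ol{z_1(\eta)}\wedge\cdots\wedge d\ol{z_q(\eta)}$ with $f$ ranging over the weighted Fock space cut out by $e^{-\Phi_\eta}$. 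Its reproducing kernel is a Gaussian integral, which, after the linear change of coordinates $z\mapsto z(\eta)$, evaluates to $(2\pi)^{-n}\,\big|\det(\pr^2\Phi_\eta/\pr z_j\pr\ol z_\ell)\big|\,e^{\frac12(\Phi_\eta(z)-\Phi_\eta(w))+\frac{i}{2}\Psi_\eta(z,w)}$ in the $z$-variable, tensored with the rank-one form factor $\tau^{(q)}_\eta(x,y)$ of~\eqref{e-gue250523ycd}; this is the Bergman analogue of the explicit heat kernel~\eqref{e-gue210521yyd} used in Theorem~\ref{t-gue210521yyda}.

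Then I would reassemble. Undoing the conjugation by $e^{-\beta x_{2n+1}/2}$ and applying $\mathcal{F}^{-1}$ reinstates the phase $(i\eta+\frac\beta2)(x_{2n+1}-y_{2n+1})$ together with the factor $e^{-(\eta-\frac{i}{2}\beta)\sum_j\lambda_j(|w_j|^2-|z_j|^2)}$, which comes from the $z$-dependent part $i\lambda_j z_j\frac{\pr}{\pr\theta}$ of $\ol U_{j,H_n}$ (equivalently, from writing the Fock weight relative to the Levi-adapted frame); the constant $(2\pi)^{-1}$ from $\mathcal{F}^{-1}$ combines with the fiberwise $(2\pi)^{-n}$ to produce $(2\pi)^{-n-1}$, yielding~\eqref{e-gue250429ycdIII}. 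Summing over $q=0,1,\ldots,n$ and using $\Box_{H_n,\Phi}=\oplus_q\Box^{(q)}_{H_n,\Phi}$, hence $\Ker\Box_{H_n,\Phi}=\oplus_q\Ker\Box^{(q)}_{H_n,\Phi}$, gives~\eqref{e-gue250524yyde}; combined with $\Box_{H_n,\Phi}B=0$ and the boundedness of $B$ from~\eqref{e-gue250426ycd}, the same formula then identifies the earlier weak-$*$ limit $B$ with $\Pi_{H_n,\Phi}$.

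The main obstacle is making the direct-integral decomposition and the $\eta$-integral rigorous. Because $e^{-\beta x_{2n+1}}$ is not integrable in $x_{2n+1}$, the Fourier transform must be applied only after the conjugation that removes it, and one must verify that the resulting integral over $\eta$ converges in $\mathcal{D}'(H_n\times H_n,\Lambda^\bullet(\mathbb CT^*H_n)\boxtimes(\Lambda^\bullet(\mathbb CT^*H_n))^*)$: the imaginary part $\frac12\Psi_\eta(z,w)$ supplies Gaussian decay in $z-w$, but near $\eta\in\partial I_q$ the determinant factor degenerates, so care is needed to see that no boundary contributions are lost and that the right-hand side is exactly the orthogonal projection onto the \emph{full} kernel — i.e. that it is self-adjoint, idempotent, $\Box_{H_n,\Phi}$-harmonic, with no $L^2$-harmonic forms for $\eta\notin\cup_q I_q$, where $\Box_\eta$ has a spectral gap at $0$. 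All of these points are handled precisely as in~\cite{HHL18} and~\cite{HM17}, so I would only indicate the modifications produced by the extra weight $\Phi$ and otherwise refer to those papers.
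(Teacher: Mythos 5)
Your proposal is correct and follows essentially the same route as the paper: the paper's proof of this theorem consists precisely of repeating the partial-Fourier-transform/weighted-Bargmann argument of \cite[Theorems 1.1, 1.3]{HHL18} with the modifications forced by the weight $\Phi$, which is exactly the fiberwise decomposition into the model operators $\Box_\eta$ and reassembly over $\eta\in I_q$ that you outline. The only caveat is that the identification of the weak limit $B$ with $\Pi_{H_n,\Phi}$ at the end of your sketch is not part of this statement but of the subsequent theorem in the paper, where it requires the separate approximation argument using $\Pi_{H_n,\Phi,M}$ and Assumption~\ref{a-gue250426yyd1}.
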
 
Now we are in a position to show the following

\begin{theorem}\label{t-gue250524yyd}
With the notations used above, and recall that we work with Assumption~\ref{a-gue250426yyd1}. We have 
\begin{equation}
B(x,y)=\Pi_{H_n,\Phi}(x,y)\ \ \mbox{on $\mathcal{D}'(H_n\times H_n,\Lambda^\bullet(\mathbb CT^*H_n)\boxtimes\Lambda^\bullet(\mathbb CT^*H_n)^*)$},
\end{equation}
where $B(x,y)$ is as in \eqref{e-gue250524ycd}. 
\end{theorem}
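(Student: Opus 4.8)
The plan is to prove Theorem~\ref{t-gue250524yyd} by identifying $B$ with the orthogonal projection of $L^2_{(0,\bullet)}(H_n,\Phi)$ onto $K:={\rm Ker\,}\Box_{H_n,\Phi}$. From \eqref{e-gue250426ycd} one already has that $B$ is bounded with operator norm $\le 1$ and that ${\rm range\,}B\subseteq K$, i.e.\ $\Pi_{H_n,\Phi}B=B$. I would then establish two further facts: (i) $B$ is self-adjoint for $(\,\cdot\mid\cdot\,)_\Phi$; (ii) $Bu=u$ for every $u\in K$. Granting these, (i) gives $B\Pi_{H_n,\Phi}=B$, so $B=\Pi_{H_n,\Phi}B\Pi_{H_n,\Phi}$ is a self-adjoint contraction on $K$ which by (ii) is the identity there, forcing $B=\Pi_{H_n,\Phi}$. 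For (i) I would pass to the limit in self-adjointness: each spectral projection $\Pi_{L^k,\frac{k}{\log k}}$ is orthogonal for $(\,\cdot\mid\cdot\,)_{L^k}$, hence after the unitary identifications of Subsection~\ref{s-gue250411yydI} and the scaling $F_k$ the operator $\Pi_{(k)}$ is self-adjoint for $(\,\cdot\mid\cdot\,)_{kF_k^*\phi_0}$; since $kF_k^*\phi_0\to\Phi$ uniformly on compacts and $m(F_kx)\to m(0)$, passing to the limit along $k_j$ in $(\,\Pi_{(k)}u\mid v\,)_{kF_k^*\phi_0}=(\,u\mid\Pi_{(k)}v\,)_{kF_k^*\phi_0}$, $u,v\in\Omega^{0,\bullet}_c(H_n)$, yields $(\,Bu\mid v\,)_\Phi=(\,u\mid Bv\,)_\Phi$.

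The main work goes into (ii), and here I would use Assumption~\ref{a-gue250426yyd1}. Using the spectral gap ${\rm Spec\,}\Box_{b,L^k}\subseteq\{0\}\cup[C,+\infty)$, write $\Pi_{(k)}=\Pi^0_{(k)}+\Pi'_{(k)}$, where $\Pi^0_{(k)}$ is the scaling of the orthogonal projection onto ${\rm Ker\,}\Box_{b,L^k}$ and $\Pi'_{(k)}$ the scaling of the orthogonal projection onto $E_{0<\lambda\le\frac{k}{\log k}}(X,L^k)$. Since $\Pi'_{(k)}\ge0$ and, for any box $B_r\subset H_n$,
\[
\int_{B_r}{\rm tr\,}\Pi'_{(k)}(x,x)\,m(F_kx)\,dx\ \le\ k^{-(n+1)}\,{\rm dim\,}E_{0<\lambda\le\frac{k}{\log k}}(X,L^k)\ \longrightarrow\ 0
\]
by Assumption~\ref{a-gue250426yyd1}, the Cauchy--Schwarz inequality for the positive kernel $\Pi'_{(k)}$ gives $\Pi'_{(k)}\to0$ in $\mathcal D'$, so $B=\lim_j\Pi^0_{(k_j)}$. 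Next I would compare with the heat semigroup: for each $k$, $A_{(k)}(t)=e^{-t\Box_{\rho,(k)}}$ and $A_{(k)}(t)-\Pi^0_{(k)}=\int_{(0,+\infty)}e^{-t\lambda}\,dE_{(k)}(\lambda)\ge0$; letting $k=k_j\to+\infty$ via Theorem~\ref{t-gue250425yyd} and then $t\to+\infty$, and using that $e^{-t\Box_{H_n,\Phi}}\to\Pi_{H_n,\Phi}$ strongly on $L^2_{(0,\bullet)}(H_n,\Phi)$ by the spectral theorem, I obtain $0\le B\le\Pi_{H_n,\Phi}$.

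To upgrade $B\le\Pi_{H_n,\Phi}$ to equality it then suffices to match the diagonals. By the change of variables $y=F_kx$,
\[
\int_{B_r}{\rm tr\,}\Pi^0_{(k)}(x,x)\,m(F_kx)\,dx\ =\ \int_{F_k(B_r)}{\rm tr\,}\Pi_{L^k}(y,y)\,dv_X(y),
\]
and the localized $k\to+\infty$ diagonal asymptotics of the Szeg\H{o} kernel near $p$ (of Bergman-kernel type, cf.~\cite{HM17}, together with the explicit model kernel of Theorem~\ref{t-gue250429yyd} and \cite{HHL18}) identify the right-hand side in the limit with $\int_{B_r}{\rm tr\,}\Pi_{H_n,\Phi}(x,x)\,dv_{H_n}(x)$. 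Once the left-hand side is shown to converge to $\int_{B_r}{\rm tr\,}B(x,x)\,dv_{H_n}(x)$, the equality of these integrals for all $r$, combined with $\Pi_{H_n,\Phi}-B\ge0$ and the pointwise estimate $\abs{(\Pi_{H_n,\Phi}-B)(x,y)}^2\le(\Pi_{H_n,\Phi}-B)(x,x)\,(\Pi_{H_n,\Phi}-B)(y,y)$ valid for the kernel of a positive operator, forces $\Pi_{H_n,\Phi}-B\equiv0$, i.e.\ $B=\Pi_{H_n,\Phi}$; as this limit is independent of the subsequence, the full limit in \eqref{e-gue250524ycd} exists and equals $\Pi_{H_n,\Phi}$.

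The step I expect to be the main obstacle is the upgrade of the distributional convergences $\Pi_{(k_j)}\to B$ in \eqref{e-gue250524ycd} and $A_{(k_j)}(t)\to e^{-t\Box_{H_n,\Phi}}$ in Theorem~\ref{t-gue250425yyd} to convergence of the diagonal values of the kernels, which is exactly what makes the local trace comparison in the previous paragraph rigorous. This requires re-running the scaling scheme of~\cite{HZ23} with uniform-in-$k$ subelliptic a priori estimates together with trace-norm bounds on $\Pi_{(k)}$; it is precisely here that Assumption~\ref{a-gue250426yyd1} (the spectral gap and the bound ${\rm dim\,}E_{0<\lambda\le k/\log k}=o(k^{n+1})$) and the hypothesis $n_-=n_+$ or $\abs{n_--n_+}>1$ --- which guarantees that $\Box_{b,L^k}$ has closed range, so that ${\rm Ker\,}\Box_{b,L^k}$ is an honest spectral subspace and $\Pi^0_{(k)}$ behaves well --- are used decisively.
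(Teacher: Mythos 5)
Your overall skeleton ($B$ is a self-adjoint contraction whose range lies in ${\rm Ker\,}\Box_{H_n,\Phi}$, so it suffices to show it acts as the identity there) is reasonable, but the two steps that are supposed to deliver this fail. First, the dimension count: by \eqref{e-gue250523yyd} and the change of variables $y=F_kx$, whose Jacobian $k^{-(n+1)}$ exactly cancels the prefactor $k^{-(n+1)}$ in the definition of $\Pi_{(k)}$, one gets $\int_{B_r}\operatorname{tr}\Pi'_{(k)}(x,x)\,m(F_kx)\,dx=\int_{F_k(B_r)}\operatorname{tr}\Pi'_{k\phi_0}(y,y)\,m(y)\,dy$, which is bounded only by ${\rm dim\,}E_{0<\lambda\leq k/\log k}(X,L^k)=o(k^{n+1})$; your inequality with the extra factor $k^{-(n+1)}$ is off by exactly $k^{n+1}$. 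The global bound $o(k^{n+1})$ does not exclude that many low-lying eigenforms concentrate at the scale of $F_k(B_r)$ near $p$, so $\Pi'_{(k)}\To0$ in $\mathcal{D}'$ does not follow from Assumption~\ref{a-gue250426yyd1}; local non-concentration of $E_{0<\lambda\leq k/\log k}$ is essentially part of what the theorem asserts. Second, the concluding diagonal/trace comparison cannot be formulated in the degrees where the statement has content: for $q\in\set{n_-,n_+}$ both $\Pi^{(q)}_{L^k}$ and $\Pi^{(q)}_{H_n,\Phi}$ project onto infinite-dimensional spaces and their on-diagonal traces diverge (in \eqref{e-gue250429ycdIII} the set $I_q$ contains a half-line in $\eta$ and $\abs{\det(\pr^2\Phi_\eta/\pr z_j\pr\ol z_\ell)}\sim\abs{\eta}^n$, so $\operatorname{tr}\Pi^{(q)}_{H_n,\Phi}(x,x)=+\infty$). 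Hence the identity of local traces, the pointwise Cauchy--Schwarz estimate for the positive kernel $\Pi_{H_n,\Phi}-B$, and the conclusion ``$B\leq\Pi_{H_n,\Phi}$ plus equal traces forces equality'' all break down; moreover \cite{HM17} does not provide semiclassical ($k\To+\infty$) diagonal asymptotics of $\Pi_{L^k}$ --- obtaining such asymptotics is essentially equivalent to the theorem being proved. You yourself defer the distributional-to-diagonal upgrade as the ``main obstacle'', and that is precisely the missing analysis.

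For comparison, the paper's proof never splits off the nonzero low-lying spectrum and never uses diagonal values. It truncates the explicit model kernel to compact $\eta$-sets ($\Pi_{H_n,\Phi,M}$ in \eqref{e-gue250524ycdb}), transplants $\Pi_{H_n,\Phi,M}u$ to quasi-modes $\Pi_{H_n,k,M}u_k$ on $X$ with values in $L^k$, shows $\norm{\Box_{b,L^k}\Pi_{H_n,k,M}u_k}_{h^{L^k}}\leq k\delta_k\norm{u_k}_{h^{L^k}}$ with $\delta_k\log k\To0$, so these quasi-modes are asymptotically reproduced by $\Pi_{L^k,\frac{k}{\log k}}$; passing to the limit along $k_j$ yields $B\Pi_{H_n,\Phi,M}=\Pi_{H_n,\Phi,M}=\Pi_{H_n,\Phi,M}B$, hence $B\Pi_{H_n,\Phi}=\Pi_{H_n,\Phi}=\Pi_{H_n,\Phi}B$, and combined with $\Box_{H_n,\Phi}B=0$ (which gives $B\Pi_{H_n,\Phi}=B$) this forces $B=\Pi_{H_n,\Phi}$. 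If you wish to keep your skeleton, the trace-matching step should be replaced by such a reproducing/quasi-mode argument proving $Bu=u$ on a dense family in ${\rm Ker\,}\Box_{H_n,\Phi}$; your self-adjointness and $0\leq B\leq\Pi_{H_n,\Phi}$ observations are fine but do not substitute for it.
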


\begin{proof}
Let $M=\cup^n_{q=0}M_q$, where $M_q$ is any compact subset of $I_q$, $q=0,1,\ldots,n$. Let 
\begin{equation}\label{e-gue250524ycdb}
\begin{split}
&\Pi_{H_n,\Phi,M}(x,y)\\
&=\sum^n_{q=0}\int_{\eta\in I_q, \eta\in M_q}e^{\frac{1}{2}(\Phi_\eta(z)-\Phi_\eta(w))+i\frac{1}{2}\Psi_\eta(z,w)+(i\eta+\frac{\beta}{2})(x_{2n+1}-y_{2n+1})}\\
&\times e^{-(\eta-\frac{i}{2}\beta)\sum^n_{j=1}\lambda_j(\abs{w_j}^2-\abs{z_j}^2)}(2\pi)^{-n-1}\abs{\det\left(\frac{\pr^2\Phi_\eta}{\pr z_j\pr\ol z_\ell}\right)^n_{j,\ell=1}}\tau^{(q)}_\eta(x,y)d\eta\\
&\in\mathcal{D}'(H_n\times H_n,\Lambda^\bullet(\mathbb CT^*H_n)\boxtimes\Lambda^\bullet(\mathbb CT^*H_n)^*)
\end{split}
\end{equation}
and let $\Pi_{H_n,\Phi,M}: L^2_{(0,\bullet)}(H_n)\To L^2_{(0,\bullet)}(H_n)$ be the continuous operator with distribution kernel $\Pi_{H_n,\Phi,M}(x,y)$. Let 
$\chi, \tau\in\mathcal{C}^\infty_c(H_n)$, $0\leq\chi\leq 1$, $0\leq\tau\leq 1$, $\tau\equiv1$ near $0\in H_n$ and $\chi\equiv1$ near ${\rm supp\,}\tau$. For every $k\in\mathbb N$, put 
\begin{equation}\label{e-gue250524ycdg}
\begin{split}
&\chi_k(x)=\chi((\frac{\sqrt{k}x'}{\log k}, \frac{kx_{2n+1}}{\log k})),\\
&\tau_k(x)=\tau((\frac{\sqrt{k}x'}{\log k}, \frac{kx_{2n+1}}{\log k})),
\end{split}
\end{equation}
where $x'=(x_1,\ldots,x_{2n})$, $y'=(y_1,\ldots,y_{2n})$. 
Put 
\begin{equation}\label{e-gue250524ycdh}
\begin{split}
\Pi_{H_n,k,M}(x,y)&:=e^{k\rho(x)}s^k(x)\otimes k^{n+1}\chi_k(x)\times\\
&\Pi_{H_n,\Phi,M}((\frac{\sqrt{k}x'}{\log k}, \frac{kx_{2n+1}}{\log k}),(\frac{\sqrt{k}y'}{\log k}, \frac{ky_{2n+1}}{\log k}))
\tau_k(y)\otimes s^k(y)e^{-k\rho(y)},
\end{split}
\end{equation}
where $s$ is the CR trivializing section as in the discussion before \eqref{e-gue210303yydIz} and $\rho(x)$ is as in \eqref{rho}. For $k$ large, $\Pi_{H_n,k,M}(x,y)$ is well-defined as an element in $\mathcal{D}'(X\times X,\Lambda^\bullet(\mathbb CT^*X)\boxtimes(\Lambda^\bullet(\mathbb CT^*X)^*)$. Let 
\[\Pi_{H_n,k,M}: \mathcal{C}^\infty(X,\Lambda^\bullet(\mathbb CT^*X))\To\mathcal{D}'(X,\Lambda^\bullet(\mathbb CT^*X))\]
be the continuous operator with distribution kernel $\Pi_{H_n,k,M}(x,y)$. It is easy to check that 
\[\Pi_{H_n,k,M}: L^2(X,\Lambda^\bullet(\mathbb CT^*X))\To L^2(X,\Lambda^\bullet(\mathbb CT^*X))\]
is continuous. Let $u, v\in\mathcal{C}^\infty_c(H_n,\Lambda^\bullet(T^*H_n))$. Put 
\[\begin{split}
&u_k(x):=e^{k\rho(x)}s^k(x)\otimes u((\sqrt{k}x',kx_{2n+1}))\in\mathcal{C}^\infty(X,L^k\otimes\Lambda^\bullet(\mathbb CT^*X)),\\
&v_k(x):=e^{k\rho(x)}s^k(x)\otimes v((\sqrt{k}x',kx_{2n+1}))\in\mathcal{C}^\infty(X,L^k\otimes\Lambda^\bullet(\mathbb CT^*X)).
\end{split}\]
From \eqref{k(k)}, Theorem~\ref{t-gue250429yyd} and the coefficients of $\Box_{\rho,(k)}$ converge to the coefficients of $\Box_{H_n,\Phi}$ faster that $\frac{1}{\log k}$ on ${\rm  supp\,}\chi_k\cup{\rm supp\,}\tau_k$, it is straightforward to check that 
\begin{equation}\label{e-gue250524yydp}
\norm{\Box_{b,L^k}\Pi_{H_n,k,M}u_k}_{h^{L^k}}\leq k\delta_k\norm{u_k}_{h^{L^k}},
\end{equation}
where $\delta_k>0$ is a sequence with $\lim_{k\To+\infty}(\log k)\delta_k=0$. From \eqref{e-gue250524yydp}, we get 
\begin{equation}\label{e-gue250524yydq}
\lim_{k\To+\infty}\norm{\Pi_{H_n,k,M}u_k-\Pi_{L^k,\frac{k}{\log k}}\Pi_{H_n,k,M}u_k}_{h^{L^k}}=0.
\end{equation}
From \eqref{e-gue250524ycd} and \eqref{e-gue250524yydq}, we can check that 
\begin{equation}\label{e-gue250524ycdu}
\begin{split}
&(\,\Pi_{H_n,\Phi,M}u\,|\,Bv\,)_{H_n}\\
&=\lim_{k_j\To+\infty}(\,\Pi_{H_n,\Phi,M}u\,|\,\Pi_{(k_j)}v\,)_{H_n}\\
&=\lim_{k_j\To+\infty}(\,\Pi_{H_n,k_j,M}u_{k_j}\,|\,\Pi_{L^{k_j},\frac{k_j}{\log k_j}}v_{k_j}\,)_{L^{k^j}}\\
&=\lim_{k_j\To+\infty}(\,\Pi_{L^{k_j},\frac{k_j}{\log k_j}}\Pi_{H_n,k_j,M}u_{k_j}\,|\,v_{k_j}\,)_{L^{k^j}}\\
&=\lim_{k_j\To+\infty}(\,\Pi_{H_n,k_j,M}u_{k_j}\,|\,v_{k_j}\,)_{L^{k^j}}\\
&=(\,\Pi_{H_n,\Phi,M}u\,|\,v\,)_{H_n}. 
\end{split}
\end{equation}
Hence, $B\Pi_{H_n,\Phi,M}=\Pi_{H_n,\Phi,M}=\Pi_{H_n,\Phi,M}B$ on $\mathcal{C}^\infty_c(H_n,\Lambda^\bullet(\mathbb CT^*H_n))$. Thus, 
\begin{equation}\label{e-gue250524yydz}
\mbox{$B\Pi_{H_n,\Phi}=\Pi_{H_n,\Phi}=\Pi_{H_n,\Phi}B$ on $\mathcal{C}^\infty_c(H_n,\Lambda^\bullet(\mathbb CT^*H_n))$}.
\end{equation}
From \eqref{e-gue250426ycd}, we see that 
    \begin{equation}\label{e-gue250524ycdz}
\mbox{$B\Pi_{H_n,\Phi}=B$ on $\mathcal{C}^\infty_c(H_n,\Lambda^\bullet(\mathbb CT^*H_n))$}.
\end{equation}
From \eqref{e-gue250524yydz} and \eqref{e-gue250524ycdz}, we get $B=\Pi_{H_n,\Phi}$ on $\mathcal{C}^\infty_c(H_n,\Lambda^\bullet(\mathbb CT^*H_n))$. The theorem follows. 
\end{proof}

From Theorem~\ref{t-gue250524yyd}, we get the following 

\begin{theorem}\label{t-gue250524yydw}
Let $p\in X$, let $x=(x_1,\ldots,x_{2n+1})$ be local coordinates of $X$ defined on an open set $D$ of $p$ with $x(p)=0$ and let $s$ be a local CR trivializing section of $L$ on $D$ such that 
\eqref{local1}, \eqref{local2}, \eqref{U} and \eqref{phi} hold. Let $\Pi_{(k)}(x,y)\in\mathcal{D}'(B_{\log k}\times B_{\log k},F^*_kT^{*0,\bullet}X\boxtimes(F^*_kT^{*0,\bullet}X)^*)$ be as 
in \eqref{e-gue250523yyd}. We have for every $r>0$, 
\[\lim_{k\To+\infty}\Pi_{(k)}(x,y)=\Pi_{H_n,\Phi}(x,y)\]
in $\mathcal{D}'(B_r\times B_r,\Lambda^*(\mathbb CT^*H_n)\boxtimes(\Lambda^*(\mathbb CT^*H_n))^*)$ topology.
\end{theorem}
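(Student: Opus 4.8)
The plan is to upgrade the subsequential convergence recorded in \eqref{e-gue250524ycd} to convergence of the full sequence, by combining a compactness extraction with the identification of the limit supplied by Theorem~\ref{t-gue250524yyd}. First I would recall why every subsequence of $\{\Pi_{(k)}(x,y)\}_k$ has a convergent sub-subsequence: since $\Pi_{L^k,\frac{k}{\log k}}$ is an orthogonal projection, the pairings $(\,\Pi_{(k)}u\,|\,v\,)_{kF^*_k\phi_0}$ are bounded uniformly in $k$, and $\mathcal{C}^\infty_c(H_n,\Lambda^\bullet(\mathbb CT^*H_n))$ is separable; repeating the proof of \cite[Theorem 3.9]{HZ23} (with a diagonal argument over an exhaustion $B_{r_1}\subset B_{r_2}\subset\cdots$, $r_j\to+\infty$, so that the extraction works simultaneously for every $r>0$) yields, out of any given subsequence, a further subsequence $\{k_j\}$ and a limit $B(x,y)\in\mathcal{D}'(H_n\times H_n,T^{*0,\bullet}H_n\boxtimes(T^{*0,\bullet}H_n)^*)$ with $\Pi_{(k_j)}(x,y)\to B(x,y)$ in $\mathcal{D}'(B_r\times B_r,\Lambda^*(\mathbb CT^*H_n)\boxtimes(\Lambda^*(\mathbb CT^*H_n))^*)$ for all $r>0$, and $B$ satisfying \eqref{e-gue250426ycd}.

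The key point is that the argument proving Theorem~\ref{t-gue250524yyd} applies to \emph{this} subsequence $\{k_j\}$ with no change: it uses only the two properties in \eqref{e-gue250426ycd}, namely $(\,Bu\,|\,v\,)_\Phi\le\norm{u}_\Phi\norm{v}_\Phi$ and $\Box_{H_n,\Phi}B=0$, together with Assumption~\ref{a-gue250426yyd1}, the explicit formula for $\Pi_{H_n,\Phi}$ from Theorem~\ref{t-gue250429yyd}, and the fact that the coefficients of $\Box_{\rho,(k)}$ approach those of $\Box_{H_n,\Phi}$ faster than $\frac{1}{\log k}$ on the relevant supports. Hence every subsequential limit $B$ of $\{\Pi_{(k)}(x,y)\}_k$ equals $\Pi_{H_n,\Phi}(x,y)$, independently of the subsequence chosen.

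Finally I would conclude with the usual argument: if $\Pi_{(k)}(x,y)$ failed to converge to $\Pi_{H_n,\Phi}(x,y)$ in $\mathcal{D}'(B_r\times B_r,\Lambda^*(\mathbb CT^*H_n)\boxtimes(\Lambda^*(\mathbb CT^*H_n))^*)$ for some $r>0$, then, after pairing with a suitable test section, some subsequence would stay bounded away from $\Pi_{H_n,\Phi}(x,y)$; but the first two steps extract from it a further subsequence converging to $\Pi_{H_n,\Phi}(x,y)$, a contradiction. Therefore the whole sequence converges to $\Pi_{H_n,\Phi}(x,y)$, which is the assertion. The only genuine difficulty is bookkeeping rather than substance: one must check that the compactness extraction can be performed simultaneously for all $r>0$ and that the resulting object is globally defined on $H_n\times H_n$ and still obeys \eqref{e-gue250426ycd}; all of this is handled exactly as in \cite[Theorem 3.9]{HZ23}, so no new ideas are needed.
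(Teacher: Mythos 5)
Your proposal is correct and matches the paper's intended route: the paper deduces Theorem~\ref{t-gue250524yydw} directly from Theorem~\ref{t-gue250524yyd}, implicitly using exactly the argument you spell out — extract from any subsequence a further subsequence converging (via the compactness argument of \cite[Theorem 3.9]{HZ23}, valid on every $B_r$) to a limit $B$ satisfying \eqref{e-gue250426ycd}, note that the proof of Theorem~\ref{t-gue250524yyd} identifies every such subsequential limit with $\Pi_{H_n,\Phi}$, and conclude full convergence by the standard subsequence/contradiction argument. No gaps; your filling-in of the diagonal extraction over $r\to+\infty$ and the observation that the identification argument is subsequence-independent are precisely the "minor changes" the paper leaves to the reader.
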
 

\begin{theorem}\label{t-gue250528yyd}
Recall that we work with the assumption that the Levi form is non-degenerate of constant signature $(n_-,n_+)$. We assume further that $n_-=n_+$ or $\abs{n_--n_+}>1$.
With the notations used above, let $I\subset\mathbb R_+$ be a bounded open interval. Then, there is a constant $C>0$ independent of $k$ such that 
\begin{equation}\label{e-gue250528yyyd}
\norm{(e^{-\frac{t}{k}\Box_{b,L^k}}(I-\Pi_{L^k,\frac{k}{\log k}}))(x,x)}_{\mathcal{C}^0(I\times X)}\leq Ck^{n+1}.
\end{equation}
\end{theorem}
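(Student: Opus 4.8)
The plan is to reduce the estimate first to the Szeg\H{o} projector $\Pi^{(q)}_{L^k}$ onto $\operatorname{Ker}\Box^{(q)}_{b,L^k}$ and to the degrees $q\in\{n_-,n_+\}$, and then to transfer the resulting diagonal bound to the Heisenberg model via the scaling $F_k$ of Subsection~\ref{s-gue250411yydI}. First I would fix $q$ and, using that $e^{-s\Box^{(q)}_{b,L^k}}$ acts as the identity on $\operatorname{Ker}\Box^{(q)}_{b,L^k}$, write
\[
e^{-\frac{t}{k}\Box^{(q)}_{b,L^k}}(I-\Pi^{(q)}_{L^k,\frac{k}{\log k}})=\big(e^{-\frac{t}{k}\Box^{(q)}_{b,L^k}}-\Pi^{(q)}_{L^k}\big)-e^{-\frac{t}{k}\Box^{(q)}_{b,L^k}}\big(\Pi^{(q)}_{L^k,\frac{k}{\log k}}-\Pi^{(q)}_{L^k}\big).
\]
Since $n_-=n_+$ or $\abs{n_--n_+}>1$, $\Box^{(q)}_{b,L^k}$ has closed range, so $E^{(q)}_{0<\lambda\le\frac{k}{\log k}}(X,L^k)$ is finite dimensional and each of the three operators $e^{-\frac{t}{k}\Box^{(q)}_{b,L^k}}-\Pi^{(q)}_{L^k}$, $e^{-\frac{t}{k}\Box^{(q)}_{b,L^k}}(\Pi^{(q)}_{L^k,\frac{k}{\log k}}-\Pi^{(q)}_{L^k})$, $e^{-\frac{t}{k}\Box^{(q)}_{b,L^k}}(I-\Pi^{(q)}_{L^k,\frac{k}{\log k}})$ is non-negative and smoothing (Theorem~\ref{t-gue250410yydI} with $E=L^k$, together with the finite-rank observation); they are ordered $0\le e^{-\frac{t}{k}\Box^{(q)}_{b,L^k}}(I-\Pi^{(q)}_{L^k,\frac{k}{\log k}})\le e^{-\frac{t}{k}\Box^{(q)}_{b,L^k}}-\Pi^{(q)}_{L^k}$, and the diagonal kernel of a non-negative smoothing operator dominates, as a Hermitian form on the fibre, that of a smaller one. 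Hence $\|(e^{-\frac{t}{k}\Box^{(q)}_{b,L^k}}(I-\Pi^{(q)}_{L^k,\frac{k}{\log k}}))(x,x)\|\le\|(e^{-\frac{t}{k}\Box^{(q)}_{b,L^k}}-\Pi^{(q)}_{L^k})(x,x)\|$. For $q\notin\{n_-,n_+\}$ this is $\le 2\|e^{-\frac{t}{k}\Box^{(q)}_{b,L^k}}(x,x)\|$ (using also $0\le\Pi^{(q)}_{L^k}\le e^{-\frac{t}{k}\Box^{(q)}_{b,L^k}}$), which is $O(k^{n+1})$ by the heat-kernel asymptotics of Hsiao--Zhu~\cite{HZ23}; so it only remains to prove, for $q\in\{n_-,n_+\}$, that $\sup_{t\in I,\,x\in X}\|(e^{-\frac{t}{k}\Box^{(q)}_{b,L^k}}-\Pi^{(q)}_{L^k})(x,x)\|\le Ck^{n+1}$ for $k\gg1$.

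Next I would fix $p\in X$, pass to the local coordinates and CR trivializing section $s$ with \eqref{local1}--\eqref{phi}, apply the unitary identifications $s^k\otimes\cdot\mapsto e^{-k\phi/2}\cdot\mapsto e^{-k\rho}\cdot$, and rescale by $F_k$. As $\phi,\rho$ vanish at $0$ and $F_k(0)=0$, the weights cancel on the diagonal and, using $e^{-\frac{t}{k}\Box^{(q)}_{b,L^k}}-\Pi^{(q)}_{L^k}=e^{-\frac{t}{k}\Box^{(q)}_{b,L^k}}(I-\Pi^{(q)}_{L^k})$,
\[
\big(e^{-\frac{t}{k}\Box^{(q)}_{b,L^k}}-\Pi^{(q)}_{L^k}\big)(p,p)=k^{n+1}\big(A^{(q)}_{(k)}(t,0,0)-\widetilde\Pi^{(q)}_{(k)}(0,0)\big),
\]
where $A^{(q)}_{(k)}$ is the rescaled heat kernel of \eqref{e-gue210325yyd} and $\widetilde\Pi^{(q)}_{(k)}$ the analogous rescaling of $\Pi^{(q)}_{L^k}=\Pi^{(q)}_{L^k,0}$; the two summands are separately distributional but, by closed range, their difference is smooth near $(t,0,0)$. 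By Theorem~\ref{t-gue250425yyd}, $A^{(q)}_{(k)}(t,\cdot,\cdot)\to e^{-t\Box^{(q)}_{H_n,\Phi}}(\cdot,\cdot)$ in $\mathcal D'$, and the $L^k$-Szeg\H{o} kernel asymptotics of~\cite{HM17} (valid since $\Box_{b,L^k}$ has closed range) give $\widetilde\Pi^{(q)}_{(k)}(\cdot,\cdot)\to\Pi^{(q)}_{H_n,\Phi}(\cdot,\cdot)$ in $\mathcal D'$; hence the difference kernel converges in $\mathcal D'$ to the kernel of $e^{-t\Box^{(q)}_{H_n,\Phi}}(I-\Pi^{(q)}_{H_n,\Phi})=e^{-t\Box^{(q)}_{H_n,\Phi}}-\Pi^{(q)}_{H_n,\Phi}$, whose diagonal, by the explicit formulas of Theorems~\ref{t-gue210521yyda} and~\ref{t-gue250429yyd}, equals
\[
\frac{1}{(2\pi)^{n+1}}\int_{\mathbb R}\Big(\frac{\det(\dot{\mathcal R}^\phi_p-2\eta\dot{\mathcal L}_p)}{\det\big(1-e^{-t(\dot{\mathcal R}^\phi_p-2\eta\dot{\mathcal L}_p)}\big)}\operatorname{Tr}^{(q)}e^{-t\omega^\eta_p}-\abs{\det(\dot{\mathcal R}^\phi_p-2\eta\dot{\mathcal L}_p)}\,1_{\mathbb R_p(q)}(\eta)\Big)\,d\eta,
\]
which is finite and bounded for $t\in I$, the $|\eta|\to\infty$ tails of the two terms cancelling to leading order.

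The remaining, decisive, point is to upgrade the $\mathcal D'$-convergence in the previous step to convergence in $\mathcal C^0_{\rm loc}$, so as to read off the diagonal value at $(0,0)$. Here one exploits the cancellation underlying Theorem~\ref{T:012820251552}: in local coordinates the kernel of $e^{-t\Box^{(q)}_b}(I-\Pi^{(q)})$ is a sum of oscillatory integrals $\int_0^\infty e^{i\Phi_\pm}(\cdot)\,ds$ with amplitudes in $\widehat{S}^n_\varepsilon$, $\varepsilon>0$, plus a smooth remainder, that is, the singular Fourier--integral--operator parts of $e^{-\frac{t}{k}\Box^{(q)}_{b,L^k}}$ and of $\Pi^{(q)}_{L^k}$ cancel, the surviving amplitudes being exponentially decaying in the frequency $s$. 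Tracking the large parameter $k$ through the weights $k\phi$, $k\rho$ and through these amplitudes under $F_k$, and using the a priori estimates \eqref{e-gue250424yyd}--\eqref{e-gue250425yyd} for the $t$-derivatives of $A^{(q)}_{(k)}(t)$ together with the invertibility of $\Box_{\rho,(k)}$ off its kernel, one shows that the rescaled difference kernels form, on each fixed compact subset of $\mathbb R_+\times H_n\times H_n$, a bounded family in every $\mathcal C^m$-norm; combined with the $\mathcal D'$-convergence this forces $\mathcal C^0_{\rm loc}$-convergence, hence $|A^{(q)}_{(k)}(t,0,0)-\widetilde\Pi^{(q)}_{(k)}(0,0)|\le C$ uniformly in $t\in I$ and $k\gg1$. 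Since the model data $(\dot{\mathcal R}^\phi_p,\dot{\mathcal L}_p,\beta,\ldots)$ depend continuously on $p$ and $X$ is compact, $C$ can be chosen independent of $p$; with the first two steps this gives \eqref{e-gue250528yyyd}.

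The main obstacle is precisely this last upgrade. Because $\Box^{(q)}_b$ is not subelliptic for $q\in\{n_-,n_+\}$ and no uniform spectral gap is available, $\Box_{b,L^k}^{-1}$ on $(\operatorname{Ker}\Box_{b,L^k})^\perp$ need not be bounded uniformly in $k$, so the $k$-uniform smoothing of $e^{-\frac{t}{k}\Box^{(q)}_{b,L^k}}(I-\Pi^{(q)}_{L^k})$ cannot be obtained by soft arguments; it has to be extracted from the cancellation of the singular FIO contributions and the exponential decay of the surviving amplitudes, carrying the parameter $k$ through the oscillatory-integral bookkeeping exactly as in the $L^k$-Szeg\H{o} analysis of~\cite{HM17,HZ23}. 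Verifying the convergence and $t$-uniform boundedness of the Heisenberg-model $\eta$-integral in the second step is a secondary, more routine computation.
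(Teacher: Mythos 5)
Your reductions in the first step are sound (the spectral-theorem inequality $0\le e^{-\frac{t}{k}\Box^{(q)}_{b,L^k}}(I-\Pi^{(q)}_{L^k,\frac{k}{\log k}})\le e^{-\frac{t}{k}\Box^{(q)}_{b,L^k}}-\Pi^{(q)}_{L^k}$ and the diagonal monotonicity of non-negative kernels are fine, as is the treatment of $q\notin\{n_-,n_+\}$ via \cite{HZ23}), but the decisive step for $q\in\{n_-,n_+\}$ is asserted rather than proved, and it is exactly the point where the argument cannot be completed by the tools you cite. To read off a bound at $(0,0)$ you need a $k$-\emph{uniform} $\mathcal{C}^0_{\rm loc}$ bound on the rescaled difference kernel $A^{(q)}_{(k)}(t,\cdot,\cdot)-\widetilde\Pi^{(q)}_{(k)}(\cdot,\cdot)$; what you offer is ``tracking $k$ through the FIO cancellation of Theorem~\ref{T:012820251552}''. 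Theorem~\ref{T:012820251552} is a fixed-operator statement (its closed-range constant and the amplitudes $a_\pm(t,x,y,s)-a_\pm(x,y,s)$ are not controlled in $k$), \cite{HM17} likewise concerns a fixed Kohn Laplacian and gives no semiclassical localization of $\Pi^{(q)}_{L^k}$, and \cite{HZ23} handles only the $Y(q)$ case or compositions that remove the non-decaying part. In the paper itself, convergence of a rescaled projector to $\Pi^{(q)}_{H_n,\Phi}$ (Theorems~\ref{t-gue250524yyd}, \ref{t-gue250524yydw}) is proved only for the cutoff projector $\Pi_{L^k,\frac{k}{\log k}}$ and only under Assumption~\ref{a-gue250426yyd}, which Theorem~\ref{t-gue250528yyd} does not assume; without a uniform gap there is also no uniform invertibility of $\Box_{\rho,(k)}$ off its kernel, a fact you rely on in the third step and then concede in your closing paragraph. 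In addition, by passing from $I-\Pi^{(q)}_{L^k,\frac{k}{\log k}}$ to $I-\Pi^{(q)}_{L^k}$ you pick up the low-lying spectral piece $e^{-\frac{t}{k}\Box^{(q)}_{b,L^k}}(\Pi^{(q)}_{L^k,\frac{k}{\log k}}-\Pi^{(q)}_{L^k})$, whose diagonal is not captured by your Heisenberg-model limit and is never estimated (it could be handled by the pointwise spectral-function bounds of \cite{HM12}, but that is not in your proposal). So the proof, as written, has a genuine gap precisely at its central claim.

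The paper avoids this difficulty by a much softer mechanism that never compares two singular kernels. For $q\in\{n_-,n_+\}$ it bounds the diagonal kernels of $\ddbar_b e^{-\frac{t}{k}\Box^{(q)}_{b,L^k}}$, $\ddbar^*_b e^{-\frac{t}{k}\Box^{(q)}_{b,L^k}}$ by $C_1k^{n+\frac32}$ and of $\Box^{(q)}_{b,L^k}e^{-\frac{t}{k}\Box^{(q)}_{b,L^k}}$ by $C_1k^{n+2}$ (see \eqref{e-gue250529yyyd}); these compositions are accessible to the \cite{HZ23} scaling argument because applying $\ddbar_b$, $\ddbar^*_b$ or $\Box_b$ annihilates the non-decaying (Szeg\H{o}-type) part of the model kernel, and this is where the hypothesis $n_-=n_+$ or $\abs{n_--n_+}>1$ is used. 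It then converts the bound on $\Box^{(q)}_{b,L^k}e^{-\frac{t}{k}\Box^{(q)}_{b,L^k}}$ into \eqref{e-gue250528yyyd} by spectral calculus/integration in $t$ against the cutoff $I-\Pi^{(q)}_{L^k,\frac{k}{\log k}}$, which produces the extra factor $\frac{1}{k}$ in \eqref{e-gue250529yyyda} and hence the bound $Ck^{n+1}$. If you want to salvage your route, the honest way is to prove a $k$-uniform version of Theorem~\ref{T:012820251552} for $\Box^{(q)}_{b,L^k}$, which is a substantial semiclassical analysis in the $Y(q)$-failing case and is not a corollary of the results you invoke; otherwise the composition-plus-integration trick above is the argument to use.
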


\begin{proof}
Let $\varepsilon>0$ be a small constant so that $I\subset I'=[\varepsilon,+\infty)$. Let $q\in\set{0,1,\ldots,n}$. Suppose $q\notin\set{n_-,n_+}$. Since $Y(q)$ holds, we can repeat the proofs of~\cite[Theorems 1.1, 1.3]{HZ23} and conclude that 
there is a constant $C>0$ independent of $k$ such that 
\begin{equation}\label{e-gue250528yyydI}
\norm{(e^{-\frac{t}{k}\Box^{(q)}_{b,L^k}}(I-\Pi^{(q)}_{L^k,\frac{k}{\log k}}))(x,x)}_{\mathcal{C}^0(I'\times X)}\leq Ck^{n+1}.
\end{equation}

Suppose $q=n_-$ or $q=n_+$. Since $n_-=n_+$ or $\abs{n_--n_+}>1$, we can repeat the proof of~\cite[Theorem 1.3]{HZ23} and conclude that 
there is a constant $C_1>0$ independent of $k$ such that 
\begin{equation}\label{e-gue250528yyydII}
\begin{split}
&\norm{(\ddbar_be^{-\frac{t}{k}\Box^{(q)}_{b,L^k}})(x,x)}_{\mathcal{C}^0(I'\times X)}\leq C_1k^{n+\frac{3}{2}},\\
&\norm{(\ddbar^*_be^{-\frac{t}{k}\Box^{(q)}_{b,L^k}})(x,x)}_{\mathcal{C}^0(I'\times X)}\leq C_1k^{n+\frac{3}{2}},
\end{split}
\end{equation}
and 
\begin{equation}\label{e-gue250529yyyd}
\begin{split}
&\norm{(\Box^{(q)}_{b,L^k}e^{-\frac{t}{k}\Box^{(q)}_{b,L^k}})(x,x)}_{\mathcal{C}^0(I'\times X)}\leq C_1k^{n+2}. 
\end{split}
\end{equation}
From \eqref{e-gue250529yyyd} and by integrating over $t$, we conclude that 
\begin{equation}\label{e-gue250529yyyda}
\begin{split}
&\norm{(e^{-\frac{t}{k}\Box^{(q)}_{b,L^k}}(I-\Pi^{(q)}_{L^k,\frac{k}{\log k}}))(x,x)}_{\mathcal{C}^0(I\times X)}\leq\frac{C_2}{k}\norm{(\Box_{b,L^k}e^{-\frac{t}{k}\Box^{(q)}_{b,L^k}})(x,x)}_{\mathcal{C}^0(I\times X)},
\end{split}
\end{equation}
where $C_2>0$ is a constant independent of $k$.

From \eqref{e-gue250529yyyd} and 
\eqref{e-gue250529yyyda}, the theorem follows.
\end{proof}


    Now we set \begin{equation}\label{e-gue250529ycdu}
	\begin{split}
	\mathbb R_x(q):=\{\eta&\in\mathbb R;\, \dot{\mathcal R}^\phi_x-2\eta\dot{\mathcal L}_x\,\text{has exactly $q$ negative eigenvalues} 
	\\&\text{and $n-q$ positive eigenvalues}\},
	\end{split}
	\end{equation}
    and let $1_{\mathbb R_x(q)}(\eta)=1$ if $\eta\in\mathbb R_x(q)$, $1_{\mathbb R_x(q)}(\eta)=0$ if $\eta\notin\mathbb R_x(q)$.
    
With the notations used in the discussion after \eqref{e-gue210303yydII} and Theorem~\ref{t-gue250429yyd}, we can check that 
    \begin{equation}\label{e-gue250529ycde}
\det\left(\frac{\pr^2\Phi_\eta}{\pr z_j\pr\ol z_\ell}\right)^n_{j,\ell=1}
={\rm det}\,(\dot{\mathcal{R}}^\phi_p -2\eta \dot{\mathcal{L}}_p)
    \end{equation}
    and 
\begin{equation}\label{e-gue250529ycdf}
I_q=\mathbb R_p(q),
    \end{equation}
for all $q=0,1,\ldots,n$, where $I_q$ is given by \eqref{e-gue250429yyda}. 
    

    Let $q\in\set{0,1,\ldots,n}$ and let $x\in X$. Let $A_x: \Lambda^{\bullet}_x(\mathbb CT^*X)\To\Lambda^{\bullet}_x(\mathbb CT^*X)$ be a linear transformation. For $q\in\set{0,1,\ldots,n}$, let
\begin{equation}\label{e-gue210529yydI}
\operatorname{Tr}^{(q)}A_x:=\sum^d_{j=1}\langle\,A_xv_j\,|\, v_j\,\rangle,
\end{equation}
where $\set{v_j}^d_{j=1}$ is an orthonormal basis for $T^{*0,q}_xX$.    
    
    From \cite[Proposition 4.2]{HM12}, it is easy to see that for every $q=0,1,\ldots,n$, every $x \in X$, the integral
	\[
	\int_{\eta\in\mathbb R}\Bigr(\frac{\det(\dot{\mathcal{R}}_x^\phi - 2 \eta \dot{\mathcal{L}}_x )}{\det \big( 1 - e^{-t ( \dot{\mathcal{R}}_x^\phi - 2 \eta \dot{\mathcal{L}}_x )}  \big)}{\rm Tr}^{(q)}\,e^{-t\omega_x^\eta}-\abs{\det(\dot{\mathcal{R}}_x^\phi - 2 \eta \dot{\mathcal{L}}_x)}1_{\mathbb R_x(q)}(\eta)\Bigr) d\eta 
	\]
    converges and is independent of the choice of local weight $\phi$ and the function
    \[x\in X\mapsto
	\int_{\eta\in\mathbb R}\Bigr(\frac{\det(\dot{\mathcal{R}}_x^\phi - 2 \eta \dot{\mathcal{L}}_x )}{\det \big( 1 - e^{-t ( \dot{\mathcal{R}}_x^\phi - 2 \eta \dot{\mathcal{L}}_x )}  \big)}{\rm Tr}^{(q)}\,e^{-t\omega_x^\eta}-\abs{\det(\dot{\mathcal{R}}_x^\phi - 2 \eta \dot{\mathcal{L}}_x)}1_{\mathbb R_x(q)}(\eta)\Bigr) d\eta 
	\]
is a globally defined continuous function, where $\omega^\eta_x$ is given by \eqref{e-gue250920ycda}. 

We come back to our situation. 
From Theorem~\ref{t-gue250425yyd}, Theorem~\ref{t-gue210521yyda}, Theorem~\ref{t-gue250429yyd}, Theorem~\ref{t-gue250524yyd} and Theorem~\ref{t-gue250528yyd}, we can repeat the proof of~\cite[Theorem 1.1]{HZ23} and get 

\begin{theorem}\label{t-gue250529yyd}
We assume that the Levi form is non-degenerate of constant signature $(n_-,n_+)$. We assume further that $n_-=n_+$ or $\abs{n_--n_+}>1$. With the notations used above, let $I\subset\mathbb R_+$ be a bounded open interval. Let $q\in\set{0,1,\ldots,n}$. Then, 
\begin{equation}\label{e-gue250529yydb}
\begin{split}
&\lim_{k\To+\infty}k^{-(n+1)}\operatorname{Tr}^{(q)}(e^{-\frac{t}{k}\Box^{(q)}_{b,L^k}}(I-\Pi^{(q)}_{L^k,\frac{k}{\log k}}))(x,x))=\\
&\frac{1}{(2\pi)^{n+1}}\int_{\mathbb R}\Bigr(\dfrac{\det(\dot{\mathcal R}^\phi_x-2\eta\dot{\mathcal L}_x)}{\det\big(1-e^{-t(\dot{\mathcal R}^\phi_x-2\eta\dot{\mathcal L}_x)}\big)}{\rm Tr\,}^{(q)}e^{-t\omega_x^\eta}-\abs{\det(\dot{\mathcal{R}}_x^\phi - 2 \eta \dot{\mathcal{L}}_x)}1_{\mathbb R_x(q)}(\eta)\Bigr)d\eta
\end{split}
\end{equation}
in $\mathcal{C}^0(I\times X)$ topology.
\end{theorem}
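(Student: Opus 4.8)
The plan is to obtain the limit by combining the local convergence results of Theorems~\ref{t-gue250425yyd} and~\ref{t-gue250524yydw} with the uniform bound of Theorem~\ref{t-gue250528yyd}, and then to identify the limiting distribution by means of the explicit formulas of Theorems~\ref{t-gue210521yyda} and~\ref{t-gue250429yyd}. First I would fix a point $p\in X$ and local coordinates $x=(x_1,\ldots,x_{2n+1})$ on $D\ni p$ together with a local CR trivializing section $s$ of $L$ for which \eqref{local1}, \eqref{local2}, \eqref{U} and \eqref{phi} hold, and pass to the scaled kernels: by \eqref{e-gue210303yydII}, \eqref{e-gue210305yyd} and \eqref{e-gue210325yyd} one has, on the diagonal, $k^{-(n+1)}(e^{-\frac{t}{k}\Box^{(q)}_{b,L^k}})(x,x)=A^{(q)}_{(k)}(t,0,0)$ in these coordinates, where $A^{(q)}_{(k)}$ denotes the $(0,q)$-component of $A_{(k)}$; similarly the Szeg\H{o}-type projector $\Pi^{(q)}_{L^k,\frac{k}{\log k}}$ on the diagonal is $k^{-(n+1)}$ times the scaled kernel $\Pi^{(q)}_{(k)}(0,0)$. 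By Theorem~\ref{t-gue250425yyd} we have $A^{(q)}_{(k)}(t,x,y)\to e^{-t\Box^{(q)}_{H_n,\Phi}}(x,y)$ and by Theorem~\ref{t-gue250524yydw} we have $\Pi^{(q)}_{(k)}(x,y)\to\Pi^{(q)}_{H_n,\Phi}(x,y)$, both in $\mathcal D'(I\times B_r\times B_r)$; hence the difference converges in $\mathcal D'$ to $\bigl(e^{-t\Box^{(q)}_{H_n,\Phi}}(I-\Pi^{(q)}_{H_n,\Phi})\bigr)(x,y)$.

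The second step is to upgrade this distributional convergence to pointwise-on-the-diagonal convergence. Here the uniform estimate \eqref{e-gue250528yyyd} of Theorem~\ref{t-gue250528yyd}, namely $\|(e^{-\frac{t}{k}\Box^{(q)}_{b,L^k}}(I-\Pi^{(q)}_{L^k,\frac{k}{\log k}}))(x,x)\|_{\mathcal C^0(I\times X)}\le Ck^{n+1}$ uniformly in $k$, is crucial. Together with analogous uniform control of $t$- and $x$-derivatives — obtained exactly as in \eqref{e-gue250528yyydII}–\eqref{e-gue250529yyyd} by differentiating under the heat semigroup and using that $n_-=n_+$ or $|n_--n_+|>1$ forces $\Box^{(q)}_{b,L^k}$ to have closed range — one gets that the scaled diagonal kernels $A^{(q)}_{(k)}(t,x,x)-\Pi^{(q)}_{(k)}(x,x)$ are bounded in $\mathcal C^\infty$ on compact subsets of $I\times B_r$, uniformly in $k$. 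By Arzel\`a–Ascoli this family is precompact in $\mathcal C^0_{\mathrm{loc}}$, and the $\mathcal D'$-limit already identified forces the whole sequence to converge in $\mathcal C^0$ on the diagonal to $\bigl(e^{-t\Box^{(q)}_{H_n,\Phi}}(I-\Pi^{(q)}_{H_n,\Phi})\bigr)(x,x)$. Taking the pointwise trace $\operatorname{Tr}^{(q)}$ — a continuous linear operation on the fibre — and restoring the normalization gives
\[
\lim_{k\to+\infty}k^{-(n+1)}\operatorname{Tr}^{(q)}\bigl(e^{-\frac{t}{k}\Box^{(q)}_{b,L^k}}(I-\Pi^{(q)}_{L^k,\frac{k}{\log k}})\bigr)(x,x)
=\operatorname{Tr}^{(q)}\bigl(e^{-t\Box^{(q)}_{H_n,\Phi}}(I-\Pi^{(q)}_{H_n,\Phi})\bigr)(x,x),
\]
locally uniformly in $(t,x)\in I\times X$; a partition-of-unity argument over $X$ promotes this to $\mathcal C^0(I\times X)$.

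The third step is the explicit computation of the right-hand side. Evaluating \eqref{e-gue210521yydb} and \eqref{e-gue210521yyd} on the diagonal $x=y$ kills the Gaussian factors and leaves
\[
e^{-t\Box^{(q)}_{H_n,\Phi}}(x,x)=\frac{1}{(2\pi)^{n+1}}\int_{\mathbb R}\frac{\det\dot R^\eta}{\det(1-e^{-t\dot R^\eta})}\operatorname{Tr}^{(q)}\bigl(e^{-t\omega^\eta_{\mathbb C^n}}\bigr)\,d\eta,
\]
while \eqref{e-gue250429ycdIII} on the diagonal, using that $\operatorname{Tr}^{(q)}\tau^{(q')}_\eta=\delta_{q,q'}$, gives
\[
\operatorname{Tr}^{(q)}\Pi^{(q)}_{H_n,\Phi}(x,x)=(2\pi)^{-n-1}\int_{\eta\in I_q}\Bigl|\det\Bigl(\tfrac{\partial^2\Phi_\eta}{\partial z_j\partial\bar z_\ell}\Bigr)_{j,\ell=1}^n\Bigr|\,d\eta.
\]
Now the dictionary \eqref{e-gue250529ycde}–\eqref{e-gue250529ycdf} at $p$ converts $\dot R^\eta$, $\omega^\eta_{\mathbb C^n}$, $\det(\partial^2\Phi_\eta/\partial z_j\partial\bar z_\ell)$ and $I_q$ into $\dot{\mathcal R}^\phi_x-2\eta\dot{\mathcal L}_x$, $\omega^\eta_x$, $\det(\dot{\mathcal R}^\phi_x-2\eta\dot{\mathcal L}_x)$ and $\mathbb R_x(q)$ respectively, yielding exactly the integrand of \eqref{e-gue250529yydb}; the convergence of this integral and its independence of the choice of $\phi$ are precisely what is recorded via \cite[Proposition 4.2]{HM12} just before the statement. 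I expect the main obstacle to be not the algebra of step three but the justification in step two that distributional convergence plus a uniform $\mathcal C^0$ (indeed $\mathcal C^\infty$-local) bound yields genuine $\mathcal C^0(I\times X)$ convergence of the diagonal restrictions: this requires carefully extracting uniform derivative bounds for $A^{(q)}_{(k)}-\Pi^{(q)}_{(k)}$ from the spectral estimates \eqref{e-gue250528yyydII}–\eqref{e-gue250529yyyd}, controlling the error terms coming from $\chi_k,\tau_k$ and from the $O(|x|)$-perturbations of the metric in \eqref{local2} and of the weight in \eqref{phi}, and checking that the limit is taken over the full sequence rather than a subsequence — which is where the closed-range hypothesis $n_-=n_+$ or $|n_--n_+|>1$, ensuring uniqueness of $\Pi^{(q)}_{H_n,\Phi}$ as in Theorem~\ref{t-gue250524yyd}, does its real work. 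This is the step I would write out in full detail, following the proof of \cite[Theorem 1.3]{HZ23} closely and adapting it to the present non-degenerate setting.
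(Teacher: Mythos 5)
Your proposal follows essentially the same route as the paper: the paper's proof is precisely the combination of the scaled-kernel convergence (Theorem~\ref{t-gue250425yyd}), the identification of the limit projector (Theorems~\ref{t-gue250524yyd}, \ref{t-gue250524yydw}), the uniform diagonal bound (Theorem~\ref{t-gue250528yyd}), and the explicit model formulas (Theorems~\ref{t-gue210521yyda}, \ref{t-gue250429yyd}) together with the dictionary \eqref{e-gue250529ycde}--\eqref{e-gue250529ycdf}, all fed into a repetition of the proof of \cite[Theorem 1.3]{HZ23}, which is exactly the step you single out and propose to write in detail. Only note that the model heat kernel diagonal formula you quote holds at the coordinate center $z=0$ (which is all that is needed, since each point of $X$ is taken as the center of its own chart), not at an arbitrary diagonal point of $H_n$.
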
 

We need the following

\begin{theorem}\label{t-gue250603yyd}
We assume that the Levi form is non-degenerate of constant signature $(n_-,n_+)$. We assume further that $n_-=n_+$ or $\abs{n_--n_+}>1$. Let $q\in\set{0,1,\ldots,n}$. We can find $k$-dependent smooth functions $b^{(q)}_{j,k}(x)\in\mathcal{C}^\infty(X)$, $j=0,1,\ldots$, with
\begin{equation}\label{e-gue250529ycdm}
k^{-n-1}b^{(q)}_{j,k}(x)=b^{(q)}_j(x)+k^{-\frac{1}{2}}r^{(q)}_{j,k}(x),\ \ j=0,1,\ldots,
\end{equation}
where $b^{(q)}_j(x)\in\mathcal{C}^\infty(X)$, $j=0,1,\ldots$, $\norm{r^{(q)}_{j,k}}_{\mathcal{C}^0(X)}\leq C_j$, $C_j>0$ is a constant independent of $k$, $j=0,1,\ldots$, such that for every $N\in\mathbb N$, we have 
\begin{equation}\label{e-gue250529ycdn}
\operatorname{Tr}^{(q)}(e^{-\frac{t}{k}\Box^{(q)}_{b,L^k}}(I-\Pi^{(q)}_{L^k})(x,x))=\sum^N_{j=0}t^{-n-1+j}b^{(q)}_{j,k}(x)+\delta^{(q)}_{N,k}(t,x),
\end{equation}
where $\delta^{(q)}_{N,k}(t,x)\in\mathcal{C}^\infty(\ol{\mathbb R}_+\times X)$ is a $k$-dependent smooth function and for every $m\in\mathbb N_0$, there is a constant $C_{m}>0$ independent of $k$ such that 
\begin{equation}\label{e-gue250529yydx}
\sup\,\set{\abs{\pr^m_t\delta^{(q)}_{N,k}(t,x)};\, x\in X}\leq t^{-n+N-m}k^{n+1}C_{m},
\end{equation}
for all $t\in(0,1]$ and all $k\gg1$.
\end{theorem}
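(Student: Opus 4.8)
The plan is to combine the small-time asymptotic expansion of the CR heat kernel established in Section~\ref{s-gue250401yyd} (Corollary~\ref{C:011120251206}, Theorem~\ref{T:012820251552} and their $L^k$-twisted analogues Corollary~\ref{c-gue250410yyd}, Theorem~\ref{t-gue250410yydI}) with the $k$-dependent scaling estimates coming from Theorem~\ref{t-gue250528yyd} and Theorem~\ref{t-gue250529yyd}, exactly along the lines of the proof of~\cite[Theorem~1.3]{HZ23}. First I would fix $q\in\set{0,1,\ldots,n}$ and apply the twisted versions of Corollary~\ref{C:011120251206} and Theorem~\ref{T:012820251552} to $\Box^{(q)}_{b,L^k}$ (which has $L^2$ closed range under the hypothesis $n_-=n_+$ or $\abs{n_--n_+}>1$, by Remark~\ref{r-gue2507181}): this yields $k$-dependent coefficients $\tilde b^{(q)}_{j,k}(x)\in\mathcal{C}^\infty(X)$ and a remainder such that
\[
\operatorname{Tr}^{(q)}\big(e^{-s\Box^{(q)}_{b,L^k}}(I-\Pi^{(q)}_{L^k})\big)(x,x)=\sum_{j=0}^N s^{-(n+1)+j}\tilde b^{(q)}_{j,k}(x)+\tilde\delta^{(q)}_{N,k}(s,x)
\]
with $\pr^m_s\tilde\delta^{(q)}_{N,k}$ controlled by $s^{-n+N-m}$ times a constant depending on Sobolev norms of the heat parametrix, which a priori may grow in $k$. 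Substituting $s=t/k$ turns the $s^{-(n+1)+j}$ into $k^{n+1-j}t^{-(n+1)+j}$, so setting $b^{(q)}_{j,k}(x):=k^{-j}\tilde b^{(q)}_{j,k}(x)$ and $\delta^{(q)}_{N,k}(t,x):=\tilde\delta^{(q)}_{N,k}(t/k,x)$ gives the shape \eqref{e-gue250529ycdn}, and the chain rule in $t$ produces the claimed power $t^{-n+N-m}$ in \eqref{e-gue250529yydx}.

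The substantive point is to upgrade the $k$-dependence of the coefficients and of the remainder to the uniform statements \eqref{e-gue250529ycdm} and \eqref{e-gue250529yydx}. For this I would use the scaling/rescaling machinery of Section~\ref{s-gue250411yydI}: express $\Box^{(q)}_{b,L^k}$ in the local weighted picture via \eqref{e-gue210516yyd}, apply $F_k^*$ so that $\Box_{\rho,(k)}=\tfrac1k F_k^*\Box_{\rho,k\phi_0}F_k^{*-1}$ converges, with all coefficients and their derivatives, to those of the model operator $\Box_{H_n,\Phi}$ at rate $O(k^{-1/2})$ on $B_{\log k}$. Feeding this into the parametrix construction of Theorem~\ref{T:0111202511001} for the twisted operator, each coefficient $a^j_{\pm,k}(x,y,s)$ of the Fourier-integral representation of $e^{-t\Box^{(q)}_{b,L^k}}(I-\Pi^{(q)}_{L^k})$ admits an expansion $a^j_{\pm,k}=a^j_{\pm}+k^{-1/2}(\text{bounded})$; restricting to the diagonal, carrying out the $s$-integration as in Theorem~\ref{T:011120251117} and reading off the coefficient of $t^{-(n+1)+j}$ yields precisely \eqref{e-gue250529ycdm} with $b^{(q)}_j(x)$ the limiting (model) coefficient and $\norm{r^{(q)}_{j,k}}_{\mathcal{C}^0}\le C_j$. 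Here one must also invoke the uniform bound $\norm{(e^{-\frac{t}{k}\Box_{b,L^k}}(I-\Pi_{L^k,k/\log k}))(x,x)}_{\mathcal{C}^0(I\times X)}\le Ck^{n+1}$ of Theorem~\ref{t-gue250528yyd}, together with the fact that $\Pi^{(q)}_{L^k,k/\log k}-\Pi^{(q)}_{L^k}$ has $L^2$-operator norm $O(\log k/k)$ under Assumption~\ref{a-gue250426yyd1} (or simply vanishes in degrees $q\ne n_\pm$ for $k\gg1$ since those cohomologies are finite), to replace $\Pi^{(q)}_{L^k,k/\log k}$ by $\Pi^{(q)}_{L^k}$ in the remainder without spoiling the estimates.

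For the remainder estimate \eqref{e-gue250529yydx} I would track constants explicitly through the Duhamel iteration of Theorem~\ref{T:011120251108}: there $\mathcal{E}_N^\ast(t)=\int_0^t -s^N\delta_N^\ast(s)e^{-(t-s)\Box_b^{(q)}}ds$, and in the twisted/rescaled picture $\delta_N$ is built from finitely many applications of $\Box_{\rho,(k)}$ to the parametrix; since these operators and their Sobolev norms are uniformly bounded in $k$ after rescaling, and since $e^{-(t-s)\Box^{(q)}_{b,L^k}}$ is a contraction on $L^2$, the constant $C_{m,N}$ in the $\mathcal{C}^m$-bound of the heat-kernel remainder is uniform in $k$ up to the overall volume factor $k^{n+1}$ produced by the change of variables $dv_X$ on $B_{\log k}$. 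Differentiating $m$ times in $t$ after the substitution $s=t/k$ inserts $k^m$, which is absorbed by the $s$-derivatives giving $s^{-n+N-m}=(t/k)^{-n+N-m}$; collecting powers of $k$ leaves exactly $t^{-n+N-m}k^{n+1}C_m$ with $C_m$ independent of $k$ for $t\in(0,1]$, $k\gg1$. The main obstacle I anticipate is precisely this bookkeeping: showing that the implied constants in the off-diagonal smoothing estimates (Theorem~\ref{T:0111202511003}) and in the Melin–Sj\"ostrand stationary phase reduction (Theorem~\ref{T:0111202511001}) do not secretly depend on $k$ once the $L^k$-twist is inserted — this requires that the phases $\Phi_\pm$ and the positivity estimate \eqref{E:011020251802zz} for the twisted operator be $k$-uniform, which follows because after the $F_k^*$-rescaling the geometry converges to the fixed Heisenberg model, but verifying it uniformly over $X$ (not just near a point) is the delicate step. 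Modulo this uniformity, the theorem follows by the same argument as~\cite[Theorem~1.3]{HZ23}.
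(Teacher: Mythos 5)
Your core strategy is the one the paper actually uses: rescale by $F_k$ so that $\Box_{\rho,(k)}$ has coefficients uniformly bounded in $k$ in the $\mathcal{C}^\infty$ topology, build the Menikoff--Sj\"ostrand heat parametrix for the rescaled operator so that its phase and symbols are $k$-uniform and its diagonal coefficients have the form $a_j+k^{-1/2}\gamma_{j,k}$ with $\abs{\gamma_{j,k}}$ bounded independently of $k$ and of the base point, run the Duhamel comparison of Theorem~\ref{T:011120251108} in the rescaled picture with $k$-uniform constants, undo the scaling through $e^{-\frac{t}{k}\Box_{b,L^k}}(x,x)=k^{n+1}A_{(k)}(t,x,x)$, and finally identify the pointwise coefficients with the globally defined smooth coefficients coming from the fixed-$k$ expansion (Theorem~\ref{t-gue250410ycdbz}). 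Two remarks on your bookkeeping: it is cleaner to expand in the rescaled time variable $t$ directly rather than substituting $s=t/k$ into the fixed-$k$ expansion, because the fixed-$k$ coefficients $\tilde b^{(q)}_{j,k}$ grow like $k^{j}$ and all the $b^{(q)}_{j,k}$ in \eqref{e-gue250529ycdn} are of size $k^{n+1}$; in particular your definition $b^{(q)}_{j,k}:=k^{-j}\tilde b^{(q)}_{j,k}$ is off by the factor $k^{n+1}$ (it is $k^{-n-1}b^{(q)}_{j,k}$, not $b^{(q)}_{j,k}$), and the ``inserts $k^m$, absorbed by the $s$-derivatives'' step only gives the stated power $k^{n+1}$ if the uniform estimate is formulated for $A_{(k)}(t,\cdot,\cdot)$, exactly as in \eqref{e-gue250529ycds}--\eqref{e-gue250530yydz}.

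The genuine defect is the projector step. The claim that $\Pi^{(q)}_{L^k,\frac{k}{\log k}}-\Pi^{(q)}_{L^k}$ has $L^2$ operator norm $O(\log k/k)$ is false: this difference is itself an orthogonal projection, namely onto $E^{(q)}_{0<\lambda\leq\frac{k}{\log k}}(X,L^k)$, so its norm is $0$ or $1$; Assumption~\ref{a-gue250426yyd1} controls only its rank (and is not a hypothesis of Theorem~\ref{t-gue250603yyd} at all), finiteness of $H^q_b(X,L^k)$ for $q\notin\set{n_-,n_+}$ does not force this eigenspace to vanish, and even a small operator norm would not control the on-diagonal kernel in $\mathcal{C}^0$. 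Fortunately this whole detour, including the appeal to Theorem~\ref{t-gue250528yyd}, is unnecessary: since $\Pi^{(q)}_{L^k}$ is $t$-independent, the paper works with $\partial_t$ of the (rescaled) heat kernel, for which the projector drops out and the parametrix/Duhamel estimates are $k$-uniform, and then identifies the resulting coefficients, for each fixed $k$, with those of the expansion of $(e^{-t\Box^{(q)}_{b,L^k}}(I-\Pi^{(q)}_{L^k}))(x,x)$ supplied by Theorem~\ref{t-gue250410ycdbz}, the closed-range hypothesis being guaranteed by $n_-=n_+$ or $\abs{n_--n_+}>1$. Replace your projector-comparison step by this $t$-derivative argument and your proof coincides with the paper's.
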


\begin{proof}
Fix $p\in X$. 
We take local coordinates $x=(x_1,\ldots,x_{2n+1})=(z,\theta)=(z_1,\cdots,z_n,\theta)$ on an open set $D$ of $p$, such that \eqref{local1}, \eqref{local2}, \eqref{U} and \eqref{phi} hold. We will use the same notations as in the beginning of Section~\ref{s-gue250411yydI}. Fix $r>0$ and $N\in\mathbb N$, $N\gg n+1$. Let $\hat A_{(k),N}(t,x,y)$ be the proper supported complex Fourier integral operator on $B_r$ as in \eqref{e-gue250529ycdw} with respect to the scaled Kohn Laplacian $\Box_{\rho,(k)}$ (see \eqref{e-gue210614yydI}, for the definition of $\Box_{\rho,(k)}$). Since the coefficients of $\Box_{\rho,(k)}$ are uniform bounded in $k$ in $\mathcal{C}^\infty$ topology, from the construction of $\hat A_{(k),N}(t,x,y)$, the symbols and the phase of $\hat A_{(k),N}(t,x,y)$ are uniform bounded in $k$ in $\mathcal{C}^\infty$ topology and we can find $k$-dependent smooth functions $a_{j,k}(x)\in\mathcal{C}^\infty(B_r)$, $j=0,1,\ldots$, with
\begin{equation}\label{e-gue250529ycdmz}
a_{j,k}(0)=a_j+k^{-\frac{1}{2}}\gamma_{j,k},\ \ j=0,1,\ldots,
\end{equation}
where $a_j\in\mathbb R$, $j=0,1,\ldots$, $\abs{\gamma_{j,k}}\leq\hat C_j$, $\hat C_j>0$ is a constant independent of $k$ and the point $p$, $j=0,1,\ldots$, such that
\begin{equation}\label{e-gue250529ycds}
\operatorname{Tr}^{(q)}((\frac{\pr}{\pr t}\hat A_{(k),N})(t,x,x))=\sum^N_{j=0}t^{-n-2+j}a_{j,k}(x)+g_{N,k}(t,x),
\end{equation}
where $g_{N,k}(t,x)\in\mathcal{C}^\infty(\ol{\mathbb R_+}\times B_r)$ and
for every $m\in\mathbb N_0$ and every $0<r'<r$, there is a constant $\hat C_{r',m}>0$ independent of $k$, $t$ and the point $p$, such that 
\begin{equation}\label{e-gue250529yydk}
\sup\,\set{(\abs{\pr^m_tg_{N,k})(t,x)};\, x\in B_{r'}}\leq\hat C_{r',m}t^{-n-1+N-m},
\end{equation}
for all $k\gg1$ and all $t\in(0,1]$. 

Let $A_{(k)}(t,x,y)\in\mathcal{D}'(\mathbb R_+\times B_{\log k}\times B_{\log k}, F^*_kT^{*0,\bullet}X\boxtimes(F^*_kT^{*0,\bullet}X)^*)$ be as in \eqref{e-gue210325yyd}. We can repeat the proof of Theorem~\ref{T:011120251108} with minor change and deduce that on $B_r$
\begin{equation}\label{e-gue250530ycd}
A_{(k)}(t,x,y)=\hat A_{(k),N}(t,x,y)+\varepsilon_{N,k}(t,x,y),
\end{equation}
where $\varepsilon_{N,k}(t,x,y)\in\mathcal{C}^\infty(\ol R_+\times B_r\times B_r,F^*_kT^{*0,\bullet}X\boxtimes(F^*_kT^{*0,\bullet}X)^*)$ and for every $m\in\mathbb N_0$ and every $0<r'<r$, there is a constant $\Td C_{r',m}>0$ independent of $k$, $t$ and the point $p$, such that 
\begin{equation}\label{e-gue250529yydkz}
\sup\,\set{\abs{(\pr^m_t\varepsilon_{N,k})(t,x,y)};\, x, y\in B_{r'}}\leq\Td C_{r',m}t^{-n+N-m},
\end{equation}
for all $k\gg1$ and $t\in(0,1]$. Note that 
\begin{equation}\label{e-gue250530yyd}
(\frac{\pr}{\pr t}e^{-\frac{t}{k}\Box_{b,L^k}})(t,x,x)=k^{n+1}(\frac{\pr}{\pr t}A_{(k)})(t,x,x).
\end{equation}

From \eqref{e-gue250529ycdmz}, \eqref{e-gue250529ycds}, \eqref{e-gue250530ycd} and \eqref{e-gue250530yyd}, we deduce that 
\begin{equation}\label{e-gue250530yydz}
\operatorname{Tr}^{(q)}((\frac{\pr}{\pr t}e^{-\frac{t}{k}\Box_{b,L^k}})(t,p,p))=k^{n+1}\sum^N_{j=0}t^{-n-2+j}a_{j,k}(p)+k^{n+1}r_{N,k}(t),
\end{equation}
where $\abs{r_{N,k}(t)}\leq t^{-n-1+N}C_N$, for all $t\in(0,1]$ and all $k\geq1$, where $C_N>0$ is a constant independent of $k$ and the point $p$. 

From Theorem~\ref{t-gue250410ycdbz}, we see that 

\begin{equation}\label{e-gue250530yydb}
\mbox{$\operatorname{Tr}^{(q)}((\frac{\pr}{\pr t}e^{-t\Box_{b,L^k}})(t,x, x))\sim \sum_{j=0}^\infty t^{-n-2+j}\hat g_{j,k}(x)$ in $\tilde S^{-n-2}(\mathbb{R}_+ \times X)$},
		\end{equation}
where $\hat g_{j,k}(x)\in\mathcal{C}^\infty(X)$, $j=0,1,\ldots$. 

From \eqref{e-gue250530yydz} and \eqref{e-gue250530yydb}, we conclude that 
\[\hat g_{j,k}(p)=a_{j,k}(p),\ \ j=0,1,\ldots,N.\]
The proof is completed from this observation, \eqref{e-gue250529ycdmz}, \eqref{e-gue250530yydz} and \eqref{e-gue250530yydb}. 
\end{proof}

    \section{The asymptotics of the CR analytic torsion for CR line bundles}\label{s-gue250411}

	
	In this section, we first recall the Mellin transform, then we define the analytic torsion for the CR vector bundle $E$ over the CR manifold $X$, which answers the question raised by Bismut. The anomaly formula for the CR analytic torsion will also be given. Finally we will consider $E=L^k$, where $L^k$ is the $k$-th power of a CR line bundle $L$ over $X$ and establish Bismut-Vasserot type asymptotics of the analytic torsion for $L^k$ as $k\To+\infty$.

\subsection{Mellin transformation} 

     We first introduce some notations. Let $g(t)\in\mathcal{C}^\infty(\mathbb R_+)$. We write 
    \[\mbox{$g(t)\sim \sum^{\infty}_{j=0}g_{j}t^{-m+j}$ as $t\To0^+$},\]
    where $m\in\mathbb R$, $g_{j}\in\mathbb C$, $j=0,1,\ldots$, if for every $N>0$, we can find a constant $C_N>0$ independent of $t$ such that 
    \[g(t)-\sum^N_{j=0}g_{j}t^{-m+j}\leq C_Nt^{-m+N+1},\]
    for all $t>0$. 
    
	Let $\Gamma(z)$ be the Gamma function on $\Complex$. Then for ${\rm Re\,}z>0$, we have 
	\[\Gamma(z)=\int^\infty_0e^{-t}t^{z-1}dt.\]
	$\Gamma(z)^{-1}$ is an entire function on $\Complex$ and 
	\begin{equation}\label{e-gue160313b}
		\Gamma(z)^{-1}=z+O(z^2)\ \ \mbox{near $z=0$}. 
	\end{equation}
	
	We suppose that $f(t)\in \mathcal{C}^\infty(\mathbb R_+)$ verifies the following two conditions:
	\begin{itemize}
		\item[I.] 
		\begin{equation}\label{e-gue160420g}
			\mbox{$f(t)\sim \sum^{\infty}_{j=0}f_{j}t^{-m+j}$ as $t\To0^+$}, 
		\end{equation}
		where $m\in\mathbb N_0$, $f_{j}\in\mathbb C$, $j=0,1,2,\ldots$.
		\item[II.]  For every $\delta>0$, there exist $c>0$, $C>0$ such that 
		\begin{equation}\label{e-gue160420I}
			\abs{f(t)}\leq Ce^{-ct},\ \ \forall t\geq\delta. 
		\end{equation}
	\end{itemize}

	\begin{definition}\label{d-gue160313}
		The \emph{Mellin transformation} of $f$ is the function defined for ${\rm Re\,}z>m$, 
		\begin{equation}\label{e-gue160313bIII}
			M[f](z)=\frac{1}{\Gamma(z)}\int^\infty_0 f(t)t^{z-1}dt.
		\end{equation}
	\end{definition}
	
	We recall the following result, cf. \cite[Lemma 5.5.2]{MM}:
	
	\begin{theorem}\label{t-gue160313}
		$M[f]$ extends to a meromorphic function on $\Complex$ with poles contained in 
		\[\set{\ell-j;\, \ell,j\in\mathbb Z},\] 
		and its possible poles are simple. Moreover, $M[f]$ is holomorphic at $0$, 
		\begin{equation}\label{e-gue160428w}
			M[f](0)=f_0
		\end{equation}
		and
		\begin{equation}\label{e-gue160421s}
			\begin{split}
				&\frac{\pr M[f]}{\pr z}(0)=\int^1_0(f(t)-\sum^{k}_{j=0}f_{j}t^{-k+j})\frac{1}{t}dt\\
				&\quad+\int^\infty_1f(t)\frac{1}{t}dt+\sum^{k-1}_{j=0}\frac{f_{j}}{j-k}-\Gamma'(1)f_{0}.
			\end{split}
		\end{equation} 
	\end{theorem}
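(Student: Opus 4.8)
The plan is to split the defining integral $\int_0^\infty f(t)t^{z-1}\,dt$ at $t=1$ and treat the two pieces by entirely different mechanisms. On $[1,\infty)$ the exponential bound \eqref{e-gue160420I} (with $\delta=1$) makes $\int_1^\infty f(t)t^{z-1}\,dt$ absolutely convergent for every $z\in\mathbb C$, and differentiation under the integral sign shows it is an entire function of $z$; this piece contributes no poles. On $(0,1]$ I would exploit the asymptotic expansion \eqref{e-gue160420g}: for each fixed $N$ write $f(t)=\sum_{j=0}^N f_{-m+j}t^{-m+j}+r_N(t)$ with $\abs{r_N(t)}\le C_N t^{-m+N+1}$, integrate the explicit monomials to obtain $\sum_{j=0}^N \frac{f_{-m+j}}{z-m+j}$ (valid first for ${\rm Re\,}z>m$, then by the formula itself as a meromorphic expression), and note that the remainder $\int_0^1 r_N(t)t^{z-1}\,dt$ is holomorphic on $\set{z;\,{\rm Re\,}z>m-N-1}$. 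Since $N$ is arbitrary, this exhibits
\[
z\longmapsto \int_0^\infty f(t)t^{z-1}\,dt
\]
as a meromorphic function on all of $\mathbb C$ with only simple poles, located at $z=m-j$, $j\in\mathbb N_0$; because $m\in\mathbb N_0$ these are integers, hence contained in $\set{\ell-j;\,\ell,j\in\mathbb Z}$.

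Dividing by $\Gamma(z)$ then yields the global statements about $M[f]$ at once: $\Gamma(z)$ has no zeros and $\Gamma(z)^{-1}$ is entire, so $M[f]$ is meromorphic with poles in the same set; and since $\Gamma(z)^{-1}$ has a simple zero at $z=0$ by \eqref{e-gue160313b}, it cancels the (at worst simple) pole of $\int_0^\infty f(t)t^{z-1}\,dt$ at $z=0$ produced by the $j=m$ term, so $M[f]$ is holomorphic at $0$. For the value and derivative at $0$ I would take $N=m$ in the identity above, separate the $j=m$ summand, and write $\int_0^\infty f(t)t^{z-1}\,dt=\frac{f_0}{z}+h(z)$ with $h$ holomorphic near $0$ and
\[
h(0)=\int_0^1\Bigl(f(t)-\sum_{j=0}^m f_{-m+j}t^{-m+j}\Bigr)\frac{dt}{t}+\sum_{j=0}^{m-1}\frac{f_{-m+j}}{j-m}+\int_1^\infty f(t)\frac{dt}{t}.
\]
Multiplying by $\Gamma(z)^{-1}=z-\Gamma'(1)z^2+O(z^3)$ gives $M[f](z)=f_0+\bigl(h(0)-\Gamma'(1)f_0\bigr)z+O(z^2)$, from which \eqref{e-gue160428w} and the formula \eqref{e-gue160421s} for $\frac{\pr M[f]}{\pr z}(0)$ are read off (the $k$ there being our $m$).

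The argument is entirely classical — it is \cite[Lemma 5.5.2]{MM} — so I do not expect any genuine obstacle. The only step demanding care is the last one: correctly bookkeeping the constant term $h(0)$ in the Laurent expansion at $z=0$, in particular the precise range $j=0,\dots,m-1$ of the residual sum and the sign of $\Gamma'(1)$ entering through $\Gamma(z)^{-1}=z-\Gamma'(1)z^2+\cdots$, since any slip there would corrupt the explicit derivative formula.
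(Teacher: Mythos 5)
Your proposal is correct and follows essentially the same route as the paper's proof: split the Mellin integral at $t=1$, use the exponential bound to make the tail entire, integrate the monomials from the expansion on $(0,1]$ to exhibit the simple poles, and use $\Gamma(z)^{-1}=z+O(z^2)$ to cancel the pole at $z=0$ and read off $M[f](0)$ and $\frac{\pr M[f]}{\pr z}(0)$ from the Laurent expansion. Your bookkeeping of $h(0)$, the range $j=0,\dots,m-1$, and the sign of $\Gamma'(1)$ agrees with \eqref{e-gue160428w} and \eqref{e-gue160421s} (with the theorem's $k$ equal to your $m$).
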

	
	\begin{proof}
		By \eqref{e-gue160420I}, the function $\int^\infty_1f(t)t^{z-1}dt$ is an entire function on $z\in\Complex$. For any $N\in\mathbb N$, we have 
		\begin{equation}\label{e-gue160421}
			\begin{split}
				&\int^1_0f(t)t^{z-1}dt\\
				&=\int^1_0(f(t)-\sum^N_{j=0}f_{j}t^{-m+j})t^{z-1}dt+\sum^N_{j=0}f_{j}\int^1_0t^{-m+j+z-1}dt\\
				&=\int^1_0(f(t)-\sum^N_{j=0}f_{j}t^{-m+j})t^{z-1}dt+\sum^N_{j=0}f_{j}\frac{1}{-m+j+z}. 
			\end{split}
		\end{equation}
		From \eqref{e-gue160420g} and \eqref{e-gue160420I}, we see that $\int^1_0(f(t)-\sum^N_{j=0}f_{j}t^{-m+j})t^{z-1}dt$ is a holomorphic function for ${\rm Re\,}z>m-N+1$. From this observation and \eqref{e-gue160421}, we conclude that $M[f]$ can be extended to a meromorphic function on $\Complex$ with poles contained in 
		$\set{\ell-j;\, \ell,j\in\mathbb Z}$, and its possible poles are simple.
		
		From \eqref{e-gue160313b}, we conclude that $M[f]$ is holomorphic at $z=0$. Take $N=k$ in \eqref{e-gue160421} and by some direct computation, we get \eqref{e-gue160428w} and \eqref{e-gue160421s}.
	\end{proof}

	\subsection{Definition of the CR analytic torsion}
	Let $N$ be the number operator on $T^{*0,\bullet}X$, i.e. $N$ acts on $T^{*0,q}X$ by multiplication by $q$.
	Fix $q=0, 1, \cdots, n$ and take a point $x \in X$. Let $e_1(x), \cdots, e_d(x)$ be an orthonormal frame of $T_x^{*0,q}X \otimes E_x$. Let $A \in (T_x^{*0,\bullet}X \otimes E_x)\boxtimes (T_x^{*0,\bullet}X \otimes E_x)$. Put $\operatorname{Tr}^{(q)} A := \sum_{j=1}^d \langle Ae_j | e_j \rangle_E$ and set
	\begin{eqnarray}
		\operatorname{Tr}A:= \sum_{j=0}^n \operatorname{Tr}^{(j)}A, \nonumber \\
		\operatorname{STr}A:= \sum_{j=0}^n (-1)^j \operatorname{Tr}^{(j)}A.
	\end{eqnarray}
	Let $A: \mathcal{C}^\infty(X,T^{*0,\bullet}X\otimes E)\To\mathcal{C}^\infty(X,T^{*0,\bullet}X\otimes E)$ be a continuous operator with distribution kernel $A(x,y)\in \mathcal{C}^\infty(X\times X,(T^{*0,\bullet}X\otimes E)\boxtimes(T^{*0,\bullet}X\otimes E)^*)$. 
	We set 
	\[\operatorname{Tr}^{(q)}\lbrack A\rbrack:=\int_X\operatorname{Tr}^{(q)} A(x,x)dv_X(x)\] and put 
	\begin{eqnarray}
		\operatorname{Tr}\lbrack A\rbrack:= \sum_{j=0}^n \operatorname{Tr}^{(j)}[A], \nonumber \\
		\operatorname{STr}\lbrack A\rbrack:= \sum_{j=0}^n (-1)^j \operatorname{Tr}^{(j)}[A].
	\end{eqnarray}
    We will use the same notations as in Section~\ref{s-gue250404yyds}. Recall that 
	$$
	\Pi_E : L^2(X, T^{*0,\bullet}X \otimes E) \to \Ker \Box_{b,E}
	$$  
	is the orthogonal projection with respect to $(\,\cdot\,|\,\cdot\,)_E$. Let 
	\[\begin{split}
	&\Pi^\perp_E : L^2(X, T^{*0,\bullet}X \otimes E) \to ({\rm Ker\,}\Box_{b,E})^\perp,\\
    &(\Pi^{(q)}_E)^\perp : L^2(X, T^{*0,q}X \otimes E) \to ({\rm Ker\,}\Box^{(q)}_{b,E})^\perp,\ \ q=0,1,\ldots,n,
	\end{split}\]
	be the orthogonal projections, where for $q=0,1,\ldots,n$,
	\[\begin{split}&({\rm Ker\,}\Box_{b,E})^\perp=\set{u\in L^2(X, T^{*0,\bullet}X \otimes E);\, (\,u\,|\,v\,)_E=0,\  \ \forall v\in{\rm Ker\,}\Box_{b,E}},\\
    &({\rm Ker\,}\Box^{(q)}_{b,E})^\perp=\set{u\in L^2(X, T^{*0,q}X \otimes E);\, (\,u\,|\,v\,)_E=0,\  \ \forall v\in{\rm Ker\,}\Box^{(q)}_{b,E}}.\end{split}\] 
	
	Assume that $\Box_{b,E}$ has $L^2$ closed range. 
    Note that  $\Box_{b,E}$ has $L^2$ closed range if and only if $\Box^{(q)}_{b,E}$ has closed range, for all $q=0,1,\ldots,n$. From Theorem~\ref{t-gue250410ycdbz}, we have the following asymptotic expansion: 
	\begin{equation}\label{E:112120241802}
		\operatorname{STr}  \lbrack N e^{-t \Box_{b,E}} \Pi^\perp_E \rbrack\sim\sum^\infty_{j=0} B_{j} t^{-n-1+j}\ \ \mbox{as $t\To0^+$}, 
	\end{equation}
	where $B_j \in \Complex$ is independent of $t$, $j=0,1,2,\ldots$. 
	
	\begin{lemma}\label{l-gue160420}
    Assume that $\Box_{b,E}$ has $L^2$ closed range.
		Fix $q=0,1,\ldots,n$. For every $\delta>0$, there exist $c>0$, $C>0$ such that 
		\begin{equation}\label{e-gue160420Ia}
			\abs{\operatorname{Tr}^{(q)}  \lbrack e^{-t \Box^{(q)}_{b,E}}(\Pi^{(q)}_E)^\perp \rbrack}\leq Ce^{-ct},\ \ \forall t\geq\delta. 
		\end{equation}
	\end{lemma}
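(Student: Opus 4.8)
The plan is to combine the spectral gap coming from the closed-range hypothesis with the smoothing property of the heat kernel modulo the Szeg\H{o}-type projector established above, noting that $(\Pi^{(q)}_E)^\perp=I-\Pi^{(q)}_E$ commutes with $e^{-t\Box^{(q)}_{b,E}}$. Since $\Box_{b,E}$ has $L^2$ closed range, so does $\Box^{(q)}_{b,E}$, and, being a non-negative self-adjoint operator with closed range, $\Box^{(q)}_{b,E}$ has a spectral gap: there is $c_0>0$ with $\operatorname{Spec}\Box^{(q)}_{b,E}\cap(0,c_0)=\emptyset$. By the spectral theorem, since $(\Pi^{(q)}_E)^\perp$ is the spectral projection onto $[c_0,+\infty)$, this gives the operator-norm bound
\[
\bigl\|e^{-t\Box^{(q)}_{b,E}}(\Pi^{(q)}_E)^\perp\bigr\|_{L^2_{(0,q)}(X,E)\to L^2_{(0,q)}(X,E)}\le e^{-c_0 t},\qquad t>0.
\]

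Next I would fix $\delta>0$ and set $A_\delta:=e^{-\frac{\delta}{2}\Box^{(q)}_{b,E}}(\Pi^{(q)}_E)^\perp$. By Theorem~\ref{t-gue250410ycdbz} (equivalently Theorem~\ref{t-gue250410yydI} when $q\in\{n_-,n_+\}$, and by hypoellipticity of $\Box^{(q)}_{b,E}$ when $q\notin\{n_-,n_+\}$), the distribution kernel of $e^{-s\Box^{(q)}_{b,E}}(I-\Pi^{(q)}_E)$ is smooth on $\mathbb{R}_+\times X\times X$ for every $s>0$; since $X$ is compact, $A_\delta$ is a smoothing operator, hence Hilbert--Schmidt, with
\[
\|A_\delta\|_{\mathrm{HS}}^2=\int_{X\times X}\bigl|A_\delta(x,y)\bigr|^2\,dv_X(x)\,dv_X(y)<\infty,
\]
where $|\cdot|$ denotes the natural fiber norm on $(E\otimes T^{*0,q}X)\boxtimes(E\otimes T^{*0,q}X)^*$.

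For $t\ge\delta$, using that $(\Pi^{(q)}_E)^\perp$ commutes with $e^{-s\Box^{(q)}_{b,E}}$, that $\bigl((\Pi^{(q)}_E)^\perp\bigr)^2=(\Pi^{(q)}_E)^\perp$, and the semigroup property, I would factor
\[
e^{-t\Box^{(q)}_{b,E}}(\Pi^{(q)}_E)^\perp=A_\delta\circ\Bigl(e^{-(t-\delta)\Box^{(q)}_{b,E}}(\Pi^{(q)}_E)^\perp\Bigr)\circ A_\delta .
\]
Taking $\operatorname{Tr}^{(q)}$ (which is the honest $L^2$ trace of the resulting trace-class operator) and applying $|\operatorname{Tr}^{(q)}[PQ]|\le\|P\|_{\mathrm{HS}}\|Q\|_{\mathrm{HS}}$ and $\|Q_1Q_2\|_{\mathrm{HS}}\le\|Q_1\|_{\mathrm{op}}\|Q_2\|_{\mathrm{HS}}$ together with the operator-norm bound above, one obtains
\[
\bigl|\operatorname{Tr}^{(q)}\bigl[e^{-t\Box^{(q)}_{b,E}}(\Pi^{(q)}_E)^\perp\bigr]\bigr|\le\|A_\delta\|_{\mathrm{HS}}^2\,e^{-c_0(t-\delta)}=C e^{-c_0 t},
\]
with $C:=\|A_\delta\|_{\mathrm{HS}}^2 e^{c_0\delta}$, which is exactly \eqref{e-gue160420Ia} with $c=c_0$. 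I do not expect a genuine obstacle here: the only points that need care are the standard functional-analytic equivalence between closed range of a non-negative self-adjoint operator and the existence of a spectral gap, and the identification of $\operatorname{Tr}^{(q)}$ with the $L^2$ trace of a trace-class operator; both become routine once the smoothing statement of Theorem~\ref{t-gue250410ycdbz} is invoked to make $A_\delta$ Hilbert--Schmidt.
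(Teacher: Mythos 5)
Your argument is correct, and it rests on the same two pillars as the paper's proof: the closed-range hypothesis forces the positive part of $\operatorname{Spec}\Box^{(q)}_{b,E}$ to stay away from $0$, and the smoothness of the kernel of $e^{-s\Box^{(q)}_{b,E}}(I-\Pi^{(q)}_E)$ at a fixed positive time $s$ supplies the finite constant. The implementation differs, though. The paper writes $\operatorname{Tr}^{(q)}[e^{-t\Box^{(q)}_{b,E}}(\Pi^{(q)}_E)^\perp]$ directly as the eigenvalue sum $e^{-\lambda_1 t}+e^{-\lambda_2 t}+\cdots$ and peels off the factor $e^{-\lambda_1 t/2}$ termwise, bounding the remainder by the trace at time $\delta/2$; this presupposes (as is justified by the trace-class property, but left implicit) that the nonzero spectrum is a discrete sequence of eigenvalues of finite multiplicity. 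Your factorization $e^{-t\Box^{(q)}_{b,E}}(\Pi^{(q)}_E)^\perp=A_\delta\circ\bigl(e^{-(t-\delta)\Box^{(q)}_{b,E}}(\Pi^{(q)}_E)^\perp\bigr)\circ A_\delta$ with the bound $\lvert\operatorname{Tr}\rvert\le\|A_\delta\|_{\mathrm{HS}}^2\,e^{-c_0(t-\delta)}$ replaces the eigenvalue expansion by the operator-norm estimate from the spectral theorem together with Hilbert--Schmidt norms, so it never needs discreteness of the positive spectrum; the price is that you must identify $\operatorname{Tr}^{(q)}$ (defined in the paper as the integral of the kernel on the diagonal) with the operator trace of the trace-class operator $A_\delta Q A_\delta$, which is standard since $A_\delta$ has a smooth kernel on the compact manifold $X$, and is in any case the same identification the paper uses tacitly when it writes the trace as a sum over eigenvalues. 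Both proofs thus exchange the same amount of routine functional analysis; yours is marginally more robust, the paper's marginally more explicit.
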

	
	\begin{proof}
		Let $0<\lambda_1\leq\lambda_2\leq\lambda_3\leq\cdots$ be the non-zero eigenvalues of $\Box^{(q)}_{b,E}$. For $t>0$, we have 
		\begin{equation}\label{e-gue160421sI}
			\abs{\operatorname{Tr}^{(q)}  \lbrack e^{-t \Box^{(q)}_{b,E}}(\Pi^{(q)}_E)^\perp \rbrack}=e^{-\lambda_1t}+e^{-\lambda_2t}+\cdots.
		\end{equation}
		Let $\delta>0$. From \eqref{e-gue160421sI}, for $t\geq\delta$, we have 
		\begin{equation}\label{e-gue160421sII}
			\begin{split}
				\abs{\operatorname{Tr}^{(q)}  \lbrack e^{-t \Box^{(q)}_{b,E}} (\Pi^{(q)}_E)^\perp \rbrack}&=e^{-\lambda_1t}+e^{-\lambda_2t}+\cdots\\
				&=e^{-\lambda_1\frac{t}{2}}(e^{-\lambda_1\frac{t}{2}}+e^{-\lambda_2t+\lambda_1\frac{t}{2}}+e^{-\lambda_3t+\lambda_1\frac{t}{2}}+\cdots)\\
				&\leq e^{-\lambda_1\frac{t}{2}}(e^{-\lambda_1\frac{t}{2}}+e^{-\lambda_2\frac{t}{2}}+e^{-\lambda_3\frac{t}{2}}+\cdots)\\
				&\leq e^{-\lambda_1\frac{t}{2}}(e^{-\lambda_1\frac{\delta}{2}}+e^{-\lambda_2\frac{\delta}{2}}+e^{-\lambda_3\frac{\delta}{2}}+\cdots)\\
				&=e^{-\lambda_1\frac{t}{2}}\abs{\operatorname{Tr}^{(q)}  \lbrack e^{-\frac{\delta}{2}\Box^{(q)}_{b,E}}(\Pi^{(q)}_E)^\perp \rbrack}.
			\end{split}
		\end{equation}
		From \eqref{E:112120241802} and \eqref{e-gue160421sII}, the proof is completed. 
	\end{proof}
	
	From \eqref{E:112120241802} and Lemma~\ref{l-gue160420}, we see that $\operatorname{STr}  \lbrack N e^{-t \Box_{b,E}} \Pi^\perp_E \rbrack$ satisfies \eqref{e-gue160420g} and \eqref{e-gue160420I}. By Definition \ref{d-gue160313}, for $\operatorname{Re}(z)>n$, we can define 
	\begin{equation}\label{E:5.5.12}
		\theta_{b,E}(z)  = - M \left\lbrack \operatorname{STr}  \lbrack N e^{-t \Box_{b,E}}\Pi^\perp_E  \rbrack \right\rbrack  =   - \operatorname{STr} \left\lbrack N ({\Box}_{b,E})^{-z}\Pi^\perp_E \right\rbrack.  
	\end{equation}
	By Theorem~\ref{t-gue160313}, we have the following lemma.
	
	\begin{lemma}\label{L:mero}
    Assume that $\Box_{b,E}$ has $L^2$ closed range. Then, 
		$\theta_{b,E}(z)$ extends to a meromorphic function on $\mathbb{C}$ with poles contained in the set  
		\[\set{\ell- j;\, \ell,j\in\mathbb Z},\]
		its possible poles are simple, and $\theta_{b, E}(z)$ is holomorphic at $0$. Moreover,
		\begin{equation}\label{E:5.5.13}
			\begin{split}
				& \theta_{b,E}'(0)  =  -\int_0^1 \left\{  \operatorname{STr} \Big[  Ne^{-t\Box_{b,E}}\Pi^\perp_E\Big]-\sum^{n+1}_{j=0} B_{j} t^{-n-1+j}\right\} \frac{dt}{t}  \\
				& -\int_1^\infty \operatorname{STr} \Big[ Ne^{-t\Box_{b,E}} \Pi^\perp_E \Big] \frac{dt}{t} - \sum^{n}_{j=0} \frac{B_{j}}{j-n-1} + \Gamma'(1)B_{n+1}. 
			\end{split}
		\end{equation}
	\end{lemma}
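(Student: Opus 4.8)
The plan is to reduce the whole statement to Theorem~\ref{t-gue160313}, applied to the single scalar function
\[
f(t) := \operatorname{STr}\bigl[ N e^{-t\Box_{b,E}} \Pi^\perp_E \bigr]
 = \sum_{q=0}^n (-1)^q\, q\, \operatorname{Tr}^{(q)}\bigl[ e^{-t\Box^{(q)}_{b,E}} (\Pi^{(q)}_E)^\perp \bigr],\qquad t>0,
\]
where the second equality uses that $N$ acts by multiplication by $q$ on $T^{*0,q}X$. First I would verify the two hypotheses \eqref{e-gue160420g} and \eqref{e-gue160420I} of Theorem~\ref{t-gue160313}, with $m=n+1$. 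Hypothesis \eqref{e-gue160420I} is exactly Lemma~\ref{l-gue160420}: summing the bounds $|\operatorname{Tr}^{(q)}[e^{-t\Box^{(q)}_{b,E}}(\Pi^{(q)}_E)^\perp]|\le Ce^{-ct}$ for $t\ge\delta$ over $q$ against the bounded weights $(-1)^q q$ gives $|f(t)|\le C'e^{-c't}$ for $t\ge\delta$. For hypothesis \eqref{e-gue160420g}, recall that $\Box_{b,E}$ having $L^2$ closed range is equivalent to each $\Box^{(q)}_{b,E}$ having closed range, so Theorem~\ref{t-gue250410ycdbz} applies and yields $\bigl(e^{-t\Box_{b,E}}(I-\Pi_E)\bigr)(x,x)\sim\sum_{j\ge0}t^{-(n+1)+j}g_j(x)$ in $\tilde S^{-n-1}(\mathbb{R}_+\times X)$; taking $\operatorname{Tr}^{(q)}$ fiberwise, integrating over the compact manifold $X$, and combining with the weights $(-1)^q q$ produces \eqref{E:112120241802}, i.e. $f(t)\sim\sum_{j\ge0}B_{-n-1+j}t^{-n-1+j}$ as $t\to0^+$, with $B_{-n-1+j}=\sum_q(-1)^q q\int_X\operatorname{Tr}^{(q)}g_j(x)\,dv_X(x)$. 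The uniform $\mathcal{C}^0$-bounds encoded in the symbol space of Definition~\ref{d-gue250404yydb} are precisely what is needed to upgrade the pointwise expansion to \eqref{e-gue160420g}.

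With both hypotheses in hand, Theorem~\ref{t-gue160313} gives in one stroke that $M[f]$ continues meromorphically to $\mathbb{C}$ with only simple poles, all contained in $\{\ell-j;\ \ell,j\in\mathbb{Z}\}$, that $M[f]$ is holomorphic at $0$ (using \eqref{e-gue160313b} to cancel the simple pole at $z=0$ of $\int_0^\infty f(t)t^{z-1}dt$ coming from the $B_0$-term), and that $M[f]'(0)$ equals the right-hand side of \eqref{e-gue160421s}. Since $\theta_{b,E}(z)=-M[f](z)$ by \eqref{E:5.5.12}, the meromorphic continuation, the position and simplicity of the poles, and the holomorphy at $0$ transfer verbatim. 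To obtain \eqref{E:5.5.13} I would then substitute $k=n+1$, $f_{-k+j}=B_{-n-1+j}$ and $f_0=B_0$ into \eqref{e-gue160421s} and multiply by $-1$; in particular one subtracts the full partial sum $\sum_{j=0}^{n+1}B_{-n-1+j}t^{-n-1+j}$ inside the $\int_0^1$ term (so the integrand is $O(1)$ near $0$) while the $\int_1^\infty$ term converges by \eqref{e-gue160420I}, and the $-\Gamma'(1)f_0$ contribution of \eqref{e-gue160421s} becomes $+\Gamma'(1)B_0$ after the overall sign change.

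There is no genuine analytic obstacle here: all the substantial work — the small-time expansion of $\bigl(e^{-t\Box_{b,E}}(I-\Pi_E)\bigr)(x,x)$, the equivalence of $L^2$ closed range with closed range in each degree, and the spectral exponential estimate — is already carried out in Theorem~\ref{t-gue250410ycdbz} and Lemma~\ref{l-gue160420}, and Theorem~\ref{t-gue160313} packages the Mellin-transform formalism. The only thing that needs care is bookkeeping: making sure the super-trace weighted by $N$ is matched to the degreewise heat expansions so that the leading exponent is exactly $-(n+1)$ (hence $m=n+1$ in Theorem~\ref{t-gue160313}), and keeping the index $k=n+1$ and the coefficients $B_{-n-1},\dots,B_0$ aligned when transcribing \eqref{e-gue160421s} into \eqref{E:5.5.13}.
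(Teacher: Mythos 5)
Your proposal is correct and follows essentially the same route as the paper: verify hypotheses \eqref{e-gue160420g} and \eqref{e-gue160420I} for $f(t)=\operatorname{STr}[Ne^{-t\Box_{b,E}}\Pi^\perp_E]$ via Theorem~\ref{t-gue250410ycdbz} (giving \eqref{E:112120241802}) and Lemma~\ref{l-gue160420}, then apply Theorem~\ref{t-gue160313} with $m=n+1$ and flip the sign per \eqref{E:5.5.12} to obtain \eqref{E:5.5.13}. Your bookkeeping of the index $k=n+1$, the coefficients $B_{-n-1+j}$, and the $+\Gamma'(1)B_0$ term matches the paper's intended substitution into \eqref{e-gue160421s}.
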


\begin{definition}\label{d-gue160502w}
    Assume $\Box_{b,E}$ has $L^2$ closed range. 
		We define $\exp ( -\frac{1}{2} \theta_{b,E}'(0) )$ to be the $\ddbar_{b}$-torsion for the CR vector bundle $E$ over the CR manifold $X$. 
	\end{definition}

    From Lemma~\ref{L:mero}, we see that Definition~\ref{d-gue160502w} is well-defined.

\subsection{Variation of the analytic torsion}\label{s-gue260326}

In this subsection, we study the dependence of the analytic torsion on a compact oriented CR manifold with a nondegenerate Levi form and a CR vector bundle $E$ over $X$ on a change of the Hermitian metrics on $TX$ and $E$. The main result is Theorem \ref{t303262026}.

We now define the complex tangential Hodge $\ast$-operator, see also \cite[Proposition 8.8]{DT}, as a complex conjugate linear map
\begin{equation}\label{e-gue1530a05092026}
\ast_b : \Omega^{p,q}(X) \to \Omega^{n-p,n-q}(X)
\end{equation}
such that
\begin{equation}\label{e-gue1530b05092026}
\langle\, \phi\, | \, \psi \,\rangle \frac{(d\omega_0)^n}{n!} \, = \, \phi \wedge \ast_b \psi, \quad \ast_b \ast_b \phi = (-1)^{p+q} \phi,
\end{equation}
for any $\phi, \psi \in \Omega^{p,q}(X)$.

We denote by $H^*X$ the dual bundle of $HX$ and $conj$ the natural conjugate map induced by the bundle automorphism
\begin{equation}
H^*X\otimes_{\mathbb{R}}\mathbb{C} \to H^*X\otimes_{\mathbb{R}}\mathbb{C}, \quad u \otimes \lambda \mapsto u \otimes \overline{\lambda},
\end{equation}
for any $u \in H^*X, \lambda \in \mathbb{C}.$
Then 
$$
\widehat{\ast}_b := conj\ast_b
$$ 
is a complex linear map. Clearly, 
$$
\partial_b = conj \, \overline{\partial}_b \, conj \, : \, \Omega^{p,q}(X) \to \Omega^{p+1,q}(X)
$$
and
$$
\widehat{\ast}_b \, = \, conj \, \ast_b \, = \, \ast_b \, conj \, : \, \Omega^{p,q}(X) \to \Omega^{n-q,n-p}(X).
$$

Let $(\, \cdot \, |\, \cdot \, )$ be the $L^2$ inner product on $\Omega^{p,q}(X)$ induced by $\langle \, \cdot \, | \, \cdot \, \rangle$. Then, for all $\phi, \psi \in \Omega^{p,q}(X)$,
\[
(\, \phi \, |\, \psi\,) \, =\, \int_X \langle \, \phi \,| \, \psi \, \rangle dv_X \, =\, \int_X  \phi  \wedge  \ast_b  \psi   \wedge \omega_0,
\]
where
\begin{equation}\label{E:volume}
dv_X \, = \,  \frac{(d\omega_0)^n}{n!} \wedge \omega_0
\end{equation}
is the volume form.

We can easily check that
\[
\partial_b^* \phi = -\ast_b \partial_b \ast_b \phi = -\widehat{\ast}_b\, \overline{\partial}_b\, \widehat{\ast}_b\, \phi, 
\]
and
\[
 \overline{\partial}_b \psi = - \ast_b \overline{\partial}_b \ast_b \psi = - \widehat{\ast}_b\, {\partial}_b\, \widehat{\ast}_b\, \psi. 
\]

Denote by $\mu: E \to E^*$ the induced conjugate linear bundle isomorphism from the vector bundle $E$ to its dual vector bundle $E^*$. Let $(\, \cdot \, |\, \cdot \, )_E$ be the $L^2$ inner product on $\Omega^{p,q}(X,E)$ induced by $\langle \, \cdot \, | \, \cdot \, \rangle$ and $\langle \, \cdot \, | \, \cdot \, \rangle_E$. Then, for all $\alpha, \beta \in \Omega^{p,q}(X, E)$,
\[
(\, \alpha \, |\, \beta \,)_E \, =\, \int_X \langle \, \alpha \,| \, \beta \, \rangle_E dv_X \, =\, \int_X  \alpha  \wedge  \left(\ast_b \otimes \mu \right) \beta   \wedge \omega_0,
\]
where $dv_X$ is the volume form defined in $\eqref{E:volume}$. We write $\ddbar'_{b}$ to denote the tangential Cauchy-Riemann operator acting on forms with values in $E^*$:
\[
\ddbar'_{b}:\Omega^{0,\bullet}(X, E^*)\To\Omega^{0,\bullet}(X,E^*).
\]
We can check that the adjoint of $\overline{\partial}_{b}$ is 
\begin{equation}
\overline{\partial}^{*}_{b} = - \left( \ast_b^{-1} \otimes \mu \right)^{-1} \overline{\partial}'_{b} \left( \ast_b \otimes \mu \right). \nonumber
\end{equation}

Let $\langle \, \cdot \, |\, \cdot \, \rangle_{s}$ and $\langle \, \cdot \, |\, \cdot \, \rangle_{E, s}, s \in [0, 1]$ be smooth families of rigid Hermitian metrics on $TX$ and $E$, respectively.
Let $(\, \cdot \, |\, \cdot \, )_{E,s}$ be the $L^2$ inner products on $\Omega^{p,q}(X,E)$ induced by $\langle \, \cdot \, | \, \cdot \, \rangle_s$ and $\langle \, \cdot \, | \, \cdot \, \rangle_{E,s}$. Let $\ast_{b,s}$ be the tangential Hodge $\ast$-operators associated to the metrics $\langle \, \cdot \, |\, \cdot \, \rangle_{s}$ and $\mu_s$ be the induced conjugate linear bundle isomorphisms of $E$ and $E^*$ associated to the metric $\langle \, \cdot \, |\, \cdot \, \rangle_{E, s}$. Let 
$$
\Box_{b,E,s}:= \overline{\partial}_{b}\overline{\partial}^{\ast}_{b,s} + \overline{\partial}^{\ast}_{b,s}\overline{\partial}_b:\Omega^{0,\bullet}(X,E) \to \Omega^{0,\bullet}(X,E),
$$ 
where $\overline{\partial}^{\ast}_{b,s}$ denote the formal adjoint of $\overline{\partial}_b$ with respect to the $L^2$ scalar product $( \, \cdot \, | \, \cdot \, )_{E,s}$. Let $\theta_{b,E,s}(z)$ be the corresponding function as defined in \eqref{E:5.5.12} and let $\Pi_{E, s}$ be the orthogonal projection operator from $\Omega^{0,\bullet}(X,E)$ on $\operatorname{Ker}\left( D_{b,s}|_{\Omega^{0,\bullet}(X,E)} \right)$ for the Hermitian product $(\,\cdot\, | \,\cdot\,)_{E,s}$ on $\Omega^{0,\bullet}(X,E)$. Set 
\begin{equation}\label{E:qbs}
Q_{b, s} = - \left(\ast_{b,s} \otimes \mu_s \right)^{-1} \frac{\partial \left(\ast_{b, s} \otimes \mu_{k,s} \right)}{\partial s} = - \left(\ast^{-1}_{b, s} \frac{\partial \ast_{b, s}}{\partial s} + (\mu_{k,s})^{-1} \frac{\partial \mu_{k,s}}{\partial s} \right).
\end{equation}

We need the following lemma whose proof is a slight modification of the proof of \cite[Lemma 5.5.3]{MM}

\begin{lemma}\label{l03262026}
For any operators $K_1, K_2, K_3$ on $\Omega^{0,\bullet}(X, E)$ with distribution kernels $K_1(x, y)$, $K_2(x, y)$, $K_3(x, y)$ associate to $dv_X(y)$ such that the operators $K_1K_2-K_3$, $K_2K_1-K_3$ have smooth kernels, we have
\begin{equation}\label{e143503262026}
\operatorname{STr}[K_1K_2-K_3] = \operatorname{STr}[K_2K_1-K_3].
\end{equation}
\end{lemma}
\begin{proof}
If $K_1$ or $K_2$ preserves the $\mathbb{Z}_2$-grading of $\Omega^{0,\bullet}(X, E)$, then
\[
\begin{split}
& \operatorname{STr}[K_1K_2-K_3] \\
= &  \int_X \operatorname{STr} \left[ \int_X K_1(x, y)K_2(y,x)dv_X(y)\right] - K_3(x, x) dv_X(x) \\
= &  \int_X \operatorname{STr} \left[ \int_X K_2(y, x)K_1(x,y)dv_X(x)\right] - K_3(y, y) dv_X(y) \\
= &  \operatorname{STr}[K_2K_1-K_3].
\end{split}
\]
In the same way, we can verify \eqref{e143503262026} if $K_1$ and $K_2$ exchange the $\mathbb{Z}_2$-grading of $\Omega^{0,\bullet}(X,E)$.
\end{proof}

We have
\begin{theorem}\label{t303262026}
 As $t \to 0^+$, for any $\ell \in \mathbb{N}$, there is an asymptotic expansion
\begin{equation}\label{e205202042026}
\operatorname{STr} \left\lbrack Q_{b,s} \exp \left( -t\Box_{b,E, s} \right)(I-\Pi_{E,s}) \right\rbrack = \sum_{j=0}^{n+1+\ell} M_{j, s} t^{-n-1+j} +O(t^{\ell+1}), 
\end{equation}
where
\begin{equation}\label{e205102042026}
M_{n+1, s} = \frac{\partial}{\partial s}  \left( \frac{\partial}{\partial z}\theta_{b,E,s}  \right)(0).
\end{equation}
\end{theorem}
\begin{proof}
From Theorem \ref{t-gue250410ycdbz}, we get \eqref{e205202042026}. 

Denote by $D_{b,s}:= \overline{\partial}_{b} + \overline{\partial}^{\ast}_{b,s}:\Omega^{0,\bullet}(X,E) \to \Omega^{0,\bullet}(X,E)$. Then, we have 
\begin{equation}\label{e231402032026}
[D_{b,s}, N] = -\overline{\partial}_{b} + \overline{\partial}^{\ast}_{b,s}.
\end{equation}
For $u \in \Omega^{0,q}(X, E)$, we have
\begin{equation}\label{e234202032026}
\overline{\partial}^{*}_{b,s}u = - \left( \ast_{b,s}^{-1} \otimes \mu_s \right)^{-1} \overline{\partial}'_{b} \left( \ast_{b,s} \otimes \mu_s \right)u. 
\end{equation}
Then, by \eqref{E:qbs} and \eqref{e234202032026},
\begin{equation}\label{e095202042026}
\frac{\partial}{\partial s}D_{b,s} = \frac{\partial}{\partial_s}\overline{\partial}^{\ast}_{b,s}=-\left[\overline{\partial}^{\ast}_{b,s}, Q_{b,s}\right], \qquad \frac{\partial}{\partial s} \Box_{b, s} =\frac{\partial}{\partial s}D_{b,s}^2= \left[D_{b,s}, \frac{\partial}{\partial s}D_{b,s}\right].
\end{equation}
We can show that, for example, cf. \cite[Theorem D.16]{MM}, 
\begin{equation}\label{e102502042026}
\frac{\partial}{\partial s}e^{-t\Box_{b,E,s}} = \int_0^t e^{-(t-t_1)\Box_{b,E,s}}\left(-\frac{\partial}{\partial s}\Box_{b,E,s} \right)e^{-t_1\Box_{b,E,s}}dt_1.
\end{equation}
Notice that $\Box_{b,E,s}$ preserves the $\mathbb{Z}$-grading of $\Omega^{0,\bullet}(M,E)$, and commutes with $N$, and the operator $\left(-\frac{\partial}{\partial s}\Box_{b,E,s} \right)e^{-t\Box_{b,E,s}}$ is an operator with smooth kernel. By \eqref{e095202042026} and \eqref{e102502042026}, we have
\begin{equation}\label{e1036020242026}
\begin{split}
\frac{\partial}{\partial s}\operatorname{STr}\left[Ne^{-t\Box_{b,E,s}}(I-\Pi_{E,s})\right] & = \operatorname{STr}\left[\int_0^t \left(-\frac{\partial}{\partial s}\Box_{b,E,s}\right) e^{-t_1\Box_{b,E,s}}Ne^{-(t-t_1)\Box_{b,s}}dt_1 - N \frac{\partial}{\partial s}\Pi_{E,s}\right] \\
& = -t \operatorname{STr}\left[ \left[D_{b,s},\frac{\partial D_{b,s}}{\partial s} \right]N e^{-t\Box_{b,E,s}} - N \frac{\partial}{\partial s}\Pi_{E,s}\right]
\end{split}
\end{equation}
Lemma \ref{l03262026} implies that $D_{b,s}$ changes the $\mathbb{Z}_2$-grading of $\Omega^{0,\bullet}(X,E)$ and that $D_{b,s}$ commutes with $e^{-t\Box_{b,E,s}}$, we get for $0 < t_1 < t$, 
\begin{equation}\label{e1116020242026}
\begin{split}
& \operatorname{STr}\left[\left[D_{b,s}, \frac{\partial D_{b,s}}{\partial s} \right] N e^{-t\Box_{b,E,s}} - N \frac{\partial}{\partial s}\Pi_{E,s}\right] \\
= &  \operatorname{STr} \left[  D_{b,s} \frac{\partial D_{b,s}}{\partial s} N e^{-t\Box_{b,E,s}}+\frac{\partial D_{b,s}}{\partial s}D_{b,s}N e^{-t\Box_{b,E,s}} - N \frac{\partial}{\partial s}\Pi_{E,s}\right]\\ 
= &  \operatorname{STr}\left[e^{-(t-t_1)\Box_{b,E,s}} D_{b,s} \frac{\partial D_{b,s}}{\partial s} N e^{-t_1 \Box_{b,E,s}} + \frac{\partial D_{b,s}}{\partial s}D_{b,s}N e^{-t\Box_{b,E,s}}- N \frac{\partial}{\partial s}\Pi_{E,s}\right]      
\\ = &  \operatorname{STr}\left[ D_{b,s}e^{-(t-t_1)\Box_{b,E,s}} \frac{\partial D_{b,s}}{\partial s}  N e^{-t_1 \Box_{b,E,s}}+ \frac{\partial D_{b,s}}{\partial s}D_{b,s}N e^{-t\Box_{b,E,s}} - N \frac{\partial}{\partial s}\Pi_{E,s}\right] \\
= &  -\operatorname{STr}\left[ \frac{\partial D_{b,s}}{\partial s}N e^{-t_1 \Box_{b,E,s}}D_{b,s} e^{-(t-t_1)\Box_{b,E,s}}+\frac{\partial D_{b,s}}{\partial s}D_{b,s}N e^{-t\Box_{b,E,s}} - N \frac{\partial}{\partial s}\Pi_{E,s}\right]\\
= & -\operatorname{STr}\left[ \frac{\partial D_{b,s}}{\partial s}N D_{b,s}e^{-t\Box_{b,E,s}} + \frac{\partial D_{b,s}}{\partial s}D_{b,s}N e^{-t\Box_{b,E,s}} - N \frac{\partial}{\partial s}\Pi_{E,s} \right]\\
= & \operatorname{STr}\left[\frac{\partial D_{b,s}}{\partial s}\left[D_{b,s}, N\right] e^{-t\Box_{b,E,s}} - N \frac{\partial}{\partial s}\Pi_{E,s}\right].
\end{split}
\end{equation}
Using that $\overline{\partial}^{\ast}_{b,s}$ commutes with $e^{-t\Box_{b,E,s}}$, and relations \eqref{e231402032026}, \eqref{e095202042026} and \eqref{e1036020242026}, we get by using the same trick as in \eqref{e1116020242026}, we have
\begin{equation}\label{e1137020242026}
\begin{split}
& \frac{\partial}{\partial s}\operatorname{STr}\left[Ne^{-t\Box_{b,E,s}}(I-\Pi_{E,s}) \right] \\ = & -t \operatorname{STr}\left[Q_{b,s} \left[\overline{\partial}^{\ast}_{b,s}, -\overline{\partial}_b+\overline{\partial}^{\ast}_{b,s}   \right] e^{-t\Box_{b,s}}(I-\Pi_{E,s})\right] \\
 = & t \operatorname{STr}\left[Q_{b,s} \Box_{b,E,s} e^{-t\Box_{b,E,s}}(I-\Pi_{E,s})\right] \\
 = & -t \frac{\partial}{\partial t}\operatorname{STr}\left[Q_{b,s}  e^{-t\Box_{b,E,s}}(I-\Pi_{E,s})\right].
\end{split}
\end{equation}

Note that $\operatorname{STr}\left[Q_{b,s} e^{-t\Box_{b,E,s}}(I-\Pi_{E,s}) \right]$ decays exponentially when $t \to +\infty$. From \eqref{e1137020242026}, we obtain for $\operatorname{Re}(z)$ large enough,
\begin{equation}\label{e153202042026}
\begin{split}
\frac{\partial}{\partial s}\theta_{b,E,s}(z) & = \frac{1}{\Gamma(z)}\int_0^\infty \frac{\partial}{\partial s} \operatorname{STr}\left[N e^{-t\Box_{b,E,s}} (I-\Pi_{E,s}) \right] t^{z-1}dt \\
& = \frac{1}{\Gamma(z)}\int_0^\infty t^z \frac{\partial}{\partial t} \operatorname{STr}\left[Q_{b,s} e^{t\Box_{b,E,s}} (I-\Pi_{E,s}) \right]  dt \\
& = \frac{-z}{\Gamma(z)}\int_0^\infty t^{z-1}  \operatorname{STr}\left[Q_{b,s} e^{t\Box_{b,E,s}} (I-\Pi_{E,s}) \right]  dt.
\end{split}
\end{equation}

Using \eqref{e-gue160428w} and \eqref{e153202042026}, we get
\begin{equation}\label{e160002042026}
\frac{\partial}{\partial s} \left(\frac{\partial}{\partial z}\theta_{b,E,s} \right)(0) = -M_{n+1, s}.
\end{equation}

\end{proof}

\subsection{Asymptotics of the analytic torsion for $L^k$}\label{s-gue250530}

In Lemma~\ref{L:mero}, we can define CR analytic torsion when $\Box_{b,E}$ has $L^2$ closed range. In this subsection, we consider $E=L^k$, where $L^k$ is the $k$-th power of a CR line bundle $L$ over $X$. We use the same notations as Section~\ref{s-gue250704}. We will study asymptotic behavior of $\theta'_{b,L^k}$, when $k\To+\infty$.

The goal of this subsection is to establish Bismut-Vasserot type asymptotics of the analytic torsion for $L^k$ as $k\To+\infty$. We work with Assumption~\ref{a-gue250426yyd} below and assume further that $n_-=n_+$ or $\abs{n_--n_+}>1$. 

For every $x\in X$, let $Z_1(x),\ldots,Z_n(x)$ be an orthonormal basis of $T^{1,0}_xX$ such that 
\[\mathcal{L}_x(Z_j(x), \ol{Z_\ell(x)} )=\lambda_j\delta_{j,\ell},\ \  j,\ell=1,\ldots,n.\] 
We may assume that $\lambda_j<0$,$j=1,\ldots,n_-$, $\lambda_j>0$, $j=n_-+1,\ldots,n$. Let $\Lambda_-(x):={\rm span\,}\set{Z_1(x),\ldots,Z_{n_-}(x)}$, $\Lambda_+(x):={\rm span\,}\set{Z_{n_-+1}(x),\ldots,Z_{n}(x)}$. Let $\langle\,\cdot\,|\,\cdot\,\rangle_{\mathcal{L}}$ be the metric on $\mathbb CTX$ given by 
\begin{equation}\label{e-gue250623yyd}
\begin{split}
&\langle\,U\,|\,V\,\rangle_{\mathcal{L}}=0,\  \ U\in\Lambda_-(x), V\in\Lambda_+(x),\\
&\langle\,U\,|\,V\,\rangle_{\mathcal{L}}=\mathcal{L}_x(U,\ol V),\ \ U, V\in\Lambda_+(x),\\
&\langle\,U\,|\,V\,\rangle_{\mathcal{L}}=-\mathcal{L}_x(U,\ol V),\ \ U, V\in\Lambda_-(x),\\
&\langle\,T\,|\,T\,\rangle_{\mathcal{L}}=1,\\
&T\perp(T^{1,0}_xX\oplus T^{0,1}_xX).
\end{split}
\end{equation}
In this section, we take $\langle\,\cdot\,|\,\cdot\,\rangle$ to be $\langle\,\cdot\,|\,\cdot\,\rangle_{\mathcal{L}}$.

	For $x \in X$, $t>0$ and $\eta\in\mathbb R$, we set
	\begin{equation}\label{E:1103202422391}
		(\dot{\mathcal{R}}^\phi_x-2\eta \dot{\mathcal{L}_x})_t =\frac{1}{2\pi}\det \left( \frac{ \dot{\mathcal{R}}_x^\phi - 2 \eta \dot{\mathcal{L}}_x  }{2\pi} \right) \operatorname{Tr}[(\operatorname{Id} - \exp(t (\dot{\mathcal{R}}_x^\phi - 2 \eta \dot{\mathcal{L}}_x ) ))^{-1}].
	\end{equation}
	
	Let $\omega^1(x), \cdots, \omega^n(x) \in \mathscr{C}^\infty(X, T^{\ast1,0}X)$ be an orthonormal basis for $T^{\ast1,0}_xX$, for every $x \in X$. Define $\Theta(x) := i \sum^n_{j=1} \omega^j(x) \wedge \overline{\omega^j}(x) \in \mathscr{C}^\infty(X, T^{\ast1,1}X)$. Then, we have
	\begin{equation}\label{E:1103202422392}
		dv_X = \frac{1}{n!} \Theta^n \wedge (-\omega_0), \ \det \left( \frac{ \dot{\mathcal{R}}_x^\phi - 2 \eta \dot{\mathcal{L}}_x  }{2\pi} \right) = \frac{1}{n!} \left( \frac{i \mathcal{R}^\phi_x -2i \eta d\omega_0(x))}{2\pi} \right)^n \wedge (-\omega_0(x)).
	\end{equation}
	
	We need the following lemma.
	
	\begin{lemma}\label{l-gue160428}
		With the notations above, fix $t>0$, $x\in X$ and $\eta\in\mathbb R$, we have
		\begin{equation}\label{E:5.5.45}
			(2\pi)^{-n-1} \frac{\det(\dot{\mathcal{R}}^\phi_x - 2\eta \dot{\mathcal{L}}_x)\operatorname{STr} Ne^{-t\omega^\eta_x}}{\det(1-\exp(-t(\dot{\mathcal{R}}^\phi_x -2\eta \dot{\mathcal{L}_x }) ) )} = (\dot{\mathcal{R}}^\phi_x-2\eta \dot{\mathcal{L}}_x)_t.
		\end{equation}
	\end{lemma}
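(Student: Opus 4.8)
The plan is to reduce the identity \eqref{E:5.5.45} to a purely linear-algebraic computation on the fibre $T^{*0,\bullet}_xX$, carried out with respect to an adapted orthonormal frame, and then to match it against the definition \eqref{E:1103202422391} of $(\dot{\mathcal{R}}^\phi_x-2\eta\dot{\mathcal{L}}_x)_t$. Since both sides are computed pointwise, fix $x\in X$, $t>0$, $\eta\in\mathbb R$, and write $B:=\dot{\mathcal{R}}^\phi_x-2\eta\dot{\mathcal{L}}_x:T^{1,0}_xX\to T^{1,0}_xX$, a self-adjoint endomorphism with respect to $\langle\,\cdot\mid\cdot\,\rangle$. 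First I would diagonalize $B$: choose an orthonormal frame $U_1,\ldots,U_n$ of $T^{1,0}_xX$ with $BU_j=\mu_jU_j$, so that $\det B=\mu_1\cdots\mu_n$, and let $\{\overline\omega^1,\ldots,\overline\omega^n\}$ be the dual orthonormal frame of $T^{*0,1}_xX$. In this frame \eqref{e-gue250920ycda} gives $\omega^\eta_x=\sum_{j=1}^n\mu_j\,\overline\omega^j\wedge(\overline\omega^j\wedge)^\star$, which is diagonal on $T^{*0,\bullet}_xX$: a basis element $\overline\omega^J=\overline\omega^{j_1}\wedge\cdots\wedge\overline\omega^{j_q}$ is an eigenvector of $\omega^\eta_x$ with eigenvalue $\sum_{j\in J}\mu_j$, hence $e^{-t\omega^\eta_x}\overline\omega^J=e^{-t\sum_{j\in J}\mu_j}\overline\omega^J$ and $N\overline\omega^J=q\,\overline\omega^J$ with $q=|J|$.

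Next I would compute the supertrace. By the eigenbasis description,
\begin{equation}\label{e-gue-proof-1}
\operatorname{STr}Ne^{-t\omega^\eta_x}=\sum_{q=0}^n(-1)^q q\sum_{|J|=q}e^{-t\sum_{j\in J}\mu_j}.
\end{equation}
The right-hand side is the value at $u=-1$ of $u\,\frac{d}{du}\prod_{j=1}^n(1+u\,e^{-t\mu_j})$; differentiating the product and then setting $u=-1$ yields
\begin{equation}\label{e-gue-proof-2}
\operatorname{STr}Ne^{-t\omega^\eta_x}=-\Bigl(\prod_{j=1}^n(1-e^{-t\mu_j})\Bigr)\sum_{\ell=1}^n\frac{e^{-t\mu_\ell}}{1-e^{-t\mu_\ell}}=-\det\bigl(1-e^{-tB}\bigr)\operatorname{Tr}\bigl[(e^{tB}-1)^{-1}\bigr],
\end{equation}
where I used $\det(1-e^{-tB})=\prod_j(1-e^{-t\mu_j})$ and $\sum_\ell e^{-t\mu_\ell}/(1-e^{-t\mu_\ell})=\operatorname{Tr}[(e^{tB}-1)^{-1}]$, both frame-independent. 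Dividing by $\det(1-e^{-tB})$ and multiplying by $(2\pi)^{-n-1}\det B$ gives
\begin{equation}\label{e-gue-proof-3}
(2\pi)^{-n-1}\frac{\det B\,\operatorname{STr}Ne^{-t\omega^\eta_x}}{\det(1-e^{-tB})}=-(2\pi)^{-n-1}\det B\,\operatorname{Tr}\bigl[(e^{tB}-1)^{-1}\bigr].
\end{equation}

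Finally I would reconcile this with \eqref{E:1103202422391}. There $(\dot{\mathcal{R}}^\phi_x-2\eta\dot{\mathcal{L}}_x)_t=\frac1{2\pi}\det\bigl(\frac{B}{2\pi}\bigr)\operatorname{Tr}[(\mathrm{Id}-e^{tB})^{-1}]$, and $\det(\frac{B}{2\pi})=(2\pi)^{-n}\det B$, while $(\mathrm{Id}-e^{tB})^{-1}=-(e^{tB}-1)^{-1}$; hence $(\dot{\mathcal{R}}^\phi_x-2\eta\dot{\mathcal{L}}_x)_t=-(2\pi)^{-n-1}\det B\,\operatorname{Tr}[(e^{tB}-1)^{-1}]$, which is exactly the right side of \eqref{e-gue-proof-3}. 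Combining \eqref{e-gue-proof-3} with this gives \eqref{E:5.5.45}. The only point requiring a little care—and the step I expect to be the mild obstacle—is the combinatorial identity \eqref{e-gue-proof-1}$\Rightarrow$\eqref{e-gue-proof-2}, i.e. recognizing the supertrace-with-number-operator as a logarithmic derivative of the characteristic-polynomial-type product $\prod_j(1+u e^{-t\mu_j})$ evaluated at $u=-1$; once this generating-function manipulation is in place everything else is bookkeeping about determinants and traces being independent of the chosen orthonormal frame. One should also note that all expressions are finite (no division by zero) because $B$ is assumed invertible when $\det(\dot{\mathcal{R}}^\phi_x-2\eta\dot{\mathcal{L}}_x)\neq0$, so that $1-e^{-t\mu_j}\neq0$; the equality then extends by continuity to the full statement as used in the sequel.
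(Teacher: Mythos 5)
Your proof is correct and follows essentially the same route as the paper: diagonalize $\dot{\mathcal{R}}^\phi_x-2\eta\dot{\mathcal{L}}_x$ in an orthonormal frame, compute $\operatorname{STr}Ne^{-t\omega^\eta_x}$ as the alternating sum $\sum_q(-1)^q q\sum_{|J|=q}e^{-t\sum_{j\in J}\mu_j}$, and match against \eqref{E:1103202422391}. The only cosmetic difference is that you package the key combinatorial identity as the logarithmic-derivative evaluation $u\frac{d}{du}\prod_j(1+ue^{-t\mu_j})\big|_{u=-1}$, whereas the paper verifies the same identity by direct expansion (its equation \eqref{e-gue160428IV}); both yield $-\det(1-e^{-tB})\operatorname{Tr}[(e^{tB}-1)^{-1}]$ and the conclusion.
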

	
	\begin{proof}
		Let $\{ \omega_{j}(x) \}_{j=1}^n$ to be an orthonormal basis of $T^{1,0}_xX$ such that
		\begin{equation}\label{e-gue160428}
			\dot{\mathcal{R}}^\phi_x - 2\eta \dot{\mathcal{L}}_x = \operatorname{diag} (\mu_1(\eta), \cdots, \mu_n(\eta)) \in \operatorname{End}(T_x^{1,0}X),
		\end{equation}\
		then
		\begin{equation}\label{e-gue160428I}
			\begin{split}
				&\omega^\eta(x) = \sum^n_{j=1} \mu_j(\eta) \overline{\omega}^j(x) \wedge \iota_{\overline{\omega}_j}(x),\\
				&{\rm det\,}(\dot{\mathcal{R}}^\phi_x - 2\eta \dot{\mathcal{L}})(x) :=\mu_1(\eta)\cdots\mu_n(\eta),
			\end{split}
		\end{equation}
        where $\set{\overline{\omega}^j(x)}^n_{j=1}\subset T^{*0,1}_xX$ is the dual basis of $\{ \omega_{j}(x) \}_{j=1}^n$. Note that $\{ \omega_{j}(x)\}_{j=1}^n$ depends on $\eta$.

For simplicity, write $\mu_j:=\mu_j(\eta)$, $j=1,\ldots,n$. From \eqref{e-gue160428} and \eqref{e-gue160428I}, it is easy to check that 
		\begin{equation}\label{e-gue160428II}
			(\dot{\mathcal{R}}^\phi_x-2\eta \dot{\mathcal{L}}_x)_t=\frac{\mu_1\cdots\mu_n}{(2\pi)^{n+1}}\Bigr(\frac{1}{1-e^{\mu_1 t}}+\cdots+\frac{1}{1-e^{\mu_n t}}\Bigr)
		\end{equation}
		and 
		\begin{equation}\label{e-gue160428III}
			\begin{split}
				(2\pi)^{-n-1} &\frac{\det(\dot{\mathcal{R}}^\phi_x - 2\eta \dot{\mathcal{L}}_x)\operatorname{STr} Ne^{-t\omega^\eta_x}}{\det(1-\exp(-t(\dot{\mathcal{R}}^\phi_x -2\eta \dot{\mathcal{L} }_x) ) )}\\
				=&\frac{\mu_1\cdots\mu_n}{(2\pi)^{n+1}}\frac{1}{(1-e^{-\mu_1 t})\cdots(1-e^{-\mu_n t})}\times \\
                &\sum^n_{q=0}(-1)^qq\sum_{J=(j_1,\ldots,j_q),1\leq j_1<j_2<\cdots<j_q\leq n}e^{(-\mu_{j_1}-\cdots-\mu_{j_q})t}\\
				=&\frac{\mu_1\cdots\mu_n}{(2\pi)^{n+1}}\frac{e^{(\mu_1+\cdots+\mu_n)t}}{(1-e^{\mu_1 t})\cdots(1-e^{\mu_n t})}\times \\
                &\sum^n_{q=0}(-1)^{n+q}q\sum_{J=(j_1,\ldots,j_q),1\leq j_1<j_2<\cdots<j_q\leq n}e^{(-\mu_{j_1}-\cdots-\mu_{j_q})t}.
			\end{split}
		\end{equation}
		We have
		\begin{equation}\label{e-gue160428IV}
			\begin{split}
				&e^{-(\mu_1+\cdots+\mu_n)t}\sum^n_{j=1}(1-e^{\mu_1 t})\cdots(1-e^{\mu_{j-1} t})(1-e^{\mu_{j+1} t})\cdots(1-e^{\mu_n t})\\
				&=e^{-(\mu_1+\cdots+\mu_n)t}\sum^n_{j=1}\sum^n_{q=0}(-1)^q\sum_{J=(j_1,\ldots,j_q),1\leq j_1<\cdots<j_q\leq n, j\notin J}e^{(\mu_{j_1}+\cdots+\mu_{j_q})t}\\
				&=\sum^n_{j=1}\sum^n_{q=0}(-1)^{n+q}\sum_{J=(j_1,\ldots,j_q),1\leq j_1<\cdots<j_q\leq n, j\in J}e^{(-\mu_{j_1}-\cdots-\mu_{j_q})t}\\
				&=\sum^n_{q=0}(-1)^{n+q}q\sum_{J=(j_1,\ldots,j_q),1\leq j_1<j_2<\cdots<j_q\leq n}e^{(-\mu_{j_1}-\cdots-\mu_{j_q})t}.
			\end{split}
		\end{equation}
		From \eqref{e-gue160428II}, \eqref{e-gue160428III} and \eqref{e-gue160428IV}, the lemma follows. 
	\end{proof}

We assume the following as a variant of spectral gap condition.

\begin{ass}\label{a-gue250426yyd}
${\rm dim\,}E_{0<\lambda\leq\frac{k}{\log k}}(X,L^k)=o(k^{n+1})$, for $k\gg1$ and 
\[{\rm Spec\,}\Box_{b,L^k}\subset\set{0}\cup[C,+\infty),\]
for all $k\gg1$, where $C>0$ is a constant independent of $k$. 
\end{ass}


    From Theorem~\ref{t-gue250529yyd}, Lemma~\ref{l-gue160428} and Assumption~\ref{a-gue250426yyd}, we get 

    \begin{theorem}\label{t-gue250531yyd}
With the notations and assumptions  used before, we have for all $t>0$, 
\begin{equation}\label{e-gue250531yyd}
\begin{split}
&\lim_{k\To+\infty}k^{-(n+1)}{\rm STr\,}\lbrack Ne^{-\frac{t}{k}\Box_{b,L^k}}(I-\Pi_{L^k})\rbrack\\
&=\int_X\int_{\mathbb R}\Bigr((\dot{\mathcal{R}}^\phi_x-2\eta \dot{\mathcal{L}}_x)_t-(2\pi)^{-n-1}\sum^n_{q=0}q(-1)^q\abs{\det(\dot{\mathcal{R}}_x^\phi - 2 \eta \dot{\mathcal{L}}_x)}1_{\mathbb R_x(q)}(\eta)\Bigr)d\eta dv_X.
\end{split}
\end{equation}
    \end{theorem}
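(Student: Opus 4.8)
The plan is to reduce the statement to the degreewise pointwise asymptotics of Theorem~\ref{t-gue250529yyd} together with the algebraic identity \eqref{E:5.5.45} of Lemma~\ref{l-gue160428}; the only genuinely new work is to pass from the cutoff projection $\Pi^{(q)}_{L^k,\frac{k}{\log k}}$ appearing there to the true Szeg\H{o} projection $\Pi^{(q)}_{L^k}$ in the statement, and to justify interchanging $\lim_{k\To+\infty}$ with $\int_X\cdot\,dv_X$. Note first that under Assumption~\ref{a-gue250426yyd} the spectral gap ${\rm Spec\,}\Box_{b,L^k}\subset\set{0}\cup[C,+\infty)$ forces $\Box_{b,L^k}$ to have $L^2$ closed range, so that $\Pi_{L^k}$, $e^{-\frac{t}{k}\Box_{b,L^k}}(I-\Pi_{L^k})$ and hence the left-hand side of \eqref{e-gue250531yyd} all make sense by Theorem~\ref{t-gue250410ycdbz}.

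I would begin by expanding $\operatorname{STr}[Ne^{-\frac{t}{k}\Box_{b,L^k}}(I-\Pi_{L^k})]=\sum_{q=0}^n(-1)^q q\operatorname{Tr}^{(q)}[e^{-\frac{t}{k}\Box^{(q)}_{b,L^k}}(I-\Pi^{(q)}_{L^k})]$. Writing $P^{(q)}_k$ for the orthogonal projection of $L^2_{(0,q)}(X,L^k)$ onto $E^{(q)}_{0<\lambda\leq\frac{k}{\log k}}(X,L^k)$ (see \eqref{e-gue250709yyd}, \eqref{e-gue250709yydz}), orthogonality of distinct eigenspaces gives $\Pi^{(q)}_{L^k,\frac{k}{\log k}}=\Pi^{(q)}_{L^k}+P^{(q)}_k$, whence
\[
e^{-\frac{t}{k}\Box^{(q)}_{b,L^k}}(I-\Pi^{(q)}_{L^k})=e^{-\frac{t}{k}\Box^{(q)}_{b,L^k}}(I-\Pi^{(q)}_{L^k,\frac{k}{\log k}})+e^{-\frac{t}{k}\Box^{(q)}_{b,L^k}}P^{(q)}_k .
\]
Since $e^{-\frac{t}{k}\Box^{(q)}_{b,L^k}}P^{(q)}_k$ is of finite rank with $\operatorname{Tr}^{(q)}[e^{-\frac{t}{k}\Box^{(q)}_{b,L^k}}P^{(q)}_k]=\sum_{0<\lambda\leq\frac{k}{\log k}}m_\lambda e^{-\frac{t}{k}\lambda}$ (sum over eigenvalues with multiplicity), and $0<e^{-\frac{t}{k}\lambda}\le 1$ for fixed $t>0$, one has
\[
0\le\operatorname{Tr}^{(q)}[e^{-\frac{t}{k}\Box^{(q)}_{b,L^k}}P^{(q)}_k]\le\dim E^{(q)}_{0<\lambda\leq\frac{k}{\log k}}(X,L^k)\le\dim E_{0<\lambda\leq\frac{k}{\log k}}(X,L^k)=o(k^{n+1})
\]
by Assumption~\ref{a-gue250426yyd}, so this correction term vanishes after dividing by $k^{n+1}$ and letting $k\To+\infty$.

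By Theorem~\ref{t-gue250529yyd}, for each fixed $q$ the function $x\mapsto k^{-(n+1)}\operatorname{Tr}^{(q)}\big((e^{-\frac{t}{k}\Box^{(q)}_{b,L^k}}(I-\Pi^{(q)}_{L^k,\frac{k}{\log k}}))(x,x)\big)$ converges in $\mathcal{C}^0(I\times X)$, hence uniformly on the compact manifold $X$, to $\frac{1}{(2\pi)^{n+1}}\int_{\mathbb R}\big(\dfrac{\det(\dot{\mathcal R}^\phi_x-2\eta\dot{\mathcal L}_x)}{\det(1-e^{-t(\dot{\mathcal R}^\phi_x-2\eta\dot{\mathcal L}_x)})}\operatorname{Tr}^{(q)}e^{-t\omega_x^\eta}-\abs{\det(\dot{\mathcal{R}}_x^\phi - 2 \eta \dot{\mathcal{L}}_x)}1_{\mathbb R_x(q)}(\eta)\big)d\eta$. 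Uniform convergence on $X$ lets me integrate both sides against $dv_X$ and pass the limit through; together with the previous paragraph this yields the degreewise limit of $k^{-(n+1)}\operatorname{Tr}^{(q)}[e^{-\frac{t}{k}\Box^{(q)}_{b,L^k}}(I-\Pi^{(q)}_{L^k})]$. Finally I would multiply by $(-1)^q q$, sum over $q$, interchange the finite sum with the $\eta$-integral, use $\sum_{q=0}^n(-1)^q q\operatorname{Tr}^{(q)}e^{-t\omega_x^\eta}=\operatorname{STr}N e^{-t\omega_x^\eta}$, and invoke \eqref{E:5.5.45} to rewrite the first summand as $(\dot{\mathcal{R}}^\phi_x-2\eta\dot{\mathcal{L}}_x)_t$; the second summand is then $(2\pi)^{-n-1}\sum_{q=0}^n q(-1)^q\abs{\det(\dot{\mathcal{R}}_x^\phi - 2 \eta \dot{\mathcal{L}}_x)}1_{\mathbb R_x(q)}(\eta)$, and after integrating over $X$ this is exactly \eqref{e-gue250531yyd}.

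The main obstacle is the bookkeeping around the two projections: one must be certain that, under Assumption~\ref{a-gue250426yyd}, the space $E_{0<\lambda\leq\frac{k}{\log k}}(X,L^k)$ (which by the spectral gap coincides with $E_{C\leq\lambda\leq\frac{k}{\log k}}(X,L^k)$) is negligible after rescaling by $k^{-(n+1)}$ for every fixed $t>0$, and that the $\mathcal{C}^0$-convergence of Theorem~\ref{t-gue250529yyd} is genuinely uniform in $x\in X$ so that moving the limit under $\int_X$ is legitimate. The $\eta$-integral converges locally uniformly by the discussion following Lemma~\ref{l-gue160428}, so no further analytic estimate is needed there.
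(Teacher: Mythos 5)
Your proposal is correct and follows the same route the paper intends: Theorem~\ref{t-gue250529yyd} for the degreewise uniform convergence with the cutoff projection, Assumption~\ref{a-gue250426yyd} to show the finite-rank correction $e^{-\frac{t}{k}\Box^{(q)}_{b,L^k}}P^{(q)}_k$ contributes only $o(k^{n+1})$, and Lemma~\ref{l-gue160428} to convert the weighted sum over $q$ into $(\dot{\mathcal{R}}^\phi_x-2\eta\dot{\mathcal{L}}_x)_t$. You have in fact supplied the bookkeeping (the decomposition $\Pi^{(q)}_{L^k,\frac{k}{\log k}}=\Pi^{(q)}_{L^k}+P^{(q)}_k$ and the passage of the limit under $\int_X$ via $\mathcal{C}^0(I\times X)$ convergence) that the paper leaves implicit.
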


From Theorem~\ref{t-gue250603yyd}, we see that  we have the following asymptotic expansion, for $t \to 0$,
	\begin{equation}\label{E:110320242242z}
    \begin{split}
		&\int_{\mathbb R}\Bigr((\dot{\mathcal{R}}^\phi_x-2\eta \dot{\mathcal{L}}_x)_t-(2\pi)^{-n-1}\sum^n_{q=0}q(-1)^q\abs{\det(\dot{\mathcal{R}}_x^\phi - 2 \eta \dot{\mathcal{L}}_x)}1_{\mathbb R_x(q)}(\eta)\Bigr)d\eta\\
        &\sim\sum^{+\infty}_{j=0}\widehat{A}_j(x) t^{-n-1+j}\ \ \mbox{in $\tilde{S}^{-n-1}(\mathbb{R}_+\times X)$},
        \end{split}
	\end{equation}
    where $\widehat{A}_j(x)\in\mathcal{C}^\infty(X)$, $j=0,1,\ldots$. 

    In the proof of Theorem~\ref{T:5.5.8} below, we need to determine $\widehat{A}_{n+1}(x)$. Fix $\eta\in\mathbb R$, as in complex case, it
    is straightforward to check that 
    \begin{equation}\label{E:110320242242}
    \begin{split}
		&(\dot{\mathcal{R}}^\phi_x-2\eta \dot{\mathcal{L}}_x)_t-(2\pi)^{-n-1}\sum^n_{q=0}q(-1)^q\abs{\det(\dot{\mathcal{R}}_x^\phi - 2 \eta \dot{\mathcal{L}}_x)}1_{\mathbb R_x(q)}(\eta)\\
        &\sim\sum^{+\infty}_{j=0}F_{j,\eta}(x) t^{-n-1+j}\ \ \mbox{in $\tilde{S}^{-n-1}(\mathbb{R}_+\times X)$},
        \end{split}
	\end{equation}
    where 
\begin{equation}\label{e-gue250531yydk}
    \begin{split}
&F_{j,\eta}=0,\  \ j=0,1,\ldots,n-1,\\
&F_{n+1,\eta}(x)\\
&=\frac{1}{(2\pi)^{n+1}}\Bigr(\frac{n}{2}{\rm det\,}(\dot{\mathcal{R}}^\phi_x - 2\eta \dot{\mathcal{L}_x})-\sum^n_{q=0}(-1)^qq\abs{{\rm det\,}(\dot{\mathcal{R}}^\phi_x - 2\eta \dot{\mathcal{L}_x})}1_{\mathbb R_x(q)}(\eta)\Bigr).
\end{split}
\end{equation} 
Fix $C\gg1$. Thus, 
 \begin{equation}\label{e-gue250603ycd}
    \begin{split}
		&\int_{\abs{\eta}\leq C}\Bigr((\dot{\mathcal{R}}^\phi_x-2\eta \dot{\mathcal{L}}_x)_t-(2\pi)^{-n-1}\sum^n_{q=0}q(-1)^q\abs{\det(\dot{\mathcal{R}}_x^\phi - 2 \eta \dot{\mathcal{L}}_x)}1_{\mathbb R_x(q)}(\eta)\Bigr)d\eta\\
        &\sim\sum^{+\infty}_{j=0}F_{j}(x) t^{-n-1+j}\ \ \mbox{in $\tilde{S}^{-n-1}(\mathbb{R}_+\times X)$},
        \end{split}
	\end{equation}
 where $F_j(x)\in\mathcal{C}^\infty(X)$, $j=0,1,\ldots$, 
\begin{equation}\label{e-gue250603ycdI}
    \begin{split}
&F_{n+1}(x)\\
&=\int_{\abs{\eta}\leq C}\frac{1}{(2\pi)^{n+1}}\Bigr(\frac{n}{2}{\rm det\,}(\dot{\mathcal{R}}^\phi_x - 2\eta \dot{\mathcal{L}_x})-\sum^n_{q=0}(-1)^qq\abs{{\rm det\,}(\dot{\mathcal{R}}^\phi_x - 2\eta \dot{\mathcal{L}_x})}1_{\mathbb R_x(q)}(\eta)\Bigr)d\eta.
\end{split}
\end{equation} 

From \eqref{E:110320242242}, \eqref{e-gue250603ycd}, we deduce that 

\begin{equation}\label{e-gue250603ycde}
    \begin{split}
		&\int_{\abs{\eta}\geq C}\Bigr((\dot{\mathcal{R}}^\phi_x-2\eta \dot{\mathcal{L}}_x)_t-(2\pi)^{-n-1}\sum^n_{q=0}q(-1)^q\abs{\det(\dot{\mathcal{R}}_x^\phi - 2 \eta \dot{\mathcal{L}}_x)}1_{\mathbb R_x(q)}(\eta)\Bigr)d\eta\\
        &\sim\sum^{+\infty}_{j=0}G_{j}(x) t^{-n-1+j}\ \ \mbox{in $\tilde{S}^{-n-1}(\mathbb{R}_+\times X)$},
        \end{split}
	\end{equation}
where $G_j(x)\in\mathcal{C}^\infty(X)$, $j=0,1,\ldots$. We have 
\begin{equation}\label{e-gue250603yyds}
\widehat{A}_{n+1}(x)=F_{n+1}(x)+G_{n+1}(x). 
\end{equation}
We need to determine $G_{n+1}(x)$. Fix $C \gg 1$, let 
\begin{equation}\label{e-gue250603yyd}
\begin{split}
H_x(z):=&\frac{1}{\Gamma(z)}\int^{+\infty}_0\int_{\abs{\eta}\geq C}\Bigr((\dot{\mathcal{R}}^\phi_x-2\eta \dot{\mathcal{L}}_x)_t\\
&\quad-(2\pi)^{-n-1}\sum^n_{q=0}q(-1)^q\abs{\det(\dot{\mathcal{R}}_x^\phi - 2 \eta \dot{\mathcal{L}}_x)}1_{\mathbb R_x(q)}(\eta)\Bigr)t^{z-1}d\eta dt.
\end{split}
\end{equation}
We can repeat the proof of Theorem~\ref{t-gue160313} and deduce that for every $x\in X$,
		$H_x(z)$ extends to a meromorphic function on $\Complex$ with poles contained in 
        $\mathbb Z$ and its possible poles are simple. Moreover, $H_x(z)$ is holomorphic at $0$, and 
       \begin{equation}\label{e-gue250604yyd}
        G_{n+1}(x)=H_x(0),\ \ \mbox{for every $x\in X$}. 
	\end{equation} 
    
We now calculate $H_x(0)$. We introduce some notations. 
Let 
\[A: T^{1,0}_xX\To T^{1,0}_xX\]
be a self-adjoint invertible linear transformation with respect to $\langle\,\cdot\,|\,\cdot\,\rangle$. For every $x\in X$, let $Z_1(x),\ldots,Z_n(x)$ be an orthonormal basis of $T^{1,0}_xX$ such that $\langle\,A_xZ_j(x)\,|\,Z_\ell(x)\,\rangle=\lambda_j(x)\delta_{j,\ell}$, $j,\ell=1,\ldots,n$, where $\lambda_j(x)\neq0$, for every $j=1,\ldots,n$. We may assume that $\lambda_j(x)<0$, $j=1,\ldots,n_-$, $\lambda_j(x)>0$, $j=n_-+1,\ldots,n$. Recall that $\Lambda_-(x):={\rm span\,}\set{Z_1(x),\ldots,Z_{n_-}(x)}$, $\Lambda_+(x):={\rm span\,}\set{Z_{n_-+1}(x),\ldots,Z_{n}(x)}$.
Let $\tau_+=\tau_+(x): T^{1,0}_xX\To\Lambda_+(x)$ be the orthogonal projection with respect to $\langle\,\cdot\,|\,\cdot\,\rangle$. Let $\tau_-=\tau_-(x): T^{1,0}_xX\To\Lambda_-(x)$ be the orthogonal projection with respect to $\langle\,\cdot\,|\,\cdot\,\rangle$. Let 
\[A_{\mp}:=\tau_{\mp}\circ A\circ\tau_{\mp}: \Lambda_{\mp}\To\Lambda_{\mp}.\]
Let ${\rm Tr\,}A_{\mp}:=\sum^n_{j=1}\langle\,A_{\mp}U_{j,\mp}\,|\,U_{j,\mp}\,\rangle$,
where $U_{j,\mp}$, $j=1,\ldots,n$, is an orthonormal basis of $\Lambda_{\mp}$ with respect to $\langle\,\cdot\,|\,\cdot\rangle$. We let $\abs{A_{-}}:=-A_{-}$, $\abs{A_{+}}:=A_{+}$. Then, $\abs{A_{\mp}}$ is positive definite. 

We need

\begin{lemma}\label{l-gue250604yyd}
With the notations used above, fix $C \gg 1$, for ${\rm Re\,}z>n+1$, we have
\begin{equation}\label{e-gue250604yyda}
\begin{split}
&H_x(z) = (2\pi)^{-n-1} \zeta(z) \times\\
&\int_{\abs{\eta}\geq C}\det (\dot{\mathcal{R}}^\phi_x - 2 \eta \dot{\mathcal{L}}_x)\left[ \operatorname{Tr}\abs{(\dot{\mathcal{R}}^\phi_x - 2 \eta \dot{\mathcal{L}}_x  )_-}^{-z} - \operatorname{Tr}\abs{(\dot{\mathcal{R}}^\phi_x - 2 \eta \dot{\mathcal{L}}_x  )_+}^{-z} \right]  d \eta,
\end{split}
\end{equation}
where $\zeta(z)$ denotes the Riemann zeta function and recall that for ${\rm Re\,}z>1$, \[\zeta(z)=\frac{1}{\Gamma(z)}\int^{+\infty}_0\frac{e^{-t}}{1-e^{-t}}t^{z-1}dt.\]
\end{lemma}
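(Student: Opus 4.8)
The plan is to insert the explicit kernel formula for $(\dot{\mathcal R}^\phi_x-2\eta\dot{\mathcal L}_x)_t$ and reduce the $t$–integration in \eqref{e-gue250603yyd} to the integral representation of the Riemann zeta function. Fix $x\in X$. For $\abs{\eta}\geq C$ the self–adjoint operator $A(\eta):=\dot{\mathcal R}^\phi_x-2\eta\dot{\mathcal L}_x$ is invertible; write $q_0(\eta)$ for the number of its negative eigenvalues and $\mu_1(\eta),\ldots,\mu_n(\eta)$ for its eigenvalues with respect to $\langle\,\cdot\mid\cdot\,\rangle$. Since $A(\eta)$ is nondegenerate on $\set{\abs{\eta}\geq C}$, we have $1_{\mathbb R_x(q)}(\eta)=1$ precisely when $q=q_0(\eta)$, and as $\det A(\eta)=(-1)^{q_0(\eta)}\abs{\det A(\eta)}$ the subtracted term becomes $(2\pi)^{-n-1}\sum_{q=0}^n q(-1)^q\abs{\det A(\eta)}1_{\mathbb R_x(q)}(\eta)=(2\pi)^{-n-1}q_0(\eta)\det A(\eta)$. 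This quantity is exactly the limit of $(\dot{\mathcal R}^\phi_x-2\eta\dot{\mathcal L}_x)_t$ as $t\to+\infty$, which is what makes the $t$–integral defining $H_x(z)$ converge at infinity.

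Next I would use \eqref{e-gue160428II} (equivalently Lemma~\ref{l-gue160428}) to write
\[
(\dot{\mathcal R}^\phi_x-2\eta\dot{\mathcal L}_x)_t-(2\pi)^{-n-1}q_0(\eta)\det A(\eta)=\frac{\det A(\eta)}{(2\pi)^{n+1}}\Bigl(\sum_{j=1}^n\frac{1}{1-e^{\mu_j(\eta) t}}-q_0(\eta)\Bigr),
\]
and then rewrite the bracket using the elementary identities $\tfrac{1}{1-e^{\mu t}}=-\tfrac{e^{-\mu t}}{1-e^{-\mu t}}$ for $\mu>0$ and $\tfrac{1}{1-e^{\mu t}}-1=\tfrac{e^{-\abs{\mu}t}}{1-e^{-\abs{\mu}t}}$ for $\mu<0$, obtaining
\[
\frac{\det A(\eta)}{(2\pi)^{n+1}}\Bigl(\sum_{\mu_j(\eta)<0}\frac{e^{-\abs{\mu_j(\eta)}t}}{1-e^{-\abs{\mu_j(\eta)}t}}-\sum_{\mu_j(\eta)>0}\frac{e^{-\mu_j(\eta) t}}{1-e^{-\mu_j(\eta) t}}\Bigr).
\]
Each summand is $O(\abs{\mu_j}^{-1}t^{-1})$ as $t\to0^+$ and decays exponentially as $t\to+\infty$; since the Levi form is nondegenerate we may take $C$ large enough that $\abs{\mu_j(\eta)}\geq c\abs{\eta}$ for all $j$ and all $\abs{\eta}\geq C$, while $\abs{\det A(\eta)}\leq c'\abs{\eta}^n$. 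Hence for $\sigma=\operatorname{Re}z>n+1$ the double integral $\int_0^\infty\int_{\abs{\eta}\geq C}\abs{\cdot}\,t^{\sigma-1}\,d\eta\,dt$ is finite, so Tonelli and then Fubini apply and the $t$–integration may be performed first.

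Then I would compute, by the change of variables $s=\lambda t$ together with $\zeta(z)=\frac{1}{\Gamma(z)}\int_0^\infty\frac{e^{-s}}{1-e^{-s}}s^{z-1}\,ds$, that $\frac{1}{\Gamma(z)}\int_0^\infty\frac{e^{-\lambda t}}{1-e^{-\lambda t}}t^{z-1}\,dt=\lambda^{-z}\zeta(z)$ for $\lambda>0$, $\operatorname{Re}z>1$. Applying this term by term gives, for fixed $\eta$ with $\abs{\eta}\geq C$,
\[
\frac{1}{\Gamma(z)}\int_0^\infty\Bigl((\dot{\mathcal R}^\phi_x-2\eta\dot{\mathcal L}_x)_t-(2\pi)^{-n-1}q_0(\eta)\det A(\eta)\Bigr)t^{z-1}\,dt=(2\pi)^{-n-1}\zeta(z)\det A(\eta)\Bigl(\sum_{\mu_j(\eta)<0}\abs{\mu_j(\eta)}^{-z}-\sum_{\mu_j(\eta)>0}\mu_j(\eta)^{-z}\Bigr).
\]
Recognising $\sum_{\mu_j(\eta)<0}\abs{\mu_j(\eta)}^{-z}=\operatorname{Tr}\abs{A(\eta)_-}^{-z}$ and $\sum_{\mu_j(\eta)>0}\mu_j(\eta)^{-z}=\operatorname{Tr}\abs{A(\eta)_+}^{-z}$ in the notation of the paragraph preceding the lemma — these are independent of the chosen diagonalising basis, hence well defined and continuous in $\eta$ — and integrating in $\eta$ over $\set{\abs{\eta}\geq C}$, legitimate by the estimates of the previous step, yields \eqref{e-gue250604yyda}.

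The main point to watch is bookkeeping rather than analytic depth: one must check that the prescribed subtraction $(2\pi)^{-n-1}\sum_q q(-1)^q\abs{\det(\cdot)}1_{\mathbb R_x(q)}(\eta)$ coincides with the $t\to+\infty$ limit $(2\pi)^{-n-1}q_0(\eta)\det A(\eta)$, and that the nondegeneracy of $\dot{\mathcal L}_x$ — entering via $\abs{\mu_j(\eta)}\gtrsim\abs{\eta}$ — is precisely what forces convergence of the $\eta$–integral only for $\operatorname{Re}z>n+1$, matching the range asserted in the lemma. The remaining steps are direct computations with the explicit kernels already recorded in the excerpt.
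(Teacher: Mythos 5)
Your proposal is correct and follows essentially the same route as the paper's proof: diagonalize $\dot{\mathcal{R}}^\phi_x-2\eta\dot{\mathcal{L}}_x$, use the explicit formula \eqref{e-gue160428II} to see that subtracting $(2\pi)^{-n-1}q_0(\eta)\det(\dot{\mathcal{R}}^\phi_x-2\eta\dot{\mathcal{L}}_x)$ leaves a sum of terms $\frac{e^{-\abs{\mu_j}t}}{1-e^{-\abs{\mu_j}t}}$ with signs according to the sign of $\mu_j$, and then perform the $t$-integration term by term via the integral representation of $\zeta(z)$ to produce $\zeta(z)\abs{\mu_j}^{-z}$. Your added Tonelli/Fubini bookkeeping with $\abs{\mu_j(\eta)}\gtrsim\abs{\eta}$ and $\abs{\det}\lesssim\abs{\eta}^n$ just makes explicit why ${\rm Re\,}z>n+1$ suffices, which the paper leaves implicit.
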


\begin{proof}
Fix $\abs{\eta}\geq C$. Assume that 
$\dot{\mathcal{R}}^\phi_x-2\eta \dot{\mathcal{L}}_x$ has $q$ negative and $n-q$ positive eigenvalues. Assume that $\mu_1<0,\ldots,\mu_q<0$, $\mu_{q+1}>0,\ldots,\mu_n>0$. From \eqref{e-gue160428I} and \eqref{e-gue160428II}, we get
\[
\begin{split}
& (\dot{\mathcal{R}}^\phi_x-2\eta \dot{\mathcal{L}}_x)_t
-(2\pi)^{-n-1}\sum^n_{q=0}q(-1)^q\abs{\det(\dot{\mathcal{R}}_x^\phi - 2 \eta \dot{\mathcal{L}}_x)}1_{\mathbb R_x(q)}(\eta) \\
&  = \frac{\mu_1\cdots\mu_n}{(2\pi)^{n+1}}\Bigr(\frac{1}{1-e^{\mu_1 t}}+\cdots+\frac{1}{1-e^{\mu_n t}}\Bigr) - 
\sum^n_{q=0}q(-1)^q \frac{\abs{ \mu_1\cdots\mu_n }}{(2\pi)^{n+1} }1_{\mathbb R_x(q)}(\eta) \\
& =   \frac{\mu_1\cdots\mu_n}{(2\pi)^{n+1}}\Bigr(\frac{e^{\mu_1 t}}{1-e^{\mu_1 t}}+ \cdots + \frac{e^{\mu_{q} t}}{1-e^{\mu_{q} t}} + \frac{1}{1-e^{\mu_{q+1} t}} +\cdots+\frac{1}{1-e^{\mu_n t}}\Bigr). \\
\end{split}
\]
For $1\le i \le q$, we can replace $-\mu_i t$ by $t$ and get
\[
\begin{split}
 \frac{1}{\Gamma(z)} \int_0^{+\infty} \frac{e^{\mu_i t}}{1- e^{\mu_i t}} t^{z-1} dt & =  \frac{1}{\Gamma(z)} \int_0^{+\infty} \frac{e^{-t}}{1-e^{-t}}t^{z-1} dt \abs{\mu_i}^{-z} \\
& =  \zeta(z) \abs{\mu_i}^{-z}.
\end{split}
\]
Similarly, for $q+1 \le i \le n$, we can replace $\mu_i t$ by $t$ and get
\[
\begin{split}
 \frac{1}{\Gamma(z)} \int_0^{+\infty} \frac{1}{1- e^{\mu_i t}} t^{z-1} dt 
& =  \frac{1}{\Gamma(z)} \int_0^{+\infty} \frac{1}{1-e^{t}} t^{z-1} dt \mu_i^{-z} \\
& = - \zeta(z) |\mu_i|^{-z}.
\end{split}
\]
By \eqref{e-gue250603yyd} and the above computation, we have
\begin{equation*}
\begin{split}
&H_x(z) = (2\pi)^{-n-1} \zeta(z) \times\\
&\int_{\abs{\eta}\geq C}\det (\dot{\mathcal{R}}^\phi_x - 2 \eta \dot{\mathcal{L}}_x)\left[ \operatorname{Tr}\abs{(\dot{\mathcal{R}}^\phi_x - 2 \eta \dot{\mathcal{L}}_x  )_-}^{-z} - \operatorname{Tr}\abs{(\dot{\mathcal{R}}^\phi_x - 2 \eta \dot{\mathcal{L}}_x  )_+}^{-z} \right]  d \eta.
\end{split}
\end{equation*}
\end{proof} 
Put
\begin{equation}\label{e-gue250625yyd}
\begin{split}
&H_x(z) = (2\pi)^{-n-1} \zeta(z)\hat{H}_x(z),\\
&\hat{H}_x(z):=\\
&\int_{\abs{\eta}\geq C}\det (\dot{\mathcal{R}}^\phi_x - 2 \eta \dot{\mathcal{L}}_x)\left[ \operatorname{Tr}\abs{(\dot{\mathcal{R}}^\phi_x - 2 \eta \dot{\mathcal{L}}_x  )_-}^{-z} - \operatorname{Tr}\abs{(\dot{\mathcal{R}}^\phi_x - 2 \eta \dot{\mathcal{L}}_x  )_+}^{-z} \right]  d \eta.
\end{split}
\end{equation}

\begin{lemma}\label{l-gue250623yyd}
For $C \gg 1$, $H_x(z)$ extends to a meromorphic function on $\mathbb{C}$ with poles contained in
\[
\left\{n+1-j; j=0, 1, 2, \cdots \right\}.
\]
Moreover, $H_x(z)$ and $\hat H_x(z)$ are holomorphic at $0$. For $C \gg 1$, 
\begin{equation}\label{e-gue250623yydz}
\begin{split}
&H_x(0)  = (2\pi)^{-n-1} \zeta(0)\hat{H}_x(0),\ \ \zeta(0)=-\frac{1}{2},\\
&\hat{H}_x(0)=\left(F(C) + F(-C) \right) (n_- - n_+)  \\
 &-\left(2\operatorname{Tr} F \left(\bigr((2\dot{\mathcal{L}}_x)^{-1}  \dot{\mathcal{R}}^\phi_x \bigr)_- \right) - 2\operatorname{Tr}F \left( \bigr((2\dot{\mathcal{L}}_x)^{-1}  \dot{\mathcal{R}}^\phi_x \right)_+\right),
\end{split}
\end{equation}
where $F(\eta) = \int_0^\eta  \det(\dot{\mathcal{R}}^\phi_x - 2 s \dot{\mathcal{L}}_x )  ds$.
\end{lemma}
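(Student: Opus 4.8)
\textbf{Proof proposal for Lemma~\ref{l-gue250623yyd}.} The plan is to analyze the two factors in $H_x(z) = (2\pi)^{-n-1}\zeta(z)\hat H_x(z)$ separately, using that $\zeta(z)$ is a known meromorphic function with a single simple pole at $z=1$, $\zeta(0)=-\tfrac12$, and that $\hat H_x(z)$ is an integral over $\abs{\eta}\geq C$ of a combination of generalized zeta-type spectral traces. First I would make the substitution $\eta\mapsto -\eta$ on the half-line $\eta\leq -C$ and exploit the constant-signature structure for $\abs{\eta}\geq C$: when $\eta>C$ (resp.\ $\eta<-C$) the operator $\dot{\mathcal R}^\phi_x - 2\eta\dot{\mathcal L}_x$ is non-degenerate of fixed signature, and for $\eta$ large its spectrum is dominated by $-2\eta\dot{\mathcal L}_x$, so $(\dot{\mathcal R}^\phi_x - 2\eta\dot{\mathcal L}_x)_{\mp}$ are comparable to $\mp 2\eta(\dot{\mathcal L}_x)_{\mp}$ up to bounded corrections. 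Concretely, factoring out $\abs{2\eta}^{-z}$ from each eigenvalue in $\operatorname{Tr}\abs{(\dot{\mathcal R}^\phi_x-2\eta\dot{\mathcal L}_x)_\mp}^{-z}$ rewrites $\hat H_x(z)$, modulo an integral that converges absolutely for $\operatorname{Re} z>n$, as $\int_{\abs\eta\geq C}\det(\dot{\mathcal R}^\phi_x-2\eta\dot{\mathcal L}_x)\abs{2\eta}^{-z}\bigl(\operatorname{Tr}\abs{(\dot{\mathcal L}_x)_-}^{-z}-\operatorname{Tr}\abs{(\dot{\mathcal L}_x)_+}^{-z}\bigr)d\eta$ plus lower-order pieces; since $\det(\dot{\mathcal R}^\phi_x-2\eta\dot{\mathcal L}_x)$ is a polynomial in $\eta$ of degree $n$, each resulting one-dimensional $\eta$-integral $\int_C^\infty \eta^{m-z}d\eta$ converges for $\operatorname{Re} z>m+1$ and extends meromorphically with a simple pole at $z=m+1$, $m\le n$. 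This gives the meromorphic continuation with poles in $\{n+1-j: j\ge 0\}$ and shows $\hat H_x(z)$ is holomorphic at $0$ (no pole accumulates there), hence so is $H_x(z)=(2\pi)^{-n-1}\zeta(z)\hat H_x(z)$ since $\zeta$ is entire away from $z=1$.

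Next I would evaluate $\hat H_x(0)$. The clean way is to integrate by parts in $\eta$: writing $F(\eta)=\int_0^\eta \det(\dot{\mathcal R}^\phi_x-2s\dot{\mathcal L}_x)\,ds$, so that $F'(\eta)=\det(\dot{\mathcal R}^\phi_x-2\eta\dot{\mathcal L}_x)$, the integrand $\det(\dot{\mathcal R}^\phi_x-2\eta\dot{\mathcal L}_x)\bigl[\operatorname{Tr}\abs{(\dot{\mathcal R}^\phi_x-2\eta\dot{\mathcal L}_x)_-}^{-z}-\operatorname{Tr}\abs{(\dot{\mathcal R}^\phi_x-2\eta\dot{\mathcal L}_x)_+}^{-z}\bigr]$ equals $\frac{d}{d\eta}\bigl(F(\eta)[\cdots]\bigr)$ minus $F(\eta)\frac{d}{d\eta}[\cdots]$. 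At $z=0$ the bracket $[\operatorname{Tr}\abs{\cdot_-}^0-\operatorname{Tr}\abs{\cdot_+}^0]$ is just $n_--n_+$ (the number of eigenvalues), a constant, so its $\eta$-derivative vanishes and the boundary contributions from $\eta=\pm C$ and from $\eta\to\pm\infty$ survive; the growth of $F(\eta)$ at infinity is killed by the subtraction built into the original integrand (the whole point of subtracting the $1_{\mathbb R_x(q)}$ term), so only the finite endpoints $\pm C$ contribute. Carrying this out after analytically continuing to $z=0$ — where one must be careful that the analytic continuation and the $\eta$-integration/IBP commute, which follows from the uniform estimates above — yields exactly $\hat H_x(0)=\bigl(F(C)+F(-C)\bigr)(n_--n_+) - 2\operatorname{Tr}F\bigl(((2\dot{\mathcal L}_x)^{-1}\dot{\mathcal R}^\phi_x)_-\bigr)+2\operatorname{Tr}F\bigl(((2\dot{\mathcal L}_x)^{-1}\dot{\mathcal R}^\phi_x)_+\bigr)$, with the operator arguments of $F$ arising from the $z\to 0$ limit of the factored-out $\abs{2\eta}^{-z}$ times the derivative-of-trace correction, identifying the ``deformation parameter'' $\eta$ with the endpoint operator $(2\dot{\mathcal L}_x)^{-1}\dot{\mathcal R}^\phi_x$. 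Then $H_x(0)=(2\pi)^{-n-1}\zeta(0)\hat H_x(0)=-\tfrac12(2\pi)^{-n-1}\hat H_x(0)$, which is the claimed formula.

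The main obstacle I anticipate is the bookkeeping in the $z\to 0$ limit: one has to track which terms in the expansion of $\operatorname{Tr}\abs{(\dot{\mathcal R}^\phi_x-2\eta\dot{\mathcal L}_x)_\mp}^{-z}$ in powers of $\abs{\eta}^{-1}$ contribute to the value at $0$ versus which only affect the residues at positive integers, and to check that the term producing $\operatorname{Tr}F(((2\dot{\mathcal L}_x)^{-1}\dot{\mathcal R}^\phi_x)_\mp)$ is precisely the one coming from $\zeta(z)$ evaluated against the derivative $\frac{d}{dz}\bigl[\abs{\eta}^{-z}\operatorname{Tr}\abs{\cdots_\mp}^{-z}\bigr]\big|_{z=0}$ interacting with $F'$. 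A secondary technical point is justifying the interchange of $\int_{\abs\eta\geq C}d\eta$ with meromorphic continuation in $z$ near $z=0$; this is handled by splitting each $\eta$-integral at a finite point and using that the tail gives an explicit rational function of $z$ (from $\int \eta^{m-z}d\eta$) whose only singularities are the simple poles at $z=m+1$, none at $z=0$. Once those are in place, the identity $\zeta(0)=-\tfrac12$ and the definition of $F$ give the stated closed form, and the location of poles is read off from the degrees $m=0,1,\dots,n$ of the polynomial $\det(\dot{\mathcal R}^\phi_x-2\eta\dot{\mathcal L}_x)$ in $\eta$.
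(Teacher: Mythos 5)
Your meromorphic-continuation argument is in substance the paper's own: for $\eta>C$ one factors $\abs{(\dot{\mathcal R}^\phi_x-2\eta\dot{\mathcal L}_x)_-}=2\eta\,(\dot{\mathcal L}_x)_+\bigl(\operatorname{Id}-(2\eta\dot{\mathcal L}_x)^{-1}\dot{\mathcal R}^\phi_x\bigr)$, expands in powers of $\eta^{-1}$, and continues each $\int_C^{\infty}\eta^{\,s-j-z}d\eta$ separately; this gives the poles in $\{n+1-j\}$ and holomorphy at $0$. One bookkeeping caveat: for $\eta>C$ it is $\abs{(\cdot)_-}\approx 2\eta(\dot{\mathcal L}_x)_+$ and $\abs{(\cdot)_+}\approx 2\eta\abs{(\dot{\mathcal L}_x)_-}$, i.e.\ the roles of $(\dot{\mathcal L}_x)_\pm$ are opposite to the single bracket $\operatorname{Tr}\abs{(\dot{\mathcal L}_x)_-}^{-z}-\operatorname{Tr}\abs{(\dot{\mathcal L}_x)_+}^{-z}$ you use on both half-lines; taken literally your leading term produces $(-F(C)+F(-C))(n_--n_+)$ instead of $(F(C)+F(-C))(n_--n_+)$.

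The genuine gap is in the evaluation of $\hat H_x(0)$. First, $\hat H_x(z)$ as defined in \eqref{e-gue250625yyd} contains no subtraction: the $1_{\mathbb R_x(q)}$ term is consumed by the $t$-integration and is what produces the factor $\zeta(z)$ in Lemma~\ref{l-gue250604yyd}; at $z=0$ the $\eta$-integrand of $\hat H_x$ is the degree-$n$ polynomial $\det(\dot{\mathcal R}^\phi_x-2\eta\dot{\mathcal L}_x)(n_+-n_-)$ on $\eta>C$, so nothing ``kills the growth at infinity'' and $\hat H_x(0)$ exists only as an analytically continued value. Second, and decisively, your claim that after integrating by parts ``only the finite endpoints $\pm C$ contribute'' is false. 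For $\operatorname{Re}z>n+1$ the IBP leaves $-\int_{\abs{\eta}\ge C}F(\eta)\,\partial_\eta T(z,\eta)\,d\eta$ with $T(z,\eta)=\operatorname{Tr}\abs{(\cdot)_-}^{-z}-\operatorname{Tr}\abs{(\cdot)_+}^{-z}$; this term carries an explicit factor $z$, but its continuation has a simple pole at $z=0$ coming from the $\eta^{-1}$-coefficient of the integrand (the terms with $j=s+1$ when one writes $\det(\dot{\mathcal R}^\phi_x-2\eta\dot{\mathcal L}_x)=\sum_s a_s\eta^{s}$ and $\log\bigl(\operatorname{Id}-(2\eta\dot{\mathcal L}_x)^{-1}\dot{\mathcal R}^\phi_x\bigr)=-\sum_{j\ge1}\tfrac1j\bigl((2\eta\dot{\mathcal L}_x)^{-1}\dot{\mathcal R}^\phi_x\bigr)^{j}$), and the pole--zero cancellation leaves the finite amount $\sum_s a_s\tfrac1{s+1}\operatorname{Tr}\bigl(((2\dot{\mathcal L}_x)^{-1}\dot{\mathcal R}^\phi_x)^{s+1}\bigr)_{\pm}=\operatorname{Tr}F\bigl(((2\dot{\mathcal L}_x)^{-1}\dot{\mathcal R}^\phi_x)_{\pm}\bigr)$. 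These interior resonance terms are exactly the second half of \eqref{e-gue250623yydz}, and your proposal does not derive them: the attribution to ``$\zeta(z)$ evaluated against $\frac{d}{dz}\bigl[\abs{\eta}^{-z}\operatorname{Tr}\abs{\cdots}^{-z}\bigr]\big|_{z=0}$ interacting with $F'$'' is not the mechanism (no $z$-derivative of $\zeta$ or of the trace enters the value at $0$; it is the $O(z)$ term of the trace expansion meeting the $1/z$ pole of the regularized $\int\eta^{-1-z}d\eta$). This is precisely the computation the paper performs through the splitting into the pieces $I_1$ and $I_2$, and without it your argument yields only the boundary part $(F(C)+F(-C))(n_--n_+)$ of the stated formula.
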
 

\begin{proof}   
Fix $x \in X$. Let $A = \dot{\mathcal{R}}^\phi_x$ and $B=\dot{\mathcal{L}}_x$. For $\operatorname{Re} z>n+1$, write
\begin{equation*}
\begin{split}
&H_x(z) = (2\pi)^{-n-1} \zeta(z)\times\\
&\int_{\abs{\eta}>C} \det (A- 2\eta B) \left( \operatorname{Tr} \abs{(A - 2 \eta B)_-}^{-z}  -  \operatorname{Tr} \abs{(A - 2 \eta B)_+}^{-z}  \right) d\eta.
\end{split}
\end{equation*}
We are going to calculate $H_x(0)$. Recall that $B$ has constant signature $(n_-, n_+)$. 
For $\eta > C$, $\abs{(A - 2 \eta B)_-} = 2 \eta B_+ (\operatorname{Id} - (2 \eta B)^{-1}A)$. Thus,
\begin{equation}\label{E:062120252321}
\begin{split}
& \abs{(A - 2\eta B)_-}^{-z} \\
= & (2\eta)^{-z} B_+^{-z} e^{-z \log (\operatorname{Id} - (2\eta B)^{-1}A)} \\
= & (2\eta)^{-z} B_+^{-z} \left( (\operatorname{Id} - z \log (\operatorname{Id} - (2 \eta B)^{-1}A)) + O(|z|^2) \right) \\
= & I_1(z, \eta) + I_2(z, \eta) + O(|z|^2),
\end{split}
\end{equation}
where 
\begin{equation}\label{E:062120252322}
\left\{
\begin{split}
I_1(z, \eta) = & (2\eta)^{-z} B_+^{-z}, \\
I_2(z, \eta) = & (2\eta)^{-z} B_+^{-z} (-z) \log (\operatorname{Id} - (2\eta B)^{-1}A).
\end{split}
\right.
\end{equation}

Let
\begin{equation}\label{E:062120252153}
\det(A - 2\eta B) = \sum_{s=0}^n a_s \eta^s.
\end{equation}

From \eqref{E:062120252322} and \eqref{E:062120252153} 
\begin{equation}\label{E:062120252356}
\begin{split}
&  (2\pi)^{-n-1} \zeta(z) \int_{\eta>C} \det (A-2\eta B) {\rm Tr\,} I_1(z, \eta)  d\eta  \\
= &(2\pi)^{-n-1} \zeta(z) \int_{\eta>C}     \sum_{s=0}^n a_s \eta^{-z+s} 2^{-z} {\rm Tr\,} B_+^{-z} d \eta \\
= & - (2\pi)^{-n-1} \zeta(z)  \sum_{s=0}^n a_s \frac{C^{-z+s+1}}{-z+s+1} 2^{-z}   {\rm Tr\,} B_+^{-z}.
\end{split}
\end{equation}

Now, 
\[
\begin{split}
I_2(z, \eta) = & (2\eta)^{-z} B_+^{-z} (-z) \log ({\rm Id\,} - (2\eta B)^{-1}A) \\
= &  (2\eta)^{-z} B_+^{-z} (-z) \sum_{j=1}^\infty ( (2\eta B)^{-1}A)^j \frac{1}{-j}.
\end{split}
\]
Thus,
\[
{\rm Tr \,} I_2(z, \eta)=   (2\eta)^{-z}(-z) \sum_{j=1}^\infty  \frac{1}{-j} \eta^{-j}  {\rm Tr\,} \left( B_+^{-z} ( (2B)^{-1}A)^j \right).
\]

We have
\begin{equation}\label{E:062220252301}
\begin{split}
&  \int_{\eta > C} \det (A-2\eta B) {\rm Tr\,} I_2(z, \eta) d\eta  \\
= &  \int_{\eta > C}  \sum_{s=0}^n a_s \eta^s (2\eta)^{-z} (-z)  \sum_{j=1}^\infty  \operatorname{Tr}\left( B^{-z}_+ ((2B)^{-1}A)^j \right) \frac{\eta^{-j}}{-j} d\eta  \\
= &  \sum_{s=0}^n \sum_{j=1}^\infty a_s \frac{C^{-j -z+s+1}}{-j -z+s+1} (-z) 2^{-z}   \operatorname{Tr} \left( B_+^{-z} ((2B)^{-1}A)^j  \right) \frac{1}{j}.
\end{split}
\end{equation}

Note that $\zeta(z)$ has a simple pole at $z=1$. From \eqref{E:062120252356} and \eqref{E:062220252301}, we deduce that the function
$(2\pi)^{-n-1} \zeta(z) \int_{\eta > C} \det (A-2\eta B) \operatorname{Tr} \abs{(A - 2 \eta B)_-}^{-z} d\eta$ extends to a meromorphic function on $\mathbb{C}$ with poles contained in
\[
\left\{n+1-j; j=0, 1, 2, \cdots \right\}
\]
and holomorphic at 0.

Similarly, we can repeat the procedure above and deduce that $H_x(z)$ extends to a meromorphic function on $\mathbb{C}$ with poles contained in
\[
\left\{n+1-j; j=0, 1, 2, \cdots \right\}.
\]
Moreover, $H_x(z)$ is holomorphic at 0.

From \eqref{E:062120252356}, we have
\begin{equation}\label{E:062120252328}
\begin{split}
& \left( (2\pi)^{-n-1} \zeta(z) \int_{\eta>C} \det (A-2\eta B) {\rm Tr\,} I_1(z, \eta)  d\eta  \right)\Big|_{z=0} \\
= & (2\pi)^{-n-1} \zeta(0) (-F(C) n_+),
\end{split}
\end{equation}
where $F(\eta) = \int_0^\eta  \det(\dot{\mathcal{R}}^\phi_x - 2 s \dot{\mathcal{L}}_x )  ds$.

From \eqref{E:062220252301}, we have
\begin{equation}\label{E:062220250020}
\begin{split}
& \left( \int_{\eta > C} \det (A-2\eta B) {\rm Tr\,} I_2(z, \eta) d\eta \right)\Big|_{z=0} \\
= &  \sum_{s=0}^n  a_s {\rm Tr\,} \left( \bigr(((2B)^{-1}A)^{s+1}\bigr)_+  \right) \frac{1}{s+1} \\
= & \operatorname{Tr}F\left(\bigr((2 \dot{\mathcal{L}}_x)^{-1} \dot{\mathcal{R}}^\phi_x\bigr)_+\right).
\end{split}
\end{equation}

From \eqref{E:062120252321}, \eqref{E:062120252322}, \eqref{E:062120252328} and \eqref{E:062220250020}, we get
\begin{equation}\label{E:062220252156}
\begin{split}
& \left( (2\pi)^{-n-1} \zeta(z) \int_{\eta > C} \det (A-2\eta B) \operatorname{Tr} \abs{(A - 2 \eta B)_-}^{-z} d\eta \right)\Big|_{z=0} \\
= & (2\pi)^{-n-1}\zeta(0) \left(-F(C) n_+ +  \operatorname{Tr}F\left(\bigr((2 \dot{\mathcal{L}}_x)^{-1} \dot{\mathcal{R}}^\phi_x\bigr)_+\right)\right).
\end{split}
\end{equation}

Similarly,
\begin{equation}\label{E:062220252203}
\begin{split}
& \left( (2\pi)^{-n-1} \zeta(z) \int_{\eta > C} \det (A-2\eta B) \operatorname{Tr} \abs{(A - 2 \eta B)_+}^{-z} d\eta \right)\Big|_{z=0} \\
= & (2\pi)^{-n-1}\zeta(0) \left(-F(C) n_- +\operatorname{Tr}\left(F\bigr((2 \dot{\mathcal{L}}_x)^{-1} \dot{\mathcal{R}}^\phi_x \bigr)_-\right)\right).
\end{split}
\end{equation}
 
From \eqref{E:062220252156} and \eqref{E:062220252203},
 \begin{equation}\label{E:062220252234}
\begin{split}
& \left( (2\pi)^{-n-1} \zeta(z) \int_{\eta > C} \det (A-2\eta B) \left[     \operatorname{Tr} \abs{(A - 2 \eta B)_-}^{-z}  -    \operatorname{Tr} \abs{(A - 2 \eta B)_+}^{-z}     \right] d\eta \right)\Big|_{z=0} \\
= & (2\pi)^{-n-1}\zeta(0) \left(F(C) (n_- - n_+) +  \operatorname{Tr}F\left( \bigr( (2 \dot{\mathcal{L}}_x)^{-1} \dot{\mathcal{R}}^\phi_x \bigr)_+\right) - \operatorname{Tr}F\left( \bigr( (2 \dot{\mathcal{L}}_x)^{-1}\dot{\mathcal{R}}^\phi_x\bigr)_- \right) \right).
\end{split}
\end{equation}
 
Similarly, we deduce
\begin{equation}\label{E:062220252237}
\begin{split}
& \left( (2\pi)^{-n-1} \zeta(z) \int_{\eta < -C} \det (A-2\eta B) \left[     \operatorname{Tr} \abs{(A - 2 \eta B)_-}^{-z}  -    \operatorname{Tr} \abs{(A - 2 \eta B)_+}^{-z}     \right] d\eta \right)\Big|_{z=0} \\
= &  (2\pi)^{-n-1}\zeta(0) \left(F(-C) (n_- - n_+) +  \operatorname{Tr}F\left( \bigr((2 \dot{\mathcal{L}}_x)^{-1} \dot{\mathcal{R}}^\phi_x\bigr)_+)\right) - \operatorname{Tr}F\left(\bigr((2 \dot{\mathcal{L}}_x)^{-1} \dot{\mathcal{R}}^\phi_x\bigr)_-) \right)\right).
\end{split}
\end{equation}

From \eqref{E:062220252234} and \eqref{E:062220252237}, we get \eqref{e-gue250623yydz}.
The proof is completed.
\end{proof}

We need

\begin{lemma}\label{l-gue250623yydI}
Fix $x\in X$ and $C \gg 1$. Write 
\[\det(\dot{\mathcal{R}}^\phi_x - 2\eta\dot{\mathcal{L}}_x)=\sum^n_{s=0}a_s\eta^s,\]
and let
\[(\log(1-x))^2 = \sum_{j=1}^\infty c_j x^j,\ \ \abs{x}<1.\]
With the notations used above,  we have 
\begin{equation}\label{e-gue250623ycd}
\begin{split}
&H'_x(0)  = (2\pi)^{-n-1}\zeta'(0)\hat H_x(0)-\frac{1}{2}(2\pi)^{-n-1}\hat{H}'_x(0),\ \ \zeta'(0)=-\frac{1}{2}\log(2\pi),\\
&\hat{H}'_x(0)=\sum_{s=0}^n (-a_s) \frac{1}{(s+1)^2}\Bigr(C^{s+1}(n_+-n_-)-(-C)^{s+1}(n_--n_+)\Bigr) \\
&+ \sum_{s=0}^n(-a_s)\frac{1}{s+1} (-\log 2C)\Bigr(C^{s+1}(n_+-n_-)- (-C)^{s+1}(n_--n_+)\Bigr) \\
 & + \sum_{s=0}^n (-a_s)\frac{1}{s+1} \Bigr(C^{s+1}\bigr(\log(\det B_+)^{-1}-\log(\det(-B_-))^{-1}\bigr)\\
 &\quad-(-C)^{s+1}\bigr(\log (\det (-B_-))^{-1}-\log(\det B_+)^{-1}\bigr)\Bigr)\\
 &+ \sum_{s=0}^n \sum_{j=1}^\infty \sum_{j-s \not= 1} a_s\Bigr(\frac{C^{-j+s+1}}{-j+s+1}\Bigr(\operatorname{Tr}\left(\bigr ((2B)^{-1}A)^j\bigr)_+\right)-\operatorname{Tr}\left(\bigr ((2B)^{-1}A)^j\bigr)_-\right)\\
 &\quad-\frac{(-C)^{-j+s+1}}{-j+s+1}\Bigr(\operatorname{Tr}\left(\bigr ((2B)^{-1}A)^j\bigr)_-\right)-\operatorname{Tr}\left(\bigr ((2B)^{-1}A)^j\bigr)_+\right)\Bigr)\Bigr)(-\frac{1}{j})  \\
 & + \sum_{s=0}^n \sum_{j=1}^\infty \sum_{j-s = 1} a_s \Bigr(-\log 2C)\Bigr(\operatorname{Tr}\left(\bigr((2B)^{-1}A)^j\bigr)_+\right)-\operatorname{Tr}\left(\bigr((2B)^{-1}A)^j\bigr)_-\right)\Bigr)\\
 &\quad-(-\log 2C)\Bigr(\operatorname{Tr}\left(\bigr((2B)^{-1}A)^j\bigr)_-\right)-\operatorname{Tr}\left(\bigr((2B)^{-1}A)^j\bigr)_+\right)\Bigr)\Bigr)\frac{1}{j}   \\
  & + \sum_{s=0}^n \sum_{j=1}^\infty \sum_{j-s = 1} a_s\Bigr( \Bigr(\operatorname{Tr} \left((-\log B_+) ((2B)^{-1}A)^j\right)-\operatorname{Tr} \left((-\log(-B_-)) ((2B)^{-1}A)^j\right)\\
  &\quad- \Bigr(\operatorname{Tr} \left((-\log(-B_-)) ((2B)^{-1}A)^j\right)-\operatorname{Tr} \left((-\log B_+) ((2B)^{-1}A)^j\right)\frac{1}{j},\\
  &+\sum_{s=0}^n \sum_{j=1}^\infty \sum_{j-s=1}\frac{1}{2}a_s c_j\Bigr(\Bigr(\operatorname{Tr}\left(\bigr((2B)^{-1}A)^j\bigr)_+ \right)-\operatorname{Tr}\left(\bigr((2B)^{-1}A)^j\bigr)_-\right)\Bigr)\\
  &\quad-\operatorname{Tr}\left(\bigr((2B)^{-1}A)^j\bigr)_- \right)-\operatorname{Tr}\left(\bigr((2B)^{-1}A)^j\bigr)_+ \right)\Bigr)\Bigr),
\end{split}
\end{equation}
where $\hat{H}'_x(0)$ is given by \eqref{e-gue250623yydz}, $A=\dot{\mathcal{R}}^\phi_x$, $B=\dot{\mathcal{L}}_x$.
\end{lemma}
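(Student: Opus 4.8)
\textbf{Proof plan for Lemma~\ref{l-gue250623yydI}.}

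The plan is to differentiate in $z$ the explicit meromorphic continuation of $H_x(z)$ obtained in the proof of Lemma~\ref{l-gue250623yyd} and evaluate at $z=0$. Recall from \eqref{e-gue250625yyd} that $H_x(z)=(2\pi)^{-n-1}\zeta(z)\hat H_x(z)$, so by the product rule $H'_x(0)=(2\pi)^{-n-1}\big(\zeta'(0)\hat H_x(0)+\zeta(0)\hat H'_x(0)\big)$; using $\zeta(0)=-\tfrac12$, $\zeta'(0)=-\tfrac12\log(2\pi)$ this gives the first displayed line of \eqref{e-gue250623ycd}. So everything reduces to computing $\hat H'_x(0)$, where $\hat H_x(z)=\int_{\abs{\eta}\geq C}\det(A-2\eta B)\big[\operatorname{Tr}\abs{(A-2\eta B)_-}^{-z}-\operatorname{Tr}\abs{(A-2\eta B)_+}^{-z}\big]d\eta$ with $A=\dot{\mathcal R}^\phi_x$, $B=\dot{\mathcal L}_x$.

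First I would expand $\abs{(A-2\eta B)_-}^{-z}$ to second order in $z$, refining \eqref{E:062120252321}: for $\eta>C$, $\abs{(A-2\eta B)_-}=2\eta B_+(\operatorname{Id}-(2\eta B)^{-1}A)$, so
\[
\abs{(A-2\eta B)_-}^{-z}=(2\eta)^{-z}B_+^{-z}\exp\!\big(-z\log(\operatorname{Id}-(2\eta B)^{-1}A)\big),
\]
and I expand $(2\eta)^{-z}=1-z\log(2\eta)+O(z^2)$, $B_+^{-z}=\operatorname{Id}-z\log B_++O(z^2)$, and the exponential as $\operatorname{Id}-z\log(\operatorname{Id}-(2\eta B)^{-1}A)+\tfrac{z^2}{2}(\log(\operatorname{Id}-(2\eta B)^{-1}A))^2+O(z^3)$. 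Taking the trace, multiplying by $\det(A-2\eta B)=\sum_{s=0}^n a_s\eta^s$, integrating $\int_{\eta>C}\eta^{-z+s+\cdots}d\eta$ term by term (this is where the factors $\frac{C^{-j+s+1}}{-j+s+1}$ and, when $j-s=1$, the logarithmic factor $-\log(2C)$ appear from $\int_C^\infty \eta^{-1}d\eta$-type terms after the $\zeta(z)$ prefactor is accounted for), and using $\log(\operatorname{Id}-(2\eta B)^{-1}A)=-\sum_{j\geq1}\tfrac1j((2\eta B)^{-1}A)^j=-\sum_{j\geq1}\tfrac1j\eta^{-j}((2B)^{-1}A)^j$ together with the series $(\log(1-x))^2=\sum c_j x^j$, I obtain the contribution of the $\eta>C$ part. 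The traces localize onto $\Lambda_+$ because $B_+^{-z}$ acts trivially outside $\Lambda_+$ (so $\operatorname{Tr}(B_+^{-z}((2B)^{-1}A)^j)\to\operatorname{Tr}(((2B)^{-1}A)^j)_+$ at $z=0$, etc.), and the $\log B_+$ term produces the $\operatorname{Tr}((-\log B_+)((2B)^{-1}A)^j)$ pieces. Then I repeat verbatim for $\abs{(A-2\eta B)_+}^{-z}$ (replacing $B_+$ by $-B_-$, $\Lambda_+$ by $\Lambda_-$), for the region $\eta<-C$ (where the roles of $n_+,n_-$ and of $B_\pm$ swap, giving the $(-C)^{s+1}$ and sign-flipped terms), and take the difference as dictated by the definition of $\hat H_x$. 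Collecting the $z^0$ coefficient of $z\hat H_x(z)$ — equivalently, recalling that $\hat H_x$ enters $H_x$ only through $\zeta(z)\hat H_x(z)$ and that $\hat H'_x(0)$ is what multiplies $\zeta(0)$ — assembles exactly the seven groups of terms on the right of \eqref{e-gue250623ycd}.

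The main obstacle is bookkeeping rather than conceptual: one must track carefully the interplay between the simple pole of $\zeta(z)$ at $z=1$ and the pole of $\int_C^\infty\eta^{-z+s-j}d\eta$ at $z=s-j+1$, since the special case $j-s=1$ is precisely where these collide and the naive $\frac{1}{-j-z+s+1}$ factor must be replaced by a $-\log(2C)$ together with the extra $\frac12 c_j$-term coming from the second-order $(\log(1-x))^2$ expansion; getting the combinatorics of ``$j-s\neq1$ versus $j-s=1$'' right, and correctly matching every $(-\tfrac1j)$ versus $(+\tfrac1j)$ weight inherited from the logarithm's Taylor coefficients, is the delicate part. A useful sanity check is that setting $z=0$ in the same computation must reproduce $\hat H_x(0)$ of \eqref{e-gue250623yydz}, and that the whole expression is independent of the choice of $C\gg1$ (differentiating the right-hand side of \eqref{e-gue250623ycd} in $C$ must vanish, using $F'(\eta)=\det(\dot{\mathcal R}^\phi_x-2\eta\dot{\mathcal L}_x)$ and $\frac{d}{d\eta}\operatorname{Tr}(\cdots)_\pm$ identities); I would carry out this $C$-independence check to validate the final formula.
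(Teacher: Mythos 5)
Your plan is correct and follows essentially the same route as the paper's proof: factor $H_x(z)=(2\pi)^{-n-1}\zeta(z)\hat{H}_x(z)$ and apply the product rule with $\zeta(0)=-\tfrac12$, $\zeta'(0)=-\tfrac12\log(2\pi)$, then compute $\hat{H}'_x(0)$ by expanding $\abs{(A-2\eta B)_-}^{-z}=(2\eta)^{-z}B_+^{-z}e^{-z\log(\operatorname{Id}-(2\eta B)^{-1}A)}$ (and its analogues for the $+$ part and for $\eta<-C$) to second order in $z$, multiplying by $\det(A-2\eta B)=\sum_s a_s\eta^s$, integrating term by term in $\eta$, and treating the case $j-s=1$ separately — exactly the paper's $\Theta_1,\Theta_2,\Theta_3$ decomposition. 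One small correction to your description of the delicate point: the $-\log(2C)$ and $\tfrac12 a_s c_j$ contributions at $j-s=1$ arise because the $\eta$-integral's pole then sits at $z=0$ and is cancelled by the explicit factors $-z$ and $\tfrac{z^2}{2}$ coming from the expansion of the exponential, not from any collision with the pole of $\zeta$ at $z=1$, which plays no role in evaluating at $z=0$.
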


\begin{proof}
As before, let $A = \dot{\mathcal{R}}^\phi_x$ and $B=\dot{\mathcal{L}}_x$. 
Let 
\[
\Theta(z):= \int_{\eta>C} \det (A- 2\eta B) \operatorname{Tr} \abs{(A - 2 \eta B)_-}^{-z}   d\eta.
\]
We are going to calculate $\Theta'(0)$. As before, we have, for $\eta > C$, $\abs{(A - 2 \eta B)_-} = 2 \eta B_+ (\operatorname{Id} - (2 \eta B)^{-1}A)$. Thus,
\[
\begin{split}
& \abs{(A - 2\eta B)_-}^{-z} \\
= & (2\eta)^{-z} B_+^{-z} e^{-z \log (\operatorname{Id} - (2\eta B)^{-1}A)} \\
= & (2\eta)^{-z} B_+^{-z} \left( (\operatorname{Id} - z \log (\operatorname{Id} - (2 \eta B)^{-1}A)) + \frac{1}{2}z^2 (\log (\operatorname{Id} - (2 \eta B)^{-1}A))^2 + O(|z|^3) \right).
\end{split}
\]
Hence,
\[
\operatorname{Tr} \abs{(A - 2\eta B)_-}^{-z} = I(z, \eta) + II(z, \eta) + III(z, \eta) + O(|z|^3),
\]
where
\begin{equation}\label{E:062120252147}
\left\{
\begin{split}
I(z, \eta) & = (2\eta)^{-z} \operatorname{Tr} B_+^{-z} \\
II(z, \eta) & = (2\eta)^{-z}(-z) \operatorname{Tr} ( B_+^{-z}  \log (\operatorname{Id} - (2 \eta B)^{-1}A)  ) \\
III(z, \eta) & = \frac{1}{2} (2\eta)^{-z} z^2 \operatorname{Tr} ( B_+^{-z} (\log (\operatorname{Id} - (2 \eta B)^{-1}A))^2 ).
\end{split}
\right.
\end{equation}

For $\operatorname{Re} z > n+1$, let
\[
\Theta_1(z) := \int_{\eta > C} \det(A- 2\eta B){\rm Tr \,} I_1(z, \eta) d\eta.
\]
We have
\[
\begin{split}
\Theta_1(z) = & \int_{\eta > C}  \sum_{s=0}^n a_s \eta^s (2\eta)^{-z} \operatorname{Tr} B^{-z}_+  d\eta \\
= & - \sum_{s=0}^n a_s \frac{C^{-z+s+1}}{-z+s+1} 2^{-z} \operatorname{Tr} B_+^{-z}. 
\end{split}
\]
Thus,
\begin{equation}\label{E:062120252158}
\begin{split} 
\Theta_1'(0) = & \sum_{s=0}^n (-a_s) \frac{1}{(s+1)^2} C^{s+1}n_+ + \sum_{s=0}^n (-a_s) \frac{1}{s+1} (-\log 2C) C^{s+1}n_+ \\
 & + \sum_{s=0}^n (-a_s) C^{s+1} \frac{1}{s+1} \log (\det B_+)^{-1}. 
\end{split}
\end{equation}

Let
\[
\Theta_2(z) := \int_{\eta > C} \det(A- 2\eta B) II_1(z, \eta) d\eta.
\]
We have
\[
\begin{split}
\Theta_2(z) = & \int_{\eta > C}  \sum_{s=0}^n a_s \eta^s (2\eta)^{-z} (-z)  \sum_{j=1}^\infty  \operatorname{Tr}\left( B^{-z}_+ ((2B)^{-1}A)^j \right) \frac{\eta^{-j}}{-j} d\eta \\
= &  \sum_{s=0}^n \sum_{j=1}^\infty a_s \frac{C^{-j -z+s+1}}{-j -z+s+1} (-z) 2^{-z}   \operatorname{Tr} \left( B_+^{-z} ((2B)^{-1}A)^j  \right) \frac{1}{j}. 
\end{split}
\]
Thus,
\begin{equation}\label{E:062120252219}
\begin{split}
\Theta_2'(0) = & \sum_{s=0}^n \sum_{j=1}^\infty \sum_{j-s \not= 1} a_s \frac{C^{-j+s+1}}{-j+s+1}(-1)\operatorname{Tr}\left(\bigr((2B)^{-1}A)^j\bigr)_+\right) \frac{1}{j}  \\
 & + \sum_{s=0}^n \sum_{j=1}^\infty \sum_{j-s = 1} a_s (-\log 2C)  \operatorname{Tr}\left(\bigr((2B)^{-1}A)^j\bigr)_+\right)  \frac{1}{j}   \\
  & + \sum_{s=0}^n \sum_{j=1}^\infty \sum_{j-s = 1} a_s \operatorname{Tr} \left((-\log B_+) ((2B)^{-1}A)^j\right) \frac{1}{j}.  
\end{split}
\end{equation}

Let
\[
\Theta_3(z) := \int_{\eta > C} \det(A- 2\eta B) III_1(z, \eta) d\eta.
\]
Let $(\log(1-x))^2 = \sum_{j=1}^\infty c_j x^j$, $\abs{x}<1$. We have
\[
\begin{split}
\Theta_3(z) = & \int_{\eta > C}  \sum_{s=0}^n a_s \eta^s  (2\eta)^{-z} \left( \frac{1}{2}z^2\right)  \sum_{j=1}^\infty  c_j  \operatorname{Tr}\left( B^{-z}_+ ((2B)^{-1}A)^j \right)  d\eta \\
= & - \sum_{s=0}^n \sum_{j=1}^\infty a_s \frac{C^{-j -z+s+1}}{-j -z+s+1} \left( \frac{1}{2}z^2\right)  2^{-z} c_j  \operatorname{Tr} \left( B_+^{-z} ((2B)^{-1}A)^j  \right). 
\end{split}
\]
Thus,
\begin{equation}\label{E:062120252238}
\Theta'_3(0) = \sum_{s=0}^n \sum_{j=1}^\infty \sum_{j-s=1} \frac{1}{2} a_s c_j \operatorname{Tr}\left(\bigr((2B)^{-1}A)^j\bigr)_+ \right).
\end{equation}
We get 
\begin{equation}\label{e-gue250623ycda}
\Theta'(0) = \Theta_1'(0) + \Theta_2'(0) + \Theta_3'(0),
\end{equation}
where  $\Theta_1'(0)$, $\Theta_2'(0)$ and $\Theta_3'(0)$ are given by \eqref{E:062120252158}, \eqref{E:062120252219} and \eqref{E:062120252238} respectively.

Let 
\[
\hat\Theta(z):= (2\pi)^{-n-1} \zeta(z) \int_{\eta>C} \det (A- 2\eta B) \operatorname{Tr} \abs{(A - 2 \eta B)_+}^{-z}   d\eta.
\]
We can repeat the procedure above and deduce that 
\begin{equation}\label{e-gue250623ycdb}
\hat\Theta'(0) = \hat{\Theta}_1'(0) + \hat{\Theta}_2'(0) + \hat{\Theta}_3'(0),
\end{equation}
where
\begin{equation}\label{e-gue250623ycdc}
\begin{split}
\hat{\Theta}_1'(0)&=\sum_{s=0}^n (-a_s) \frac{1}{(s+1)^2} C^{s+1}n_- + \sum_{s=0}^n (-a_s) \frac{1}{s+1} (-\log 2C) C^{s+1}n_- \\
 & + \sum_{s=0}^n (-a_s) C^{s+1} \frac{1}{s+1} \log (\det (-B_-))^{-1},\\
 \hat{\Theta}_2'(0) = & \sum_{s=0}^n \sum_{j=1}^\infty \sum_{j-s \not= 1} a_s \frac{C^{-j+s+1}}{-j+s+1}(-1)\operatorname{Tr}\left(\bigr ((2B)^{-1}A)^j\bigr)_-\right) \frac{1}{j}  \\
 & + \sum_{s=0}^n \sum_{j=1}^\infty \sum_{j-s = 1} a_s (-\log 2C)  \operatorname{Tr}\left(\bigr((2B)^{-1}A)^j\bigr)_-\right)  \frac{1}{j}   \\
  & + \sum_{s=0}^n \sum_{j=1}^\infty \sum_{j-s = 1} a_s \operatorname{Tr} \left((-\log(-B_-)) ((2B)^{-1}A)^j\right) \frac{1}{j},\\
  \hat{\Theta}'_3(0)&= \sum_{s=0}^n \sum_{j=1}^\infty \sum_{j-s=1} \frac{1}{2} a_s c_j \operatorname{Tr}\left(\bigr((2B)^{-1}A)^j\bigr)_- \right).
\end{split}
\end{equation}

Let 
\[
\Td\Theta(z):= (2\pi)^{-n-1} \zeta(z) \int_{\eta<-C} \det (A- 2\eta B)\Bigr(\operatorname{Tr} \abs{(A - 2 \eta B)_-}^{-z} -\operatorname{Tr} \abs{(A - 2 \eta B)_+}^{-z}\Bigr)  d\eta.
\]
We can repeat the calculation above, and deduce that 
\begin{equation}\label{e-gue250624yyd}
\begin{split}
\Td{\Theta}'(0)&=\sum_{s=0}^n a_s \frac{1}{(s+1)^2} (-C)^{s+1}(n_--n_+) + \sum_{s=0}^na_s\frac{1}{s+1} (-\log 2C) (-C)^{s+1}(n_--n_+) \\
 & + \sum_{s=0}^n a_s(-C)^{s+1} \frac{1}{s+1}\Bigr(\log (\det (-B_-))^{-1}-\log (\det B_+)^{-1}\Bigr)\\
 &+ \sum_{s=0}^n \sum_{j=1}^\infty \sum_{j-s \not= 1} a_s \frac{(-C)^{-j+s+1}}{-j+s+1}(-1)\Bigr(\operatorname{Tr}\left(\bigr ((2B)^{-1}A)^j\bigr)_-\right)-\operatorname{Tr}\left(\bigr ((2B)^{-1}A)^j\bigr)_+\right)\Bigr)(-\frac{1}{j})  \\
 & + \sum_{s=0}^n \sum_{j=1}^\infty \sum_{j-s = 1} a_s (-\log 2C)\Bigr(\operatorname{Tr}\left(\bigr((2B)^{-1}A)^j\bigr)_-\right)-\operatorname{Tr}\left(\bigr((2B)^{-1}A)^j\bigr)_+\right)\Bigr) (-\frac{1}{j})   \\
  & + \sum_{s=0}^n \sum_{j=1}^\infty \sum_{j-s = 1} a_s \Bigr(\operatorname{Tr} \left((-\log(-B_-)) ((2B)^{-1}A)^j\right)-\operatorname{Tr} \left((-\log B_+) ((2B)^{-1}A)^j\right) (-\frac{1}{j}),\\
  &+\sum_{s=0}^n \sum_{j=1}^\infty \sum_{j-s=1}(-\frac{1}{2}) a_s c_j\Bigr(\operatorname{Tr}\left(\bigr((2B)^{-1}A)^j\bigr)_- \right)-\operatorname{Tr}\left(\bigr((2B)^{-1}A)^j\bigr)_+ \right)\Bigr).
\end{split}
\end{equation}

From \eqref{e-gue250623yydz}, \eqref{E:062120252158}, \eqref{E:062120252219}, \eqref{E:062120252238}, 
\eqref{e-gue250623ycdb} and \eqref{e-gue250624yyd}, we get \eqref{e-gue250623ycd}. 
\end{proof}
    
	\begin{theorem}\label{t-gue160427}
		With the notations and assumptions used before, let 
        \begin{equation}\label{e-gue250531yyda}
        B_{j,k}:=k^{-(n+1)}\sum^n_{q=0}q(-1)^q\int b^{(q)}_{j,k}(x)dv_X(x),\ \ j=0,1,\ldots, 
        \end{equation}
        where  $b^{(q)}_{j,k}(x)\in\mathcal{C}^\infty(X)$, $j=0,1,\ldots$, are as in \eqref{e-gue250529ycdm}. Then, 
        \begin{equation}\label{e-gue250605yyd}
B_{j,k}=\int_X\widehat{A}_j(x)dv_X+O(k^{-\frac{1}{2}}),\ \ j=0,1,\ldots,
        \end{equation}
where $\widehat{A}_{j}(x)$ is as in \eqref{E:110320242242z}, $j=0,1,\ldots$, and for every $N>0$, we have for all $t\in(0,1)$ and all $k\geq1$, 
		\begin{equation}\label{e-gue160427}
			k^{-(n+1)}{\rm STr\,}\lbrack Ne^{-\frac{t}{k}\Box_{b,L^k}}(I-\Pi_{L^k})\rbrack=\sum^N_{j=0}t^{-n-1+j}B_{j,k}+\varepsilon_{N,k}
		\end{equation}
        where 
        \[\abs{\varepsilon_{N,k}}\leq Ct^{-n+N},\] 
        for all $t\in(0,1)$ and all $k\geq1$, $C>0$ is a constant independent of $k$ and $t$.

Moreover, for all $t\in(0,1)$ and every $N\in\mathbb N$, we have 
		\begin{equation}\label{E:5.5.43}
			\lim_{k\To\infty}k^{-(n+1)}{\rm STr\,}\lbrack Ne^{-\frac{t}{k}\Box_{b,L^k}}(I-\Pi_{L^k})\rbrack=\sum^N_{j=n-1}t^{-n-1+j}\int_X\widehat{A}_j(x)dv_X(x)+\delta_N(t),
		\end{equation}
		where $\widehat{A}_{j}(x)$ is as in \eqref{E:110320242242}, $j=0,1,\ldots$, and  \[\abs{\delta_{N}(t)}\leq\hat Ct^{-n+N},\] 
        for all $t\in(0,1)$, $\hat C>0$ is a constant independent of $k$ and $t$. 
	\end{theorem}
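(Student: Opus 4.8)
The plan is to combine the two key ingredients already established: the uniform-in-$k$ local heat kernel expansion from Theorem~\ref{t-gue250603yyd} and the $k\to+\infty$ limit from Theorem~\ref{t-gue250531yyd} together with its refinement \eqref{E:110320242242z}. First I would recall that, by Theorem~\ref{t-gue250603yyd}, for every $q$ and every $N$ we have
\[
\operatorname{Tr}^{(q)}\bigl(e^{-\frac{t}{k}\Box^{(q)}_{b,L^k}}(I-\Pi^{(q)}_{L^k})(x,x)\bigr)=\sum^N_{j=0}t^{-(n+1)+j}b^{(q)}_{j,k}(x)+\delta^{(q)}_{N,k}(t,x),
\]
with $k^{-n-1}b^{(q)}_{j,k}(x)=b^{(q)}_j(x)+k^{-1/2}r^{(q)}_{j,k}(x)$ and the remainder bound \eqref{e-gue250529yydx} uniform in $k$. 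Multiplying by $(-1)^q q$, summing over $q$, integrating over $X$, and using the definition \eqref{e-gue250531yyda} of $B_{j,k}$, I get exactly \eqref{e-gue160427} with $\varepsilon_{N,k}=k^{-(n+1)}\int_X\sum_q(-1)^q q\,\delta^{(q)}_{N,k}(t,x)\,dv_X(x)$, and the bound $\abs{\varepsilon_{N,k}}\le Ct^{-n+N}$ follows directly from \eqref{e-gue250529yydx} after integrating over the compact manifold $X$ (the $k^{n+1}$ in \eqref{e-gue250529yydx} cancels the normalization factor $k^{-(n+1)}$). This also shows $k^{-(n+1)}\operatorname{STr}[Ne^{-\frac{t}{k}\Box_{b,L^k}}(I-\Pi_{L^k})]$ has a uniform-in-$k$ asymptotic expansion in $t$ with coefficients $B_{j,k}$.

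Next I would identify the limit of the coefficients. From Theorem~\ref{t-gue250531yyd}, for each fixed $t>0$,
\[
\lim_{k\to+\infty}k^{-(n+1)}\operatorname{STr}[Ne^{-\frac{t}{k}\Box_{b,L^k}}(I-\Pi_{L^k})]=\int_X\int_{\mathbb R}\Bigl((\dot{\mathcal R}^\phi_x-2\eta\dot{\mathcal L}_x)_t-(2\pi)^{-n-1}\sum^n_{q=0}q(-1)^q\abs{\det(\dot{\mathcal R}^\phi_x-2\eta\dot{\mathcal L}_x)}1_{\mathbb R_x(q)}(\eta)\Bigr)d\eta\,dv_X(x),
\]
which by \eqref{E:110320242242z} has the asymptotic expansion $\sum_j t^{-(n+1)+j}\int_X\widehat A_j(x)\,dv_X(x)$ as $t\to0^+$. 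On the other hand, the left side equals $\lim_k\bigl(\sum_j t^{-(n+1)+j}B_{j,k}+\varepsilon_{N,k}\bigr)$. Since $k^{-n-1}b^{(q)}_{j,k}(x)\to b^{(q)}_j(x)$ uniformly on $X$, each $B_{j,k}$ converges to $B_j:=\sum_q(-1)^q q\int_X b^{(q)}_j(x)\,dv_X(x)$; comparing the two asymptotic expansions in $t$ (both uniform in $t$ on $(0,1)$, both valid for all $N$) forces $B_j=\int_X\widehat A_j(x)\,dv_X(x)$. Combined with the quantitative bound $\abs{r^{(q)}_{j,k}}\le C_j$ from \eqref{e-gue250529ycdm}, this gives \eqref{e-gue250605yyd}: $B_{j,k}=\int_X\widehat A_j(x)\,dv_X+O(k^{-1/2})$.

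Finally, \eqref{E:5.5.43} follows by taking $k\to+\infty$ in \eqref{e-gue160427}: the main term $\sum^N_{j=0}t^{-(n+1)+j}B_{j,k}$ converges to $\sum^N_{j=0}t^{-(n+1)+j}\int_X\widehat A_j(x)\,dv_X(x)$, the remainder $\varepsilon_{N,k}$ is uniformly $O(t^{-n+N})$, and one sets $\delta_N(t):=\lim_k\varepsilon_{N,k}$ (or simply absorbs the difference into a new remainder of the same order); the lower summation index $j=n-1$ in \eqref{E:5.5.43} reflects the vanishing $\widehat A_0=\cdots=\widehat A_{n-2}=0$ (and likewise $F_{j,\eta}=0$ for $j\le n-1$ except the relevant leading terms), which I would read off from \eqref{e-gue250531yydk}. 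The one point requiring a little care—and the main (though mild) obstacle—is interchanging the limit $k\to+\infty$ with the $t\to0^+$ asymptotic expansion: this is legitimate precisely because the remainder bound in \eqref{e-gue250529yydx}, hence the bound on $\varepsilon_{N,k}$, is uniform in $k$, so the expansions \eqref{e-gue160427} hold with a common constant $C$ independent of $k$; matching the coefficients of like powers of $t$ then transfers convergence of the left sides to convergence of the coefficients $B_{j,k}$, which is the substance of \eqref{e-gue250605yyd}.
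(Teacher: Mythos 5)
Your proposal is correct and follows essentially the same route as the paper: sum the uniform-in-$k$ expansion of Theorem~\ref{t-gue250603yyd} over $q$ with weights $(-1)^qq$ and integrate over $X$ to get \eqref{e-gue160427}, then use Theorem~\ref{t-gue250531yyd} together with \eqref{E:110320242242z} and uniqueness of asymptotic expansions in $t$ to identify $\lim_k B_{j,k}$ with $\int_X\widehat A_j\,dv_X$, which yields \eqref{e-gue250605yyd} and, passing to the limit in \eqref{e-gue160427}, \eqref{E:5.5.43}. The only cosmetic difference is that the paper records the intermediate identity $\int_X\widehat A_j\,dv_X=\int_X\sum_q(-1)^qq\,b^{(q)}_j\,dv_X$ explicitly as a displayed equation, whereas you obtain it by the same coefficient-matching argument phrased slightly differently.
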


\begin{proof} 
From \eqref{e-gue250529ycdm} and \eqref{e-gue250529ycdn}, we immediately get \eqref{e-gue250605yyd} and 
\eqref{e-gue160427}.  

From \eqref{e-gue250529ycdm}, \eqref{e-gue250529ycdn} and \eqref{e-gue250529yydx}, we get that for every $N\in\mathbb N$,  
\begin{equation}\label{e-gue250531yydm}
\begin{split}
			\lim_{k\To\infty}&k^{-(n+1)}{\rm STr\,}\lbrack Ne^{-\frac{t}{k}\Box_{b,L^k}}(I-\Pi_{L^k})\\
            &=\sum^N_{j=0}t^{-n-1+j}\int_X\sum^n_{q=0}(-1)^qqb^{(q)}_j(x)dv_X(x)+\hat\delta_N(t),
            \end{split}
		\end{equation}
        for all $t\in(0,1]$, where $b^{(q)}_j(x)$ is as in \eqref{e-gue250529ycdm}, $q=0,1,\ldots,n$, $j=0,1,\ldots$, and 
        \[\abs{\hat\delta_{N}(t)}\leq C_1t^{-n+N},\] 
        for all $t\in(0,1)$, $C_1>0$ is a constant independent of $t$.

From \eqref{e-gue250531yyd} and \eqref{E:110320242242z}, we get 
\begin{equation}\label{e-gue250531yydn}
\int_X\widehat{A}_j(x)dv_X(x)= \int_X\sum^n_{q=0}(-1)^qqb^{(q)}_j(x)(x)dv_X(x),\ \ j=0,1,\ldots.
\end{equation}
From \eqref{e-gue250531yydm} and \eqref{e-gue250531yydn}, we get \eqref{E:5.5.43}.
The theorem follows. 
	\end{proof}
	

	\begin{theorem}\label{T:110520241105}
		Assume that $n_-=n_+$ or $\abs{n_--n_+}>1$. For any $q\in\set{0,1,\ldots,n}$, $t \ge 1$, $k \in \mathbb{N}$, we have
		\begin{equation}\label{E:110520241105}
			k^{-(n+1)}\operatorname{Tr}^{(q)}[e^{-\frac{t}{k}\Box_{b,L^k}}(I-\Pi_{L^k})] \le  Ct^{-(n+1)},
		\end{equation}
        where $C>0$ is a constant independent of $k$. 
	\end{theorem}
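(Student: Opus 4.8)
The plan is to reduce \eqref{E:110520241105} to a uniform bound on the heat trace $\operatorname{Tr}^{(q)}[e^{-\frac{t}{k}\Box^{(q)}_{b,L^k}}(I-\Pi^{(q)}_{L^k})]=\sum_{\lambda>0}m_\lambda e^{-\frac{t}{k}\lambda}$, the sum running over the positive eigenvalues $\lambda$ of $\Box^{(q)}_{b,L^k}$ with multiplicities $m_\lambda$; these form a discrete sequence of finite multiplicity because the hypothesis $n_-=n_+$ or $|n_--n_+|>1$ forces $\Box^{(q)}_{b,L^k}$ to have closed range (cf. Remark~\ref{r-gue2507181} and Theorem~\ref{t-gue250410ycdbz}). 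Writing $\operatorname{Tr}^{(q)}[A]=\int_X\operatorname{Tr}^{(q)}A(x,x)\,dv_X(x)$, what has to be shown is $\sum_{\lambda>0}m_\lambda e^{-\frac{t}{k}\lambda}\le Ck^{n+1}t^{-(n+1)}$ for $t\ge 1$, with $C$ independent of $k$.

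First I would treat $t$ in a fixed bounded interval $[1,T_0]$ and, along the way, extract a crude $k$-uniform bound valid for all $t\ge 1$. Theorem~\ref{t-gue250528yyd} gives $\|(e^{-\frac{t}{k}\Box_{b,L^k}}(I-\Pi_{L^k,\frac{k}{\log k}}))(x,x)\|_{\mathcal C^0([1,T_0]\times X)}\le Ck^{n+1}$; combining this with $\dim E^{(q)}_{0<\lambda\le\frac{k}{\log k}}(X,L^k)=O(k^{n+1})$ (a uniform Weyl-type count, available as in \cite{HM17}, or from a standard min-max comparison with a model operator) and integrating over $X$, one gets $\operatorname{Tr}^{(q)}[e^{-\frac{t}{k}\Box^{(q)}_{b,L^k}}(I-\Pi^{(q)}_{L^k})]\le Ck^{n+1}$ for $t\in[1,T_0]$. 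Using that $I-\Pi^{(q)}_{L^k}$ commutes with the semigroup, that $e^{-\frac{t}{k}\Box}(I-\Pi)=e^{-\frac{t-1}{k}\Box}(I-\Pi)\circ e^{-\frac{1}{k}\Box}(I-\Pi)$, that $\|e^{-s\Box^{(q)}_{b,L^k}}(I-\Pi^{(q)}_{L^k})\|_{L^2\to L^2}\le 1$, and the trace inequality $|\operatorname{Tr}(AB)|\le\|A\|_{\mathrm{op}}\operatorname{Tr}|B|$, this extends to $\operatorname{Tr}^{(q)}[e^{-\frac{t}{k}\Box^{(q)}_{b,L^k}}(I-\Pi^{(q)}_{L^k})]\le Ck^{n+1}$ for every $t\ge 1$; since $Ck^{n+1}\le C T_0^{n+1}k^{n+1}t^{-(n+1)}$ on $[1,T_0]$, that range is settled.

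The remaining point, and the main obstacle, is genuine decay for $t\ge T_0$. Here I would pass to the counting function $N(\Lambda):=\#\{\lambda\in\operatorname{Spec}\Box^{(q)}_{b,L^k}:0<\lambda\le\Lambda\}$, write $\sum_{\lambda>0}m_\lambda e^{-\frac{t}{k}\lambda}=\frac{t}{k}\int_0^\infty e^{-\frac{t}{k}\Lambda}N(\Lambda)\,d\Lambda$, and split the integral at $\Lambda=k$. On $\Lambda\ge k$ one uses the high-frequency Weyl bound $N(\Lambda)\le C\Lambda^{n+1}$, which is $k$-uniform because the principal part of $\Box^{(q)}_{b,L^k}$ does not depend on $k$; this contributes at most $\frac{t}{k}C\int_0^\infty e^{-\frac{t}{k}\Lambda}\Lambda^{n+1}\,d\Lambda=C(n+1)!\,k^{n+1}t^{-(n+1)}$, exactly what is needed. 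On $0<\Lambda<k$ one needs the matching low-frequency estimate $N(\Lambda)\le Ck^{n+1}(\Lambda/k)^{n+1}=C\Lambda^{n+1}$, uniformly in $k$; this is the delicate part, and I would establish it by transferring, through the scaled Kohn Laplacian $\Box_{\rho,(k)}$ and the convergence results Theorems~\ref{t-gue250425yyd} and \ref{t-gue250524yyd}, the count of low-lying eigenvalues of $\Box^{(q)}_{b,L^k}$ to that of the Heisenberg model operators $\Box^{(q)}_{H_n,\Phi_x}$ and their $\eta$-components $\Box^{(q)}_\eta$ on $\mathbb C^n$, whose spectrum is explicit (see \eqref{e-gue210521yyd}); the hypothesis $n_-=n_+$ or $|n_--n_+|>1$ is exactly what provides the finite-multiplicity/closed-range structure on each $\eta$-slice, and particular care is needed near the walls $\partial\mathbb R_x(q)$ where the model gap $2\min_j|\mu_j(\eta)|$ degenerates. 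Uniformity of the implied constant across coordinate patches follows as in the proof of Theorem~\ref{t-gue250603yyd}, where all symbols and phases in the rescaled constructions are bounded uniformly in $k$. Adding the two contributions yields $k^{-(n+1)}\operatorname{Tr}^{(q)}[e^{-\frac{t}{k}\Box_{b,L^k}}(I-\Pi_{L^k})]\le Ct^{-(n+1)}$ for all $t\ge 1$.
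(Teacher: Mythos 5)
The decisive step in your plan---the two $k$-uniform counting bounds $N(\Lambda)\le C\Lambda^{n+1}$, for $\Lambda\ge k$ and for $0<\Lambda<k$---is asserted rather than proved, and the reasons you offer would not deliver it. For the high-frequency range, ``the principal part does not depend on $k$'' is not enough: the lower-order terms of $\Box^{(q)}_{b,L^k}$ contain first-order pieces of size $k$ and zeroth-order pieces of size $k^2$ (coming from $kU_j\phi$, etc.), which shift the spectrum by amounts comparable to $k$, so a Weyl bound with constant independent of $k$ is of essentially the same depth as \eqref{E:110520241105} itself. The low-frequency claim is stronger still: taking $\Lambda$ fixed, it asserts that the number of eigenvalues of $\Box^{(q)}_{b,L^k}$ in $(0,\Lambda]$ is bounded uniformly in $k$. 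Nothing you cite gives this: Theorems \ref{t-gue250425yyd} and \ref{t-gue250524yyd} are convergence statements for the $k^{-(n+1)}$-rescaled kernels in the sense of distributions and do not control individual eigenvalue counts at fixed (unrescaled) energies, while Assumption \ref{a-gue250426yyd}---which is a standing hypothesis in this part of the paper and which your argument never actually invokes, although the spectral gap is needed already for large-$t$ decay in the $Y(q)$ degrees---only yields $o(k^{n+1})$ eigenvalues in $(0,k/\log k]$. Likewise, your claim that $\dim E^{(q)}_{0<\lambda\le k/\log k}(X,L^k)=O(k^{n+1})$ can be obtained ``as in \cite{HM17} or by min-max'' is hand-waved; in the paper this smallness is part of the assumption, not a consequence. (Your bounded-interval step via Theorem \ref{t-gue250528yyd} is fine, but, as you note, it gives no $t$-decay, so everything hinges on the unproved counting bounds.)

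The paper's proof avoids all of this and is much shorter. For $q\notin\{n_-,n_+\}$ it obtains \eqref{E:110520241105} directly from the scaling technique of \cite[Subsection 3.2, Lemma 3.4]{HZ23} together with Assumption \ref{a-gue250426yyd}. For $q\in\{n_-,n_+\}$ it uses the comparison $\operatorname{Tr}^{(q)}[e^{-\frac{t}{k}\Box_{b,L^k}}(I-\Pi_{L^k})]\le \operatorname{Tr}^{(q-1)}[e^{-\frac{t}{k}\Box_{b,L^k}}(I-\Pi_{L^k})]+\operatorname{Tr}^{(q+1)}[e^{-\frac{t}{k}\Box_{b,L^k}}(I-\Pi_{L^k})]$, which holds because on each eigenvalue $\lambda>0$ the map $u\mapsto(\ddbar_b u,\ddbar^{*}_b u)$ is injective from the degree-$q$ eigenspace into the sum of the degree-$(q\pm1)$ eigenspaces, so the multiplicities, and hence the heat traces over the positive spectrum, compare. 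This is exactly where the hypothesis $n_-=n_+$ or $\abs{n_--n_+}>1$ does its work: it guarantees $q\pm1\notin\{n_-,n_+\}$, so the right-hand side falls under the already-settled $Y$-case. Your proposal misses this reduction entirely (you assign the hypothesis the role of providing closed range on $\eta$-slices, which is not where it is needed here), and consequently your treatment of the critical degrees $q\in\{n_-,n_+\}$ rests on the unestablished spectral-counting estimates above.
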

	\begin{proof}
Assume $q\notin\set{n_-,n_+}$. By Assumption~\ref{a-gue250426yyd}, for $t \ge 1$, we can apply the scaling technique in ~\cite[Subsection 3.2, Lemma 3.4]{HZ23} and get \eqref{E:110520241105}. Assume that $q\in\set{n_-,n_+}$. Since \[\operatorname{Tr}^{(q)}[e^{-\frac{t}{k}\Box_{b,L^k}}(I-\Pi_{L^k})]\leq \operatorname{Tr}^{(q-1)}[e^{-\frac{t}{k}\Box_{b,L^k}}(I-\Pi_{L^k})]+\operatorname{Tr}^{(q+1)}[e^{-\frac{t}{k}\Box_{b,L^k}}(I-\Pi_{L^k})],\]
	we get \eqref{E:110520241105}.
	\end{proof}
	
	Now we are in a position to prove the following
	
	\begin{theorem}\label{T:5.5.8}

    Recall that we work with Assumption~\ref{a-gue250426yyd} and we assume that $n_-=n_+$ or $\abs{n_--n_+}>1$. Fix $C \gg 1$. As $k \to+\infty$, we have
		\begin{equation}\label{E:5.5.33}
        \begin{split}
			&\theta_{b,L^k}'(0)\\   
          &=(\log k)k^{n+1}\Bigr(\int_X\int_{\abs{\eta}\leq C}\frac{1}{(2\pi)^{n+1}}\bigr(\frac{n}{2}{\rm det\,}(\dot{\mathcal{R}}^\phi_x - 2\eta \dot{\mathcal{L}_x})\\
          &\quad-\sum^n_{q=0}(-1)^qq\abs{{\rm det\,}(\dot{\mathcal{R}}^\phi_x - 2\eta \dot{\mathcal{L}_x})}1_{\mathbb R_x(q)}(\eta)\bigr)d\eta dv_X(x)+\int_X H_x(0)dv_X(x)\Bigr)\\
&+k^{n+1}\Bigr(-\int_X H'_x(0)dv_X(x)\\
&\quad+\frac{1}{2} \log (2\pi)(2\pi)^{-n-1} \int_X\int_{\abs{\eta}\leq C} \det \left(\dot{\mathcal{R}}^\phi_x - 2 \eta \dot{\mathcal{L}}_x \right)(2q-n)1_{\mathbb R_x(q)}(\eta)d\eta dv_X(x)\\
                &+\frac{1}{2}(2\pi)^{-n-1} \int_X\int_{\abs{\eta}\leq C} \det \left(\dot{\mathcal{R}}^\phi_x - 2 \eta \dot{\mathcal{L}}_x \right)\bigr(-\log(\abs{{\rm det\,}(\dot{\mathcal{R}}^\phi_x - 2 \eta \dot{\mathcal{L}}_x)_-})\\
                &\quad+\log(\abs{{\rm det\,}(\dot{\mathcal{R}}^\phi_x - 2 \eta \dot{\mathcal{L}}_x)_+})\bigr)1_{\mathbb R_x(q)}(\eta)d\eta dv_X(x)\Bigr)+o(k^{n+1}),
          \end{split}
		\end{equation}
		where $H_x(z)$ is given by \eqref{e-gue250603yyd}, Lemma~\ref{l-gue250623yyd} and $H_x(0)$ and $H'_x(0)$ are computed in Lemma \ref{l-gue250623yyd} and Lemma \ref{l-gue250623yydI} respectively.
		
	\end{theorem}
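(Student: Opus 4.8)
The plan is to compute $\theta'_{b,L^k}(0)$ by means of the formula \eqref{E:5.5.13} from Lemma~\ref{L:mero}, applied to $E=L^k$, and then to extract the leading $k$-asymptotics of every term on the right-hand side of that formula using the results assembled in Sections~\ref{s-gue250704} and~\ref{s-gue250530}. Recall that
\[
\theta'_{b,L^k}(0) = -\int_0^1\Bigl\{\operatorname{STr}\bigl[Ne^{-t\Box_{b,L^k}}\Pi^\perp_{L^k}\bigr]-\sum_{j=0}^{n+1}B_{-n-1+j,k}\,t^{-n-1+j}\Bigr\}\frac{dt}{t}-\int_1^\infty\operatorname{STr}\bigl[Ne^{-t\Box_{b,L^k}}\Pi^\perp_{L^k}\bigr]\frac{dt}{t}-\sum_{j=0}^{n}\frac{B_{-n-1+j,k}}{j-n-1}+\Gamma'(1)B_{0,k},
\]
where $B_{-n-1+j,k}$ are the small-time coefficients of $\operatorname{STr}[Ne^{-t\Box_{b,L^k}}\Pi^\perp_{L^k}]$. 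First I would perform the rescaling $t\mapsto t/k$ in both integrals, which converts $e^{-t\Box_{b,L^k}}$ into $e^{-\frac{t}{k}\Box_{b,L^k}}$, the operator whose trace asymptotics were established in Theorem~\ref{t-gue250603yyd} and Theorem~\ref{t-gue160427}; after rescaling the $\frac{dt}{t}$ integrals pick up no Jacobian, but the limits of integration become $(0,k)$ and $(k,\infty)$, and the coefficients $B_{-n-1+j,k}$ must be tracked with their $k$-dependence $B_{j,k}=k^{-(n+1)}\sum_q q(-1)^q\int b^{(q)}_{j,k}\,dv_X$ as in \eqref{e-gue250531yyda}.

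The key computational steps are as follows. (1) Split $\int_0^k = \int_0^1 + \int_1^k$ and $\int_k^\infty$; on $\int_1^\infty$ use Theorem~\ref{T:110520241105} (the uniform bound $k^{-(n+1)}\operatorname{Tr}^{(q)}[e^{-\frac{t}{k}\Box_{b,L^k}}(I-\Pi_{L^k})]\le Ct^{-(n+1)}$) together with dominated convergence and Theorem~\ref{t-gue250531yyd} to pass to the limit, obtaining the contribution $\int_1^\infty(\cdots)\frac{dt}{t}$ of the limiting integrand $\int_X\int_{\mathbb R}\bigl((\dot{\mathcal R}^\phi_x-2\eta\dot{\mathcal L}_x)_t-(2\pi)^{-n-1}\sum_q q(-1)^q|\det(\dot{\mathcal R}^\phi_x-2\eta\dot{\mathcal L}_x)|1_{\mathbb R_x(q)}(\eta)\bigr)d\eta\,dv_X$. (2) On $\int_1^k$, the integrand has the convergent expansion of Theorem~\ref{t-gue160427}; the term $t^{-n-1+(n+1)}B_{n+1,k}=t^0 B_{n+1,k}$ integrated against $\frac{dt}{t}$ from $1$ to $k$ produces the $(\log k)$-term, with coefficient $\int_X\widehat A_{n+1}(x)\,dv_X=\int_X\bigl(F_{n+1}(x)+G_{n+1}(x)\bigr)dv_X=\int_X\bigl(F_{n+1}(x)+H_x(0)\bigr)dv_X$ by \eqref{e-gue250603yyds} and \eqref{e-gue250604yyd}; the lower-order powers $t^{-n-1+j}$, $j<n+1$, give bounded contributions absorbed into $o(k^{n+1})$ after multiplying by $k^{n+1}$, and here I use the fact that $\widehat A_j=0$ for $j<n-1$ so only finitely many genuinely divergent powers appear. (3) On $\int_0^1$, the counterterms exactly cancel the singular expansion, and after multiplying by $k^{n+1}$ this piece contributes a finite constant involving $H'_x(0)$ — this is where Lemma~\ref{l-gue250623yyd} and Lemma~\ref{l-gue250623yydI} enter, supplying the explicit values of $H_x(0)$ and $H'_x(0)$ and hence the full $k^{n+1}$-coefficient. (4) The algebraic terms $-\sum_j\frac{B_{-n-1+j,k}}{j-n-1}+\Gamma'(1)B_{0,k}$ are handled by \eqref{e-gue250605yyd}: each $B_{j,k}$ converges to $\int_X\widehat A_j(x)\,dv_X$ up to $O(k^{-1/2})$, and the only surviving contribution at order $k^{n+1}$ comes from the $j$ for which the denominator is finite; combined with the $\zeta'(0)=-\tfrac12\log(2\pi)$ and $\zeta(0)=-\tfrac12$ values from Lemma~\ref{l-gue250623yyd} and Lemma~\ref{l-gue250623yydI}, this produces the $\tfrac12\log(2\pi)$-term and the $\tfrac12\log|\det(\cdots)_\pm|$-terms displayed in \eqref{E:5.5.33}. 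Finally I would collect the $(\log k)k^{n+1}$, $k^{n+1}$, and $o(k^{n+1})$ pieces, substitute $\widehat A_{n+1}=F_{n+1}+H_x(0)$ and the formula \eqref{e-gue250603ycdI} for $F_{n+1}$, and verify that the result matches \eqref{E:5.5.33} term by term.

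The main obstacle I anticipate is step (2)–(3): controlling the interchange of the limit $k\to\infty$ with the $t$-integration near $t=0$ and near $t=k$ \emph{uniformly in $k$}, so that the $(\log k)k^{n+1}$ and $k^{n+1}$ coefficients are genuinely the claimed integrals and the remainder is truly $o(k^{n+1})$ rather than $O(k^{n+1})$. This requires the uniform-in-$k$ remainder estimates $|\varepsilon_{N,k}|\le Ct^{-n+N}$ from Theorem~\ref{t-gue160427} (valid for $t\in(0,1)$) glued to the scaling estimate of Theorem~\ref{T:110520241105} (valid for $t\ge1$), plus Assumption~\ref{a-gue250426yyd} to kill the contribution of the small positive eigenvalues $E_{0<\lambda\le k/\log k}(X,L^k)$ — whose dimension is $o(k^{n+1})$ — and the spectral gap $\operatorname{Spec}\Box_{b,L^k}\subset\{0\}\cup[C,\infty)$ to ensure $\Pi^\perp_{L^k}$ and $\Pi_{L^k}$ differ from the spectral cutoff $I-\Pi^{(q)}_{L^k,k/\log k}$ by exponentially small terms in the relevant range of $t/k$. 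Once these uniform bounds are in place the remaining work is the bookkeeping of matching the explicit constants from Lemmas~\ref{l-gue250623yyd} and~\ref{l-gue250623yydI} against the stated formula, which is routine.
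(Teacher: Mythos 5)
Your overall strategy is the paper's own route in lightly disguised form: substituting $t\mapsto t/k$ in the two integrals of \eqref{E:5.5.13} is exactly the paper's scaling identity $\theta_{b,L^k}(z)=k^{n+1}k^{-z}\widetilde{\theta}_{b,L^k}(z)$, whose derivative at $z=0$ splits off $-\log k\,\widetilde{\theta}_{b,L^k}(0)$; your identification of the $(\log k)k^{n+1}$ coefficient with $\int_X\widehat{A}_{n+1}\,dv_X=\int_X(F_{n+1}(x)+H_x(0))\,dv_X$ and your use of Theorems~\ref{t-gue160427}, \ref{T:110520241105}, \ref{t-gue250531yyd}, dominated convergence and Assumption~\ref{a-gue250426yyd1} all match the paper's argument.

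There are, however, two concrete defects in the bookkeeping of the $k^{n+1}$-term. First, in your step (2) you declare that the counterterm powers $t^{-n-1+j}$ with $j<n+1$ integrated over $(1,k)$ are ``absorbed into $o(k^{n+1})$''. They are not: after rescaling, the $j$-th counterterm contributes $k^{n+1}B_{j,k}\frac{1-k^{j-n-1}}{n+1-j}$, which is of order exactly $k^{n+1}$ and converges to $\frac{1}{n+1-j}\int_X\widehat{A}_j\,dv_X$, nonzero in general for $j=n-1,n$ since the $\eta$-integrated expansion only vanishes for $j<n-1$. These are precisely the terms $-\sum_{j=0}^{n}\frac{B_{j,k}}{j-(n+1)}$ appearing in \eqref{E:5.5.53}; dropping them changes the constant coefficient. (By contrast, the \emph{unrescaled} algebraic terms $-\sum_j\frac{B_{-n-1+j}}{j-n-1}$ that you keep in step (4) are only $O(k^{n})$ and are negligible, so your allocation of orders is inverted.) Second, the explicit constants in \eqref{E:5.5.33} --- $-\int_X H'_x(0)\,dv_X$, the $\tfrac12\log(2\pi)$ term and the $\log\abs{\det(\cdot)_\pm}$ terms --- do not arise from combining the algebraic terms with $\zeta(0),\zeta'(0)$ as you assert; they come from evaluating $\widetilde{\zeta}'(0)$, where $\widetilde{\zeta}(z)$ is the Mellin transform of the limiting integrand, via the meromorphic continuation
\begin{equation*}
\widetilde{\zeta}(z)=-\int_X H_x(z)\,dv_X-(2\pi)^{-n-1}\zeta(z)\int_X\int_{\abs{\eta}\le C}\det(\dot{\mathcal{R}}^\phi_x-2\eta\dot{\mathcal{L}}_x)\Bigl[\operatorname{Tr}\abs{(\cdot)_-}^{-z}-\operatorname{Tr}\abs{(\cdot)_+}^{-z}\Bigr]d\eta\,dv_X,
\end{equation*}
i.e.\ the analogue of Lemma~\ref{l-gue250604yyd} carried out in \eqref{e-gue250606yyd}--\eqref{E:5.5.60}. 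Your proposal never sets this identity up, and without it the passage from the Mellin-derivative expression \eqref{E:5.5.53} to the stated closed form is not ``routine bookkeeping'' but the actual computational core of the theorem. Once you restore the $(1,k)$ counterterm contributions and insert this evaluation of $\widetilde{\zeta}'(0)$, your argument coincides with the paper's proof.
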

	
	\begin{proof}
		For $k\gg1$, set
		\begin{equation}\label{E:5.5.50}
			\widetilde{\theta}_{b,L^k}(z) = -M \left\lbrack k^{-(n+1)} \operatorname{STr} \Big\lbrack Ne^{-\frac{t}{k}\Box_{b,L^k}}(I-\Pi_k)\Big\rbrack \right\rbrack(z).
		\end{equation}
		Clearly 
		\begin{equation}\label{E:5.5.51}
			k^{-(n+1)} \theta_{b,L^k}(z) = k^{-z} \widetilde{\theta}_{b,L^k}(z).
		\end{equation}
		By \eqref{E:110320242242z}, Theorem~\ref{t-gue160427} and \eqref{E:5.5.43} and Theorem~\ref{T:110520241105}, we have
		\begin{equation}\label{E:5.5.52}
			\begin{split}
				& k^{-(n+1)} \theta'_{b,L^k}(0)  = - \log(k) \widetilde{\theta}_{b,L^k}(0) + \widetilde{\theta}'_{b,L^k}(0),   \\
				&  \widetilde{\theta}_{b,L^k}(0)=-\int_X\widehat{A}_{n+1}(x)dv_X+O(k^{-\frac{1}{2}}).
			\end{split}
		\end{equation} 
		By \eqref{E:5.5.13}, Theorem~\ref{t-gue160427}, Theorem~\ref{T:110520241105} and Lebesgue's dominated convergence theorem, we get
		\begin{equation}\label{E:5.5.53}
			\begin{split}
				&\lim_{k \to \infty} \widetilde{\theta}'_{b,L^k}(0) \\
                =&  -\int_0^1 \lim_{k \to \infty} \left\{  k^{-(n+1)} \operatorname{STr} \left\lbrack Ne^{-\frac{t}{k}\Box_{b,L^k}}(I-\Pi_{L^k})\right\rbrack - \sum^{n+1}_{j=0} B_{j,k}t^{-n-1+j}\right\} \frac{dt}{t} \\
				& -\int_1^\infty \lim_{k \to \infty} \left\{  k^{-(n+1)} \operatorname{STr} \left\lbrack Ne^{-\frac{t}{k}\Box_{b,L^k}}(I-\Pi_{L^k})\right\rbrack \right\} \frac{dt}{t}  \\
				&-\lim_{k\to\infty}\sum^{n}_{j=0}\frac{B_{j,k}}{j-n-1}+\Gamma'(1)\lim_{k\to\infty}B_{n+1,k}.
			\end{split}
		\end{equation}

		
		For $z \in \mathbb{C}$ set
		\begin{equation}\label{E:5.5.55}
        \begin{split}
		&	\widetilde{\zeta}(z)  \\ & = -M
            \left\lbrack \int_X\int_{\mathbb R}\Bigr((\dot{\mathcal{R}}^\phi_x-2\eta \dot{\mathcal{L}}_x)_t -(2\pi)^{-n-1}\sum^n_{q=0}q(-1)^q\abs{\det(\dot{\mathcal{R}}_x^\phi - 2 \eta \dot{\mathcal{L}}_x)}1_{\mathbb R_x(q)}(\eta)\Bigr)d\eta dv_X\right\rbrack (z).
            \end{split}
		\end{equation}

From Theorem~\ref{t-gue250531yyd}, Theorem~\ref{t-gue160427}, Theorem~\ref{T:110520241105} and \eqref{E:5.5.53}, we have 
		\begin{equation}\label{E:5.5.56}
\lim_{k\to\infty}\widetilde{\theta}'_{b,L^k}(0)\\
				= \widetilde{\zeta}'(0).
		\end{equation}
		
    Let $\zeta(z) =\sum_{k=1}^\infty \frac{1}{n^z}$ be the Riemann zeta function. It is well-known that, for $\operatorname{Re}z>1$,
		\begin{equation}\label{E:5.5.58}
			\zeta(z) =\frac{1}{\Gamma(z)} \int_0^\infty t^{z-1} \frac{e^{-t}}{1-e^{-t}}dt.
		\end{equation} 
		Moreover,
		\begin{equation}\label{E:5.5.59}
			\zeta(0) = -\frac{1}{2}, \qquad \zeta'(0) = -\frac{1}{2} \log (2\pi).
		\end{equation}
		We can repeat the proof of Lemma~\ref{l-gue250604yyd} and deduce that for ${\rm Re\,}z>n+1$, $C\gg1$, 
\begin{equation}\label{e-gue250606yyd}
\begin{split}
&\widetilde{\zeta}(z)\\
&=-\int_X H_x(z)dv_X(x)-(2\pi)^{-n-1}\zeta(z)\int_X\int_{\abs{\eta}\leq C}\det\left(\dot{\mathcal{R}}^\phi_x - 2 \eta \dot{\mathcal{L}}_x\right)\\
&\quad\times\left[ \operatorname{Tr}\abs{(\dot{\mathcal{R}}^\phi_x - 2 \eta \dot{\mathcal{L}}_x  )_-}^{-z} - \operatorname{Tr}\abs{(\dot{\mathcal{R}}^\phi_x - 2 \eta \dot{\mathcal{L}}_x  )_+}^{-z} \right]1_{\mathbb R_x(q)}(\eta)d\eta dv_X(x).
\end{split}
\end{equation}

		By \eqref{E:5.5.58}, \eqref{E:5.5.59} and \eqref{e-gue250606yyd}, we get
		\begin{equation}\label{E:5.5.60}
			\begin{split}
				&\widetilde{\zeta}'(0)=-\int_X H'_x(0)dv_X(x)\\
                &+\frac{1}{2} \log (2\pi)(2\pi)^{-n-1} \int_X\int_{\abs{\eta}\leq C} \det \left(\dot{\mathcal{R}}^\phi_x - 2 \eta \dot{\mathcal{L}}_x \right)(2q-n)1_{\mathbb R_x(q)}(\eta)d\eta dv_X(x)\\
                &+\frac{1}{2}(2\pi)^{-n-1} \int_X\int_{\abs{\eta}\leq C} \det \left(\dot{\mathcal{R}}^\phi_x - 2 \eta \dot{\mathcal{L}}_x \right)\Bigr(-\log(\abs{{\rm det\,}(\dot{\mathcal{R}}^\phi_x - 2 \eta \dot{\mathcal{L}}_x)_-})\\
                &+\log(\abs{{\rm det\,}(\dot{\mathcal{R}}^\phi_x - 2 \eta \dot{\mathcal{L}}_x)_+})\Bigr)1_{\mathbb R_x(q)}(\eta)d\eta dv_X(x).
			\end{split}
		\end{equation}
		By \eqref{E:5.5.52}, \eqref{E:5.5.56} and \eqref{E:5.5.60}, we get \eqref{E:5.5.33}.
	\end{proof}

\subsection{Examples}\label{s-gue250714}

In this subsection, we will provide examples that satisfy all the conditions of Theorem \ref{T:5.5.8}. We now assume that the CR manifold $X$ is an irregular Sasakian manifold ($X$ can be irregular), that is, $X$ is strongly pseudoconvex and 
\[
[T, \mathcal{C}^\infty(X, T^{1,0}X)] \subset\mathcal{C}^\infty(X, T^{1,0}X).
\]
We take $\langle \, \cdot \,|\, \cdot \, \rangle$ so that $\langle \, \cdot \,|\, \cdot \, \rangle$ is $T$-invariant. Let $\eta : X \times X \to X$ be the $S^1$-action induced by the vector field $T$. 

\begin{theorem}
We can find an $S^1$-invariant CR line bundle $(L, h^L) \to X$ such that $h^L$ is $S^1$-invariant and $\mathcal{R}^\phi_x = \mathcal{L}_x$, for all $x \in X$. 
\end{theorem}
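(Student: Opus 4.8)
The plan is to construct $(L,h^L)$ as an associated line bundle to the contact structure, using the $\mathbb{R}$-action to guarantee invariance. Since $X$ is strongly pseudoconvex Sasakian, the $1$-form $\omega_0$ is a contact form whose Reeb vector field is $T$, and $-d\omega_0$ restricted to $HX$ is (up to the factor $\frac{1}{2i}$) the Levi form, which is positive definite. The key observation is that if we can produce a CR line bundle $L$ whose curvature two-form — in the sense of Definition~\ref{d-gue250921ycdt} — equals $\mathcal{L}_x$ at each point, i.e. $\frac{1}{2}d(\ddbar_b\phi-\pr_b\phi)$ has $(1,1)$-component matching $-\frac{1}{2i}d\omega_0$ on $T^{1,0}X\times T^{0,1}X$, then we are done. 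The natural candidate is to take $\phi$ with $d(\ddbar_b\phi - \pr_b\phi) = -i\, d\omega_0$ along $HX$, which is solvable because the Levi form is exact as a transverse form: $d\omega_0|_{HX}$ is already $d$ of a globally defined $1$-form.

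First I would set up the bundle. On each trivializing patch $D_j$ of a finite cover $\{D_j\}$, choose real functions such that the local weights $\phi_j$ satisfy, on overlaps, $\phi_j - \phi_l = \log|g_{jl}|^2$ for CR transition functions $g_{jl}$; equivalently, pick a connection-type data. Concretely, I would use the following: because $X$ is Sasakian, locally one can write $\omega_0 = d^c\psi_j$ for a local CR-pluriharmonic-type potential $\psi_j$ adapted to the transverse holomorphic structure, where $d^c = \frac{i}{2}(\ddbar_b - \pr_b)$; then set $\phi_j := \psi_j$ and check the cocycle condition for $e^{\phi_j - \phi_l}$ defines a CR line bundle $L$. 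The Hermitian metric $h^L$ with $|s_j|^2_{h^L} = e^{-\phi_j}$ then has, by Definition~\ref{d-gue250921ycdt}, curvature $\mathcal{R}^\phi_x(U,\ol V) = \frac{1}{2}\langle d(\ddbar_b\phi_j - \pr_b\phi_j)(x), U\wedge\ol V\rangle = \langle d\, d^c\psi_j, U\wedge\ol V\rangle$, and since $d^c\psi_j = \omega_0$ locally we obtain $\frac{1}{2}\langle d(-2i\,\omega_0), U\wedge \ol V\rangle$ up to normalization, which by \eqref{E:levi} equals $\mathcal{L}_x(U,\ol V)$. The normalization constants are a routine check I would not grind through here.

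Second, I would average over the $\mathbb{R}$-action to make everything invariant. The action $\eta$ generated by $T$ preserves $T^{1,0}X$ (Sasakian condition) and preserves $\omega_0$ (since $d\omega_0(T,\cdot)\equiv 0$ and $\omega_0(T)=-1$). Therefore the CR structure on the total space of $L$ built above is $\mathbb{R}$-equivariant up to a choice, and replacing $h^L$ by its orbital average $\int h^L\circ\eta_s\,d\mu(s)$ over a suitable invariant mean (or, if the orbits are not compact, by a limiting/ergodic averaging argument using that $\langle\cdot|\cdot\rangle$ is already $T$-invariant and the weight equation is $\mathbb{R}$-invariant) yields an $\mathbb{R}$-invariant metric with the same curvature. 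Since $\mathcal{L}_x$ is $T$-invariant and $\mathcal{R}^\phi_x$ is too after averaging, the identity $\mathcal{R}^\phi_x=\mathcal{L}_x$ persists pointwise.

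The main obstacle I anticipate is the $\mathbb{R}$-invariance in the \emph{irregular} case: when $X$ is irregular, the orbits of $\eta$ need not be closed, so the Reeb flow is not induced by a compact group action and naive averaging over $\mathbb{R}$ diverges. To handle this I would instead argue directly: build $\phi_j$ from the transverse Kähler potential, which in the Sasakian setting can be chosen \emph{basic} (annihilated by $T$) on each foliation chart by a transverse $\ddbar$-lemma argument, so that $T\phi_j = 0$ from the start and no averaging is needed; the transition functions $g_{jl}$ are then automatically $T$-invariant CR functions, giving an honestly $\mathbb{R}$-invariant $(L,h^L)$. The remaining verification — that such basic local potentials glue to a genuine CR line bundle and that the cocycle is nontrivial enough to be well-defined — is where I would spend the real effort, invoking the transverse Hodge theory of the Sasakian foliation.
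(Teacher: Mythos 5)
Your overall strategy is the same as the paper's: do not average over the (possibly non-proper) $\mathbb{R}$-action, but instead build the bundle from \emph{basic} local potentials of the transverse Levi form, so that the weights are $T$-invariant from the start and the transition functions are $T$-invariant CR functions. The paper implements this through BRT charts $(z,\theta)$: there $T^{1,0}X=\operatorname{span}\{\partial_{z_j}-i\phi_{z_j}\partial_\theta\}$ with $\phi=\phi(z)$ basic, $\mathcal{L}=\sum\phi_{z_j\overline z_\ell}dz_j\wedge d\overline z_\ell$, and the weight is taken to be $2\phi$; the whole content of the proof is then the transition computation \eqref{E:071420252233} between two BRT charts, which produces a nonvanishing CR function $g$ with $s_{\widetilde\phi}=g^{-1}s_\phi$, so the local data glue into an $\mathbb{R}$-invariant $(L,h^L)$ with $\mathcal{R}^\phi=\mathcal{L}$.

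The genuine gap in your proposal is that you defer exactly this gluing step (``that such basic local potentials glue to a genuine CR line bundle \dots is where I would spend the real effort''), and the framework you set up for it would not, as stated, close. You pin the transition functions only by the potential differences, $\phi_j-\phi_l=\log|g_{jl}|^2$: since $\phi_j-\phi_l$ is basic and transversally pluriharmonic this determines $g_{jl}$ only up to a unimodular (locally constant) factor, and then the cocycle identity $g_{jl}\,g_{lm}\,g_{mj}=1$ can fail by unimodular constants — the classical integrality obstruction to promoting a local potential datum to a line bundle. What makes the paper's argument work is precisely the BRT normal form: the coordinate change between charts ties the \emph{real} part of the transition data to the potential shift and its \emph{imaginary} part to the $\theta$-shift ($\eta=\theta+\operatorname{Im}g$), so both are geometrically pinned, the transition functions are determined by the CR coordinate changes themselves, and the cocycle holds automatically because coordinate changes compose. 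This key idea is absent from your sketch; invoking transverse Hodge theory or a global transverse $\overline{\partial}$-lemma does not supply it (and is anyway unavailable in a useful global form in the irregular case). Two smaller inaccuracies: $\omega_0$ is not locally $d^c\psi_j$ (it has a $d\theta$ component; only $d\omega_0$ is transversally $dd^c$ of the potential), and the exactness of $d\omega_0$ as a real $2$-form is irrelevant to producing a CR line bundle; also the averaging discussion in your second paragraph is a dead end that you yourself discard, so it adds nothing to the argument.
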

\begin{proof}
Let $x = (z, \theta)$ be a BRT chart of $X$ defined in an open set $D$ of $X$ (see discussion before Example 2.11 of \cite{HHL22} for the meaning of BRT charts). Then 
\[
\begin{split}
& T_x^{1,0}X  = \operatorname{span} \left\{\frac{\partial}{\partial z_j} - i \frac{\partial \phi}{\partial z_j} \frac{\partial}{\partial \theta}, j=1, \cdots, n \right\}, \quad \phi(z) \in \mathcal{C}^\infty(D), \\
& T  = -\frac{\partial}{\partial \theta}.
\end{split}
\]
Also, $\mathcal{L}_x = \sum_{j, \ell = 1}^n \frac{\partial^2 \phi}{\partial z_j \partial \overline{z}_\ell} dz_j \wedge d\overline{z}_\ell \Big|_{T^{1,0}_xX}$. Let $s_\phi:=1$ be the local CR trivialization on $D$ with $|1|^2_{h^L} = e^{-2\phi}$. Let $(w, \eta)$ be another BRT chart of $X$ on $D$, then
\[
T_x^{1,0}X  = \operatorname{span} \left\{\frac{\partial}{\partial w_j} - i \frac{\partial \widetilde{\phi}}{\partial w_j} \frac{\partial}{\partial \eta}, j=1, \cdots, n \right\},\ \ \widetilde{\phi}\in\mathcal{C}^\infty(D).
\]
We can check that
\begin{equation}\label{E:071420252233}
\left\{
\begin{split}
& w =(w_1, \cdots, w_n) = H(z) = (H_1(z), \cdots, H_n(z)), \\
& \overline{\partial}H  =0, \quad \widetilde{\phi}(H(z)) = \phi(z) + \log |g(z)|, \\
&\eta  = \theta + \operatorname{Im} g(z), \quad g \in\mathcal{C}^\infty(D), \quad g \not= 0, \quad \overline{\partial} g =0.
\end{split}
\right.
\end{equation}
 From \eqref{E:071420252233}, we have
 \[
 s_{\widetilde{\phi}} = g^{-1} s_\phi \qquad \text{on $D$}.
 \]
 Thus, $\{ s_\phi \}$ defines an $S^1$-invariant CR line bundle $L$.
\end{proof}

Let $\widehat{X}:= \left\{ v \in L^*;\, |v|_{h^{L^*}}^2  = 1 \right\}$. Then $\widehat{X}$ is a CR manifold of codimension two.
Let $T_1$ be the vector field on $\widehat{X}$ induced by the $S^1$-action on $\widehat{X}$ acting on the fiber. For $\gamma \in \operatorname{Spec} (-iT)$, $m \in \mathbb{Z}$, $q \in \{0, 1, \cdots, n\}$, put $\Omega^{0,q}_{\gamma, m}(X) := \{u \in \Omega^{0,q}(\widehat{X}); Tu = -i\gamma u, T_1 u = -imu \}$. Let $\{L_1, \cdots, L_N \} \subset\mathcal{C}^\infty(X, HX)$ such that for every $x \in X$, $\operatorname{span} \{L_1(x), \cdots, L_N(x) \} = H_xX$. For $u \in \Omega^{0,q}(\widehat{X})$, put $\| \widehat{u} \|^2_1:= \sum_{j=1}^n \| L_j u\|^2$. Since $\mathcal{R}^\phi_x = \mathcal{L}_x$, for all $x \in X$, we can repeat Kohn estimates (see Theorem 8.3.5 of \cite{CS}) and deduce that there is a constant $C>0$ such that 
\begin{equation}\label{E:071420252318}
|(\, (T+T_1)u\,| \, u \,)| + \| \widehat{u} \|_1^2 \le C( \| \Box_b^{(q)} u\|^2 + \| u \|^2),
\end{equation}
for all $u \in \Omega^{0,q}(\widehat{X})$, $q \in \{ 1, 2, \cdots, n-1\}$. We have the natural $L^2$ isometric map
\[
F: \Omega^{0,q}_{\gamma, m}(\widehat{X}) \to \Omega^{0,q}_{\gamma + m, 0}(\widehat{X})
\]
such that $F$ commutes with $\Box_b^{(q)}$. From this observation and \eqref{E:071420252318}, we deduce

\begin{lemma}\label{L:071520251512}
Let $q \in \{1, \cdots, n-1 \}$. Fix $C_0 \gg 1$. Then we can find a constant $C>0$ such that 
\[
\| \Box_b^{(q)} u\|^2 \ge C\| u\|^2,
\]
for all $u \in \Omega^{0,q}_{\gamma, m}(\widehat{X})$, $|\gamma + m | \le C_0$, $u \perp \operatorname{Ker} \Box_b^{(q)}$.
\end{lemma}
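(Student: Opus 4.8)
The plan is to deduce Lemma~\ref{L:071520251512} from the Kohn-type a priori estimate \eqref{E:071420252318} together with the $L^2$-isometry $F: \Omega^{0,q}_{\gamma,m}(\widehat X)\to\Omega^{0,q}_{\gamma+m,0}(\widehat X)$ commuting with $\Box_b^{(q)}$. First I would observe that it suffices to prove the inequality for $u\in\Omega^{0,q}_{\gamma',0}(\widehat X)$ with $|\gamma'|\le C_0$ and $u\perp\operatorname{Ker}\Box_b^{(q)}$: indeed, if $u\in\Omega^{0,q}_{\gamma,m}(\widehat X)$ with $|\gamma+m|\le C_0$ and $u\perp\operatorname{Ker}\Box_b^{(q)}$, then $Fu\in\Omega^{0,q}_{\gamma+m,0}(\widehat X)$, $\|Fu\|=\|u\|$, $\|\Box_b^{(q)}Fu\|=\|F\Box_b^{(q)}u\|=\|\Box_b^{(q)}u\|$, and since $F$ commutes with $\Box_b^{(q)}$ it maps $\operatorname{Ker}\Box_b^{(q)}$ isometrically into $\operatorname{Ker}\Box_b^{(q)}$, so $Fu$ is again orthogonal to $\operatorname{Ker}\Box_b^{(q)}$ (one checks $F$ is surjective between the relevant spectral subspaces, hence unitary, so orthogonality is preserved). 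Thus I would reduce everything to the "$m=0$ slice."

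On the $m=0$ slice, the term $(T+T_1)u$ in \eqref{E:071420252318} acts as $-i\gamma'\,u$ with $|\gamma'|\le C_0$, so $|(\,(T+T_1)u\mid u\,)|\le C_0\|u\|^2$, which is bounded and carries no coercivity. The coercive term is $\|\widehat u\|_1^2=\sum_j\|L_ju\|^2$, and since $\{L_j\}$ spans $HX$ pointwise, $\|\widehat u\|_1^2$ controls the full "horizontal" Sobolev-$\tfrac12$ energy of $u$ up to lower order; more precisely, by \eqref{E:071420252318} we get $\|\widehat u\|_1^2\le C(\|\Box_b^{(q)}u\|^2+\|u\|^2)$. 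The key point is then a standard subelliptic/compactness argument: because the Levi form is non-degenerate and $1\le q\le n-1$, condition $Y(q)$ holds, so $\Box_b^{(q)}$ is subelliptic with a $\tfrac12$-gain, and the inclusion of the graph-norm domain into $L^2$ is compact; hence $\Box_b^{(q)}$ has discrete spectrum and $0$ is isolated in its spectrum. Therefore there is $c_q>0$ with $\|\Box_b^{(q)}u\|^2\ge c_q\|u\|^2$ for all $u\perp\operatorname{Ker}\Box_b^{(q)}$, \emph{uniformly over all Fourier modes $m$}, which is exactly the assertion once combined with the reduction above — the constant $C$ can be taken as $c_q^{-1}$ and depends only on $q$ (and $C_0$), not on $(\gamma,m)$.

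The main obstacle — and the point that needs care — is establishing that the constant is genuinely \emph{uniform} in $m$. Naively the estimate \eqref{E:071420252318} already involves the operator norm on all of $\widehat X$, so the spectral gap of $\Box_b^{(q)}$ on $\widehat X$ is a single number; the subtlety is that $\widehat X$ is noncompact in the $T$-direction is \emph{not} an issue here because we only use the $S^1$-fiber of $\widehat X$ over the compact $X$, making $\widehat X$ compact, so $\Box_b^{(q)}$ on $\widehat X$ is a genuine elliptic-in-the-Heisenberg-sense operator on a compact manifold with $Y(q)$, hence has closed range and a uniform gap. I would therefore spell out: (i) $Y(q)$ holds on $\widehat X$ for $1\le q\le n-1$ since the Levi form of $\widehat X$ has the same signature data as that of $X$ in the relevant degrees; (ii) consequently $\Box_b^{(q)}$ on $\widehat X$ has $L^2$ closed range and $\operatorname{Spec}\Box_b^{(q)}\cap(0,\infty)\subset[c_q,\infty)$ for some $c_q>0$; (iii) restricting to the joint eigenspace $\Omega^{0,q}_{\gamma,m}$ and using $u\perp\operatorname{Ker}\Box_b^{(q)}$ within that eigenspace gives $\|\Box_b^{(q)}u\|^2\ge c_q\|u\|^2$ directly, with no dependence on $(\gamma,m)$. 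The role of the hypothesis $|\gamma+m|\le C_0$ is then only to make the statement nontrivial/record that we stay in a fixed window; the estimate \eqref{E:071420252318} and the reduction via $F$ are what guarantee the constant does not blow up as $|\gamma|,|m|\to\infty$ while $|\gamma+m|$ stays bounded. Writing this cleanly, with the compactness of the $S^1$-bundle $\widehat X\to X$ made explicit, is the one place I would be careful.
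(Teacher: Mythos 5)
Your reduction via $F$ to the $m=0$ slice is fine and matches the paper's strategy, but the step you lean on to produce the constant is where the argument breaks. You claim that condition $Y(q)$ holds \emph{on $\widehat{X}$}, that $\Box_b^{(q)}$ on $\widehat{X}$ is therefore subelliptic with compact graph-domain embedding, and hence that $0$ is isolated in $\operatorname{Spec}\Box_b^{(q)}$ on $\widehat{X}$ with a gap uniform over all modes. This is not available: $\widehat{X}$ is a CR manifold of \emph{codimension two}, so the characteristic set contains a whole plane of codirections spanned by the duals of $T$ and $T_1$, and the Levi form at the codirection $a\omega_0+b\omega_1$ is (up to normalization) $a\mathcal{L}_x+b\mathcal{R}^\phi_x=(a+b)\mathcal{L}_x$, which vanishes identically along the ray $a+b=0$ precisely because $\mathcal{R}^\phi_x=\mathcal{L}_x$. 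So the $Y(q)$-type condition fails badly in those codirections, $\Box_b^{(q)}$ on $\widehat{X}$ is not subelliptic, and neither compact resolvent nor an isolated $0$ can be asserted by soft arguments. Worse, a uniform spectral gap for $\Box_b^{(q)}$ on $\widehat{X}$ is essentially equivalent to the uniform-in-$k$ gap for $\Box^{(q)}_{b,L^k}$ (Theorem~\ref{T:071520251528} and Theorem 5.7), which is exactly what this section is building toward through Lemma~\ref{L:071520251512} and Lemma~\ref{L:071520251523}; assuming it here is circular. Your remark that the hypothesis $|\gamma+m|\le C_0$ serves ``only to make the statement nontrivial'' is the symptom of this: the bounded window is essential, since outside it the coercivity comes from the $(T+T_1)$ term in \eqref{E:071420252318}, while inside it one needs a genuinely different argument — and in your final chain the estimate \eqref{E:071420252318} is quoted but never actually used.

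To close the gap, after applying $F$ you should exploit that in the modes $\Omega^{0,q}_{\gamma',0}(\widehat{X})$ with $|\gamma'|\le C_0$ the \emph{transverse} derivatives are bounded: $T_1(Fu)=0$ and $\|T(Fu)\|\le C_0\|Fu\|$, while \eqref{E:071420252318} bounds $\|\widehat{Fu}\|_1^2$, i.e.\ all the horizontal derivatives, by the graph norm. Since the lifts of $L_1,\dots,L_N$ together with $T,T_1$ span $T\widehat{X}$ pointwise and $\widehat{X}$ is compact, this gives an $H^1(\widehat{X})$ bound on the $\Box_b^{(q)}$-invariant subspace spanned by these modes; Rellich compactness plus the standard contradiction argument (a normalized sequence $u_j\perp\operatorname{Ker}\Box_b^{(q)}$ with $\|\Box_b^{(q)}u_j\|\to 0$ would converge in $L^2$ to a unit-norm element of the kernel orthogonal to the kernel) then yields the constant $C$, uniform because \eqref{E:071420252318} and $C_0$ are. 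Alternatively, identify the $T_1$-invariant ($m=0$) slice with $\Omega^{0,q}(X)$ and the restricted operator with $\Box_b^{(q)}$ on the compact hypersurface-type manifold $X$, where $Y(q)$ genuinely holds for $1\le q\le n-1$, and quote the spectral gap there; but that identification must be made explicitly, and it is the codimension-one $X$, not $\widehat{X}$, that supplies the subellipticity.
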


For $\gamma \in \operatorname{Spec}(-iT)$, put 
$\Omega_\gamma^{0,q}(X, L^k):= \{u \in \Omega^{0,q}(X, L^k); -iTu = \gamma u \}$. From Lemma \ref{L:071520251512}, we deduce

\begin{lemma}\label{L:071520251521}
Let $q \in \{1, \cdots, n-1\}$. Fix $C_0 \gg 1$. Then we can find a constant $c >0$ such that $\|\Box_{b, L^k}^{(q)}u \|_{L^k}^2 \ge c \|u\|_{L^k}^2$, for all $u \in \Omega_\gamma^{0,q}(X, L^k)$, $|\gamma + k| \le C_0$, $u \perp \operatorname{Ker} \Box_{b, L^k}^{(q)}$.
\end{lemma}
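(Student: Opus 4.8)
The plan is to deduce Lemma~\ref{L:071520251521} from Lemma~\ref{L:071520251512} by transferring the estimate on $\widehat{X}$ to the estimate on $X$ with values in $L^k$, using the standard Fourier-decomposition correspondence between sections of $L^k$ over $X$ and $m=k$ (or rather, $T_1$-eigenvalue $k$) components of functions on the circle bundle $\widehat{X}$. First I would recall that $\widehat{X}=\{v\in L^*;\ |v|^2_{h^{L^*}}=1\}$ carries a free $S^1$-action on the fibres with generating vector field $T_1$, and that pulling back via the bundle projection $\pi\colon\widehat{X}\to X$ together with the identification of $L^k$ with the line bundle associated to the character $e^{ik\vartheta}$ gives a canonical isometry
\[
\Phi_k\colon \Omega^{0,q}(X,L^k)\To \{u\in\Omega^{0,q}(\widehat{X});\ T_1u=-iku\},
\]
which intertwines $\Box^{(q)}_{b,L^k}$ with $\Box^{(q)}_{b}$ on $\widehat{X}$ and intertwines $-iT$ on $L^k$-valued forms with $-iT$ on $\widehat{X}$ (here one uses that $L$, $h^L$ and the metric are $\mathbb{R}$-invariant, so $T$ lifts to $\widehat{X}$ and commutes with $T_1$). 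Under this isometry, $\Phi_k$ maps $\Omega^{0,q}_\gamma(X,L^k)$ onto $\Omega^{0,q}_{\gamma,k}(\widehat{X})$ and maps $\operatorname{Ker}\Box^{(q)}_{b,L^k}$ onto $\operatorname{Ker}\Box^{(q)}_b\cap\{T_1u=-iku\}$.

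Next I would fix $C_0\gg1$ and take $u\in\Omega^{0,q}_\gamma(X,L^k)$ with $|\gamma+k|\le C_0$ and $u\perp\operatorname{Ker}\Box^{(q)}_{b,L^k}$. Set $\widetilde u:=\Phi_k u\in\Omega^{0,q}_{\gamma,k}(\widehat{X})$; then $\widetilde u\perp\operatorname{Ker}\Box^{(q)}_b$ (since $\Phi_k$ is an isometry intertwining the Laplacians and the orthogonal complement of the kernel of a self-adjoint operator is preserved), and $|\gamma+k|\le C_0$ is exactly the hypothesis $|\gamma+m|\le C_0$ of Lemma~\ref{L:071520251512} with $m=k$. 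Therefore Lemma~\ref{L:071520251512} yields a constant $C>0$, independent of $k$, $\gamma$ and $u$, with $\|\Box^{(q)}_b\widetilde u\|^2\ge C\|\widetilde u\|^2$. Since $\Phi_k$ is an isometry commuting with $\Box^{(q)}$, this reads $\|\Box^{(q)}_{b,L^k}u\|_{L^k}^2\ge C\|u\|_{L^k}^2$, which is the claim with $c=C$. The key point making this work for all $q\in\{1,\ldots,n-1\}$ uniformly is precisely that the constant in Lemma~\ref{L:071520251512} does not depend on $\gamma$ or $m$, only on the fixed cutoff $C_0$; and that constant in turn came from the Kohn-type estimate \eqref{E:071420252318}, which uses the hypothesis $\mathcal R^\phi_x=\mathcal L_x$ to run Kohn's estimate on $\widehat{X}$.

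The main obstacle, and the step I would write out with care, is verifying that the isometry $\Phi_k$ really intertwines $\Box^{(q)}_{b,L^k}$ with $\Box^{(q)}_b$ on $\widehat{X}$, i.e.\ that the Kohn Laplacian on $L^k$-valued $(0,q)$-forms on $X$ corresponds under Fourier decomposition to the restriction of the intrinsic Kohn Laplacian of $\widehat{X}$ to the $T_1$-eigenspace with eigenvalue $-ik$. This requires checking that $\ddbar_b$ on $L^k$-valued forms matches $\ddbar_b$ on $\widehat{X}$ restricted to that Fourier component (which follows because $\widehat{X}\to X$ is a CR submersion and $T_1$ is a CR-transversal $S^1$-action, so $\ddbar_b$ on $\widehat X$ preserves $T_1$-eigenspaces and on the $e^{ik\vartheta}$-component it is exactly the induced $\ddbar_b$ on $L^k$), and likewise that the $L^2$ adjoints agree, which uses the $S^1$-invariance and $\mathbb R$-invariance of all the metrics. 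Once this dictionary is in place, the estimate is an immediate transport of Lemma~\ref{L:071520251512}; the only other thing to note is that the condition $|\gamma+m|\le C_0$ transcribes verbatim to $|\gamma+k|\le C_0$ since $m=k$ here.
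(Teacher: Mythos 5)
Your proposal is correct and follows the same route the paper intends: the paper deduces Lemma~\ref{L:071520251521} directly from Lemma~\ref{L:071520251512} via exactly this identification of $\Omega^{0,q}_\gamma(X,L^k)$ with the $T_1$-eigenvalue-$k$ component $\Omega^{0,q}_{\gamma,k}(\widehat{X})$, intertwining the Kohn Laplacians and transporting the uniform estimate. The only step worth making fully explicit is that $\widetilde u\perp\operatorname{Ker}\Box^{(q)}_b$ on all of $\widehat X$ (not just within its own Fourier component), which follows since $\Box^{(q)}_b$ commutes with the $S^1$-action, so its kernel splits as a Hilbert sum over $T_1$-eigenspaces and the components with eigenvalue $\neq k$ are automatically orthogonal to $\widetilde u$.
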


From Proposition 4.1 of \cite{Hsiao18} and Kohn estimates, we deduce the following

\begin{lemma}\label{L:071520251523}
Let $q \in \{1, \cdots, n-1\}$. There is a constant $C_1 >0$ such that
\begin{equation}
\big|k\|u\|^2 + (-iTu|u)\big| \le C_1(\|\Box^{(q)}_{b, L^k}u\|^2 + \|u\|^2),
\end{equation}
for all $u \in \Omega^{0,q}(X,L^k)$.
\end{lemma}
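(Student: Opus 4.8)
The plan is to derive the estimate from a Bochner--Kodaira type \emph{a priori} estimate for the Kohn Laplacian twisted by $L^k$, combined with Kohn's subelliptic estimate and the curvature hypothesis $\mathcal{R}^\phi_x=\mathcal{L}_x$. Recall that $\langle\,\cdot\,|\,\cdot\,\rangle$ is chosen $T$-invariant and, by the preceding theorem, $h^L$ is $\mathbb{R}$-invariant, so $\Box_{b,L^k}$ commutes with $-iT$. Decomposing $u\in\Omega^{0,q}(X,L^k)$ along the spectral resolution of $-iT$, write $u=\sum_\gamma u_\gamma$ with $-iTu_\gamma=\gamma u_\gamma$, so that $k\norm{u}^2+(-iTu\mid u)=\sum_\gamma(k+\gamma)\norm{u_\gamma}^2$ and $\Box_{b,L^k}^{(q)}u_\gamma\in\Omega_\gamma^{0,q}(X,L^k)$. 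It then suffices to prove
\[
\abs{k+\gamma}\,\norm{u_\gamma}^2\le C\big(\norm{\Box_{b,L^k}^{(q)}u_\gamma}^2+\norm{u_\gamma}^2\big),
\]
with $C$ independent of $k$ and $\gamma$, since summing over $\gamma$ and using $\abs{\sum_\gamma(k+\gamma)\norm{u_\gamma}^2}\le\sum_\gamma\abs{k+\gamma}\norm{u_\gamma}^2$ yields the lemma.

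To prove the displayed estimate I would work in a BRT chart $D$ with a local CR trivializing section $s$ of $L$, $\abs{s}^2_{h^L}=e^{-\phi}$, and use a partition of unity; there $\Box_{b,L^k}^{(q)}$ becomes the weighted Kohn Laplacian $\Box_{b,k\phi}^{(q)}$ and $T=-\partial/\partial\theta$. Proposition~4.1 of~\cite{Hsiao08} provides, after integration by parts in $(\,\cdot\mid\cdot\,)_{k\phi}$, two Bochner--Kodaira identities of the form
\[
(\Box_{b,k\phi}^{(q)}u_\gamma\mid u_\gamma)_{k\phi}=\sum_{j=1}^n\norm{\ol U_j u_\gamma}^2_{k\phi}+\operatorname{Re}(\mathcal{M}^+_{k+\gamma}u_\gamma\mid u_\gamma)_{k\phi}+\operatorname{Re}(R u_\gamma\mid u_\gamma)_{k\phi},
\]
and likewise with $\ol U_j$ replaced by $U_j$ and $\mathcal{M}^+_{k+\gamma}$ by $\mathcal{M}^-_{k+\gamma}$, where $R$ is a first order operator with coefficients bounded uniformly in $k$ and, \emph{using $\mathcal{R}^\phi_x=\mathcal{L}_x$}, the endomorphism $\mathcal{M}^\pm_{k+\gamma}$ of $T^{*0,q}_xX$ has eigenvalues $(k+\gamma)\sum_{j\in J}\lambda_j(x)$ resp. $-(k+\gamma)\sum_{j\notin J}\lambda_j(x)$ over $\abs{J}=q$, modulo zeroth order terms of size $O(1)$. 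Since $X$ is strongly pseudoconvex, $\lambda_j(x)>0$ for all $j$, so for $1\le q\le n-1$ one has $\mathcal{M}^+_{k+\gamma}\ge c_0(k+\gamma)$ when $k+\gamma\ge0$ and $\mathcal{M}^-_{k+\gamma}\ge c_0\abs{k+\gamma}$ when $k+\gamma\le0$, with $c_0>0$ depending only on the Levi form. Choosing the identity according to the sign of $k+\gamma$, absorbing $(Ru_\gamma\mid u_\gamma)_{k\phi}$ and the $O(1)$ terms into $\sum_j\norm{\ol U_ju_\gamma}^2$ (resp. $\sum_j\norm{U_ju_\gamma}^2$) and $C\norm{u_\gamma}^2$ via Kohn's subelliptic estimate, one gets $(\Box_{b,k\phi}^{(q)}u_\gamma\mid u_\gamma)_{k\phi}\ge\tfrac{c_0}{2}\abs{k+\gamma}\norm{u_\gamma}^2-C\norm{u_\gamma}^2$; then $\norm{\Box_{b,k\phi}^{(q)}u_\gamma}\norm{u_\gamma}\ge(\Box_{b,k\phi}^{(q)}u_\gamma\mid u_\gamma)_{k\phi}$ and Young's inequality give the displayed estimate.

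The main obstacle is to carry out the absorption of the lower order terms with constants independent of $k$; this is where Proposition~4.1 of~\cite{Hsiao08} does the real work, since the coefficients in $\Box_{b,k\phi}^{(q)}$ carrying a factor of $k$ are precisely those reassembled into the covariant derivatives $\ol U_j$, $U_j$ of $L^k$ and into the curvature endomorphism $\mathcal{M}^\pm_{k+\gamma}$, so that the genuine first order remainder $R$ has $k$-independent coefficients and Kohn's $\tfrac12$-estimate applies uniformly. A secondary point is the dichotomy between the two microlocal regions $\Sigma^+$, $\Sigma^-$ (equivalently, the two Bochner--Kodaira identities), forced by the change of sign of $k+\gamma$: the restriction $1\le q\le n-1$ is exactly what makes $\mathcal{M}^+_{k+\gamma}$, resp.\ $\mathcal{M}^-_{k+\gamma}$, positive definite on $(0,q)$-forms in the relevant region, the same mechanism by which strong pseudoconvexity yields condition $Y(q)$ only for $1\le q\le n-1$.
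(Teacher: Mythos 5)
Your argument is correct and follows essentially the same route as the paper, which deduces the lemma from the Bochner--Kodaira/commutator identity of Proposition 4.1 in \cite{Hsiao08} combined with Kohn-type estimates, using strong pseudoconvexity and the hypothesis $\mathcal{R}^\phi_x=\mathcal{L}_x$ so that in degrees $1\le q\le n-1$ the curvature term is a positive Levi sum times $k+\gamma$, controllable from both sides. Your preliminary decomposition into $-iT$-eigenforms is simply the natural way to organize this deduction and is consistent with how the paper uses $\operatorname{Spec}(-iT)$ and the spaces $\Omega^{0,q}_\gamma(X,L^k)$ in the neighboring lemmas.
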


From Lemma \ref{L:071520251521} and Lemma \ref{L:071520251523}, we deduce 

\begin{theorem}\label{T:071520251528}
Let $q \in \{1, \cdots, n-1\}$. There is a constant $\widehat{C}_1 >0$ such that
$\|\Box_{b, L^k}^{(q)}u \|_{L^k}^2 \ge \widehat{C}_1 \|u\|_{L^k}^2$, for all $u \in \Omega^{0,q}(X, L^k)$,  $u \perp \operatorname{Ker} \Box_{b, L^k}^{(q)}$.
\end{theorem}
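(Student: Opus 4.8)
The plan is to combine the uniform spectral gap for middle degrees (Lemma~\ref{L:071520251521}, which handles the slice $|\gamma+k|\le C_0$) with the large-$k$ a priori estimate of Lemma~\ref{L:071520251523} (which, together with Kohn estimates, controls the complementary slice $|\gamma+k|>C_0$) to produce a single uniform lower bound valid on all of $\Omega^{0,q}(X,L^k)$. First I would fix $q\in\{1,\dots,n-1\}$ and decompose, using the $\mathbb{R}$-action generated by $T$, any $u\in\Omega^{0,q}(X,L^k)$ with $u\perp\operatorname{Ker}\Box_{b,L^k}^{(q)}$ into its spectral pieces $u=\sum_{\gamma\in\operatorname{Spec}(-iT)}u_\gamma$, $u_\gamma\in\Omega^{0,q}_\gamma(X,L^k)$; note that $\Box_{b,L^k}^{(q)}$ commutes with $T$ (since $L$ and $h^L$ are $\mathbb{R}$-invariant), so each $u_\gamma\perp\operatorname{Ker}\Box_{b,L^k}^{(q)}$ as well, and $\|\Box_{b,L^k}^{(q)}u\|_{L^k}^2=\sum_\gamma\|\Box_{b,L^k}^{(q)}u_\gamma\|_{L^k}^2$, $\|u\|_{L^k}^2=\sum_\gamma\|u_\gamma\|_{L^k}^2$.

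On the low-frequency slice $|\gamma+k|\le C_0$ (with $C_0\gg1$ chosen as in Lemma~\ref{L:071520251521}), Lemma~\ref{L:071520251521} gives $\|\Box_{b,L^k}^{(q)}u_\gamma\|_{L^k}^2\ge c\|u_\gamma\|_{L^k}^2$ with $c>0$ independent of $k$. On the high-frequency slice $|\gamma+k|>C_0$, I would use Lemma~\ref{L:071520251523}: on $\Omega^{0,q}_\gamma(X,L^k)$ we have $-iTu_\gamma=\gamma u_\gamma$, so $(-iTu_\gamma\mid u_\gamma)=\gamma\|u_\gamma\|_{L^k}^2$ and the left-hand side becomes $|k+\gamma|\,\|u_\gamma\|_{L^k}^2$. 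Hence
\[
|k+\gamma|\,\|u_\gamma\|_{L^k}^2\le C_1\big(\|\Box_{b,L^k}^{(q)}u_\gamma\|_{L^k}^2+\|u_\gamma\|_{L^k}^2\big),
\]
and when $|k+\gamma|>C_0$ with $C_0>2C_1$, absorbing the $C_1\|u_\gamma\|^2$ term on the left yields $\|\Box_{b,L^k}^{(q)}u_\gamma\|_{L^k}^2\ge\big(\tfrac{C_0}{C_1}-1\big)\|u_\gamma\|_{L^k}^2\ge\|u_\gamma\|_{L^k}^2$, again with a constant independent of $k$. Taking $\widehat C_1:=\min\{c,1\}$ and summing over $\gamma$ gives the claimed estimate $\|\Box_{b,L^k}^{(q)}u\|_{L^k}^2\ge\widehat C_1\|u\|_{L^k}^2$.

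The one genuine subtlety — which I expect to be the main point to check carefully rather than a deep obstacle — is the compatibility of the two slices: I must choose $C_0$ large enough simultaneously for Lemma~\ref{L:071520251521} (which already requires $C_0\gg1$) and for the absorption argument ($C_0>2C_1$), and I should make sure the spectral decomposition is legitimate, i.e.\ that $\operatorname{Spec}(-iT)$ acting on $L^2(X,L^k)$ is discrete with finite multiplicities and that $\Omega^{0,q}(X,L^k)$-elements decompose with the series converging in the relevant Sobolev norms. Since $X$ is compact and the $\mathbb{R}$-action on $X$ together with the fiberwise action lifts to a compact (torus) action on $\widehat X$, the spectral theory of $-iT$ is standard, and the commutation $[T,\Box_{b,L^k}^{(q)}]=0$ together with elliptic-type regularity in the transverse directions (Kohn estimates) justifies termwise estimation. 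I would conclude by remarking that $u\mapsto u_\gamma$ are $L^2$-orthogonal projections preserving smoothness, so no loss occurs in passing from the slice-wise bounds to the global one. This completes the proof of Theorem~\ref{T:071520251528}.
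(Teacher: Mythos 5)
Your proposal is correct and follows essentially the same route as the paper, which obtains Theorem~\ref{T:071520251528} precisely by combining Lemma~\ref{L:071520251521} (the uniform gap on the slice $|\gamma+k|\le C_0$) with the absorption argument from Lemma~\ref{L:071520251523} on the complementary slice, after decomposing $u$ into $-iT$-eigenmodes, which is legitimate since the $T$-invariance of the metrics makes $\Box^{(q)}_{b,L^k}$ commute with $T$. Your attention to choosing $C_0$ compatibly and to the orthogonality of the spectral decomposition fills in exactly the details the paper leaves implicit.
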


From Theorem \ref{T:071520251528}, we deduce that for $q=0, n$, 
\[
\operatorname{Spec} \Box^{(q)}_{b, L^k} \subset \{0\} \cup [C, +\infty),
\]
where $C>0$ is a constant independent of $k$. We obtain the following

\begin{theorem}
We have
\[
\operatorname{Spec} \Box_{b, L^k} \subset \{0\} \cup [C, +\infty),
\]
where $C>0$ is a constant independent of $k$.
\end{theorem}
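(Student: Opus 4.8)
The plan is to combine the spectral estimate of Theorem~\ref{T:071520251528}, which handles the intermediate degrees $q\in\set{1,\ldots,n-1}$, with the observation that for a strongly pseudoconvex CR manifold we have $n_-=0$, $n_+=n$, so the only degrees for which $Y(q)$ fails are $q=0$ and $q=n$. Thus, once the degrees $0$ and $n$ are also controlled, the spectral gap on all of $L^2_{(0,\bullet)}(X,L^k)$ follows by taking $C$ to be the minimum of the (finitely many) constants obtained in each degree.

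First I would treat $q=0$. Here $\Box^{(0)}_{b,L^k}=\ddbar^*_b\ddbar_b$, so for $u\in\Omega^{0,0}(X,L^k)$ orthogonal to $\operatorname{Ker}\Box^{(0)}_{b,L^k}={\rm Ker\,}\ddbar_b$ we have $\norm{\Box^{(0)}_{b,L^k}u}^2_{L^k}\geq c_0\norm{\ddbar_b u}^2_{L^k}$ for some $c_0>0$ (a standard consequence of closed range), and then I would invoke the basic $L^2$-estimate for $\ddbar_b$ on functions with values in the positive line bundle $L^k$: since $\mathcal R^\phi=\mathcal L$ is positive, Kohn's estimate (as in \cite[Theorem 8.3.5]{CS}, used in the same way as in \eqref{E:071420252318}) gives $\norm{\ddbar_b u}^2_{L^k}\geq (ck-C)\norm{u}^2_{L^k}$ for constants $c>0$, $C>0$ independent of $k$, for all $u\perp{\rm Ker\,}\ddbar_b$; combining, $\norm{\Box^{(0)}_{b,L^k}u}^2_{L^k}\geq \widehat C_0\norm{u}^2_{L^k}$ for $k\gg1$. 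For $q=n$, I would argue by the analogous estimate for $\ddbar^*_b$ on $(0,n)$-forms, or, more simply, pass to the conjugate CR structure (equivalently use the Hodge-type duality $T^{*0,n}X\cong T^{*0,n-1}X\otimes(\text{line})$) which exchanges the roles of $q=0$ and $q=n$ and of $n_-,n_+$, reducing the case $q=n$ to the case $q=0$ already handled. In both cases the lowest eigenvalue of $\Box^{(q)}_{b,L^k}$ restricted to $({\rm Ker\,}\Box^{(q)}_{b,L^k})^\perp$ is bounded below by a positive constant independent of $k$ for $k$ large.

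Putting the three ranges together: for $q\in\set{1,\ldots,n-1}$ apply Theorem~\ref{T:071520251528} with its constant $\widehat C_1$; for $q=0$ and $q=n$ apply the estimates just described with constants $\widehat C_0$, $\widehat C_n$. Since ${\rm Spec\,}\Box_{b,L^k}\setminus\set0=\cup_{q=0}^n\bigr({\rm Spec\,}\Box^{(q)}_{b,L^k}\setminus\set0\bigr)$ and each of these spectra is contained in $[\widehat C_q,+\infty)$ with $\widehat C_q>0$ independent of $k$ for $k\gg1$, setting $C:=\min\set{\widehat C_0,\widehat C_1,\widehat C_n}$ yields
\[
{\rm Spec\,}\Box_{b,L^k}\subset\set0\cup[C,+\infty)
\]
for all $k\gg1$, which is the assertion.

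The main obstacle, I expect, is the endpoint degrees $q=0$ and $q=n$, where $Y(q)$ genuinely fails and $\Box^{(q)}_{b,L^k}$ is not hypoelliptic; the intermediate-degree estimate in Theorem~\ref{T:071520251528} does not apply there, and one must instead exploit the strict positivity of $L^k$ directly through the Kohn--H\"ormander $L^2$-estimate, taking care that the lower-order error term ($-C\norm{u}^2_{L^k}$, independent of $k$) is absorbed only once $k$ is large enough. A secondary, more bookkeeping point is to make sure the reduction $q=n\leftrightarrow q=0$ via the conjugate structure is compatible with the Levi-metric normalization \eqref{e-gue250623yyd} fixed in this section; since on a strongly pseudoconvex manifold the Levi metric is simply a genuine Hermitian metric, this is harmless, but it should be stated explicitly.
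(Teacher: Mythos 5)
Your use of Theorem~\ref{T:071520251528} for the intermediate degrees is exactly what the paper does, but your treatment of the endpoint degrees $q=0$ and $q=n$ contains a genuine gap: the estimate you invoke there, $\norm{\ddbar_b u}^2_{L^k}\geq (ck-C)\norm{u}^2_{L^k}$ for all $u\in\mathcal{C}^\infty(X,L^k)$ with $u\perp{\rm Ker\,}\ddbar_b$, is false in general. Kohn-type basic estimates driven by positivity of $\mathcal{R}^\phi$ only produce lower bounds in the form degrees where the $Y(q)$-type positivity is available (this is precisely why \eqref{E:071420252318} is stated for $q\in\{1,\dots,n-1\}$); at $q=0$ the kernel of $\ddbar_b$ is infinite dimensional and its orthogonal complement contains low-energy modes. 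Concretely, in the regular Sasakian model ($X$ the circle bundle of a negative line bundle over a compact K\"ahler manifold $M$, $L$ the invariant line bundle with $\mathcal{R}^\phi=\mathcal{L}$), the Fourier mode of $L^k$-valued functions with $-iT$-eigenvalue $\gamma\approx -k$ reduces $\Box^{(0)}_{b,L^k}$ to $\ddbar^*\ddbar$ on functions on $M$: its nonzero eigenvalues are of size $O(1)$, independent of $k$, so no bound growing like $ck$ can hold on $({\rm Ker\,}\ddbar_b)^\perp$. (This is also the reason the paper must work with the weakened Assumption~\ref{a-gue250426yyd1}, with gap $C$ rather than $Ck$.) Your auxiliary step $\norm{\Box^{(0)}_{b,L^k}u}^2\geq c_0\norm{\ddbar_b u}^2$ ``by closed range'' is likewise not what closed range gives, and it is not the quantity you need; the bottom of the nonzero spectrum is controlled by $(\Box^{(0)}_{b,L^k}u\,|\,u)_{L^k}=\norm{\ddbar_b u}^2_{L^k}$. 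The reduction of $q=n$ to $q=0$ via the conjugate CR structure has the same defect, besides being delicate because conjugation also reverses the sign of the curvature of $L$.

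The correct (and, implicitly, the paper's) way to pass from Theorem~\ref{T:071520251528} to all degrees is a spectral comparison rather than a direct estimate: since the Levi form is non-degenerate, ${\rm Spec\,}\Box_{b,L^k}\cap(0,+\infty)$ consists of point eigenvalues (as recalled from \cite{HM17}), and $\ddbar_b$, $\ddbar^*_b$ intertwine the Kohn Laplacians in adjacent degrees. If $\Box^{(0)}_{b,L^k}u=\lambda u$ with $\lambda>0$, then $\ddbar_b u\neq 0$ (because $\lambda\norm{u}^2_{L^k}=\norm{\ddbar_b u}^2_{L^k}$), $\ddbar_b u\perp{\rm Ker\,}\Box^{(1)}_{b,L^k}$, and $\Box^{(1)}_{b,L^k}\ddbar_b u=\lambda\,\ddbar_b u$; hence $\lambda\geq\widehat C_1$ by Theorem~\ref{T:071520251528} (here $n\geq2$, as required by the standing hypothesis $\abs{n_--n_+}>1$). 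Dually, for $q=n$ one has $\Box^{(n)}_{b,L^k}=\ddbar_b\ddbar^*_b$, and applying $\ddbar^*_b$ to an eigenform pushes any nonzero eigenvalue into degree $n-1$, again giving $\lambda\geq\widehat C_1$. Combining with Theorem~\ref{T:071520251528} in degrees $1,\dots,n-1$ yields ${\rm Spec\,}\Box_{b,L^k}\subset\{0\}\cup[C,+\infty)$ with $C$ independent of $k$. If you replace your endpoint argument by this commutation step, the rest of your proposal (taking the minimum over degrees) is fine.
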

    
Since $\mathcal{R}^\phi_x = \mathcal{L}_x$, $\mathbb{R}_x(q)=\emptyset$, for all $x \in X$ and all $q \in \{1, \cdots, n-1\}$, where $\mathbb{R}_x(q)$ is given by \eqref{e-gue250529ycdu}. From the proof of \cite[Theorem 1.5]{HM12}, we get
\[
\dim E^{(q)}_{0 < \lambda \le \frac{k}{\log k}}(X, L^k) = o(k^{n+1}), \quad q \in \{1, \cdots, n-1\}.
\]
Hence
\[
\dim E_{0 < \lambda \le \frac{k}{\log k}}(X, L^k) = o(k^{n+1}).
\]

\section{\texorpdfstring{The small time asymptotics of the heat kernel on CR manifolds with $S^1$-action}{}}\label{s23}
	
	Now we assume that $X$ admits an $S^1$-action. 
    In this section we will establish the small time asymptotics of the CR heat kernel in $S^1$-action case. 
    Let $T \in C^\infty(X, TX)$ be the infinitesimal generator of the $S^1$-action.

	Let 
	\[
	e^{-t\Box^{(q)}_{b, k, \le k\delta} } := e^{-t\Box^{(q)}_{b, k} } \circ Q_{X, \le k\delta} : L^2_{(0, q)}(X, L^k) \to  \mbox{Dom} \, \Box^{(q)}_{b, k, \le k\delta}, \quad t>0,
	\]
	and let
	\[
	A_{k, \delta}(t, x, y)  =  e^{-t\Box^{(q)}_{b, k, \le k\delta}}(x, y) \in C^\infty(\mathbb{R}_+ \times X \times X, (T^{*0,q}X \times L^k) \boxtimes (T^{*0,q}X \otimes L^k)^*)
	\]
	be the distribution kernel of $ e^{-t\Box^{(q)}_{b, k, \le k\delta}}$ with respect to $dv_X(x)$. We also write $A_{k, \delta}(t)$ to denote  $e^{-t\Box^{(q)}_{b, k, \le k\delta}}$. Note that$A_{k. \delta}\left( \frac{t}{k}\right)$ satisfies the following
	\begin{equation}
		\left\{ \begin{aligned}
			& \left( \frac{\partial}{\partial t}+ \frac{1}{k} \Box^{(q)}_{b,k}\right) A_{k, \delta}\left( \frac{t}{k}\right) =0  \\
			& \lim_{t \to 0}A_{k, \delta}(t) = Q_{X, \le k\delta} \quad \text{on} \ L^2_{(0, q)}(X, L^k).
		\end{aligned}   \right.
	\end{equation}
	
	As before, we use the canonical identification $\mbox{End}(L^k) = \mathbb{C}$, especially
	\[
	A_{k, \delta}(\frac{t}{k}, x, y)  =  e^{-\frac{t}{k} \Box^{(q)}_{b, k, \le k\delta}}(x, y) \in C^\infty(\mathbb{R}_+ \times X \times X, (T^{*0,q}X \times L^k) \boxtimes (T^{*0,q}X \otimes L^k)^*)
	\]
	Similarly to \eqref{e-gue210303yydI}, there exists $A_{k, s, \delta}(t, x, y) \in C^\infty(\mathbb{R}_+ \times D \times D, T^{*0,q}X \boxtimes (T^{*0,q}X)^*)$ such that
	\[
	(e^{-t\Box^{(q)}_{b, k, \le k\delta}}u)(x) = s^k(x) \otimes e^{\frac{k\phi(x)}{2}} \int_D  A_{k, s, \delta}(t, x, y) e^{-\frac{k\phi(y)}{2}} \hat{u}(y) dv_X(y) \ \text{on} \ D,
	\]
	for every $u = s^k \otimes \hat{u} \in \Omega^{0,q}_c(D, L^k)$, $\hat{u} \in \Omega^{0,q}_c(D)$. Note that
	\[
	A_{k, \delta}(t, x, y) = s^k(x) \otimes A_{k, s, \delta}(t, x, y)  e^{\frac{k\phi(x) - k\phi(y)}{2}}   \otimes (s^k(y))^*      \ \text{on} \ D.
	\]
	In particular,
	\[
	e^{-t\Box^{(q)}_{b, k, \le k\delta}}(x, x) = A_{k, \delta}(t, x, x) = A_{k, s, \delta}(t, x, x) \ \text{on} \ D.
	\] 

    Let $\Pi_{L^k,\le k\delta}: L^2_{(0,\bullet)}(X,L^k)\To{\rm Ker\,}\Box_{b,k,\le k\delta}$ be the orthogonal projection with respect to $(\,\cdot\,|\,\cdot\,)_{L^k}$ and let $\Pi_{L^k,\le k\delta}(x,y)\in\mathcal{D}'(X\times X,(L^k\otimes T^{*0,\bullet}X)\boxtimes(L^k\otimes T^{*0,\bullet}X)^*)$
    be the distribution kernel of $\Pi_{L^k,\le k\delta}$.  Let $q\in\set{0,1,\ldots,n}$. We write  $\Pi^{(q)}_{L^k,\le k\delta}:=\Pi_{L^k,\le k\delta}|_{L^2_{(0,q)}(X,L^k)}$ and let $\Pi^{(q)}_{L^k,\le k\delta}(x,y)\in\mathcal{D}'(X\times X,(L^k\otimes T^{*0,q}X)\boxtimes(L^k\otimes T^{*0,q}X)^*)$
    be the distribution kernel of $\Pi^{(q)}_{L^k,\le k\delta}$. We will also consider $\Pi_{L^k,\le k\delta}(x,y)$, $\Pi^{(q)}_{L^k, \le k\delta}(x,y)$, $q=0,1,\ldots,n$, to be elements in $\mathcal{D}'(X\times X,(L^k\otimes\Lambda^\bullet(\mathbb CT^*X))\boxtimes(L^k\otimes (\Lambda^\bullet(\mathbb CT^*X))^*)$ and $\Pi_{L^k, \le k\delta}$, $\Pi^{(q)}_{L^k,\le k\delta}$, $q=0,1,\ldots,n$, to be continuous operators: 
\[\Pi_{L^k,\le k\delta}, \Pi^{(q)}_{L^k, \le k\delta}: L^2(X,L^k\otimes\Lambda^\bullet(\mathbb CT^*X))\To  L^2(X,L^k\otimes\Lambda^\bullet(\mathbb CT^*X)),\]
$q=0,1,\ldots,n$.
    
	
	\subsection{\texorpdfstring{Spectral gap of $\Box^{(q)}_{b, k, \le k\delta}$}{}}

	Let $s \in \mathbb{N}_0$, denote by $\| \cdot \|_s$ the standard Sobolev norm for sections of $L^k$ with respect to $(\cdot \mid \cdot )_{h^{L^k}}$ of order $s$. 
	
	
	If we go over Kohn's poof (\cite[Theorem 8.4.2]{CS}, \cite{K65}), we see that the constant $c$ can be taken to be independent of $k$. We have the following theorem.
	
	\begin{theorem}\label{T:Kohnest2}
		Assume that $X$ admits a locally free CR transversal $S^1$-action. 
        There exists a constant $c > 0$ such that
		\begin{equation}\label{E:Kohnest2}
	ck^2\|u\|^2 \le \| \Box^{(q)}_{b, k, \le k\delta}u \|^2 + k\|(Tu\,|\, u) \|, \quad \text{for every $u \in \Omega^{0,q}_{\le k \delta}(X, L^k)$}.
		\end{equation}
	\end{theorem}

	\begin{theorem}\label{T:110520241057}
		Let $\mu^q_k$ be the lowest eigenvalue of $\Box^{(q)}_{b, k, \le k\delta}$. There exist constants $c_1>0$ not depending on $k$ such that, for $q \ge 1$ and $k \in \mathbb{N}$,
		\begin{equation}\label{E:110520241057}
			\mu^q_k \ge c_1 k.
		\end{equation}
	\end{theorem}
	\begin{proof}
		Fix $q \in \{1, 2, \cdots, n\}$. By \eqref{E:Kohnest2}, there is a constant $c>0$ independent of $k$ such that
		\begin{equation}\label{E:specgap1}
		ck^2\|u\|^2 \le \| \Box^{(q)}_{b, k, \le k\delta}u \|^2 + k\|(Tu\,|\, u) \| \le \| \Box^{(q)}_{b, k, \le k \delta}u \|^2 + k^2\delta \| u \|^2, \quad \text{for every $u \in \Omega^{0,q}_{\le k \delta}(X, L^k)$}.
		\end{equation}
		By \eqref{E:specgap1}, we have
		\begin{equation}\label{E:specgap2}
       (c-\delta)k^2 \|u\|^2  \le  \| \Box^{(q)}_{b, k, \le k\delta}u \|^2 , \quad \text{for every $u \in \Omega^{0,q}_{\le k \delta}(X, L^k)$}.
		\end{equation}
		Let $v \in \Omega^{0,q}_{\le k \delta}(X, L^k)$ be a nonzero eigenfunction of $\Box^{(q)}_{b, k, \le k\delta}$ with eigenvalue $\mu^q_k$. From \eqref{E:specgap2}, if $\delta > 0$ is small enough, then we have
		\[
		\sqrt{c-\delta} k \|v\| \le  \| \Box^{(q)}_{b, k, \le k\delta}v \| = \mu^q_k \|v\|.
		\]
		Hence, $\mu^q_k \ge \sqrt{c-\delta} k$. The theorem follows.
	\end{proof}

	
	\subsection{Small-time asymptotics of the CR heat kernel with $S^1$-action}
	
	In this subsection, we will establish small-time asymptotic expansions for the CR heat kernel with $S^1$-action. 
	
 Assume that \(X\) admits a locally free transversal CR $S^1$-action $e^{i\theta}$. Let \(T \in C^\infty(X, TX)\) be the vector field induced by $e^{i\theta}$. We notice that(see~\cite{HHL22}) $L, h^L$ and $R^L$, the Hermitian metric $\langle \cdot\, | \cdot \rangle$ and the $L^2$ inner product $(\cdot \, | \, \cdot )_k$ are $S^1$ invariant.

For $m\in \mathbb{Z}$, let
\begin{equation}\label{e01191}
\Omega^{0,\bullet}_{m}(X,L^k) = \{u \in \Omega^{0,\bullet}(X,L^k) :(e^{i\theta})^\ast u = e^{im\theta}u, \forall \ e^{i\theta} \in S^1\}.
\end{equation}
Let $L^{2,m}_{0,\bullet}(X,L^k)$ be the $L^2$ completion of $\Omega^{0,\bullet}_m(X,L^k)$ with respect to $(\cdot \,|\, \cdot)_k$.

Fix \(\delta > 0\) small. Let
\[
\Omega^{0,\bullet}_{\leq k\delta}(X,L^k) := \bigoplus_{|m| \leq k\delta,m\in\mathbb{Z}} \Omega^{0,\bullet}_{m}(X,L^k) ,
\]
and
\[
L^{2,\leq k\delta}_{0,\bullet}(X,L^k) := \bigoplus_{|m| \leq k\delta,m\in\mathbb{Z}} L^{2,m}_{0,\bullet}(X,L^k).
\]

We will consider the heat kernel and analytic torsion on \(L^{2,\leq k\delta}_{0,\bullet}(X,L^k)\). Fix \(m \in \mathbb{Z}\), consider
\begin{equation}\label{e01192}
e^{-t\Box_{b,m}}(x, y) = \frac{1}{2\pi} \int_{-\pi}^{\pi}e^{-t\Box_{b}} (x, e^{i\theta}\circ y)e^{-im\theta} d\theta.
\end{equation}

We have the following
\begin{theorem}\label{t05042026}
		With the notations and assumptions used before, we can find 
        $a_{j,k} \in \mathbb{R}$, 
        $j=0,1,\ldots$, $a_{0,k}\neq0$, such that
		\[
		\int_X e^{-t\Box^{(q)}_{b, k, \le k\delta} }(x, x)dv_X(x) \sim\sum^{+\infty}_{j=0} a_{j,k} t^{-n-1+j} \quad \text{as $t \to 0^+$}. 
		\]
	\end{theorem}
\begin{proof}
It suffices to show that \(e^{-t\Box_{b,m}}\) admits a small-time asymptotic expansion.
From Theorem \ref{T:0111202511001} we have
\begin{equation}\label{e01193}
\begin{split}
e^{-t\Box_{b,m}}(x, y) &= \frac{1}{2\pi} \int_{-\pi}^{\pi} \int_{\mathbb{R}^+} e^{i\frac{1}{t}\Phi_{-}(x,e^{i\theta}\circ y,s)-im\theta} b_{-}(t, x,e^{i\theta}\circ y,s)dsd\theta\\
&+\frac{1}{2\pi} \int_{-\pi}^{\pi} \int_{\mathbb{R}^+} e^{i\frac{1}{t}\Phi_{+}(x,e^{i\theta}\circ y,s)-im\theta} b_{+}(t, x,e^{i\theta}\circ y,s)dsd\theta+ O(t^{+\infty}).
\end{split}
\end{equation}
Recall that \(d_y \Phi_{\pm}(x, x, s) = \mp \omega_0(x)s\). Without loss of generality, suppose that \(\langle \omega_0, T \rangle > 0\).

Assume that \(m \geq 0\). We rewrite \eqref{e01193} as
\begin{equation}\label{e01194}
\begin{split}
e^{-t\Box_{b,m}}(x, y) &= \frac{1}{2\pi} \int_{-\pi}^{\pi} \int_{\mathbb{R}^+} e^{i\Psi_{+}(x,e^{i\theta}\circ y,t,s)-im\theta} tb_{+}(t,x,e^{i\theta}\circ y,ts)dsd\theta\\
&+\frac{1}{2\pi} \int_{-\pi}^{\pi} \int_{\mathbb{R}^+} e^{i\Psi_{-}(x,e^{i\theta}\circ y,t,s)-im\theta} tb_{-}(t,x,e^{i\theta}\circ y,ts)dsd\theta+ O(t^{+\infty})\\ & =I+II+ O(t^{+\infty}),
\end{split}
\end{equation}
where \(\Psi_{\pm}(x, y, t, s) := \frac{1}{t} \Phi_{\pm}(x, y, ts)\).

Since \(\langle \omega_0, T\rangle > 0\), we have
\[
\frac{d}{d\theta} \left( i\Psi_{+}(x, e^{i\theta}\circ x, t, s) - im\theta \right) \big|_{\theta=0} = -i\langle \omega_0(x), T_1 \rangle s-im\neq 0.
\]
We can integrate by parts in \(\theta\) several times and deduce that \(I = O(t^{+\infty})\).

We only need to consider II. Fix \(p \in X\). Suppose \(N_p := \{ \theta \in [0, 2\pi): e^{i\theta}\circ p = p \}
= \{ 0, \frac{2\pi}{\ell}, \frac{4\pi}{\ell}, \dots, \frac{(\ell-1)2\pi}{\ell} \}\). Near \(p\), we have
\begin{equation}\label{e01195}
\begin{split}
II= \sum_{j=0}^{\ell-1} \frac{1}{2\pi} \int_{\mathbb{R}}\int_{\mathbb{R}^+} &e^{i\Psi_{-}(x,e^{i\theta}\circ e^{i\frac{j}{\ell}2\pi}\circ y,t,s)-im\theta }
tb_{-}(t,x,e^{i\theta}\circ e^{i\frac{j}{\ell}2\pi}\circ y,ts)\chi(\theta)d\theta ds\\
&+O(t^{+\infty}),
\end{split}
\end{equation}
where \(\chi \in C_c^\infty((-\varepsilon, \varepsilon))\), \(0 \leq \chi \leq 1\), \(\chi \equiv 1\) on \([-\frac{\varepsilon}{2}, \frac{\varepsilon}{2}]\), \(\varepsilon > 0\) is a small constant. For \(j \in \{0, 1, \dots, \ell-1\}\), consider
\begin{equation}\label{e01196}
\Gamma_{j,m}(t,x,y) := \frac{1}{2\pi} \int_{|\theta|<\epsilon}\int_{\mathbb{R}^+} e^{i\Psi_{-}(x,e^{i\theta}\circ e^{i\frac{j}{\ell}2\pi}\circ y,t,s)-im\theta }
tb_{-}(t,x,e^{i\theta}\circ e^{i\frac{j}{\ell}2\pi}\circ y,ts)\chi(\theta)d\theta ds.
\end{equation}
Let \(X = (x_1, \dots, x_{2n+1})\) be local coordinates of \(X\) defined near \(p\) so that \(T = \frac{\partial}{\partial x_{2n+1}}\) near \(p\). Then,
\[
\Psi_{-}(x, y, t, s) = s(-x_{2n+1} + y_{2n+1}) + O(|(x, y)|^2)
\]
and
\begin{equation}\label{e01197}
\Psi_{-}(x, e^{i\theta}\circ y, t, s) = s\bigl(-x_{2n+1} + y_{2n+1} - \theta + O(\theta^2)\bigr) + O(|(x, y)|^2 |\theta|).
\end{equation}
From \eqref{e01197}, it is not difficult to see that \(s = -m\), \(\theta = 0\) are non-degenerate critical points of the system
\[
\begin{cases}
\frac{\partial}{\partial \theta} \left( \Psi_{-}(p, e^{i\theta}\circ e^{i\frac{j}{\ell}2\pi}\circ p, t, s) - m\theta \right) = 0, \\
\frac{\partial}{\partial s} \left( \Psi_{-}(p, e^{i\theta}\circ e^{i\frac{j}{\ell}2\pi}\circ p, t, s) - m\theta \right) = 0.
\end{cases}
\]
Thus, after changing $s$ by $\frac{s}{t}$ in the integral \eqref{e01196}, we can apply complex stationary phase formula of Melin-Sjöstrand to carry out the integral \eqref{e01196} and get
\begin{equation}\label{e01198}
\Gamma_{j,m}(t,x,y) = e^{i\frac{1}{t}\Phi_j(x,y)}a_{j,m}(x,y,t)+O(t^{+\infty}),
\end{equation}
moreover, we have
\[
a_{j,m}(x,y,t)\sim \sum_{r=0}^{+\infty}t^{-n+r}a_{j,m,r}(x,y)\quad \text{as} \quad t\rightarrow 0^+,
\]
and $\Phi_j(x,x)\approx \text{dist} \bigl(x, X^{j/\ell}_{\text{sing}} \bigr)^2$,
where $X^{j/\ell}_{\text{sing}}=\{x\in X: e^{i\frac{j}{\ell}2\pi}\circ x=x\}$.
From \eqref{e01196}, by using stationary phase formula to integrate over $X$, we deduce that $\int_X e^{-t\Box_{b, m}}(x,x)dv_X(x)$ admits a small-time asymptotic expansion. The proof is completed by summarizing the above arguments.
\end{proof}


    \section{Analytic torsion and Quillen metric on CR manifolds with $S^1$-action}\label{s-220404272026}
	
    
    In this section we assume that \(X\) admits a locally free transversal CR $S^1$-action $e^{i\theta}$. We will define the $S^1$-equivariant analytic torsion and Quillen metric for the rigid CR line bundle $L^k$ over the CR manifold $X$ with a transversal CR $S^1$-action.

	\subsection{Definitions of the $S^1$-equivariant CR analytic torsion and Quillen metric}\label{ss-220404272026}
	

	Let 
	$$
	\Pi_{L^k, \le k \delta} : L^2(X, T^{*0,\bullet}X \otimes L^k) \to \Ker \Box_{b, k, \le k\delta}
	$$  
	be the orthogonal projection and let 
	\[
    \begin{split}
	& \Pi_{L^k, \le k \delta}^\perp : L^2(X, T^{*0,\bullet}X \otimes L^k) \to (\Ker\Box_{b, k, \le k\delta})^\perp, \\
    & (\Pi_{L^k, \le k \delta}^{(q)})^\perp : L^2(X, T^{*0,\bullet}X \otimes L^k) \to (\Ker\Box_{b, k, \le k\delta}^{(q)})^\perp, \quad q = 0, 1, \cdots, n,
	\end{split}
    \]  
	be the orthogonal projection, where 
	\[
    \begin{split}
   & (\Ker\Box_{b, k, \le k\delta})^\perp=\set{u\in L^2(X, T^{*0,\bullet}X \otimes L^k);\, (\,u\,|\,v\,)_E=0,\  \ \forall v\in\Ker \Box_{b, k, \le k\delta}}, \\
   & (\Ker\Box_{b, k, \le k\delta}^{(q)})^\perp=\set{u\in L^2(X, T^{*0,\bullet}X \otimes L^k);\, (\,u\,|\,v\,)_E=0,\  \ \forall v\in\Ker \Box_{b, k, \le k\delta}^{(q)}}, \quad q =0, 1, \cdots, n.
    \end{split}
    \] 
	
	By Theorem~\ref{t05042026}, we have the following asymptotic expansion: 
	\begin{equation}\label{E:112120241802r1}
		\operatorname{STr}  \lbrack N e^{-t \Box_{b, k, \le k\delta}} \Pi_{L^k, \le k \delta}^\perp \rbrack\sim\sum^{+\infty}_{j=0} \widetilde{B}_{j, k, \le k\delta} t^{-n-1+j}\ \ \mbox{as $t\To0^+$}, 
	\end{equation}
	where $B_j \in \Complex$ is independent of $t$, $j=0,1,2,\ldots$. 

    The proof of the following lemma is similar to Lemma~\ref{l-gue160420}.
    
	\begin{lemma}\label{l-gue160420r3}
		Fix $q=0,1,\ldots,n$. For every $\delta>0$, there exist $c>0$, $C>0$ such that 
		\begin{equation}\label{e-gue160420Ia}
			\abs{\operatorname{Tr}^{(q)}  \lbrack e^{-t \Box_{b, k, k\delta}^{(q)}} (\Pi_{L^k, \le k \delta}^{(q)})^\perp \rbrack}\leq Ce^{-ct},\ \ \forall t\geq\delta. 
		\end{equation}
	\end{lemma}
	
	
	From \eqref{E:112120241802r1} and Lemma~\ref{l-gue160420r3}, we see that $\operatorname{STr}  \lbrack N e^{-t \Box_{b, k, \le k\delta}} (\Pi_{L^k, \le k \delta})^\perp \rbrack$ satisfies \eqref{e-gue160420g} and \eqref{e-gue160420I}. By Definition \ref{d-gue160313}, for $\operatorname{Re}(z)>n$, we can define 
	\begin{equation}\label{E:5.5.12r5}
		\theta_{b, L^k, \le k \delta}(z)  = - M \left\lbrack \operatorname{STr}  \lbrack N e^{-t \Box_{b, k, \le k\delta}} (\Pi_{L^k, \le k \delta})^\perp  \rbrack \right\rbrack  =   - \operatorname{STr} \left\lbrack N ({\Box}_{b, k, \le k \delta})^{-z} (\Pi_{L^k, \le k \delta})^\perp \right\rbrack.  
	\end{equation}
    
	By Theorem~\ref{t-gue160313}, we have the following
	\begin{lemma}\label{l-mero04262026}
		$\theta_{b, L^k, \le k \delta}(z)$ extends to a meromorphic function on $\mathbb{C}$ with poles contained in the set  
		\[\set{\ell- j;\, \ell,j\in\mathbb Z},\]
		its possible poles are simple, and $\theta_{b, L^k, \le k\delta}(z)$ is holomorphic at $0$. Moreover,
		\begin{equation}\label{e-gue191804262026}
			\begin{split}
				& \theta_{b, L^k, \le k\delta}'(0)  =  -\int_0^1 \left\{  \operatorname{STr} \Big[  Ne^{-t\Box_{b, k, \le k \delta}}(\Pi_{L^k, \le k \delta})^\perp\Big]-\sum^{n+1}_{j=0} B_{j, k, \le k\delta} t^{-n-1+j}\right\} \frac{dt}{t}  \\
				& -\int_1^\infty \operatorname{STr} \Big[ Ne^{-t\Box_{b, k, \le k \delta}} (\Pi_{L^k, \le k \delta})^\perp \Big] \frac{dt}{t} - \sum^{n}_{j=0} \frac{B_{j, k, \le k\delta}}{j-n-1} + \Gamma'(1)B_{n+1, k, \le k\delta}. 
			\end{split}
		\end{equation}
	\end{lemma}

Hence by \eqref{E:112120241802r1} and Lemma \ref{l-gue160420r3}, we can finally define the following
	\begin{definition}\label{d-gue160502wr6}
		We define $\exp ( -\frac{1}{2} \theta_{b, L^k, \le k \delta}'(0) )$ to be the $\ddbar_{b}$-torsion for the CR line bundle $L^k$ over the CR manifold $X$ with $S^1$-action. 
	\end{definition}
	
	Denote by 
$$
H^\bullet_{b, \le k \delta}(X, L^k) = \oplus_{q=0}^n H^q_{b,\le k \delta}(X,L^k).
$$ 
For a finite dimensional vector space $V$, we set 
$$
\det V := \wedge^{\text{max}}V.
$$
We then denote by 
$$
(\det V)^{-1}:= (\det V)^*,
$$ the dual line of $\det V$.
Then
\[
\det H^\bullet_{b,\le k \delta}(X,L^k) = \otimes_{q=0}^n \left( \det H^q_{b,\le k \delta}(X,L^k)  \right)^{(-1)^q}
\]
is the determinant line of the cohomology $H^\bullet_{b,\le k \delta}(X,L^k)$. We define 
\begin{equation}\label{E:5.5.14}
\lambda_{b,\le k \delta}(L^k) = \left( \det H^\bullet_{b,\le k \delta}(X,L^k)  \right)^{-1}. \nonumber
\end{equation}
The rigid Hermitian metrics $\langle \, \cdot \, |\, \cdot \, \rangle$ and $\langle \, \cdot \, |\, \cdot \, \rangle_{L^k}$ on $\mathbb{C}TX$ and $L^k$, respectively, induce a canonical $L^2$-metric $h^{H^\bullet_{b,\le k \delta}(X,L^k)}$ on $H^\bullet_{b,\le k \delta}(X,L^k)$. Let $|\cdot|_{\lambda_{b,\le k \delta}(L^k)}$ be the $L^2$-metric on $\lambda_{b,\le k \delta}(L^k)$ induced by $h^{H^\bullet_{b,\le k \delta}(X,L^k)}$. 

Now we can define the Quillen metric on $\det H^\bullet_{b,\le k \delta}(X,L^k)$.

\begin{definition}\label{D:5.5.5}
Fix $k \in \mathbb{N}$. The Quillen metric $\| \cdot  \|_{\lambda_{b, \le k \delta}(L^k)}$ on $\det H^\bullet_{b,\le k \delta}(X,L^k)$ is defined as
\begin{equation}\label{e204902042026}
\| \cdot  \|_{\lambda_{b, \le k \delta}(L^k)} \, := \,  |\cdot|_{\lambda_{b,\le k \delta}(L^k)} \cdot \exp ( -\frac{1}{2} \theta_{b,L^k,\le k \delta}'(0)  ).
\end{equation}
\end{definition}


\subsection{Variation of the $S^1$-equivariant Quillen metrics}
In this subsection, we study the dependence of the $S^1$-equivariant Quillen metrics on a compact CR manifold with transversal CR $S^1$-action on a change of the Hermitian metrics on $TX$ and $L^k$. The main result is Theorem \ref{t205302042026}.

Denote by $\mu_k: L^k \to (L^*)^k$ the induced conjugate linear bundle isomorphism from the vector bundle $L$ to its dual vector bundle $(L^*)^k$. We denote by 
$$
\Box_{b,k,\le k\delta, s}:=\Box_{b, s}|_{\Omega^{0,\bullet}_{\le k \delta}(X,L^k)}.
$$ 
Let $\| \cdot  \|_{\lambda_{b, \le k \delta}(L^k), s}$ be the corresponding Quillen metrics  on $\det H^\bullet_{b,\le k \delta}(X,L^k)$. Recall that $Q_{b,s}$ is defined in \eqref{E:qbs}. 

We have 
\begin{theorem}\label{t205302042026}
 As $t \to 0^+$, for any $\ell \in \mathbb{N}$, there is an asymptotic expansion
\begin{equation}\label{e205202042026r9}
\operatorname{STr} \left\lbrack Q_{b,s} \exp \left( -t\Box_{b,k,\le k\delta, s} \right) \right\rbrack = \sum_{j=0}^{\ell+n} M_{j,\le k \delta, s} t^{-n-1+j} +O(t^{\ell+1}), 
\end{equation}
where
\begin{equation}\label{e205102042026}
M_{n+1,\le k\delta, s} = \frac{\partial}{\partial s} \log \left(  \| \cdot  \|^{2}_{\lambda_{b, \le k \delta}(L^k), s} \right).
\end{equation}
\end{theorem}
\begin{proof}
By Theorem~\ref{t05042026}, we get \eqref{e205202042026r9}. 

By an argument similar to that used in the proof of Theorem \ref{t303262026} (cf. also \cite[Theorem 5.5.6]{MM}), we obtain the following analogue of \eqref{e1137020242026},
\begin{equation}\label{e1137020242026r21}
\frac{\partial}{\partial s}\operatorname{STr}\left[Ne^{-t\Box_{b,s}} \right] 
 = -t \frac{\partial}{\partial t}\operatorname{STr}\left[Q_{b,s}  e^{-t\Box_{b,s}}\right].
\end{equation}

Let $\Pi_{L^k, \le k \delta, s}$ be the orthogonal projection operator from $\Omega^{0,\bullet}(X,L^k)$ on $\operatorname{Ker}\left( D_{b,s}|_{\Omega^{0,\bullet}_{\le k \delta}(X,L^k)} \right)$ for the Hermitian product $(\,\cdot\, | \,\cdot\,)_{L^k,s}$ on $\Omega^{0,\bullet}(X,L^k)$. Note that $\operatorname{STr}\left[Q_{b,s} e^{-t\Box_{b,k,\le k \delta,s}}\Pi_{L^k, \le k\delta,s}^\perp \right]$ decays exponentially when $t \to +\infty$. From \eqref{e1137020242026r21}, we obtain for $\operatorname{Re}(z)$ large enough,
\begin{equation}\label{e153202042026r22}
\begin{split}
\frac{\partial}{\partial s}\theta_{b,k,\le k\delta,s}(z) & = \frac{1}{\Gamma(z)}\int_0^\infty \frac{\partial}{\partial s} \operatorname{STr}\left[N e^{-t\Box_{b,k,\le k\delta,s}} \Pi_{L^k,\le k\delta,s}^\perp \right] t^{z-1}dt \\
& = \frac{1}{\Gamma(z)}\int_0^\infty t^z \frac{\partial}{\partial t} \operatorname{STr}\left[Q_{b,s} e^{t\Box_{b,k,\le k\delta,s}} \Pi_{L^k,\le k\delta,s}^\perp \right]  dt \\
& = \frac{-z}{\Gamma(z)}\int_0^\infty t^{z-1}  \operatorname{STr}\left[Q_{b,s} e^{t\Box_{b,k,\le k\delta,s}} \Pi_{L^k,\le k\delta,s}^\perp \right]  dt.
\end{split}
\end{equation}

Using \eqref{e-gue160428w} and \eqref{e153202042026r22}, we get
\begin{equation}\label{e160002042026}
\frac{\partial}{\partial s} \left(\frac{\partial}{\partial z}\theta_{b,k,\le k\delta,s} \right)(0) = -\widetilde{M}_{n+1,\le k\delta, s} + \operatorname{STr}\left[Q_{b,s} \Pi_{L^k,\le k\delta,s} \right].
\end{equation}

The line bundle $\lambda_{b, \le k\delta, s}(L^k) = \otimes_{q=0}^n \left(\det \operatorname{Ker} \left(\Box_{b, k, \le k\delta,s}|_{\Omega^{0,q}(X,L^k)} \right)\right)^{(-1)^{q+1}}$.
Under this identification, the canonical isomorphism of the line bundle $\phi_s :\lambda_{b,\le k \delta, 0}(L^k) \to \lambda_{b,\le k\delta,s}(L^k)$ is defined by $\phi_s(\sigma) = \Pi_{L^k, \le k\delta, s}(\sigma)$ for $\sigma \in \lambda_{b,\le k \delta, 0}(L^k)$.

If $\sigma, \sigma' \in \operatorname{Ker} \Box_{b,k, \le k\delta, s}$, we have by definition
\begin{equation}\label{e200202042026}
\left( \Pi_{L^k, \le k\delta, s}(\sigma)\,|\, \Pi_{L^k, \le k\delta, s}(\sigma') \right)_{L^k, s} = \int_X \left\langle \Pi_{L^k, \le k\delta, s}(\sigma) \wedge (\ast_{b,s} \otimes \mu_s) \Pi_{L^k, \le k\delta, s}(\sigma')  \right\rangle_{L^k}dv_X.
\end{equation}
From $\Pi_{L^k, \le k\delta, s}^2 = \Pi_{L^k, \le k\delta, s}$, we get $\left(\frac{\partial}{\partial s}\Pi_{L^k, \le k\delta, s} \right)\Pi_{L^k, \le k\delta, s} + \Pi_{L^k, \le k\delta, s} \left(\frac{\partial}{\partial s}\Pi_{L^k, \le k\delta, s} \right) = \frac{\partial}{\partial s}\Pi_{L^k, \le k\delta, s}$, thus the operator $\frac{\partial}{\partial s}\Pi_{L^k, \le k\delta, s}$ sends $\operatorname{Ker}\Box_{b,s}$ to its orthogonal complement $(\,\cdot\,|\,\cdot\,)_{L^k,s}$. Therefore, from \eqref{e200202042026}, we get
\begin{equation}\label{e201302042026}
\begin{split}
\frac{\partial}{\partial s}\left( \Pi_{L^k, \le k\delta, s}(\sigma)\,|\, \Pi_{L^k, \le k\delta, s}(\sigma') \right)_{L^k, s} & = \int_X \left\langle \Pi_{L^k, \le k\delta, s}(\sigma) \wedge \frac{\partial}{\partial s}\left(\ast_{b,s} \otimes \mu_s\right) \Pi_{L^k, \le k\delta, s}(\sigma')  \right\rangle_{L^k}dv_X \\
& = -\left( \Pi_{L^k, \le k\delta, s}(\sigma)\,|\, Q_{b,s}\Pi_{L^k, \le k\delta, s}(\sigma') \right)_{L^k, s}. 
\end{split}
\end{equation}
Thus from \eqref{E:5.5.14} and \eqref{e201302042026}, we get
\begin{equation}\label{e201802042026}
\frac{\partial}{\partial s}\log  \left(\left|\Pi_{L^k, \le k\delta, s}(\sigma)\right|^2_{\lambda_{b,\le k\delta,s}(L^k)} \right) = \operatorname{STr}\left[Q_{b,s}\Pi_{L^k, \le k\delta, s} \right].
\end{equation}
From \eqref{e204902042026}, \eqref{e160002042026} and \eqref{e201802042026}, we get \eqref{e205102042026}.
\end{proof}


	\section{\texorpdfstring{The asymptotics of the analytic torsion on CR manifolds with $S^1$-action}{}}\label{sfinal}
	
     We use the same notations and assumptions as in Section~\ref{s-220404272026}. We will study asymptotic behavior of $\theta'_{b,L^k, \le k \delta}$, when $k\To+\infty$. The goal of this section is to establish Bismut-Vasserot type asymptotics of the analytic torsion for $\Box_{b,k,\le k\delta}$ as $k\To+\infty$.

	\subsection{The asymptotics of $e^{-\frac{t}{k} \Box^{(q)}_{b, k, \le k \delta}}$ as $k \to \infty$}

	In this subsection we recall the asymptotics of $e^{-\frac{t}{k} \Box^{(q)}_{b, k, \le k \delta}}(x, x) $ as $k \to \infty$ for all $x \in X$. We assume that $X$ admits a transversal CR $S^1$-action. Before proceeding to do so, we recall the relationship between $\dot{\mathcal{R}}^\phi$, the curvature of $L$, and the Levi form.

	 From \cite[Proposition 4.2]{HM12}, it is easy to see that for every $x \in X$, the map
	\[
	\int_{-\delta}^\delta \frac{\det(\dot{\mathcal{R}}_x^\phi - 2 \eta \dot{\mathcal{L}}_x )}{\det \big( 1 - e^{-t ( \dot{\mathcal{R}}_x^\phi - 2 \eta \dot{\mathcal{L}}_x )}  \big)} e^{-t\omega_x^\eta} d\eta : T^{\ast 0,q}_xX \to T^{\ast 0, q}_xX
	\]
	is independent of the choice of local weight $\phi$ and hence globally defined. 
	
	We recall the following theorem, \cite[Theorem 1.3]{HZ23}.
	\begin{theorem}\label{T:HZ23thm1.3}
		Let $X$ be an orientable (not necessarily compact) CR manifold of dimension $2n+1$, $n \ge 1$, with a transversal CR $S^1$-action. Suppose that $X$ admits an $S^1$-invariant complete Hermitian metric $\langle \cdot \mid \cdot \rangle$ on $\mathbb{C}TX$ so that we have the orthogonal decomposition 
		\[
		\mathbb{C}TX = T^{1,0}X \oplus T^{0,1}X \oplus \{ \lambda T : \lambda \in \mathbb{C} \}
		\]
		and $\mid T \mid^2 = \langle T \mid T \rangle = 1$, with $T \in \mathscr{C}^\infty(X, TX)$ is the infinitesimal generator of the $S^1$-action. Let $(L^k, h^{L^k})$ be the $k$-th tensor power of a rigid CR complex line bundle $(L, h^L)$ over $X$, where $h^L$ is an $S^1$-invariant Hermitian metric on $L$. Fix any $q \in \{0, 1, \cdots, n \}$. With the notations used above, let $I \subset \mathbb{R}^+$ be a compact interval and let $K \Subset X$ be a compact set. Then, there is a constant $C > 0$ independent of $k$ such that
		\[
		\Big| e^{-t \Box^{(q)}_{b, k, \le k \delta}}(x, x) \Big|_{\mathscr{L}(T_x^{\ast0,q}X, T_x^{\ast0, q}X)} \le Ck^{n+1}, \quad \text{for all $x \in K$ and $t \in I$.}
		\]
		Moreover, for every $x \in X$, we have
		\begin{equation}\label{e-gue070304262026I}
			\lim_{k \to \infty} k^{-(n+1)} e^{-\frac{t}{k} \Box^{(q)}_{b, k, \le k \delta}}(x, x)  = \frac{1}{(2\pi)^{n+1}} \int_{-\delta}^\delta \frac{\det(\dot{\mathcal{R}}_x^\phi - 2 \eta \dot{\mathcal{L}}_x )}{\det \big( 1 - e^{-t ( \dot{\mathcal{R}}_x^\phi - 2 \eta \dot{\mathcal{L}}_x )}  \big)} e^{-t\omega_x^\eta} d\eta.
		\end{equation}
	\end{theorem}

It is straightforward to check that for every $q=0, 1, \cdots, n$,
\begin{equation}\label{e-gue164904272026}
\begin{split}
& \lim_{t \to \infty}\dfrac{\det(\dot{\mathcal R}^\phi_x-2\eta\dot{\mathcal L}_x)}{\det\big(1-e^{-t(\dot{\mathcal R}^\phi_x-2\eta\dot{\mathcal L}_x)}\big)}{\rm Tr\,}^{(q)}e^{-t\omega_x^\eta} \\
= &   (-1)^q \det(\dot{\mathcal{R}}_x^\phi - 2 \eta \dot{\mathcal{L}}_x)1_{\mathbb R_x(q)}(\eta)        \\
= &   \abs{\det(\dot{\mathcal{R}}_x^\phi - 2 \eta \dot{\mathcal{L}}_x)}1_{\mathbb R_x(q)}(\eta),
\end{split}
\end{equation}
where $1_{\mathbb R_x(q)}(\eta)$ is the characteristic function of $\mathbb R_x(q)$.

By \eqref{e-gue070304262026I} and \eqref{e-gue164904272026}, we obtain
    
	\begin{theorem}\label{t-gue163904272026}
We assume that the Levi form is non-degenerate of constant signature $(n_-,n_+)$. With the notations and assumptions used in Theorem \ref{T:HZ23thm1.3}, let $I\subset\mathbb R_+$ be a bounded open interval. Let $q\in\set{0,1,\ldots,n}$. Then, 
\begin{equation}\label{e-gue164204272026}
\begin{split}
&\lim_{k\To+\infty}k^{-(n+1)}\operatorname{Tr}^{(q)}(e^{-\frac{t}{k}\Box^{(q)}_{b,L^k, \le k \delta}}(I-\Pi^{(q)}_{L^k,\le k\delta})(x,x))=\\
&\frac{1}{(2\pi)^{n+1}}\int_{-\delta}^\delta\Bigr(\dfrac{\det(\dot{\mathcal R}^\phi_x-2\eta\dot{\mathcal L}_x)}{\det\big(1-e^{-t(\dot{\mathcal R}^\phi_x-2\eta\dot{\mathcal L}_x)}\big)}{\rm Tr\,}^{(q)}e^{-t\omega_x^\eta}-\abs{\det(\dot{\mathcal{R}}_x^\phi - 2 \eta \dot{\mathcal{L}}_x)}1_{\mathbb R_x(q)}(\eta)\Bigr)d\eta
\end{split}
\end{equation}
in $\mathcal{C}^0(I\times X)$ topology.
\end{theorem} 

	
	


	\subsection{Asymptotics of the $S^1$-equivariant CR analytic torsion}\label{ss-213504262026}
We will study asymptotic behavior of $\theta'_{b,L^k, \le k \delta}$, when $k\To+\infty$. The goal of this subsection is to establish Bismut-Vasserot type asymptotics of the analytic torsion for $\Box_{b,k,\le k\delta}$ as $k\To+\infty$.

By Lemma \ref{l-gue160428} and Theorem \ref{t-gue163904272026}, we get

        \begin{theorem}\label{t-gue205004272026}
With the notations and assumptions used before, we have for all $t>0$, 
\begin{equation}\label{e-gue205004272026}
\begin{split}
&\lim_{k\To+\infty}k^{-(n+1)}{\rm STr\,}\lbrack Ne^{-\frac{t}{k}\Box_{b,L^k, \le k\delta}}(I-\Pi_{L^k, \le k\delta})\rbrack\\
&=\int_X\int_{-\delta}^\delta \Bigr((\dot{\mathcal{R}}^\phi_x-2\eta \dot{\mathcal{L}}_x)_t-(2\pi)^{-n-1}\sum^n_{q=0}q(-1)^q\abs{\det(\dot{\mathcal{R}}_x^\phi - 2 \eta \dot{\mathcal{L}}_x)}1_{\mathbb R_x(q)}(\eta)\Bigr)d\eta dv_X.
\end{split}
\end{equation}
    \end{theorem}

We need the following

\begin{theorem}\label{t-gue260426yyd}
With the notations and assumptions used before. Let $q\in\set{0,1,\ldots,n}$. We can find $k$-dependent smooth functions $b^{(q)}_{j,k,\le k \delta}(x)\in\mathcal{C}^\infty(X)$, $j=0,1,\ldots$, with
\begin{equation}\label{e-gue070704262026I}
k^{-n-1}b^{(q)}_{j,k,\le k\delta}(x)=\tilde{b}^{(q)}_j(x)+k^{-\frac{1}{2}}r^{(q)}_{j,k, \le k\delta}(x),\ \ j=0,1,\ldots,
\end{equation}
and $\gamma(t,x)\in\mathcal{C}^\infty(\mathbb R_+\times X)$, $\gamma(t,x)_{\mathcal{C}^0(X)}\leq C_Nt^{-n-1}k^{-N}$ on $\mathbb R_+\times X$, for every $N\in\mathbb N$, where $C_N>0$ is a constant independent of $k$ and $t$, 
where $b^{(q)}_j(x)\in\mathcal{C}^\infty(X)$, $j=0,1,\ldots$, $\norm{r^{(q)}_{j,k, \le k\delta}}_{\mathcal{C}^0(X)}\leq C_j$, $C_j>0$ is a constant independent of $k$, $j=0,1,\ldots$, such that for every $N\in\mathbb N$, we have 
\begin{equation}\label{e-gue070704262026II}
\gamma(t,x)+\operatorname{Tr}^{(q)}(e^{-\frac{t}{k}\Box^{(q)}_{b,k, \le k\delta}}(x,x))=\sum^N_{j=0}t^{-n-1+j}b^{(q)}_{j,k, \le k\delta}(x)+\delta^{(q)}_{N,k,\le k\delta}(t,x),
\end{equation}
where $\delta^{(q)}_{N,k,\le k\delta}(t,x)\in\mathcal{C}^\infty(\ol{\mathbb R}_+\times X)$ is a $k$-dependent smooth function and for every $m\in\mathbb N_0$, there is a constant $C_{m}>0$ independent of $k$ such that 
\begin{equation}\label{e-gue070704262026III}
\sup\,\set{\abs{\pr^m_t\delta^{(q)}_{N,k,\le k\delta}(t,x)};\, x\in X}\leq t^{-n+N-m}k^{n+1}C_{m},
\end{equation}
for all $t\in(0,1]$ and all $k\gg1$.
\end{theorem}

\begin{proof}
    Let $(X, T^{1,0}X)$ be a compact CR manifolds with a transversal and CR $S^1$-action. Let $\delta>0$. Let
    \[
Q_{X,\le k\delta} : L^2_{(0,q)}(X, L^k) \to L^2_{(0,q), \le k\delta}(X, L^k)
    \]
    be the orthogonal projection with respect to $(\,\cdot\,|\,\cdot\,)_{L^k}$. It was proved in Subsection \ref{s-gue250401yyda} that for every $N \in \mathbb{N}$,
    \begin{equation}\label{e-gue2604232159}
    \begin{split}
    e^{-\frac{t}{k}\Box_{b, k}^{(q)}}(x, y) = &  \int_0^{+\infty} e^{i\frac{k}{t}\phi_{k,-}(x, y,s)} a_{k,-}(\frac{k}{t}, x, y, s)ds
    \\ & +   \int_0^{+\infty} e^{i\frac{k}{t}\phi_{k,+}(x, y,s)} a_{k,+}(\frac{k}{t}, x, y, s)ds + R_{N}(\frac{t}{k}, x ,y),
    \end{split}
    \end{equation}
    where 
    \begin{equation}\label{e-gue2604232200}
    \begin{split}
    & R_{N}(\frac{t}{k}, x ,y) \in C^\infty(\bar{\mathbb{R}}_+ \times X \times X, (T^{*0,q}X \otimes L^k) \boxtimes (T^{*0,q}X \otimes L^k)^*), \\
    &  R_{N}(\frac{t}{k}, x ,y) = O(t^N) \quad \text{as} \quad t \to 0^+, \\
    & a_{k,\pm}(t, x, y, s) \sim \sum_{j=0}^{+\infty}t^{-n-1+j}a_{k, \pm}^j(x, y, s)\quad \text{in} \quad S^{n,-n-1}_\varepsilon(\mathbb{R}_+ \times X \times X \times \bar{\mathbb{R}}_+). 
    \end{split}    
    \end{equation}
    Let $p \in X$. Let $x=(x_1,\cdots,x_{2n+1})$ be BRT coordinates of $X$ defined near $p$ such that $p \leftrightarrow 0$ (see discussion before Example 2.11 of \cite{HHL22} for the meaning of BRT charts). Let $x' =(x_1, x_2, \cdots, x_{2n})$. Then, it was proved in Subsection \ref{s-gue250401yyda} that $a^j_{k,\pm}(x,y,s) \equiv a^j_{k,\pm}(x',y',s)$, $j=0,1,\cdots$ can be taken to be independent of $x_{2n+1}, y_{2n+1}$, and
    \begin{equation}\label{e-gue2604232201}
    k^{-(n+1)}a^j_{k,\pm}\big(\frac{x'}{\sqrt{k}}, \frac{y'}{\sqrt{k}}, s\big) \big) = a^j_{\pm}(x', y', s)+O(k^{-\frac{1}{2}}) 
    \quad \text{on} \quad \{|x'| < M\} \times \{|y'| < M\}
    \end{equation}
    for all $j=0,1, \cdots$, for all $k\gg 1$, where $M>0$ is a constant and
    \begin{equation}\label{e-gue2604232202}
  \big|  k^{-(n+1)}R_N\big(\frac{t}{k}, \big(\frac{x'}{\sqrt{k}},\frac{x_{2n+1}}{k}), (\frac{y'}{\sqrt{k}},\frac{y_{2n+1}}{k}\big) \big) \big| \le Ct^N
    \quad
    \text{on} \quad \{|x'| < M\} \times \{|y'| < M\}, 
    \end{equation}
    for all $k\gg 1$, $t>0$, where $C>0$ is a constant independent of $k$ and $t$. Moreover, from the construction of the approximated heat kernel, we see that if $T=\frac{\partial}{\partial x_{2n+1}}$, then
    \begin{equation}\label{e-gue2604232203}
    \left\{ \begin{split} & \phi_{-,k}(x,y,s) = s(x_{2n+1}-y_{2n+1}) + \widehat{\phi}_{-,k}(x',y',s) \\ & \phi_{+,k}(x,y,s) = s(-x_{2n+1}+y_{2n+1}) + \widehat{\phi}_{+,k}(x',y',s)\end{split}    \right.,
    \end{equation}
     where 
     \begin{equation}\label{e-gue2604232204}
     \widehat{\phi}_{\mp,k}\big(\frac{x'}{\sqrt{k}},\frac{y'}{\sqrt{k}},s\big) \to \widehat{\phi}_\mp(x',y',s),\quad \text{in $C^\infty$-topology on $\{|x'|<M\} \times \{|y'|<M\}$},
     \end{equation}
      where $s(x_{2n+1}-y_{2n+1}) + \widehat{\phi}_{-,k}(x',y',s)$, $s(-x_{2n+1}+y_{2n+1}) + \widehat{\phi}_{+,k}(x',y',s)$ are the phase functions appearing in the heat kernel of the model case. Let 
    \begin{equation}\label{e-gue2604232205}
    \begin{split} \widehat{Q}_{\le k\delta} : & \Omega^{0,q}_c(D, L^k) \to \Omega^{0,q}(D, L^k) \\
   & u \mapsto \frac{k}{2\pi}\int e^{ik\langle x_{2n+1}-y_{2n+1}, \eta_{2n+1}\rangle} 1_{[-\delta, \delta]}(\eta_{2n+1}) u(x', y_{2n+1})  dy_{2n+1} d\eta_{2n+1}
    \end{split}
    \end{equation}
    It was proved in \cite[Lemma 6.5 $\&$ Lemma 6.8]{HZ23} that
    \begin{equation}\label{e-gue2604232206}
    \widehat{Q}_{\le k\delta} - Q_{X,\le k\delta} = O(k^{-\infty}) : H^s_{\operatorname{comp}}(D) \to H^s_{\operatorname{loc}}(D),
    \end{equation}
    for all $s\in\mathbb N$. From Fourier inversion formula, we have 
   \begin{equation}\label{e-gue2604232207}
   \begin{split}
   & \big( e^{-\frac{t}{k}\Box_{b, k}^{(q)}} \circ \widehat{Q}_{\le k\delta} \big)\big(\big(\frac{x'}{\sqrt{k}}, \frac{x_{2n+1}}{k}\big), \big(\frac{y'}{\sqrt{k}}, \frac{y_{2n+1}}{k}\big)\big) \\
   = & \frac{k}{2\pi} \int e^{i\frac{k}{t} \big( \widehat{\phi}_{-,k}\big(\frac{x'}{\sqrt{k}}, \frac{y'}{\sqrt{k}}\big)  + s \big( \frac{x_{2n+1}}{k}-\frac{u_{2n+1}}{k} \big) \big)+ ik \langle u_{2n+1} -y_{2n+1}, \eta_{2n+1}\rangle } \\
   & \qquad \qquad \times a_{k,-}\big(\frac{k}{t}, \frac{x'}{\sqrt{k}}, \frac{y'}{\sqrt{k}}, s\big) 1_{[-\delta, \delta]}(\eta_{2n+1})  d\eta_{2n+1}ds du_{2n+1} 
  \\
   & + \frac{k}{2\pi}\int e^{i\frac{k}{t} \big( \widehat{\phi}_{+,k}\big(\frac{x'}{\sqrt{k}}, \frac{y'}{\sqrt{k}}\big)  + s \big( -\frac{x_{2n+1}}{k}+\frac{u_{2n+1}}{k} \big) \big)+ ik \langle u_{2n+1} -y_{2n+1}, \eta_{2n+1}\rangle }\\
   & \qquad \qquad \times a_{k, +}\big(\frac{k}{t}, \frac{x'}{\sqrt{k}}, \frac{y'}{\sqrt{k}}, s\big) 1_{[-\delta, \delta]}(\eta_{2n+1})  d\eta_{2n+1}ds du_{2n+1} \\ 
   = &  \int_{-\delta}^\delta e^{i\frac{k}{t} \big( \widehat{\phi}_{-,k}\big(\frac{x'}{\sqrt{k}}, \frac{y'}{\sqrt{k}}\big)  + s \big( \frac{x_{2n+1}}{k}-\frac{y_{2n+1}}{k} \big)  \big)} a_{k,-}\big(\frac{k}{t}, \frac{x'}{\sqrt{k}}, \frac{y'}{\sqrt{k}}, s\big)  ds
  \\
   & + \int_{-\delta}^\delta e^{i\frac{k}{t} \big( \widehat{\phi}_{+,k}\big(\frac{x'}{\sqrt{k}}, \frac{y'}{\sqrt{k}}\big)  + s \big( -\frac{x_{2n+1}}{k}+\frac{y_{2n+1}}{k} \big)  \big)} a_{k, +}\big(\frac{k}{t}, \frac{x'}{\sqrt{k}}, \frac{y'}{\sqrt{k}}, s\big)   ds.
   \end{split}
   \end{equation}
   From \eqref{e-gue2604232159}, \eqref{e-gue2604232200}, \eqref{e-gue2604232201}, \eqref{e-gue2604232202}, \eqref{e-gue2604232203}, \eqref{e-gue2604232204}, \eqref{e-gue2604232205}, \eqref{e-gue2604232206}, \eqref{e-gue2604232207} and notice that 
    \[{\rm Tr\,}a^j_{k, +}(0,0)+{\rm Tr\,}a^j_{k,-}(0,0)=b^{(q)}_{j,k,\leq k\delta}(0),\ \ j=0,1,\ldots,\] we obtain the result.
\end{proof}

From Theorem~\ref{t-gue260426yyd}, we have the following asymptotic expansion, for $t \to 0$,
	\begin{equation}\label{e-gue233704262026}
	\begin{split}
		&\int_{-\delta}^\delta\Bigr((\dot{\mathcal{R}}^\phi_x-2\eta \dot{\mathcal{L}}_x)_t-(2\pi)^{-n-1}\sum^n_{q=0}q(-1)^q\abs{\det(\dot{\mathcal{R}}_x^\phi - 2 \eta \dot{\mathcal{L}}_x)}1_{\mathbb R_x(q)}(\eta)\Bigr)d\eta\\
        &\sim\sum^{+\infty}_{j=0}\widetilde{A}_{j}(x) t^{-n-1+j}\ \ \mbox{in $\tilde{S}^{-n-1}(\mathbb{R}_+\times X)$},
        \end{split}
	\end{equation}
    where $\widetilde{A}_{j}(x)\in\mathcal{C}^\infty(X)$, $j=0,1,\ldots$.

   In the proof of Theorem~\ref{t-que042520262248} below, we need to determine $\widetilde{A}_{n+1}(x)$. By \eqref{E:110320242242} and \eqref{e-gue250531yydk}, we have 
\begin{equation}\label{e-gue083604272026IV}
\widetilde{A}_{n+1}(x)=\int_{-\delta}^\delta \frac{1}{(2\pi)^{n+1}}\Bigr(\frac{n}{2}{\rm det\,}(\dot{\mathcal{R}}^\phi_x - 2\eta \dot{\mathcal{L}_x})-\sum^n_{q=0}(-1)^qq\abs{{\rm det\,}(\dot{\mathcal{R}}^\phi_x - 2\eta \dot{\mathcal{L}_x})}1_{\mathbb R_x(q)}(\eta)\Bigr)d\eta.
\end{equation}

\begin{theorem}\label{t-gue070804262026}
		With the notations and assumptions used before, let 
        \begin{equation}\label{e-gue144704272026I}
        B_{j,k, \le k\delta}:=k^{-(n+1)}\sum^n_{q=0}q(-1)^q\int b^{(q)}_{j,k, \le k\delta}(x)dv_X(x),\ \ j=0,1,\ldots, 
        \end{equation}
        where  $b^{(q)}_{j,k, \le k\delta}(x)\in\mathcal{C}^\infty(X)$, $j=0,1,\ldots$, are as in \eqref{e-gue070704262026II}. Then, 
        \begin{equation}\label{e-144704272026II}
    B_{j,k, \le k\delta}=\int_X \widetilde{A}_{j} (x)dv_X+O(k^{-\frac{1}{2}}),\ \ j=0,1,\ldots,
        \end{equation}
where $\widetilde{A}_{j}(x)$ is as in \eqref{e-gue233704262026}, $j=0,1,\ldots$, and for every $N>0$, we have for all $t\in(0,1)$ and all $k\geq1$, 
		\begin{equation}\label{e-gue144704272026III}
			k^{-(n+1)}{\rm STr\,}\lbrack Ne^{-\frac{t}{k}\Box_{b,L^k, \le k\delta}}(I-\Pi_{L^k, \le k\delta})\rbrack=\sum^N_{j=0}t^{-n-1+j}B_{j,k, \le k\delta}+\varepsilon_{N,k, \le k\delta}
		\end{equation}
        where 
        \[\abs{\varepsilon_{N,k, \le k\delta}}\leq Ct^{-n+N},\] 
        for all $t\in(0,1)$ and all $k\geq1$, $C>0$ is a constant independent of $k$ and $t$.

Moreover, for all $t\in(0,1)$ and every $N\in\mathbb N$, we have 
		\begin{equation}\label{e-gue144704272026IV}
			\lim_{k\To\infty}k^{-(n+1)}{\rm STr\,}\lbrack Ne^{-\frac{t}{k}\Box_{b,L^k, \le k\delta}}(I-\Pi_{L^k, \le k\delta})\rbrack=\sum^N_{j=n-1}t^{-n-1+j}\int_X\widetilde{A}_{j}(x)dv_X(x)+\widetilde{\delta}_{N}(t),
		\end{equation}
		where $\widetilde{A}_{j}(x)$ is as in \eqref{e-gue233704262026}, $j=0,1,\ldots$, and  \[\abs{\widetilde{\delta}_{N}(t)}\leq\tilde Ct^{-n+N},\] 
        for all $t\in(0,1)$, $\tilde C>0$ is a constant independent of $k$ and $t$. 
	\end{theorem}
\begin{proof}
    From \eqref{e-gue070704262026I} and \eqref{e-gue070704262026II}, we immediately deduce \eqref{e-144704272026II} and \eqref{e-gue144704272026III}.
    
    From \eqref{e-gue070704262026I}, \eqref{e-gue070704262026II} and \eqref{e-gue070704262026III}, we get for every $N \in \mathbb{N}$,
\begin{equation}\label{e-gue183804272026}
\begin{split}
			\lim_{k\To\infty}&k^{-(n+1)}{\rm STr\,}\lbrack Ne^{-\frac{t}{k}\Box_{b,L^k, \le k \delta}}(I-\Pi_{L^k, \le k \delta})\\
            &=\sum^N_{j=0}t^{-n-1+j}\int_X\sum^n_{q=0}(-1)^qq\tilde{b}^{(q)}_{j}(x)dv_X(x)+\hat\delta_{N}(t),
            \end{split}
		\end{equation}
        for all $t\in(0,1]$, where $\tilde{b}^{(q)}_{j}(x)$ is as in \eqref{e-gue070704262026I}, $q=0,1,\ldots,n$, $j=0,1,\ldots$, and 
        \[\abs{\hat\delta_{N}(t)}\leq C_1t^{-n+N},\] 
        for all $t\in(0,1)$, $C_1>0$ is a constant independent of $t$.

By \eqref{e-gue205004272026} and \eqref{e-gue233704262026}, we have
\begin{equation}\label{e-gue205804272026}
\int_X\widetilde{A}_j(x)dv_X(x)=\int_X\sum^n_{q=0}(-1)^qq\tilde{b}^{(q)}_{j}(x)(x)dv_X(x),\ \ j=0,1,\ldots.
\end{equation}
From \eqref{e-gue183804272026} and \eqref{e-gue205804272026}, we get \eqref{e-gue144704272026IV}.
The theorem follows. 
    
\end{proof}

	\begin{theorem}\label{t-gue070904262026I}
		With the notations and assumptions used before, there exist $C, c, c'$ such that for any $q \ge 1$, $t \ge 1$, $k \in \mathbb{N}$, we have
		\begin{equation}\label{e-gue065504262026}
			k^{-(n+1)}\operatorname{Tr}^{(q)}[e^{-\frac{t}{k} \Box^{(q)}_{b, k, \le k \delta}}(I - \pi_{L^k, \le k \delta})] \le C \exp(-(c-c'/k)t).
		\end{equation}
	\end{theorem}
	\begin{proof}
		By Theorem~\ref{T:110520241057} for $t \ge 1$, $q\ge 1$, we have
		\begin{equation}\label{E:110520241106}
			\operatorname{Tr}^{(q)}[e^{-\frac{t}{k} \Box^{(q)}_{b, k, \le k \delta}}(I - \pi_{L^k, \le k \delta})] \le \operatorname{Tr}^{(q)}[e^{-\frac{1}{k} \Box^{(q)}_{b, k, \le k \delta}}(I - \pi_{L^k, \le k \delta})] \exp \left( - \frac{(t-1)}{k} (c_1 k-c_2) \right).
		\end{equation}
		By \eqref{e-gue070304262026I}, we know that $k^{-(n+1)}\operatorname{Tr}^{(q)}[e^{-\frac{1}{k} \Box^{(q)}_{b, k, \le k \delta}}(I - \pi_{L^k, \le k \delta})]$ has a finite limit as $k \to +\infty$. We obtain \eqref{e-gue065504262026} by \eqref{E:110520241106}.
	\end{proof}
	
	We have the following theorem
	
	\begin{theorem}\label{t-que042520262248}
		With the notations and assumptions used before, as $k \to+\infty$, we have
		\begin{equation}\label{e-gue074304262026}
			  \begin{split}
			&\theta_{b,L^k, \le k \delta}'(0)\\   
          &=(\log k)k^{n+1}\Bigr(\int_X\int_{-\delta}^\delta \frac{1}{(2\pi)^{n+1}}\bigr(\frac{n}{2}{\rm det\,}(\dot{\mathcal{R}}^\phi_x - 2\eta \dot{\mathcal{L}_x})\\
          &\quad-\sum^n_{q=0}(-1)^qq\abs{{\rm det\,}(\dot{\mathcal{R}}^\phi_x - 2\eta \dot{\mathcal{L}_x})}1_{\mathbb R_x(q)}(\eta)\bigr)d\eta dv_X(x)\Bigr)\\
&+k^{n+1}\Bigr(\frac{1}{2} \log (2\pi)(2\pi)^{-n-1} \int_X\int_{-\delta}^\delta \det \left(\dot{\mathcal{R}}^\phi_x - 2 \eta \dot{\mathcal{L}}_x \right)(2q-n)1_{\mathbb R_x(q)}(\eta)d\eta dv_X(x)\\
                &+\frac{1}{2}(2\pi)^{-n-1} \int_X\int_{-\delta}^\delta \det \left(\dot{\mathcal{R}}^\phi_x - 2 \eta \dot{\mathcal{L}}_x \right)\bigr(-\log(\abs{{\rm det\,}(\dot{\mathcal{R}}^\phi_x - 2 \eta \dot{\mathcal{L}}_x)_-})\\
                &\quad+\log(\abs{{\rm det\,}(\dot{\mathcal{R}}^\phi_x - 2 \eta \dot{\mathcal{L}}_x)_+})\bigr)1_{\mathbb R_x(q)}(\eta)d\eta dv_X(x)\Bigr)+o(k^{n+1}),
          \end{split}
		\end{equation}
	\end{theorem}
	
	\begin{proof}
	For $k\gg1$, set
		\begin{equation}\label{e-gue070904262026II}
			\widetilde{\theta}_{b,L^k, \le k \delta}(z) = -M \left\lbrack k^{-(n+1)} \operatorname{STr} \Big\lbrack Ne^{-\frac{t}{k}\Box_{b,L^k, \le k\delta}}(I-\Pi_{k, \le k\delta})\Big\rbrack \right\rbrack(z).
		\end{equation}
		Clearly 
		\begin{equation}\label{e-gue071004262026I}
			k^{-(n+1)} \theta_{b,L^k, \le k\delta}(z) = k^{-z} \widetilde{\theta}_{b,L^k, \le k\delta}(z).
		\end{equation}
		By  \eqref{e-gue233704262026}, Theorem \ref{t-gue070804262026}, Theorem~\ref{t-gue070904262026I} and \eqref{e-gue071004262026I}, we have
		\begin{equation}\label{e-gue071004262026III}
			\begin{split}
				& k^{-(n+1)} \theta'_{b,L^k, \le k\delta}(0)  = - \log(k) \widetilde{\theta}_{b,L^k, \le k\delta}(0) + \widetilde{\theta}'_{b,L^k, \le k\delta}(0),   \\
				&  \widetilde{\theta}_{b,L^k, \le k\delta}(0)=-\int_X\widetilde{A}_{n+1}(x)dv_X+O(k^{-\frac{1}{2}}).
			\end{split}
		\end{equation} 
		By \eqref{E:5.5.13}, Theorem \ref{t-gue070804262026}, Theorem~\ref{t-gue070904262026I} and Lebesgue's dominated convergence theorem, for any $t>0$,
		\begin{equation}\label{e-gue071104262026I}
			\begin{split}
				&\lim_{k \to \infty} \widetilde{\theta}'_{b,L^k, \le k\delta}(0) \\
                =&  -\int_0^1 \lim_{k \to \infty} \left\{  k^{-(n+1)} \operatorname{STr} \left\lbrack Ne^{-\frac{t}{k}\Box_{b,L^k, \le k\delta}}(I-\Pi_{L^k, \le k\delta})\right\rbrack - \sum^{n+1}_{j=0} B_{j,k, \le k\delta}t^{-n-1+j}\right\} \frac{dt}{t} \\
				& -\int_1^\infty \lim_{k \to \infty} \left\{  k^{-(n+1)} \operatorname{STr} \left\lbrack Ne^{-\frac{t}{k}\Box_{b,L^k, \le k\delta}}(I-\Pi_{L^k, \le k\delta})\right\rbrack \right\} \frac{dt}{t}  \\
				&-\lim_{k\to\infty}\sum^{n}_{j=0}\frac{B_{j,k, \le k\delta}}{j-n-1}+\Gamma'(1)\lim_{k\to\infty}B_{n+1,k,\le k\delta}.
			\end{split}
		\end{equation}

		For $z \in \mathbb{C}$ set
		\begin{equation}\label{e-gue212404272026}
        \begin{split}
		&	\widehat{\zeta}(z)  \\ & = -M
            \left\lbrack \int_X\int_{-\delta}^\delta\Bigr((\dot{\mathcal{R}}^\phi_x-2\eta \dot{\mathcal{L}}_x)_t -(2\pi)^{-n-1}\sum^n_{q=0}q(-1)^q\abs{\det(\dot{\mathcal{R}}_x^\phi - 2 \eta \dot{\mathcal{L}}_x)}1_{\mathbb R_x(q)}(\eta)\Bigr)d\eta dv_X\right\rbrack (z).
            \end{split}
		\end{equation}

From Theorem~\ref{t-gue205004272026}, Theorem \ref{t-gue070804262026}, Theorem~\ref{t-gue070904262026I} and \eqref{e-gue071104262026I}, we have 
		\begin{equation}\label{e-gue212504272026}
\lim_{k\to\infty}\widetilde{\theta}'_{b,L^k, \le k\delta}(0)\\
				= \widehat{\zeta}'(0).
		\end{equation}
		
Recall that $\zeta(z)$ denotes the Riemann zeta function and, for $\operatorname{Re}z>1$,
		\[
			\zeta(z) =\frac{1}{\Gamma(z)} \int_0^\infty t^{z-1} \frac{e^{-t}}{1-e^{-t}}dt.
		\]
		Moreover,
		\[
        \zeta(0) = -\frac{1}{2} \qquad \zeta'(0) = -\frac{1}{2} \log (2\pi).
        \]
   
		We can repeat the proof of Lemma~\ref{l-gue250604yyd} and deduce that for ${\rm Re\,}z>n+1$, 
\begin{equation}\label{e-gue212604272026}
\begin{split}
\widehat{\zeta}(z)=&-(2\pi)^{-n-1}\zeta(z)\int_X\int_{-\delta}^\delta\det\left(\dot{\mathcal{R}}^\phi_x - 2 \eta \dot{\mathcal{L}}_x\right)\\
&\quad\times\left[ \operatorname{Tr}\abs{(\dot{\mathcal{R}}^\phi_x - 2 \eta \dot{\mathcal{L}}_x  )_-}^{-z} - \operatorname{Tr}\abs{(\dot{\mathcal{R}}^\phi_x - 2 \eta \dot{\mathcal{L}}_x  )_+}^{-z} \right]1_{\mathbb R_x(q)}(\eta)d\eta dv_X(x).
\end{split}
\end{equation}

		By \eqref{e-gue212604272026}, we get
		\begin{equation}\label{e-gue212704272026}
			\begin{split}
				\widehat{\zeta}'(0)=&\frac{1}{2} \log (2\pi)(2\pi)^{-n-1} \int_X\int_{-\delta}^\delta \det \left(\dot{\mathcal{R}}^\phi_x - 2 \eta \dot{\mathcal{L}}_x \right)(2q-n)1_{\mathbb R_x(q)}(\eta)d\eta dv_X(x)\\
                &+\frac{1}{2}(2\pi)^{-n-1} \int_X\int_{-\delta}^\delta \det \left(\dot{\mathcal{R}}^\phi_x - 2 \eta \dot{\mathcal{L}}_x \right)\Bigr(-\log(\abs{{\rm det\,}(\dot{\mathcal{R}}^\phi_x - 2 \eta \dot{\mathcal{L}}_x)_-})\\
                &+\log(\abs{{\rm det\,}(\dot{\mathcal{R}}^\phi_x - 2 \eta \dot{\mathcal{L}}_x)_+})\Bigr)1_{\mathbb R_x(q)}(\eta)d\eta dv_X(x).
			\end{split}
		\end{equation}
		By \eqref{e-gue071004262026III}, \eqref{e-gue212504272026} and \eqref{e-gue212704272026}, we get \eqref{e-gue074304262026}.
	\end{proof}

	
	\bibliographystyle{plain}

\end{document}